\documentclass[11pt,a4paper,reqno]{amsart}
\usepackage[T1]{fontenc}
\usepackage{lmodern,amsmath,amssymb,amsthm,mathtools}
\usepackage[margin=2.6cm]{geometry}
\usepackage{enumitem,booktabs,listings}
\usepackage[colorlinks=true,linkcolor=blue,citecolor=blue,urlcolor=blue]{hyperref}
\newtheorem{theorem}{Theorem}[section]
\newtheorem{lemma}[theorem]{Lemma}
\newtheorem{proposition}[theorem]{Proposition}
\newtheorem{corollary}[theorem]{Corollary}
\theoremstyle{definition}

\newtheorem{example}[theorem]{Example}
\newtheorem{remark}[theorem]{Remark}
\DeclareMathOperator{\Aut}{Aut}
\newcommand{\PhiG}{\Phi(G)}
\DeclareMathOperator{\ord}{ord}
\DeclareMathOperator{\id}{id}
\DeclareMathOperator{\cl}{cl}
\newcommand{\Z}{\mathbb Z}
\newcommand{\F}{\mathbb F}

\newcommand{\Bab}{B_{\mathrm{ab}}}
\newcommand{\Bna}{B_{\mathrm{nab}}}
\newcommand{\Nrm}{\mathsf N}
\title[Complete regular dessins: the nonmetacyclic case]{Regular dessins on $K_{2^e,2^f}$: the nonmetacyclic case}

\author[K. Hu]{Kan Hu}
\address{Department of Mathematics, Zhejiang Ocean University,
Zhoushan, Zhejiang 316022, P.R. China}
\email{hukan@zjou.edu.cn}

\subjclass[2020]{Primary 20D15, 05C10; Secondary 20E22, 14H57}
\keywords{regular dessin, exact bicyclic group, nonmetacyclic $2$-group, Frattini subgroup, reflexibility, circular embedding}
\date{}

\begin{document}
\begin{abstract}
We classify regular dessins on $K_{2^e,2^f}$ with nonmetacyclic
colour-preserving automorphism group, retaining the two specified
vertex rotations. We give nonredundant presentations and exact
counts in both the abelian and nonabelian Frattini cases.
For orientation- and colour-preserving isomorphisms, the number
is $2^{2e-2}+16$ when $e=f\ge4$ and
$3\cdot2^{2f-3}+16$ when $e>f\ge4$, with explicit counts for
all smaller exponents. The construction uses a parity criterion
in the abelian case and a uniquely determined odd quadratic
root with two lifts in the nonabelian case. A dessin in the
classification is reflexible exactly when its Frattini subgroup
is abelian. All symmetric classes are circular. We determine
every circular subfamily, including its face orders and genera;
the circular count is $2^e+8$ on the diagonal and
$2^{f-2}(2^{f-1}+1)+8$ off the diagonal for $e\ge f\ge4$.
\end{abstract}
\maketitle

\section{Introduction}
Cellular embeddings of complete bipartite graphs relate finite-group
factorisations to surface topology. Classifying those embeddings
whose symmetries act transitively on edges leads to a marked factorisation problem
for finite groups: one must classify the group together with two
cyclic subgroups and their specified generators. The generators
record the local rotations, so forgetting them loses information
about the embedding. In this paper we solve this problem for
$K_{2^e,2^f}$ when the colour-preserving automorphism group is
nonmetacyclic.

A \emph{dessin} is a connected bipartite graph, with a specified
vertex colouring, embedded cellularly in a closed oriented surface.
Its automorphisms preserve the colouring and orientation. An
automorphism fixing an edge fixes its ends and their cyclic orders;
connectedness then forces it to fix every edge. The automorphism
group therefore acts semiregularly on edges. A dessin is
\emph{regular} if this action is transitive, and \emph{complete}
if its underlying graph is complete bipartite.

A complete regular dessin on $K_{m,n}$ determines an
\emph{exact bicyclic triple}
\begin{equation}\label{exact}
 (G,a,b),\qquad G=\langle a\rangle\langle b\rangle,\qquad
 \langle a\rangle\cap\langle b\rangle=1,\qquad |a|=m,\quad |b|=n.
\end{equation}
Here $a,b$ are the rotations at the ends of a chosen edge.
Two such triples are equivalent if an isomorphism of groups sends
each specified generator to its counterpart. Equivalence classes
of triples correspond to orientation- and colour-preserving
isomorphism classes of complete regular dessins
\cite{FHNSW2020}. We recall the construction in
Section~\ref{dictionary}. The ordered pair records the two rotation
orders; the corresponding vertex classes have sizes $n,m$.

Two natural operations on dessins are reversal of orientation and
exchange of colours. The \emph{mirror} $\mathcal D^*$ reverses all
local cyclic orders, whereas the \emph{reciprocal} $\mathcal D'$
exchanges the colours. A dessin is \emph{reflexible} if it is
isomorphic to its mirror by an orientation-preserving,
colour-preserving map, equivalently if it admits an
orientation-reversing, colour-preserving self-symmetry. Otherwise
it is \emph{chiral}. It is \emph{symmetric} if it is isomorphic to
its reciprocal by an orientation-preserving map, and
\emph{circular} if every facial boundary walk is a simple cycle.
Symmetric complete regular dessins are the orientably regular
embeddings of complete bipartite graphs, with colour exchange
included among their map automorphisms.

The symmetric case was classified by Jones \cite{Jones2010}, building on
\cite{DJKNS2007,DJKNS2010,DJKNS2013,JNS2007,JNS2008,NSZ2002};
see also the survey \cite{Jones2015}. For dessins without an
assumed colour-exchanging symmetry, related results include the
uniqueness theorem of Fan and Li \cite{FL2018} and the
classification of circular dessins of odd prime-power bidegree
\cite{FLQ2018}, of $2$-power bidegree with metacyclic automorphism
group \cite{Fan2022}, and of coprime bidegree \cite{FLQ2023}.
Without assuming circularity, Hu, Nedela and Wang
\cite{HNW2019} classified complete regular dessins of odd
prime-power bidegree. Cai and Hu \cite{CH2024} treated the
metacyclic case for $K_{2^e,2^f}$.

The remaining case is specific to the prime two. A bicyclic
$p$-group is metacyclic when $p$ is odd, whereas bicyclic
$2$-groups need not be. Janko's structural classification
\cite{Janko2008} provides the group-theoretic starting point:
a nonmetacyclic bicyclic $2$-group has two metacyclic maximal
subgroups containing the cyclic factors and a third maximal
subgroup requiring three generators. This does not yet classify
exact bicyclic triples. One must determine which groups admit
disjoint cyclic factors of the prescribed orders, recover the
parameters from the specified generators, and distinguish
inequivalent markings of the same abstract group.

We carry out these steps by separating the two possibilities for
$\Phi(G)$. If $\Phi(G)$ is abelian, the squares $a^2,b^2$ form
a specified direct-product basis. We express the extension by
$G/\Phi(G)\cong C_2^2$ through six residues. Janko's
maximal-subgroup criterion identifies nonmetacyclicity with a
parity condition, after which the congruences give three explicit
families and complete boundary lists. If $\Phi(G)$ is nonabelian,
we impose exactness on Janko's presentations and obtain three
canonical families. Their remaining quadratic congruence has
one odd root modulo a half modulus and exactly two lifts to the
full modulus. In both cases we construct every group, prove
exhaustion, and recover all parameters intrinsically from its
marked triple.

The classification also yields a group-theoretic distinction
between reflection and chirality: a dessin in these families is
reflexible exactly when its Frattini subgroup is abelian.
The symmetric dessins all lie in the abelian case and are
circular. Circularity is more varied in the nonabelian case,
where longer simple facial cycles occur. We give explicit
circular subfamilies and enumerate them by face order.

Throughout, automorphisms act on the right:
\[
 u^{\alpha\beta}=(u^\alpha)^\beta,\qquad
 u^v=v^{-1}uv,\qquad [u,v]=u^{-1}v^{-1}uv.
\]
Thus $(u^v)^w=u^{vw}$ and $uv=vu[u,v]$.
The exponents $e,f$ are positive integers. Counts retain both
orientation and vertex colouring unless reciprocality or mirror
identification is explicitly stated.

\begin{theorem}[Classification and enumeration]\label{uniformmain}
Every nonmetacyclic exact triple $(G,a,b)$ of orders $2^e,2^f$
occurs in exactly one of the abelian Frattini lists in
Theorems~\ref{largefamilies} and \ref{boundary}, or the
nonabelian Frattini lists in Theorem~\ref{na:main}.
Each listed presentation defines a group of order $2^{e+f}$
with the prescribed disjoint cyclic factors. The lists are
nonredundant as marked triples.
For $e\ge f$, the number of orientation- and colour-preserving
isomorphism classes is
\[
 T(e,f)=
 \begin{cases}
 0,&f=1,\\
 1,&e=f=2,\\
 4,&f=2<e,\\
 16,&e=f=3,\\
 24,&f=3<e,\\
 2^{2e-2}+16,&e=f\ge4,\\
 3\cdot2^{2f-3}+16,&e>f\ge4.
 \end{cases}
\]
A listed dessin is reflexible if and only if $\Phi(G)$ is abelian.
The symmetric and circular classes are determined explicitly in
Theorems~\ref{uniformsymref} and \ref{uniformcircular}.
For $e<f$, the lists apply after exchanging the specified
generators and then restoring their original order.
\end{theorem}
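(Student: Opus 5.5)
This theorem is an umbrella statement assembled from the later classification theorems, so the plan is to prove it by collecting those results and adding the counts. First reduce to $e\ge f$: exchanging $a$ and $b$ gives a bijection between exact triples of orders $(2^e,2^f)$ and $(2^f,2^e)$ that preserves equivalence and nonmetacyclicity. This justifies the last sentence of the statement.

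For $e\ge f$, split according to whether $\Phi(G)$ is abelian. This dichotomy is intrinsic to the abstract group, so a triple from the abelian lists can never be equivalent to one from the nonabelian lists.

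\emph{Abelian case.} By Theorems~\ref{largefamilies} and \ref{boundary}, every such triple lies in exactly one listed family. Each listed presentation defines a group of order $2^{e+f}$ with $\langle a\rangle\cap\langle b\rangle=1$. The parameters are recovered intrinsically from $(G,a,b)$: $a^2,b^2$ form a specified basis of $\Phi(G)$, and the six residues are read off from the conjugation action and from the expressions for $[a,b]$. Hence the lists are nonredundant. Counting the admissible parameter values subject to the parity criterion gives a count $A(e,f)$.

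\emph{Nonabelian case.} By Theorem~\ref{na:main} there are three canonical families, with nonredundancy and a count $N(e,f)$. For $e,f\ge4$, I expect the uniquely determined odd quadratic root with exactly two lifts to make $N(e,f)=16$, independent of $e,f$. I would then check that $A(e,f)$ equals $2^{2e-2}$ on the diagonal and $3\cdot2^{2f-3}$ off it. The small cases $f\le3$ come from the boundary lists, and $T(e,1)=0$ because a nonmetacyclic group cannot be generated by a cyclic subgroup together with an involution forming an exact factorisation. Summing $A+N$ yields the table.

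\emph{Reflexibility.} A triple is reflexible exactly when $(G,a,b)\sim(G,a^{-1},b^{-1})$.
- In the abelian case, I would verify directly that $a\mapsto a^{-1}$, $b\mapsto b^{-1}$ preserves the defining relations of each family. The residue description should be symmetric under inversion, which gives an automorphism.
- In the nonabelian case, I would compute the invariants of $(G,a^{-1},b^{-1})$ and show they land on a different root lift or parameter, hence a different listed triple. This relies on the nonredundancy already established.

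The symmetric and circular statements are deferred to Theorems~\ref{uniformsymref} and \ref{uniformcircular}.

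The main obstacle is the nonabelian reflexibility check: showing the inverted marking is never equivalent to the original requires tracking how the quadratic root transforms under inversion. I would first try to show that inversion sends the root $r$ to a value congruent to $-r$ or $r^{-1}$ that differs modulo the half modulus.
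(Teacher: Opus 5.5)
Your architecture matches the paper's. You reduce to $e\ge f$ by exchanging generators and split by whether $\Phi(G)$ is abelian, an invariant of the abstract group that gives disjointness. Exhaustion, construction and marked nonredundancy come from the component theorems, and reflexibility from the two mirror analyses. The genuine gap is in the enumeration: the split you expect is false, so the step ``check that $A(e,f)$ equals $2^{2e-2}$'' cannot be carried out.

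By Theorem~\ref{counts}, for $f\ge4$ and $N=2^{f-1}$ the abelian count is only linear in $N$: $\Bab=8N+16,\ 6N+16,\ 4N+16$ at gaps $0$, $1$, $\ge2$. The constant $16$ is family (I). The quadratic term comes from the nonabelian case. By Lemma~\ref{na:qcount}, each admissible multiplier $Q$ yields $N$ marked triples ($N/2$ odd $k$, two lifts of $\xi$). There are $2^{\min(e-3,f-1)}-2$ multipliers in $\mathcal C_0$, $2^{\min(e-3,f-2)}-2$ in $\mathcal L_0$, and $N/2-2$ in $\mathcal C_1$, the last present only for $e=f+1$. On the diagonal the $\mathcal C_0,\mathcal L_0$ entries must be doubled, because the reciprocal marking $(G,b,a)$ is a distinct triple. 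This gives $\Bna=N^2-8N,\ 3N^2/2-6N,\ 3N^2/2-4N$; in particular $\Bna(4,4)=0$, not $16$. Only the sums equal the stated $T$.

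The independence of $e$ off the diagonal is a compensation between the two cases, not a property of either piece. From gap one to larger gaps $\Bab$ loses $2N$, while $\mathcal C_1$ disappears and $\mathcal C_0$ gains $N^2/2$. Your small cases fail for the same reason. The boundary lists give only $\Bab(e,3)=16$ for $e\ge5$. The remaining $8$ classes of $T(e,3)=24$ are nonabelian: they come from $\mathcal C_0$, with two multipliers and four triples each.

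On reflexibility, two corrections. In the abelian case, inverting $a,b$ is not automatically an automorphism. It respects the collection relation exactly when $t^{\alpha\beta}=t^{-1}$, that is, $P(Rq-1)\equiv0\pmod M$ and $S(p+1)\equiv0\pmod N$. This must be verified on the nonmetacyclic lists, and your family-by-family check should target exactly these congruences.

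In the nonabelian case no root-tracking is needed. For the mirror pair, $r^{-1}$ acts on $c^*=[r^{-1},s^{-1}]$ with multiplier $Q^{-1}$, with the same $w$ factor in $\mathcal L_0$. Moreover $Q^{-1}\not\equiv Q\pmod M$, because $Q^2\not\equiv1\pmod M$ is a defining condition of all three families. Marked nonredundancy then rules out reflexibility. Comparing root lifts is not even meaningful here, since the mirror changes $Q$ and hence the polynomial $F_{Q,k}$.
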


The numbers $T(e,e)$ do not identify reciprocal dessins.
The reciprocal counts are $(T(e,e)+S(e))/2$, where
$S(2)=1$ and $S(e)=4$ for $e\ge3$; see
Corollary~\ref{uniformorbits}. This distinction is necessary on
the diagonal. For unequal exponents, placing the larger rotation
order first already chooses one representative up to reciprocality.

Section~\ref{dictionary} gives the group--dessin correspondence
and the required group-theoretic preliminaries.
Sections~\ref{abelian} and \ref{nonabelian} classify the two
Frattini cases. Sections~\ref{abelianexternal} and
\ref{nonabelianexternal} determine their faces, external
symmetries and circular subfamilies. Section~\ref{uniformcount}
assembles the counts and completes the proof of
Theorem~\ref{uniformmain}. Section~\ref{stabilisation} explains the precise scope of stabilisation
in the larger exponent, compares the odd-prime and $2$-group counts,
and identifies the compensating family weights. The appendices
contain reproducible checks of the parameter lists and their
geometric invariants.

\section{Preliminaries}\label{dictionary}
In this section, we explain the correspondence between complete regular dessins
and exact $(m,n)$-bicyclic triples, and then focus on the special case $m=2^e$ and $n=2^f$.
\subsection{Correspondence}
\begin{proposition}[{\cite{FHNSW2020,Fan2021}}]\label{dessindictionary}
The isomorphism classes of regular dessins $\mathcal{D}$ with underlying graph $K_{m,n}$
correspond bijectively to the equivalence classes of exact $(m,n)$-bicyclic triples $(G,a,b)$.
Moreover,
\begin{enumerate}[label=\rm(\alph*)]
  \item the dessin $\mathcal{D}$ is reflexible if and only if $G$ has an automorphism $a\to a^{-1}$ and $b\to b^{-1}$;
  \item the dessin $\mathcal{D}$ is symmetric if and only if $G$ has an automorphism transposing $a$ and $b$;
  \item the dessin $\mathcal{D}$ is circular if and only if $\langle a\rangle\cap\langle ab\rangle=\langle b\rangle\cap\langle ab\rangle=1$.
\end{enumerate}
\end{proposition}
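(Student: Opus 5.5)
The plan is to build the dessin from the group by a coset construction, verify that this construction inverts the passage from a dessin to its rotation triple described in the introduction, and then read (a)–(c) off the construction.

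Starting from an exact triple $(G,a,b)$, I take black vertices to be the right cosets $\langle a\rangle g$, white vertices the cosets $\langle b\rangle g$, and edges the elements of $G$. An edge $g$ joins $\langle a\rangle g$ and $\langle b\rangle g$. The local rotations are left multiplication by $a$ at black vertices and by $b$ at white vertices; the rotation $g\mapsto ag$ permutes the edges incident to $\langle a\rangle g$ in a single cycle, as required. The group $G$ acts by right multiplication, and this action preserves incidence and both rotations, regularly on edges. The embedding is then the standard permutation-pair surface attached to the transitive pair of permutations $(\lambda_a,\lambda_b)$ of $G$; it is connected because $\langle a,b\rangle=G$. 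The underlying graph is $K_{m,n}$ for the following reason. Two edges $g,h$ joining the same pair of cosets would give $hg^{-1}\in\langle a\rangle\cap\langle b\rangle=1$, so there are no multiple edges. Counting, there are $|G:\langle a\rangle|=n$ black vertices, $m$ white vertices and $|G|=mn$ edges, so the graph is complete bipartite; this is where $G=\langle a\rangle\langle b\rangle$ is used. Conversely, given a regular dessin, the introduction's argument gives $\operatorname{Aut}$ acting regularly on edges. Identifying the edges with $G$ via a base edge, the two rotations become left multiplications by elements $a,b$ of orders $m,n$. Completeness and simplicity of $K_{m,n}$ give exactness of the factorisation. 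Equivalence of triples matches isomorphism of dessins, since an isomorphism of dessins sending base edge to base edge is exactly a group isomorphism respecting $a$ and $b$.

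For the symmetry statements I use the description of dessins as pairs of permutations. Reversing orientation replaces $(\lambda_a,\lambda_b)$ by $(\lambda_a^{-1},\lambda_b^{-1})$, so the mirror corresponds to the triple $(G,a^{-1},b^{-1})$. Exchanging colours gives the triple $(G,b,a)$. Hence reflexibility amounts to equivalence of $(G,a,b)$ with $(G,a^{-1},b^{-1})$, which is (a), and symmetry amounts to equivalence with $(G,b,a)$, which is (b).

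The main obstacle is (c), which requires identifying the faces correctly. Faces correspond to cycles of the product $\lambda_a\lambda_b$, that is, to cosets of $\langle ab\rangle$ (with a fixed composition convention). A facial walk from edge $g$ traverses the edges $g, ag, bag,\dots$, alternating between black and white corners, and has length $2|ab|$. It is a simple cycle exactly when no vertex repeats. A repeated black vertex means that $(ab)^k g\in\langle a\rangle g$ for some $0<k<|ab|$, that is, $(ab)^k\in\langle a\rangle\setminus\{1\}$, which is excluded precisely by $\langle a\rangle\cap\langle ab\rangle=1$. The white case is analogous: it gives $b(ab)^kb^{-1}\in\langle b\rangle$, or equivalently uses $ba$, a conjugate of $ab$. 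Since $\langle b\rangle\cap\langle ba\rangle$ is conjugate to $\langle b\rangle\cap\langle ab\rangle$, the white condition becomes $\langle b\rangle\cap\langle ab\rangle=1$. By edge-transitivity all faces are alike, so it suffices to treat the face through the base edge. I would take care over the right-action conventions here, because a wrong convention would swap $ab$ for $ba$, although the resulting conditions are equivalent by conjugacy.
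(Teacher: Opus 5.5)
Your proposal is correct and follows essentially the same route as the paper: edges labelled by $G$, vertices the right cosets of $\langle a\rangle$ and $\langle b\rangle$, the regular right action, the mirror and reciprocal triples $(G,a^{-1},b^{-1})$ and $(G,b,a)$ for (a) and (b), and tracing the face $g,ag,bag,\dots$ for (c), with a conjugation to pass between $ab$ and $ba$. One small slip: for that walk the black repetition condition is $(ba)^k\in\langle a\rangle$ rather than $(ab)^k\in\langle a\rangle$, but as you anticipated these are equivalent after conjugating by $a$.
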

\begin{proof}
Start with an exact bicyclic triple $(G,a,b)$ as given in~\eqref{exact}, put $A=\langle a\rangle$ and $B=\langle b\rangle$,
and label the edges by the elements of $G$. The black vertices are the
right cosets $Ag$ and the white vertices are the right cosets $Bg$.
Edge $g$ joins $Ag$ to $Bg$. At vertex $Ag$, prescribe the cyclic order
\[
 g,\ ag,\ a^2g,\ldots,a^{m-1}g,
\]
and prescribe the analogous order at $Bg$ using $b$. These orders do not
depend on the choice of representative of the coset: replacing $g$ by
$a^i g$ merely starts the same black cyclic order at a different place.

Any two cosets $Ag$ and $Bh$ meet in at most one element. Indeed,
if $z,z'$ belong to both, then $z'z^{-1}$ lies in both $A$ and $B$,
so $z=z'$. They also meet in at least one element: the condition
$a^i g=b^j h$ is equivalent to $g h^{-1}=a^{-i}b^j$, and the latter
is possible because $G=AB$. Hence the resulting graph is complete
bipartite, with $|G|/|A|=n$ black vertices and $m$ white vertices.
The cyclic orders define an oriented cellular embedding by the usual
ribbon construction: thicken the vertices to oriented discs, attach an
edge band in the specified cyclic position, and cap every boundary
component by a disc; see~\cite{GT1987} for explicit details.

Right multiplication acts on edge labels by $g^k=gk$. It preserves
both coset systems and both cyclic orders, and is regular on the edges.
The full colour-preserving, orientation-preserving automorphism group
cannot be larger, since an automorphism is determined by the image
of one edge. Thus this group is $G$, acting on the right.

Conversely, choose an edge of a complete regular dessin $\mathcal{D}$
with underlying graph $K_{m,n}$, and identify
its automorphism group with the edge set by this regular action.
The stabilizers of its black and white ends are cyclic: their actions
on the incident edges preserve a cyclic order, and are faithful.
Choose $a,b$ to advance these two cyclic orders by one place.
They have the stated orders, and their intersection fixes the chosen
edge, so is trivial. The group has as many elements as the graph has
edges, namely $mn$. Therefore
$|\langle a\rangle\langle b\rangle|=mn=|G|$, giving \eqref{exact}.

Changing the chosen edge conjugates both distinguished generators by
the same group element, which gives an isomorphic exact triple.
An isomorphism of dessins transports the two distinguished rotations,
and an equivalence of triples transports the cosets and cyclic orders
in the construction. This proves the asserted bijection.

Reversing the orientation reverses both cyclic orders, so the mirror
dessin is represented by $(G,a^{-1},b^{-1})$. It is isomorphic to
the original precisely when the indicated generator-inverting
automorphism exists; this proves (a). Exchanging the two colors
similarly replaces the triple by $(G,b,a)$, proving (b).

For (c), write $c=ba$. A face boundary has black vertices
$A c^i$ and white vertices $B a c^i$, for $0\le i<|c|$:
on each pair of successive sides the edge label advances from
$c^i$ to $ac^i$ and then to $bac^i=c^{i+1}$.
Two black vertices coincide exactly when a nonidentity power
$c^{i-j}$ lies in $A$. Two white vertices coincide exactly when
$a c^{i-j}a^{-1}$ lies in $B$. Since
$a(ba)a^{-1}=ab$ and $a$ normalizes $A$, these are respectively
the conditions that a nonidentity power of $ab$ lies in $A$
or in $B$. The two colors prevent coincidences between a black
and a white vertex. Thus all vertices of a face boundary are
distinct precisely when both intersections in (c) are trivial.
Regularity makes the same condition apply to every face.
\end{proof}
In the rest of the paper, we concentrate on complete regular dessins
with underlying graph $K_{2^e,2^f}$, which corresponds to exact bicyclic triples $(G,a,b)$ with
\begin{equation}\label{2exact}
 G=\langle a\rangle\langle b\rangle,\qquad
 \langle a\rangle\cap\langle b\rangle=1,\qquad
 |a|=2^e,\quad |b|=2^f,
\end{equation}
where $e,f$ are positive integers. We order $e\ge f$ when listing the
nonabelian families and the final counts; the abelian congruences
are valid without an ordering assumption.

\subsection{Some results on \texorpdfstring{$p$}{p}-groups}
Let $G$ be a finite $p$-group of exponent $p^h$. The series
\[
1=\Omega_0(G)\leq\Omega_1(G)\leq \cdots\leq\Omega_h(G)=G
\]
and
\[
G=\mho_0(G)\geq\mho_{1}(G)\geq \cdots\geq\mho_h(G)=1
\]
are called the \textit{upper power series} and the \textit{lower power series} of $G$, respectively, where
\[
\mho_i(G)=\langle g^{p^i}\mid g\in G\rangle\quad\text{and}\quad\Omega_i(G)=\langle g\in G\mid g^{p^i}=1\rangle,\qquad 0\leq i\leq h.
\]
The \textit{Frattini subgroup} $\PhiG$ of $G$ is the intersection of the maximal subgroups of $G$.
We recall
\begin{equation}\label{frattinifact}
 \Phi(G)=\mho_1(G)G',\qquad G/\Phi(G)\cong C_p^{\,d(G)},
\end{equation}
where $d(G)$ is the minimum number of generators.
The Burnside basis theorem says that elements generate
$G$ if and only if their images span $G/\Phi(G)$~\cite{Huppert1967}.

Recall that a group $G$ is \textit{metacyclic} if it contains a cyclic normal subgroup $N$ such that $G/N$ is cyclic; otherwise, it is \textit{nonmetacyclic}.
\begin{lemma}\label{primitive}
Let $G$ be a finite two-generator noncyclic $2$-group. Then $G$ is metacyclic if and only if
$G'\le\langle g^2\rangle$ for some $g\in G\setminus\PhiG$.
\end{lemma}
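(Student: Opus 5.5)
The plan is to prove the two implications separately. The reverse one is elementary; the forward one reduces to a classical fact about metacyclic $2$-groups.

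\emph{Sufficiency.} Suppose $g\in G\setminus\PhiG$ satisfies $G'\le\langle g^2\rangle$. By \eqref{frattinifact} and the Burnside basis theorem, $G/\PhiG\cong C_2^2$ because $G$ is two-generator and noncyclic. Choose $h$ so that $\{g,h\}$ is a basis of $G/\PhiG$; then $G=\langle g,h\rangle$. Put $N=\langle g^2\rangle G'=\langle g^2\rangle$, which is cyclic. It is normal: $G/G'$ is abelian, and any subgroup containing $G'$ is normal. In $G/N$ both images have order at most $2$, since $g^2\in N$ and $h^2\in\PhiG$. However, $\PhiG$ need not lie in $N$, so this alone does not bound $G/N$. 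A better choice is $N=\langle g\rangle$. It is normal because for any $x$ we have $g^x=g[g,x]\in g G'\subseteq\langle g\rangle$. Then $G/N$ is generated by the image of $h$, so it is cyclic, and $G$ is metacyclic.

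\emph{Necessity.} Let $G$ be metacyclic with a cyclic normal subgroup $N=\langle x\rangle$ and $G/N=\langle yN\rangle$ cyclic. Then $G=\langle x,y\rangle$, and $G'=\langle [x,y]\rangle\le N$ because $G/N$ is abelian and $N$ is normal. Write $x^y=x^r$ with $r$ odd ($N$ is a $2$-group), so $[x,y]=x^{r-1}$ and $G'=\langle x^{r-1}\rangle\le\langle x^2\rangle$. It remains to check that $x\notin\PhiG$. If $x\in\PhiG$, then $G=\langle x,y\rangle=\langle y\rangle$ by the Frattini property, contradicting that $G$ is noncyclic. So we take $g=x$.

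The main point to watch is the normality argument in sufficiency, namely that a subgroup containing $G'$ is normal. The key observation is $\langle g\rangle\supseteq G'$, which holds since $G'\le\langle g^2\rangle\le\langle g\rangle$. That makes $\langle g\rangle$ itself the cyclic normal subgroup with cyclic quotient, and no appeal to Janko's classification is needed.
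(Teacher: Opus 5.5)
Your proof is correct and follows essentially the paper's route. For sufficiency you use $G'\le\langle g\rangle$ to make $\langle g\rangle$ normal and lift a complementary basis vector $h$ of $G/\Phi(G)$. For necessity you take $g$ to be a generator of the cyclic normal subgroup; the only variation is that you obtain $G'\le\langle x^2\rangle$ from $[x,y]=x^{r-1}$ with $r$ odd, rather than from $G'\le C\cap\Phi(G)=\langle g^2\rangle$ as the paper does.

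In a final write-up, delete the abandoned attempt with $N=\langle g^2\rangle$. Also justify $G'=\langle[x,y]\rangle$: every subgroup of the cyclic normal subgroup $\langle x\rangle$ is characteristic in it, hence normal in $G$, so $\langle[x,y]\rangle$ is the normal closure of $[x,y]$.
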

\begin{proof}
If $G$ is metacyclic, then $G$ has a cyclic normal subgroup $C=\langle g\rangle$ with cyclic quotient.
Then $G'\le C$. If $C\le\PhiG$, then $G/\PhiG$, being a quotient of the cyclic group $G/C$,
would be cyclic, a contradiction. Hence a generator $g$ of $C$ lies outside $\PhiG$, and
consequently $G'\le C\cap\Phi(G)=\langle g^2\rangle$.

Conversely, suppose $g\notin\PhiG$ and $G'\le\langle g^2\rangle$. Then $\langle g\rangle$ is normal.
We may complete the nonzero image of $g$ to a basis of $G/\PhiG$ and then choose a lift $h$ of the other basis vector.
The Burnside basis theorem implies $G=\langle g,h\rangle$. Therefore $G/\langle g\rangle$ is cyclic, generated by the image of $h$.
\end{proof}

\begin{proposition}[{\cite[Theorem 5.1]{Janko2008}}]\label{prop:janko}
Let $G=AB$ be a nonmetacyclic $2$-group, where the subgroups $A$ and $B$
are cyclic. Let $\{U,V,W\}$ be the set of maximal subgroups of $G$, where $A<U$ and $B<V$.
Then $U$ and $V$ are metacyclic, and the remaining maximal subgroup $W$
has rank $d(W)=3$.
\end{proposition}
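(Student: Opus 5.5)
The plan is to argue by induction on $|G|$, combining the Burnside basis theorem with Lemma~\ref{primitive} and Blackburn's classification of minimal nonmetacyclic $2$-groups. This is the route I would take to reprove the cited Theorem~5.1 of \cite{Janko2008}.

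\emph{Identifying $U,V,W$.} Write $A=\langle a\rangle$ and $B=\langle b\rangle$. Since $G=AB$, we have $G=\langle a,b\rangle$. As $G$ is noncyclic, $G/\Phi(G)\cong C_2^2$, so $G$ has exactly three maximal subgroups.
\begin{itemize}
\item Neither $a$ nor $b$ lies in $\Phi(G)$; otherwise $G$ would be generated by the other element and so be cyclic.
\item Hence $A$ lies in a unique maximal subgroup $U$ and $B$ in a unique maximal subgroup $V$.
\item $U\ne V$, since otherwise $AB\le U<G$.
\item The third maximal subgroup is $W=\Phi(G)\langle ab\rangle$.
\end{itemize}
By Dedekind's law, $U=A(U\cap B)=A\langle b^2\rangle$ and $V=\langle a^2\rangle B$. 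So $U$ and $V$ are again products of two cyclic subgroups of smaller order, and the inductive hypothesis applies to them.

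\emph{Metacyclicity of $U$ (and symmetrically $V$).} Suppose $U$ is nonmetacyclic. By induction applied to $U=A\langle b^2\rangle$, $U$ has its own structure: two metacyclic maximal subgroups and a third, $W_U$, of rank $3$. I would aim for a contradiction by comparing $\Phi(U)$ with $\Phi(G)$. The element $b$ normalises $U$ and conjugates the pair $(A,\langle b^2\rangle)$ to $(A^b,\langle b^2\rangle)$. Since $W_U$ is characteristic in $U$, the subgroup $\langle W_U,b\rangle$ is a subgroup of $G$ containing $b^2$ and $\Phi(U)$. I would try to show that this subgroup is a proper subgroup of $G$ that is a product of two cyclic groups yet has rank $3$. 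That is impossible, because a product of two cyclic groups is generated by two elements.

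If this direct comparison stalls, the fallback is to pass to a minimal counterexample. Quotienting by a suitable normal subgroup of order $2$ inside $\Omega_1(Z(G))\cap\Phi(G)$ reduces to the case where $G$ is minimal nonmetacyclic. Blackburn's list then applies: the minimal nonmetacyclic $2$-groups are $C_2^3$, $Q_8\times C_2$, $Q_8*C_4$, and a specific group of order $32$. Among these, only the order-$32$ group is a product of two cyclic subgroups, and there one checks the claim by hand. Lifting the conclusion back through the quotient needs care, because metacyclicity is not inherited upward in general. I expect this step to be the main obstacle.

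\emph{Rank of $W$.} Since $U$ is metacyclic, $U$ has a cyclic normal subgroup $\langle g\rangle$ with cyclic quotient. If $W$ were also generated by two elements, then all three maximal subgroups of $G$ would be two-generated. A nonmetacyclic two-generator $2$-group has some maximal subgroup of rank $3$, which follows by using Lemma~\ref{primitive} to test each generator $g\notin\Phi(G)$. Since $U$ and $V$ are metacyclic, hence two-generated, that rank-$3$ maximal subgroup must be $W$. This gives $d(W)\ge3$.

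For the upper bound, $|W:\Phi(G)|=2$, and $\Phi(G)=\langle a^2\rangle\langle b^2\rangle G'$ is generated by $a^2$, $b^2$ and commutators. Using that $U$ and $V$ are metacyclic, I would show that $G'\le\langle a^2\rangle\langle b^2\rangle$. Then $W=\langle a^2,b^2,ab\rangle$, so $d(W)\le3$, and therefore $d(W)=3$.
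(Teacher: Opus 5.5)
\textbf{There is a genuine gap: the proposal never establishes that $U$ and $V$ are metacyclic.} Note first that the paper does not prove this proposition at all. It imports it from Janko's Theorem~5.1 as its one external structural input. So your argument would have to be complete on its own.

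The direct comparison cannot work. If $U=A\langle b^2\rangle$ is nonmetacyclic, it is noncyclic, so neither $a$ nor $b^2$ lies in $\Phi(U)$, and $W_U=\Phi(U)\langle ab^2\rangle$. Then $\langle W_U,b\rangle$ contains $b$ and $(ab^2)b^{-2}=a$, so it is all of $G$. There is no proper subgroup of rank~$3$ to set against the two-generator property.

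The fallback does not help either. Minimal nonmetacyclicity is a condition on subgroups. A counterexample with $U$ nonmetacyclic has the nonmetacyclic proper subgroup $U$, so it is not minimal nonmetacyclic. Factoring out a central involution gives a group $\bar A\bar B$, again a product of two cyclic subgroups and hence two-generated. To my knowledge all four $2$-groups on Blackburn's list need three generators, the order-$32$ one included. If so, your claim that the order-$32$ group is a product of two cyclic subgroups is wrong, and the reduction lands on an empty case. You also leave the lifting step open yourself, and that is where the real difficulty lies.

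\textbf{The lower bound $d(W)\ge3$ is also unsupported.} It rests on the assertion that every nonmetacyclic two-generator $2$-group has a maximal subgroup of rank~$3$. Lemma~\ref{primitive} does not give this. It only decides whether $G'\le\langle g^2\rangle$ for some $g\notin\Phi(G)$, and says nothing about $d(\langle g\rangle\Phi(G))$. Once $U,V$ are known to be metacyclic, that assertion applied to $G$ is exactly the lower bound you want. It therefore needs a proof of its own and cannot be cited as a consequence of Lemma~\ref{primitive}.

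\textbf{The upper bound is fine and easier than you suggest.} It does not need $U,V$ to be metacyclic. The parity argument of Lemma~\ref{squarebasis} uses only $G=AB$, not exactness. It gives $\Phi(G)=\langle a^2\rangle\langle b^2\rangle$ as a set, so $G'\le\Phi(G)$ and $W=\langle ab,a^2,b^2\rangle$, hence $d(W)\le3$. What is missing is exactly the substantive part: metacyclicity of $U,V$ and $d(W)\ge3$.
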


\begin{lemma}\label{squarebasis}
Let $(G,a,b)$ be an exact $(2^e,2^f)$-bicyclic triple with $e,f\ge1$. Then $G$ is noncyclic, $G/\PhiG\cong C_2^2$, and
$ \PhiG=\langle a^2\rangle\langle b^2\rangle$ is an exact product, with specified factors of orders $2^{e-1}$ and $2^{f-1}$.
\end{lemma}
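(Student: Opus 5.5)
The plan is to get everything from counting, using only \eqref{2exact}, \eqref{frattinifact} and the Burnside basis theorem.

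\emph{Step 1: $G$ is noncyclic.} Since $e,f\ge1$, both $\langle a\rangle$ and $\langle b\rangle$ are nontrivial. A cyclic $2$-group has a unique subgroup of order $2$. So if $G$ were cyclic, $\langle a\rangle$ and $\langle b\rangle$ would both contain that subgroup, contradicting $\langle a\rangle\cap\langle b\rangle=1$.

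\emph{Step 2: $G/\PhiG\cong C_2^2$.} The group $G$ is generated by $a$ and $b$, because $G=\langle a\rangle\langle b\rangle$. Hence $d(G)\le2$. A $2$-group with $d(G)=1$ is cyclic, which Step 1 excludes, so $d(G)=2$. Then \eqref{frattinifact} gives $G/\PhiG\cong C_2^2$, and in particular $|\PhiG|=2^{e+f-2}$.

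\emph{Step 3: $\PhiG=\langle a^2\rangle\langle b^2\rangle$.} By \eqref{frattinifact}, $\mho_1(G)\le\PhiG$, so $a^2,b^2\in\PhiG$. Thus the product set $\langle a^2\rangle\langle b^2\rangle$ lies inside $\PhiG$. Its factors have orders $2^{e-1}$ and $2^{f-1}$. Their intersection lies in $\langle a\rangle\cap\langle b\rangle=1$, so the product set has exactly $2^{e-1}\cdot2^{f-1}=2^{e+f-2}=|\PhiG|$ elements. A subset of the finite set $\PhiG$ with the same cardinality equals it. Hence the product set is the subgroup $\PhiG$, and it is an exact product with the specified cyclic factors.

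No step is a genuine obstacle. The only point needing care is Step 3: we do not know a priori that $\langle a^2\rangle\langle b^2\rangle$ is a subgroup, and the cardinality comparison supplies this without any commutation argument.
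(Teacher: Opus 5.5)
Your proof is correct and follows the paper's route: noncyclicity comes from the two disjoint nontrivial cyclic factors, and $G/\Phi(G)\cong C_2^2$ comes from the Burnside basis theorem. The only difference is the last step. The paper identifies $\Phi(G)$ directly as the set of $a^ib^j$ with $i,j$ both even, since $\bar a,\bar b$ form a basis of $G/\Phi(G)$. You instead get the same equality from the inclusion $\langle a^2\rangle\langle b^2\rangle\subseteq\Phi(G)$ and the common cardinality $2^{e+f-2}$, which is equally short and complete.
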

\begin{proof}
Since the factors $\langle a\rangle$ and $\langle b\rangle$ of $G$ are disjoint nontrivial cyclic $2$-subgroups,
$G$ is not cyclic, and the Burnside basis theorem gives $G/\PhiG\cong C_2^2$ so that the images of $a,b$ form a basis.
An element $a^i b^j\in G$ belongs to $\PhiG$ if and only if both $i$ and $j$ are even. Hence the underlying set of $\PhiG$ is precisely the set $\langle a^2\rangle\langle b^2\rangle$. The factors have trivial intersection because the original factors do, and their orders are the stated orders. In particular their product is a subgroup.
\end{proof}

\begin{lemma}\label{kernelmeta}
Let $G$ be an exact bicyclic $2$-group. Then $\PhiG$ is metacyclic.
\end{lemma}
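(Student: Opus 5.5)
The plan is to obtain metacyclicity of $\PhiG$ from that of a larger subgroup, using the elementary fact that subgroups of metacyclic groups are metacyclic. I would first record this fact with a one-line proof. If $M$ has a cyclic normal subgroup $N$ with $M/N$ cyclic, and $H\le M$, then $H\cap N$ is cyclic and normal in $H$. Moreover $H/(H\cap N)\cong HN/N\le M/N$ is cyclic, so $H$ is metacyclic.

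Write the exact bicyclic $2$-group as $G=AB$ with $A=\langle a\rangle$ and $B=\langle b\rangle$ cyclic and $A\cap B=1$. I would then split into two cases.

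\emph{$G$ metacyclic.} Here $\PhiG\le G$, so $\PhiG$ is metacyclic by the fact above.

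\emph{$G$ nonmetacyclic.} By Lemma~\ref{squarebasis}, $G$ is noncyclic with $G/\PhiG\cong C_2^2$, so $G$ has exactly three maximal subgroups $U,V,W$. Since $G$ is a $2$-group, $A$ is proper in $G$ and hence lies in some maximal subgroup; name it $U$. Proposition~\ref{prop:janko} (Janko) applies to the factorisation $G=AB$ and says this $U$ is metacyclic. The Frattini subgroup is the intersection of all maximal subgroups, so $\PhiG\le U$, and again $\PhiG$ is metacyclic.

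There is essentially no obstacle. The only care needed is to check that Janko's hypotheses hold, namely that $G$ is a nonmetacyclic $2$-group that is a product of two cyclic subgroups. These are exactly the assumptions in this case; disjointness of the factors is not even needed. The degenerate orders $e=1$ or $f=1$ need no separate treatment: the argument is uniform, and the metacyclic case already covers every group that is not covered by Janko's theorem.
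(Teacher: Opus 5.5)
Your proof is correct and follows the paper's own argument: the metacyclic case is immediate, and in the nonmetacyclic case Janko's theorem supplies a metacyclic maximal subgroup containing $\PhiG$, which is therefore metacyclic as a subgroup of a metacyclic group. One small wording fix: $A$ is proper in $G$ because $G$ is noncyclic, not merely because $G$ is a $2$-group.
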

\begin{proof}
If $G$ is metacyclic, then $\PhiG$ is clearly metacyclic.
If $G$ is nonmetacyclic, by Proposition~\ref{prop:janko}, $G$ contains exactly one nonmetacyclic maximal subgroup, and the remaining
two maximal subgroups of $G$ are metacyclic. The assertion follows from the facts that
$\PhiG$ is the intersection of all maximal subgroups of $G$
and every subgroup of a metacyclic group is metacyclic.
\end{proof}

For an element $g\in G$ of order $2^t>1$, the unique involution $g^{2^{t-1}}$ contained in $\langle g\rangle$
is its \textit{terminal involution}.
\begin{lemma}[Terminal involutions]\label{terminal}
Two nontrivial cyclic $2$-subgroups in a group $G$ have
trivial intersection if and only if their terminal involutions
differ.
\end{lemma}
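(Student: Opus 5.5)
The statement concerns two nontrivial cyclic $2$-subgroups $\langle x\rangle$ and $\langle y\rangle$ of $G$, of orders $2^s$ and $2^t$ with $s,t\ge1$. The plan is to show that each subgroup contains a unique involution, namely its terminal involution, and that any nontrivial subgroup of a cyclic $2$-group contains that involution. The equivalence then follows in both directions at once.

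First, in a cyclic group $\langle x\rangle$ of order $2^s$ there is, for each divisor $d$ of $2^s$, exactly one subgroup of order $d$. In particular there is exactly one subgroup of order $2$, generated by $x^{2^{s-1}}$. Every nontrivial subgroup $H\le\langle x\rangle$ has order $2^k$ with $k\ge1$. Being cyclic of even order, $H$ contains an element of order $2$, and by uniqueness this element is $x^{2^{s-1}}$. So every nontrivial subgroup of $\langle x\rangle$ contains the terminal involution of $x$.

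Now suppose the terminal involutions differ, and let $H=\langle x\rangle\cap\langle y\rangle$. If $H\ne1$, then $H$ is a nontrivial subgroup of both cyclic groups. By the previous step, $H$ contains the terminal involution of $x$ and also that of $y$. Since $H$ is itself a nontrivial subgroup of the cyclic $2$-group $\langle x\rangle$, it contains a unique involution, so the two terminal involutions coincide. This contradicts the hypothesis, so $H=1$.

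Conversely, if the terminal involutions coincide, that common involution is a nonidentity element of both subgroups, so the intersection is nontrivial. There is no real obstacle here. The only point needing care is that the terminal involution depends only on the subgroup $\langle x\rangle$ and not on the choice of generator, which is exactly the uniqueness of the order-$2$ subgroup in a cyclic group.
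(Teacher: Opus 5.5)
Your proof is correct and follows the paper's argument: every nontrivial subgroup of a cyclic $2$-group contains its unique involution, so a nontrivial intersection forces the terminal involutions to agree, and a common involution gives a nontrivial intersection. Your remark that the terminal involution depends only on the subgroup, not on the chosen generator, is a useful point that the paper leaves implicit.
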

\begin{proof}
Every nontrivial subgroup of a cyclic $2$-group contains its unique
involution. A nontrivial intersection therefore forces the two
terminal involutions to agree, and agreement itself supplies a
nontrivial intersection.
\end{proof}

The following result is well known.
\begin{lemma}\label{roots}
For $L=2^h$ with $h\ge3$, the roots of $r^2=1\pmod L$ are
\[
 r=-1,\quad -1+L/2,\quad 1,\quad 1+L/2.
\]
Modulo two there is only the root one; modulo four the roots are one
and three.
\end{lemma}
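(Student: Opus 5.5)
The plan is to work in the unit group $(\Z/L\Z)^\times$ and factor the congruence. Since $r^2\equiv1$ forces $r$ to be odd, I will write
\[
 r^2-1=(r-1)(r+1),
\]
where both factors are even. Exactly one of $r-1$ and $r+1$ is divisible by $4$, because they differ by $2$. The other factor is therefore $2$ times an odd number. Hence $2^h\mid(r-1)(r+1)$ holds if and only if the factor divisible by $4$ is in fact divisible by $2^{h-1}=L/2$.

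This gives a two-case split. In the first case $L/2\mid r-1$, so $r\equiv1$ or $1+L/2\pmod L$. In the second case $L/2\mid r+1$, so $r\equiv-1$ or $-1+L/2\pmod L$. Conversely, each of these four residues satisfies $r^2\equiv1$. This follows directly from
\[
 (\pm1+L/2)^2=1\pm L+L^2/4,
\]
together with $L^2/4\equiv0\pmod L$, which holds since $h\ge2$.

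The only point needing care is that the four residues are pairwise distinct modulo $L$, and this is where $h\ge3$ is used. Two of them coincide only if $L$ divides one of the differences $2$, $L/2$, $2\pm L/2$, or $L/2-(-1+L/2)=1$ type combinations. When $L\ge8$ we have $L/2\ge4>2$, so none of these differences is a multiple of $L$.

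The small moduli are checked directly. Modulo $2$ the only unit is $1$. Modulo $4$ the units are $1$ and $3$, and both square to $1$. This also shows why the list degenerates when $h\le2$: for $L=4$ the residues $1+L/2$ and $-1$ coincide, as do $-1+L/2$ and $1$.

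I do not expect any step to be an obstacle; the argument is a standard factorization. The only subtlety is the distinctness check, which relies on $h\ge3$.
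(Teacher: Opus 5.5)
Your proof is correct and follows the paper's argument essentially step for step: oddness of $r$, the factorisation $(r-1)(r+1)$ with one factor exactly divisible by two, hence $r\equiv\pm1\pmod{L/2}$, then direct squaring and a distinctness check that uses $h\ge3$. The only blemish is the stray ``difference'' $L/2-(-1+L/2)=1$ in your distinctness list, since $L/2$ is not one of the residues; the relevant differences are $2$, $L/2$ and $L/2\pm2$, and including the extra value is harmless.
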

\begin{proof}
Any root is odd. The even integers $r-1$ and $r+1$ differ by two,
so one has exactly one factor of two. The divisibility
$2^h\mid(r-1)(r+1)$ forces the other to be divisible by
$2^{h-1}$. Hence $r\equiv1$ or $-1\pmod{2^{h-1}}$.
Each congruence gives two residues modulo $2^h$, namely the four
listed roots. Direct squaring verifies them, and they are distinct
for $h\ge3$. The two smaller moduli are immediate.
\end{proof}
\subsection{Power and norm formulas}
For an odd integer $Q$ and an integer $j\ge0$, write
\begin{equation}\label{na:norm}
 \Nrm_j(Q)=1+Q+\cdots+Q^{j-1},\qquad \Nrm_0(Q)=0.
\end{equation}
\begin{lemma}[Geometric sums at powers of two]\label{na:normlemma}
Let $q\equiv3\pmod4$ be an odd integer, and write
$\lambda=v_2(q+1)$. If $q=-1$, interpret the even sums below as zero.
For $j\ge1$, when the sums are nonzero,
\begin{align}
 v_2(q^{2^j}-1)&=\lambda+j,\label{na:qval}\\
 v_2(\Nrm_{2^j}(q))&=\lambda+j-1.\label{na:normval}
\end{align}
If $t\equiv1\pmod4$, then
\begin{equation}\label{na:positiveval}
 v_2(\Nrm_{2^j}(t))=j.
\end{equation}
The same formula holds for $t=1$, since the sum equals $2^j$.
For positive odd $i$, replacing $q$ by $q^i$ does not change $\lambda$.
\end{lemma}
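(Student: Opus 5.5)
The plan is to prove \eqref{na:qval} by induction on $j$ via the factorisation $q^{2^j}-1=(q^{2^{j-1}}-1)(q^{2^{j-1}}+1)$. Then \eqref{na:normval} follows by dividing by $q-1$, and \eqref{na:positiveval} is obtained by the same argument with the base case changed. The case $q=-1$ is degenerate: both $q^{2^j}-1$ and $\Nrm_{2^j}(q)$ vanish for $j\ge1$, and this is exactly the interpretation stated. So throughout we assume $q\ne-1$, which makes $\lambda$ finite.

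For the base case $j=1$, write $q^2-1=(q-1)(q+1)$. Since $q\equiv3\pmod4$, we have $q-1\equiv2\pmod4$, so $v_2(q-1)=1$, and $v_2(q+1)=\lambda\ge2$ by definition. Hence $v_2(q^2-1)=\lambda+1$. For the inductive step, with $j\ge2$, note that $q^{2^{j-1}}$ is an odd square, so it is $\equiv1\pmod 8$ (indeed $\equiv1\pmod4$ suffices). Therefore $q^{2^{j-1}}+1\equiv2\pmod4$ and contributes exactly one factor of $2$. This gives $v_2(q^{2^j}-1)=v_2(q^{2^{j-1}}-1)+1$, and by induction $v_2(q^{2^j}-1)=\lambda+j$. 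Next, $\Nrm_{2^j}(q)(q-1)=q^{2^j}-1$ and $v_2(q-1)=1$, so $v_2(\Nrm_{2^j}(q))=\lambda+j-1$.

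For $t\equiv1\pmod4$ with $t\ne1$, put $\mu=v_2(t-1)\ge2$. The same factorisation applies, and now every factor $t^{2^{i}}+1$ with $i\ge0$ is $\equiv2\pmod4$. Hence $v_2(t^{2^j}-1)=\mu+j$, and dividing by $t-1$ gives $v_2(\Nrm_{2^j}(t))=j$. For $t=1$ the sum equals $2^j$ directly. Finally, for odd $i>0$, write $q^i+1=(q+1)(q^{i-1}-q^{i-2}+\cdots+1)$. The second factor is a sum of $i$ odd terms, so it is odd, which gives $v_2(q^i+1)=\lambda$. Moreover $q^i\equiv(-1)^i=-1\pmod4$, so $q^i$ again satisfies the hypothesis.

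There is no real obstacle here. The only point needing care is that the base case for $q\equiv3\pmod 4$ uses $v_2(q+1)$ rather than $v_2(q-1)$; this explains the asymmetry between \eqref{na:normval} and \eqref{na:positiveval}.
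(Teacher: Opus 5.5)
Your proof is correct and is essentially the paper's argument. The paper writes the full factorisation $q^{2^j}-1=(q-1)(q+1)(q^2+1)\cdots(q^{2^{j-1}}+1)$ in one step, which your induction simply unrolls. It then divides by $q-1$, treats $t\equiv1\pmod4$ the same way using $v_2(t+1)=1$, and uses the odd cofactor $(q^i+1)/(q+1)$ for the final claim, exactly as you do.
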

\begin{proof}
Factor
\[
 q^{2^j}-1=(q-1)(q+1)
          (q^2+1)(q^4+1)\cdots(q^{2^{j-1}}+1).
\]
The first factor has valuation one, the second has valuation
$\lambda$, and each subsequent factor has valuation one because
an odd square is one modulo eight.
There are $j-1$ subsequent factors. Adding the valuations gives
$\lambda+j$. Dividing by $q-1$, whose valuation is one, proves
\eqref{na:normval}.

For $t\equiv1\pmod4$, the same factorisation gives
$v_2(t^{2^j}-1)=v_2(t-1)+j$, because $v_2(t+1)=1$.
Division by $t-1$ gives \eqref{na:positiveval}.
Finally
$(q^i+1)/(q+1)=q^{i-1}-q^{i-2}+\cdots-q+1$ is odd for odd $i$,
so $v_2(q^i+1)=v_2(q+1)$.
The assertions for the zero or constant sums follow directly.
\end{proof}

\begin{lemma}[Right-action power formulas]\label{na:power}
If $E$ is an abelian normal subgroup, $x\in G$, and $d\in E$, then
\begin{equation}\label{na:powerformula}
 (xd)^j=x^j\prod_{i=0}^{j-1}d^{x^i}.
\end{equation}
If $[r,s]=v\in E$ and conjugation by $r$ induces a right action
$\rho$ on $E$,
then
\begin{equation}\label{na:commnorm}
 [r^j,s]=\prod_{i=0}^{j-1}v^{\rho^i},\qquad
 sr^j=r^js[r^j,s]^{-1}.
\end{equation}
\end{lemma}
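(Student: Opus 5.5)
The plan is to prove all three displayed identities by induction on $j$, using only the right-action conventions $u^v=v^{-1}uv$, $[u,v]=u^{-1}v^{-1}uv$ and $uv=vu[u,v]$ fixed in the introduction. We need normality of $E$ so that every conjugate of an element of $E$ stays in $E$. We need commutativity of $E$ so that products of such conjugates may be reordered freely, which makes the products $\prod_{i=0}^{j-1}$ unambiguous. The case $j=0$ is trivial throughout, since empty products equal $1$, and $j=1$ is immediate.

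For \eqref{na:powerformula}, write $P_j=\prod_{i=0}^{j-1}d^{x^i}\in E$ and suppose $(xd)^j=x^jP_j$. Then
\[
(xd)^{j+1}=x^jP_j\,x d=x^{j+1}\,(x^{-1}P_jx)\,d=x^{j+1}P_j^{\,x}\,d .
\]
Conjugation by $x$ is an automorphism, so
\[
P_j^{\,x}=\prod_{i=0}^{j-1}d^{x^{i+1}}=\prod_{i=1}^{j}d^{x^i}.
\]
This product lies in $E$ because $E$ is normal. Multiplying by $d=d^{x^0}$ and using commutativity of $E$ gives $P_{j+1}$, which completes the induction.

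For the first formula in \eqref{na:commnorm}, I use the commutator identity $[uw,s]=[u,s]^w[w,s]$. It follows by expanding
\[
w^{-1}u^{-1}s^{-1}uws=w^{-1}\bigl(u^{-1}s^{-1}us\bigr)w\cdot w^{-1}s^{-1}ws .
\]
Taking $u=r^j$ and $w=r$ gives $[r^{j+1},s]=[r^j,s]^{r}\,v$. Assume inductively that $[r^j,s]=\prod_{i=0}^{j-1}v^{\rho^i}$, a product in $E$. Applying $\rho$ shifts the exponents to $1,\dots,j$, and multiplying by $v=v^{\rho^0}$ yields the product up to $j$, again reordering inside the abelian group $E$. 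The second formula needs no induction. The identity $uv=vu[u,v]$ with $u=r^j$ and $v=s$ gives $r^js=sr^j[r^j,s]$, and solving for $sr^j$ gives $sr^j=r^js[r^j,s]^{-1}$.

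The only real hazard is bookkeeping with the right-action conventions. In particular, the conjugate must appear as $P^{x}$ (not $xPx^{-1}$) in the power formula, and the commutator identity must take the form $[uw,s]=[u,s]^w[w,s]$ rather than its left-action variant. I would check both by direct expansion before running the inductions.
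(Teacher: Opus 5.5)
Your proof is correct. The power formula and the relation $sr^j=r^js[r^j,s]^{-1}$ are proved exactly as in the paper. The only difference is the commutator norm.

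You run a second induction through the expansion $[uw,s]=[u,s]^w[w,s]$. The paper instead obtains it as an immediate corollary of the first formula. Since $r^s=r[r,s]=rv$ and conjugation by $s$ is a homomorphism,
\[
r^j[r^j,s]=(r^j)^s=(rv)^j=r^j\prod_{i=0}^{j-1}v^{r^i},
\]
and cancelling $r^j$ gives the claim. The paper's route is shorter and shows that the commutator formula is just the power formula applied with $x=r$, $d=v$. Yours is self-contained and does not use the first formula at all, at the cost of repeating the same shift-and-multiply bookkeeping.

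One cosmetic point: when you quote $uv=vu[u,v]$ with $v=s$, you reuse the letter $v$, which the lemma already reserves for $[r,s]$. Rename the dummy variable to avoid the clash.
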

\begin{proof}
The identity $dx=x d^x$ moves a kernel element to the right.
Assuming \eqref{na:powerformula} for $j$, multiplication by $xd$
conjugates its accumulated kernel product by $x$ and adds $d$.
This gives the product for $j+1$; its factors commute inside $E$.
For \eqref{na:commnorm}, use $r^s=rv$:
\[
 (r^j)^s=(rv)^j
       =r^j\prod_{i=0}^{j-1}v^{r^i}.
\]
By definition this equals $r^j[r^j,s]$.
The final relation follows from
$r^js=sr^j[r^j,s]$.
All conjugations in these calculations are right conjugations.
\end{proof}

\section{The abelian Frattini case}\label{abelian}
If $\min(e,f)=1$, one factor has index two and the group is
metacyclic. Thus the nonmetacyclic classification has no cases in
that range. From now on assume $e,f\ge2$ and write $M:=2^{e-1}$ and $N:=2^{f-1}$, unless explicitly
stated otherwise. Throughout Section~\ref{abelian}, assume that
\[
 K=\PhiG=\langle x\rangle\times\langle y\rangle\cong C_M\times C_N.
\]
\subsection{Constructing the abelian Frattini extension}\label{extension}
Conjugation on $K$ gives right automorphisms
\[
 u^\alpha=u^a,\qquad u^\beta=u^b \quad (u\in K).
\]
As $a^2=x$ and $b^2=y$ belong to the abelian group $K$,
\[
 u^{\alpha^2}=u^{a^2}=u,\qquad
 u^{\beta^2}=u^{b^2}=u.
\]
Also $G/K$ is abelian, so $ab$ and $ba$ differ by an element of $K$.
Conjugation by such an element is trivial on $K$, whence
$\alpha\beta=\beta\alpha$.
In particular, although we use right conjugation, on this kernel
\begin{equation}\label{lefttoright}
 a u a^{-1}=u^{a^{-1}}=u^\alpha,\qquad
 b u b^{-1}=u^{b^{-1}}=u^\beta.
\end{equation}
The last equalities use involutivity, not a change of convention.

Define
\begin{equation}\label{tdef}
 t=ba(ab)^{-1}\in K,\qquad ba=t\,ab.
\end{equation}
We call $t$ the \emph{collection coefficient}. With
our commutator convention the commutator is instead
\begin{equation}\label{actualcomm}
 [b,a]=(ab)^{-1}ba=(ab)^{-1}t(ab)=t^{ab}
       =t^{\alpha\beta}.
\end{equation}
Thus $t$ and $[b,a]$ have the same normal closure, but should not be
identified as elements.

\begin{theorem}[Extension conditions and normal forms]\label{construction}
Let $K=\langle x\rangle\times\langle y\rangle\cong C_M\times C_N$,
let $\alpha,\beta\in\Aut(K)$ be right automorphisms, and let $t\in K$.
There is an exact triple with $a^2=x$, $b^2=y$, $u^a=u^\alpha$,
$u^b=u^\beta$ and $ba=t\,ab$ if and only if
\begin{align}
 x^\alpha&=x,&y^\beta&=y,\label{E1}\\
 \alpha^2&=\beta^2=\id,&\alpha\beta&=\beta\alpha,\label{E3}\\
 x^\beta&=x\,t\,t^\alpha,&
 y^\alpha&=y\,t^{-1}(t^{-1})^\beta.\label{E2}
\end{align}
When these conditions hold, the presentation consisting of the relations
of $K$ and
\begin{equation}\label{eqn:relpres}
 a^2=x,\quad b^2=y,\quad
 u^a=u^\alpha,\quad u^b=u^\beta\ (u\in K),\quad ba=t\,ab
\end{equation}
has exactly the $4MN$ distinct normal forms
\begin{equation}\label{normal}
 u a^\epsilon b^\eta,\qquad u\in K,\quad \epsilon,\eta\in\{0,1\}.
\end{equation}
Its Frattini subgroup is precisely the embedded copy of $K$.
\end{theorem}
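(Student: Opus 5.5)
Necessity and sufficiency are handled separately; the main work is the sufficiency direction, which needs an explicit construction of the extension.

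\emph{Necessity.} Suppose an exact triple is given with the stated data. Then \eqref{E1} holds because $a$ commutes with $a^2=x$ and $b$ commutes with $b^2=y$. Condition \eqref{E3} was derived just before the theorem: $\alpha^2$ is conjugation by $a^2\in K$, which is trivial on the abelian group $K$, and $\alpha\beta\beta^{-1}\alpha^{-1}$ is conjugation by an element of $K$. For \eqref{E2}, compute $x^b=b^{-1}a^2b$ in two ways. From $ba=t\,ab$ we get $a^{-1}b^{-1}=(ab)^{-1}=(ba)^{-1}t=a^{-1}b^{-1}t$ after suitable rearrangement. More directly, use $ab=t^{-1}ba$ twice:
\[
 b^{-1}a\,a\,b=b^{-1}a\,t^{-1}ba
 =b^{-1}(t^{-1})^{\alpha}\,ab\,a
 =\cdots,
\]
moving $K$-elements left by \eqref{lefttoright}. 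This yields $x^\beta=x\,t\,t^\alpha$ after sign bookkeeping. The formula for $y^\alpha$ is symmetric, obtained by interchanging the roles and using $ab=t^{-1}ba$. I will do this calculation carefully once; it is routine.

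\emph{Sufficiency.} This is the main obstacle: we must show that the presentation \eqref{eqn:relpres} does not collapse. I will build the group as an explicit two-step extension.

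\begin{enumerate}[label=\rm(\arabic*)]
\item Form $H=\langle K,a\rangle$ as the cyclic extension of $K$ by $C_2$ with $a^2=x$ and action $\alpha$. The standard criterion for cyclic extensions requires $\alpha^2$ to be conjugation by $x$ (trivial, since $K$ is abelian) and $x^\alpha=x$; both hold by \eqref{E1} and \eqref{E3}. Hence $|H|=2M N$, with normal forms $ua^\epsilon$.
\item Extend $H$ by $b$ of order $2$ modulo $H$. Define an automorphism $\theta$ of $H$ by $u\mapsto u^\beta$ on $K$ and $a\mapsto a^b$. The value of $a^b$ is forced by $ba=t\,ab$, namely $a^b=b^{-1}tab$, which is expressible as an element $k\,a$ with $k\in K$ built from $t$, $\beta$ and $y$.
\item Check that $\theta$ respects the relations of $H$. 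The relation $a^2=x$ is preserved by the first condition of \eqref{E2}. The relation $u^a=u^\alpha$ is preserved by the commutation $\alpha\beta=\beta\alpha$.
\item Check the cyclic-extension conditions for $b$: $\theta^2$ must equal conjugation by $y$, and $y^\theta=y$. Here $y^\theta=y$ is \eqref{E1}, while $\theta^2=\mathrm{inn}(y)$ requires, on $K$, that $\beta^2=\id$. On $a$ it requires $a^{\theta^2}=a^y$, where $a^y=y^{-1}ay=a\,(y^{-1})^\alpha y$. This is exactly where the second condition of \eqref{E2} enters, combined with $\beta^2=\id$.
\end{enumerate}

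The cyclic-extension theorem then gives a group of order $4MN$ with normal forms $ua^\epsilon b^\eta$ satisfying all the relations. Since the relations let any word be reduced to such a form, the presented group has exactly these $4MN$ distinct elements.

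\emph{Exactness and the Frattini subgroup.} The order of $a$ is $2M=2^e$ and the order of $b$ is $2^f$. Moreover $\langle a\rangle\cap\langle b\rangle=1$, because the normal forms $x^ia^\epsilon$ and $y^jb^\eta$ coincide only trivially. Then $|\langle a\rangle\langle b\rangle|=4MN$, so $G=\langle a\rangle\langle b\rangle$. Finally, $K$ is normal of index $4$ with quotient $C_2^2$, so $\Phi(G)\le K$. Lemma~\ref{squarebasis} gives $\Phi(G)=\langle a^2\rangle\langle b^2\rangle=K$.

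The delicate point throughout is the sign and side bookkeeping in the right-action conventions when verifying that $\theta$ is well defined and that $\theta^2=\mathrm{inn}(y)$.
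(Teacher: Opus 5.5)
Your proof is correct, but your sufficiency argument takes a different route from the paper's. The paper does not build the group in stages. It treats \eqref{rules} as a rewriting system on words in $K\cup\{a,b\}$, proves termination with a lexicographic measure, and resolves every overlap of left-hand sides. The decisive overlaps are $bau$, $baa$ and $bba$, settled respectively by $\alpha\beta=\beta\alpha$ and the two equations of \eqref{E2}. Uniqueness of normal forms, and the fact that the presented monoid is a group, then follow directly. You instead apply the cyclic-extension criterion twice, first to $K\langle a\rangle$ and then to $H\langle b\rangle$. This is the device the paper uses only later, for the nonabelian case (Lemma~\ref{na:cyclicextension}). The two verifications correspond term by term: $\theta$ preserving $u^a=u^\alpha$ is the $bau$ overlap, $\theta$ preserving $a^2=x$ is the $baa$ overlap, and $\theta^2=\iota_y$ on $a$ is the $bba$ overlap. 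Your route produces a concrete model from a standard theorem and avoids the termination and confluence bookkeeping. The cost is quoting the extension criterion and bounding the presented group separately. The paper's route is self-contained and works directly on the presented object.

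Fix two slips when you write out the bookkeeping.

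First, the forced value is $a^b=b^{-1}(t^{-1}ba)=(t^{-1})^\beta a$. The expression $b^{-1}tab$ you wrote equals $a$ itself by the relation, and no $y$ enters. With the correct value, $(a^\theta)^2=\bigl((tt^\alpha)^{-1}\bigr)^\beta x$. So preservation of $a^2=x$ reads $x^\beta=\bigl((tt^\alpha)^{-1}\bigr)^\beta x$, which is the first equation of \eqref{E2} after applying the involution $\beta$. Likewise, comparing $a^{\theta^2}=t^{-1}(t^{-1})^\beta a$ with $y^{-1}ay=y^{-1}y^\alpha a$ gives exactly the second equation.

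Second, in the necessity part the chain starting $a^{-1}b^{-1}=(ab)^{-1}$ is wrong, since $(ab)^{-1}=b^{-1}a^{-1}$. Also, $\alpha\beta\beta^{-1}\alpha^{-1}$ should read $\alpha\beta\alpha^{-1}\beta^{-1}$, which is induced by $ab(ba)^{-1}=t^{-1}\in K$. Your second computation of $b^{-1}a^2b$ does work: it yields the same identity $x^\beta=\bigl((tt^\alpha)^{-1}\bigr)^\beta x$, hence \eqref{E2}. The paper shortens this by using $\beta^2=\id$ to write $x^\beta=bxb^{-1}=(bab^{-1})^2=(ta)^2=t\,t^\alpha x$.
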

\begin{proof}
For necessity, \eqref{E1} follows because an element centralizes its
own square. We established \eqref{E3} above. The collection relation
gives $bab^{-1}=ta$ and $aba^{-1}=t^{-1}b$.
Using \eqref{lefttoright} to move elements of $K$ to the left gives
\[
 x^\beta=ba^2b^{-1}=(ta)^2=t\,t^\alpha x,\qquad
 y^\alpha=ab^2a^{-1}=(t^{-1}b)^2=t^{-1}(t^{-1})^\beta y.
\]
This proves \eqref{E2}.

For sufficiency, we verify normal forms rather than merely assuming
that the presentation retains a copy of $K$. Take the elements of $K$
as letters, with their multiplication table as relations, and use
the collection rules
\begin{equation}\label{rules}
 a u\longrightarrow u^\alpha a,\quad
 b u\longrightarrow u^\beta b,\quad
 aa\longrightarrow x,\quad bb\longrightarrow y,\quad
 ba\longrightarrow t\,ab.
\end{equation}
Adjacent kernel letters are multiplied, and identity letters deleted.
These rules terminate. A useful decreasing lexicographic measure is:
the number of $a,b$ letters; the number of pairs in which a $b$ precedes
an $a$; the sum, over kernel letters, of the number of $a,b$ letters
to their left; and finally the number of kernel letters.
The rules for $aa,bb$ decrease the first entry; the rule for $ba$
decreases the second; moving a kernel letter left decreases the third;
and combining kernel letters decreases the third or fourth.
The irreducible words are exactly \eqref{normal}.

We check every possible overlap of rules. Overlaps of three kernel
letters agree by associativity in $K$. For $a u v$ and $b u v$,
agreement is exactly the assertion that $\alpha,\beta$ preserve
multiplication. For $aa u$ and $bb u$, the two reductions agree because
$\alpha^2=\beta^2=\id$ and $K$ is abelian. For $aaa$ and $bbb$,
agreement is respectively $x^\alpha=x$ and $y^\beta=y$.
There are three remaining types:
\[
\begin{array}{c|c|c}
\text{word}&\text{one reduction}&\text{other reduction}\\ \hline
ba u&t\,u^{\alpha\beta}ab&u^{\beta\alpha}t\,ab\\
baa&t\,t^\alpha x\,b&x^\beta b\\
bba&y\,a&t^\beta t\,y^\alpha a .
\end{array}
\]
The first row agrees by commutativity of $K$ and of the two actions.
The second is the first equation of \eqref{E2}.
The second equation of \eqref{E2} says
$y^\alpha=y\,t^{-1}(t^\beta)^{-1}$, which gives the third row.
This exhausts the overlaps, since every rule has a left side of
length two.

The elementary termination-and-overlap argument now gives uniqueness
of reduction: if two first reductions are disjoint, they commute;
if they overlap, the checks just made join their results.
Induction on the decreasing measure joins any two full reduction
sequences. Consequently each presented element has one normal form,
and no two of the words \eqref{normal} are identified.
The presented monoid is a group: kernel letters are invertible,
$a$ has inverse $x^{-1}a$, and $b$ has inverse $y^{-1}b$.
Hence the presentation defines a group of order $4MN$.

The orders of $a,b$ are $2M,2N$, since their squares have orders $M,N$
and their images modulo $K$ have order two.
If $a^i=b^j$, the four normal-form cosets force both $i,j$ to be even.
The equality then lies in
$\langle x\rangle\cap\langle y\rangle=1$, so the two cyclic subgroups
intersect trivially. Their product has $4MN$ elements and is all of
the group. Also $K=\langle a^2,b^2\rangle$, so $a,b$ generate $G$.
The quotient $G/K$ is elementary abelian of order four.
Thus $\Phi(G)=G^2G'\le K$, while $K\le\Phi(G)$ because it is generated
by squares. The two inclusions give equality.
\end{proof}

\subsection{The six-residue classification}\label{sixsection}
Let $(\alpha,\beta,t)$ satisfy the extension conditions of Theorem~\ref{construction}. Since $t\in K$, $t=x^py^q$ for some $p\in\Z_M$ and
$q\in\Z_N$. The identities \eqref{E1}, \eqref{E2} and \eqref{E3} in Theorem~\ref{construction} imply that
\[
x^{\alpha}=x,\quad y^{\alpha}=x^Py^{Q},\quad x^{\beta}=x^Ry^S,\qquad y^{\beta}=y
\]
for some $P,R\in\Z_M$ and $Q,S\in\Z_N$. Thus, for any $(u,v)\in\Z_M\times\Z_N$,
\begin{align*}
(x^uy^v)^{\alpha}&=(x^{\alpha})^u(y^{\alpha})^v=x^u(x^Py^{Q})^v=x^{u+Pv}y^{Qv},\\
(x^uy^v)^{\beta}&=(x^{\beta})^u(y^{\beta})^v=(x^Ry^S)^uy^v=x^{Ru}y^{Su+v}.
\end{align*}
We may write $K$ and its basis $\{x,y\}$ additively as
\[
 K=\Z_M\times\Z_N,\qquad x=(1,0),\quad y=(0,1),
\]
so that the above equations are transformed into the following form:
\begin{equation}\label{coaction}
  t=(p,q),\qquad (u,v)^\alpha=(u+Pv,Qv),\qquad
 (u,v)^\beta=(Ru,Su+v).
\end{equation}
\begin{theorem}[Six-residue presentation]\label{six}
Exact $(2^e,2^f)$-bicyclic triples $(G,a,b)$ with abelian Frattini subgroup are in bijective correspondence
with the six-residue data $P,R,p\in\Z_M$ and $Q,S,q\in\Z_N$ satisfying the following conditions:
\begin{align}
 NP&=0\pmod M,& MS&=0\pmod N,\label{A1}\\
 P(1+Q)&=0\pmod M,& Q^2&=1\pmod N,\label{A2}\\
 R^2&=1\pmod M,& S(1+R)&=0\pmod N,\label{A3}\\
 PS&=0\pmod M,& PS&=0\pmod N,\label{A4}\\
 P(R-1)&=0\pmod M,& S(Q-1)&=0\pmod N,\label{A5}\\
 R&=1+2p+Pq\pmod M,& S&=(1+Q)q\pmod N,\label{A6}\\
 P&=-(1+R)p\pmod M,& Q&=1-Sp-2q\pmod N.\label{A7}
\end{align}
In particular $Q,R$ are odd. The group has the marked presentation
\begin{equation}\label{overviewpresentation}
 \begin{split}
 G=\langle a,b,x,y\mid{}&
 x^M=y^N=[x,y]=1,\quad a^2=x,\quad b^2=y,\\
 &x^a=x,\quad y^a=x^P y^Q,\quad
 x^b=x^R y^S,\quad y^b=y,\quad ba=x^p y^q ab\rangle.
 \end{split}
\end{equation}
This theorem allows arbitrary residues $p,q$; the restriction to
odd $p,q$ will select the nonmetacyclic triples in
Proposition~\ref{parity}. Distinct admissible six-residue data
define nonisomorphic marked triples.

\end{theorem}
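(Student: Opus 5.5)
The plan is to translate the extension conditions of Theorem~\ref{construction} into congruences on the six residues, using the coordinates \eqref{coaction}. By Lemma~\ref{squarebasis}, every exact triple with abelian Frattini subgroup has $K=\PhiG=\langle a^2\rangle\times\langle b^2\rangle$ with $x=a^2$, $y=b^2$ a specified basis. Thus $t=ba(ab)^{-1}=(p,q)$ is uniquely determined by the marked triple. So are the matrices of $\alpha,\beta$ in the basis $\{x,y\}$: since \eqref{E1} fixes $x^\alpha=x$ and $y^\beta=y$, these matrices are given by $(P,Q)$ and $(R,S)$. Conversely, by Theorem~\ref{construction}, data satisfying \eqref{E1}--\eqref{E2} produce an exact triple with presentation \eqref{overviewpresentation}. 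Any equivalence of marked triples sends $a^2,b^2$ to their counterparts and hence preserves $x,y$, the actions and $t$. Therefore distinct data give inequivalent triples. It remains to show that \eqref{E1}--\eqref{E2}, together with the requirement that the formulas in \eqref{coaction} define automorphisms of $C_M\times C_N$, are equivalent to \eqref{A1}--\eqref{A7}.

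I would carry out the translation step by step. First, the maps in \eqref{coaction} are well defined on $\Z_M\times\Z_N$. For $\alpha$, the image of $y$ must have order dividing $N$, which gives $NP\equiv0\pmod M$. For $\beta$, the image of $x$ must have order dividing $M$, which gives $MS\equiv0\pmod N$. This is \eqref{A1}.

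Next, compute the squares. We have $(u,v)^{\alpha^2}=(u+Pv+PQv,Q^2v)$. So $\alpha^2=\id$ is equivalent to $P(1+Q)\equiv0\pmod M$ and $Q^2\equiv1\pmod N$, which is \eqref{A2}. Symmetrically, $\beta^2=\id$ gives \eqref{A3}. Bijectivity of $\alpha$ and $\beta$ then follows from involutivity.

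Then, comparing $(u,v)^{\alpha\beta}$ with $(u,v)^{\beta\alpha}$ coordinatewise gives the commutation condition. With right actions,
\[
(u,v)^{\alpha\beta}=\bigl(R(u+Pv),\,S(u+Pv)+Qv\bigr),\qquad
(u,v)^{\beta\alpha}=\bigl(Ru+P(Su+v),\,Q(Su+v)\bigr).
\]
Taking $(u,v)=(1,0)$ and $(0,1)$ yields $PS\equiv0$ in both moduli and $P(R-1)\equiv0$, $S(Q-1)\equiv0$. This is \eqref{A4}--\eqref{A5}.

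Finally, expand \eqref{E2}. We have $t\,t^\alpha=(2p+Pq,(1+Q)q)$, which must equal $x^\beta x^{-1}=(R-1,S)$; this gives \eqref{A6}. Likewise $t^{-1}(t^{-1})^\beta=-(p+Rp,\,Sp+2q)$ must equal $(P,Q-1)$; this gives \eqref{A7}.

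The parity statement follows directly. $Q^2\equiv1$ and $R^2\equiv1$ force $Q,R$ odd once $N,M\ge2$. In the degenerate modulus $1$ cases oddness is vacuous, and I would read it from \eqref{A6}--\eqref{A7} modulo $2$, where $R=1+2p+Pq$.

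I expect the main obstacle to be bookkeeping rather than ideas. One point is keeping the right-action composition order straight when checking $\alpha\beta=\beta\alpha$. Another is noticing that some products, such as $PS$ viewed in $\Z_N$, only make sense thanks to \eqref{A1}, so each congruence must be stated in the correct modulus. I would handle this by always lifting residues to integers and verifying that every expression is independent of the lifts using \eqref{A1}. Beyond that, the proof is a direct substitution into Theorem~\ref{construction}.
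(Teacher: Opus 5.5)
Your proposal is correct and follows essentially the same route as the paper. You translate the conditions of Theorem~\ref{construction} through the coordinates \eqref{coaction} into well-definedness \eqref{A1}, involutivity \eqref{A2}--\eqref{A3}, commutation \eqref{A4}--\eqref{A5} and the norm equations \eqref{A6}--\eqref{A7}, and you get nonredundancy from the fact that the marking fixes $x=a^2$, $y=b^2$, the action coordinates and $t$. Your order-of-images formulation of \eqref{A1} is equivalent to the paper's check that the formulas do not depend on the chosen representatives, and the modulus-one caveat for oddness is unnecessary, since $e,f\ge2$ throughout this section.
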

\begin{proof}
We use the identities \eqref{E1}, \eqref{E2} and \eqref{E3}, which guarantee
the admissibility of the datum, to prove this result.

First the two maps must be well defined on the indicated residue
classes. Replacing $v$ by $v+N$ in the first map changes its first
coordinate by $NP$, which must vanish modulo $M$. Replacing $u$
by $u+M$ in the second changes its second coordinate by $MS$,
which must vanish modulo $N$. All other changes are already
multiples of their own coordinate modulus. Thus well-definedness
is equivalent to \eqref{A1}.
Squaring the right action $\alpha$ gives
\[ (u,v)^{\alpha^2}=(u+P(1+Q)v,Q^2v),
\]
so $\alpha^2=\id$ is equivalent to \eqref{A2}.
Likewise
\[
 (u,v)^{\beta^2}=(R^2u,S(1+R)u+v),
\]
giving \eqref{A3}. An endomorphism whose square is the identity is
automatically an automorphism, with itself as inverse.
Thus these equations also justify the required invertibility.

To check the composition order explicitly, our right-action convention
gives
\begin{align*}
 (u,v)^{\alpha\beta}
   &=(Ru+RPv,\ Su+(SP+Q)v),\\
 (u,v)^{\beta\alpha}
   &=((R+PS)u+Pv,\ QSu+Qv).
\end{align*}
Equating the coefficients of $u,v$ in the appropriate coordinates
gives exactly \eqref{A4} and \eqref{A5}.

In additive notation \eqref{E2} reads
$x^\beta=x+t+t^\alpha$ and $y^\alpha=y-t-t^\beta$.
Since
\[
 t^\alpha=(p+Pq,Qq),\qquad t^\beta=(Rp,Sp+q),
\]
comparison of the two coordinates gives \eqref{A6} and \eqref{A7}.
Theorem~\ref{construction} therefore proves both necessity and
sufficiency of the entire list, and proves that \eqref{overviewpresentation}
has the required order and exact factorisation.

Finally, in a marked triple the elements $x=a^2,y=b^2$ are fixed by
the marking. The coordinates of $y^a$ recover $P,Q$, those of $x^b$
recover $R,S$, and the equation $ba=t\,ab$ recovers $p,q$.
A marked isomorphism preserves all these coordinates. Conversely,
equal data give the same presentation. This proves nonredundancy.
\end{proof}
\subsection{The parity criterion}\label{paritysection}
The following lemma identifies the metacyclic groups among those constructed above.
\begin{lemma}\label{dsquares}
For an admissible six-residue datum, the derived subgroup is
\begin{equation}\label{derived}
 D=G'=\langle t,t^\alpha,t^\beta,t^{\alpha\beta}\rangle\le K.
\end{equation}
For $w\in K$, the squares in the three nontrivial Frattini cosets are
\begin{align}
 (wa)^2&=w+w^\alpha+x,\label{squarea}\\
 (wb)^2&=w+w^\beta+y,\label{squareb}\\
 (wab)^2&=w+w^{\alpha\beta}+z,\qquad
 z=x+y+t^\alpha.\label{squareab}
\end{align}
Moreover $G$ is metacyclic if and only if $D$ is contained in the cyclic
subgroup generated by at least one of these squares, for some $w$.
\end{lemma}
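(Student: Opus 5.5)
The plan has three parts, taken in order: compute the derived subgroup, compute the three families of squares, and then apply Lemma~\ref{primitive}.

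\emph{Derived subgroup.} Since $G/K\cong C_2^2$ is abelian, $D\le K$. The group $G$ is generated by $a,b$, so $D$ is the normal closure of the single commutator $[b,a]=t^{\alpha\beta}$ from \eqref{actualcomm}. Because $K$ is abelian and normal, conjugation by elements of $K$ is trivial on $K$. Hence the normal closure of an element of $K$ is its orbit under the group $\langle\alpha,\beta\rangle=\{\id,\alpha,\beta,\alpha\beta\}$; this group has order at most four because $\alpha,\beta$ are commuting involutions by \eqref{E3}. The orbit of $t^{\alpha\beta}$ is $\{t,t^\alpha,t^\beta,t^{\alpha\beta}\}$, which gives \eqref{derived}.

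\emph{Squares.} I would write $K$ additively and use the collection rules \eqref{rules}, in particular $au=u^\alpha a$ as in \eqref{lefttoright}. For $g=wa$ this gives
\[
(wa)^2=w\,a\,w\,a=w\,w^\alpha a^2=w+w^\alpha+x,
\]
and the same computation with $b$ gives $w+w^\beta+y$. For $wab$, first compute
\[
(ab)^2=a\,b\,a\,b=a\,(t\,ab)\,b=t^\alpha\,a^2\,b^2=t^\alpha+x+y=z.
\]
Then conjugating $w$ past $ab$ gives
\[
(wab)^2=w\,(ab)\,w\,(ab)=w+w^{\alpha\beta}+(ab)^2=w+w^{\alpha\beta}+z.
\]
Here the identity $ab\,u=u^{\alpha\beta}ab$ follows by applying the two rules $bu\to u^\beta b$ and $au\to u^\alpha a$ in turn; the order of application is immaterial since $\alpha\beta=\beta\alpha$.

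\emph{Metacyclicity criterion.} $G$ is a two-generator $2$-group and is noncyclic by Lemma~\ref{squarebasis}, so Lemma~\ref{primitive} applies. It says $G$ is metacyclic if and only if $D\le\langle g^2\rangle$ for some $g\notin\Phi(G)=K$. Every such $g$ has the form $wa$, $wb$ or $wab$ with $w\in K$, by the normal forms \eqref{normal}. So the condition is exactly that $D$ lies in the cyclic group generated by one of the squares \eqref{squarea}--\eqref{squareab} for some $w$, which is the final assertion.

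The only step needing care is the bookkeeping of conjugation direction in $(ab)^2$ and $(wab)^2$. Right conjugation and the left-moving rule coincide here because $\alpha,\beta$ are involutions, which is the content of \eqref{lefttoright}. I expect no genuine obstacle.
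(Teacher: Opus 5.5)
Your proposal is correct and follows the paper's proof essentially step for step. You compute $D$ as the normal closure of the commutator via its $\langle\alpha,\beta\rangle$-orbit, collect the three squares directly using $au=u^\alpha a$ and $ab\,u=u^{\alpha\beta}ab$, and then apply Lemma~\ref{primitive} over the three nontrivial normal-form cosets. One small wording fix: the normal closure is the subgroup \emph{generated} by the orbit, not the orbit itself, and that subgroup is what \eqref{derived} states.
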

\begin{proof}
In any two-generator group the normal closure of $[b,a]$ is the derived
subgroup: killing it makes the two generators commute, whereas every
conjugate of it already lies in $G'$.
By \eqref{actualcomm}, its normal closure equals that of $t$.
Conjugation by $K$ is trivial on $K$, and conjugation by $a,b$
induces the commuting involutions $\alpha,\beta$.
The possible conjugates of $t$ are therefore the four elements displayed
in \eqref{derived}; their subgroup is normal and is exactly $G'$.

For the first square use $a w=w^\alpha a$, which follows from
\eqref{lefttoright}. Then
$(wa)^2=w w^\alpha a^2$, giving \eqref{squarea}; the second is the
same calculation with $b$. Next,
\[
 (ab)^2=a(ba)b=a\,t\,a b^2=t^\alpha a^2b^2.
\]
Conjugation by $(ab)^{-1}$ on $K$ is also $\alpha\beta$, because
these involutions commute. Hence
$ab\,w=w^{\alpha\beta}ab$, and the third square formula follows.
Every element outside $K$ has one of these three normal-form shapes.
The last assertion is now precisely Lemma~\ref{primitive}.
\end{proof}

The cyclic-subgroup test above is finite, but the following result
removes it completely from the nonmetacyclic classification.

\begin{proposition}\label{parity}
An admissible six-residue datum gives a nonmetacyclic group if and only
if both $p$ and $q$ are odd.
\end{proposition}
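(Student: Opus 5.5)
The plan is to reduce everything modulo $2K$, where $K=\PhiG=\Z_M\times\Z_N$ with $M,N\ge2$. Then $K/2K\cong\F_2^2$, and the image of $(u,v)$ is $(u\bmod2,\ v\bmod2)$. First I record the parities of the action coefficients. By Theorem~\ref{six}, $Q$ and $R$ are odd. By \eqref{A7}, $P=-(1+R)p$ is even, and by \eqref{A6}, $S=(1+Q)q$ is even. Hence, by \eqref{coaction}, both $\alpha$ and $\beta$ act trivially on $K/2K$. So $w^\alpha\equiv w^\beta\equiv w^{\alpha\beta}\equiv w$ and $t^\alpha\equiv t\equiv(p,q)\pmod{2K}$. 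Substituting into the square formulas \eqref{squarea}--\eqref{squareab} of Lemma~\ref{dsquares}, I get, for every $w\in K$,
\[
 (wa)^2\equiv(1,0),\qquad (wb)^2\equiv(0,1),\qquad (wab)^2\equiv(1+p,\,1+q)\pmod{2K}.
\]

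\emph{Sufficiency ($p,q$ odd $\Rightarrow$ nonmetacyclic).} I use the criterion of Lemma~\ref{dsquares} (equivalently Lemma~\ref{primitive}, valid since $G$ is noncyclic by Lemma~\ref{squarebasis}). We have $t\in D$ and $t\equiv(1,1)\pmod{2K}$. A cyclic group $\langle g\rangle$ reduces modulo $2K$ to the span of the image of $g$. The three candidate spans are $\langle(1,0)\rangle$, $\langle(0,1)\rangle$ and $0$, the last because $(1+p,1+q)\equiv(0,0)$. None of these contains $(1,1)$. Hence $D$ lies in no cyclic subgroup generated by a square of an element outside $\PhiG$, and $G$ is nonmetacyclic.

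\emph{Necessity (nonmetacyclic $\Rightarrow$ $p,q$ odd).} Here I avoid searching for a suitable $w$ and instead invoke Proposition~\ref{prop:janko}. Since $\langle a\rangle<\langle K,a\rangle$ and $\langle b\rangle<\langle K,b\rangle$, the third maximal subgroup is $W=\langle K,ab\rangle$, and nonmetacyclicity forces $d(W)=3$. I compute $\Phi(W)=W^2W'$. The derived subgroup $W'$ is generated by the commutators $[w,ab]=-w+w^{\alpha\beta}$, which lie in $2K$. The squares in $W$ are the elements of $2K$ together with the $(wab)^2\equiv(1+p,1+q)$. So if $p$ or $q$ is even, $\Phi(W)$ contains an element of $K\setminus2K$, and $\Phi(W)\supseteq 2K$ from the squares of $K$. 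Then $|W:\Phi(W)|\le |W:2K|/2=4$, giving $d(W)\le2$, a contradiction. Hence $p$ and $q$ are both odd.

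The only delicate point is the necessity direction. A direct approach through Lemma~\ref{dsquares} would require showing that $D$ lies in a specific cyclic subgroup, which needs finer information about $D$ than its image modulo $2K$. Routing it through Janko's rank-three maximal subgroup replaces that by the mod-$2K$ computation, so the main thing to check carefully is the parity bookkeeping, namely that $P$ and $S$ are even, so that $\alpha$ and $\beta$ are trivial modulo $2K$.
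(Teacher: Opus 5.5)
Your proof is correct. The parity bookkeeping and the necessity direction are essentially the paper's argument. The paper also passes to $H=2K$ and notes that $W/H$ is abelian of order eight, generated by $X,Y,abH$. It then uses $H\le\Phi(W)$ to conclude that $d(W)=3$ exactly when $(ab)^2H=1$, and for the implication from nonmetacyclic it invokes Proposition~\ref{prop:janko}, as you do.

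You differ only in the sufficiency direction.
\begin{itemize}
\item The paper runs the same invariant backwards. When $p,q$ are odd, $W/H\cong C_2^3$, so $d(W)=3$. A metacyclic group cannot contain a subgroup needing three generators.
\item You instead feed the mod-$2K$ square computation into Lemma~\ref{primitive} via Lemma~\ref{dsquares}. You observe that $t\in G'$ has image $(1,1)$, while every primitive square subgroup has image $0$, $\langle(1,0)\rangle$ or $\langle(0,1)\rangle$.
\end{itemize}

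Each route has its advantage. Yours reuses a lemma the paper has already proved and names the obstruction concretely as the class of the collection coefficient in $K/2K$. The paper's is more economical: the single quantity $d(W)$ governs both implications, and it needs neither the description of $G'$ nor the full list of primitive squares, only the class of $(ab)^2$. Neither sufficiency argument depends on Janko's theorem.
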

\begin{proof}
The root equations make $Q,R$ odd; the norm equations then make
$P,S$ even. Hence both actions are trivial on
$K/2K=\langle X,Y\rangle\cong C_2^2$, where $X,Y$ are the
images of $x,y$. Put $H=2K$. It is characteristic in $K$ and
normal in $G$. The quotient $G/H$ has order sixteen and its
subgroup $K/H$ is central. If $t$ has image $pX+qY$, then
\[
 (ab)^2H=(1+p)X+(1+q)Y.
\]

The third maximal subgroup is $W=\langle ab,K\rangle$;
the other two contain $a$ and $b$, respectively.
The quotient $W/H$ is abelian of order eight, generated by the
two central involutions $X,Y$ and $abH$. Moreover
$H=\Phi(K)\le\Phi(W)$ because squares and commutators of a
subgroup lie among those of the ambient group. Thus $d(W)=3$
exactly when $(ab)^2H=1$, and otherwise $d(W)=2$.
This condition is precisely that $p,q$ are odd.

If $G$ is nonmetacyclic, Proposition~\ref{prop:janko} gives
$d(W)=3$. Conversely $d(W)=3$ rules out metacyclicity of $G$:
every subgroup of a metacyclic group is metacyclic and has at most
two generators, by intersecting it with a cyclic normal subgroup
and lifting a generator of its cyclic image. This proves both
directions and all four parity patterns at once.
\end{proof}

\begin{remark}
The parity criterion concerns the coordinates of $t$.
It can also be phrased using the coordinates of $[b,a]$, since the
actions are trivial on $K/\mho_1(K)$ and \eqref{actualcomm} gives the same
coordinate parities. The two elements themselves can nevertheless
differ, so their actual coordinates must not be interchanged in
presentations or face calculations.
\end{remark}

\subsection{Solving the nonmetacyclic congruences}\label{arithmetic}
All data in this section have $p,q$ odd.
We first eliminate two of the six residues.

\begin{lemma}[Four-residue reduction]\label{four}
Choose odd $p\in\Z_M,q\in\Z_N$, and set
\begin{equation}\label{PS}
 P=-(1+R)p\pmod M,\qquad S=(1+Q)q\pmod N.
\end{equation}
The six-residue conditions are equivalent to
\begin{align}
 R^2&=1\pmod M,&Q^2&=1\pmod N,\label{F1}\\
 N(1+R)&=0\pmod M,&M(1+Q)&=0\pmod N,\label{F2}\\
 (1+R)(1+Q)&=0\pmod{\max(M,N)},\label{F3}\\
 (1+R)(1+pq)&=2(1+p)\pmod M,\label{F4}\\
 (1+Q)(1+pq)&=2(1-q)\pmod N.\label{F5}
\end{align}
Every admissible quadruple $(R,Q,p,q)$ defines one nonmetacyclic marked
triple, and different quadruples define different marked triples.
\end{lemma}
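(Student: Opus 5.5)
Given odd $p,q$, the six-residue conditions of Theorem~\ref{six} hold if and only if \eqref{F1}--\eqref{F5} hold with $P,S$ defined by \eqref{PS}. We prove this as an algebraic equivalence, working in the ring $\Z$ and reducing modulo $M$ or $N$ as appropriate. The last sentence of the lemma then follows at once. Theorem~\ref{six} gives a bijection between admissible data and marked triples. Proposition~\ref{parity} says that odd $p,q$ means nonmetacyclic. Since $P,S$ are determined by $(R,Q,p,q)$ through \eqref{PS}, distinct quadruples give distinct six-residue data.

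First, the two norm equations in \eqref{A7} and \eqref{A6} are exactly the definitions \eqref{PS}. We substitute them everywhere else.

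\textbf{Root conditions.} \eqref{A2} and \eqref{A3} contain $Q^2=1$ and $R^2=1$, giving \eqref{F1}. The remaining parts, $P(1+Q)=0$ and $S(1+R)=0$, become $-(1+R)(1+Q)p=0 \pmod M$ and $(1+Q)(1+R)q=0\pmod N$. Since $p,q$ are units, these say $(1+R)(1+Q)\equiv0$ modulo $M$ and modulo $N$, which is \eqref{F3}.

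\textbf{Well-definedness.} \eqref{A1} reads $N(1+R)p\equiv0\pmod M$ and $M(1+Q)q\equiv0\pmod N$, which is \eqref{F2} after dropping the units.

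\textbf{Compatibility conditions.} For \eqref{A5}, $P(R-1)=-(R^2-1)p$ vanishes by \eqref{F1}, and similarly $S(Q-1)=(Q^2-1)q$.

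\textbf{Remaining residue equations.} \eqref{A6} first gives $R=1+2p+Pq=1+2p-(1+R)pq$. Rearranging gives $(1+R)(1+pq)=2+2p$, which is \eqref{F4}. \eqref{A7} second gives $Q=1-Sp-2q=1-(1+Q)pq-2q$, which rearranges to \eqref{F5}.

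The step I expect to need care is \eqref{A4}, $PS\equiv0$ modulo both $M$ and $N$. Here $PS=-(1+R)(1+Q)pq$ is an integer computed from representatives. Since $P$ is defined only modulo $M$ and $S$ only modulo $N$, I must check that the product is well defined modulo $\min(M,N)$ and then modulo $\max(M,N)$. I would lift $R,Q$ to integers so that $PS$ equals $-(1+R)(1+Q)pq$ up to multiples of $M(1+Q)q$ and $N(1+R)p$. By \eqref{F2}, these multiples are divisible by $\max(M,N)$, so they vanish modulo both moduli. Then \eqref{A4} is equivalent to \eqref{F3}, which we already have.

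Each step above is reversible. In particular, \eqref{A1}, \eqref{A2} and \eqref{A3} are recovered from \eqref{F1}--\eqref{F3}, and \eqref{A5} follows from \eqref{F1}. This gives the equivalence.

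The main obstacle is bookkeeping: mixing the moduli $M$ and $N$ when $e\ne f$. To handle it, I would treat the cases $M\ge N$ and $M<N$ symmetrically, noting which of \eqref{F2} is vacuous in each case.
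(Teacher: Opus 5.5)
Your proposal is correct and follows essentially the paper's argument: substitute \eqref{PS}, cancel the odd units $p,q$ in each of \eqref{A1}--\eqref{A7}, note that every step reverses, and invoke Theorem~\ref{six} and Proposition~\ref{parity} for the final sentence. The only difference is cosmetic: you extract \eqref{F3} from the square-action halves of \eqref{A2},\eqref{A3} and then deduce \eqref{A4}, with an explicit representative check. The paper instead extracts \eqref{F3} from \eqref{A4} and deduces the square-action equations; this is harmless, since each pair is separately equivalent to \eqref{F3}.
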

\begin{proof}
The root equations for $Q,R$ in \eqref{A2},\eqref{A3} give
\eqref{F1}.
Substitute \eqref{PS} into \eqref{A1}. Multiplication by the odd integers
$p,q$ is invertible modulo either power of two, so these become
\eqref{F2}.
The two equations $PS=0$ become
$(1+R)(1+Q)pq=0$ modulo $M$ and modulo $N$.
Again $pq$ is odd and invertible. Since one of $M,N$ divides the other,
the two equations are exactly \eqref{F3}.

The remaining square-action equations
$P(1+Q)=0$ and $S(1+R)=0$ follow from \eqref{F3}.
The commutation equations in \eqref{A5} also follow, because
\[
 P(R-1)=-(R^2-1)p,\qquad S(Q-1)=(Q^2-1)q.
\]
The first equation of \eqref{A6}, after substituting $P$, becomes
\[
 R=1+2p-(1+R)pq\pmod M.
\]
Adding one and rearranging gives \eqref{F4}.
The second equation of \eqref{A7}, after substituting $S$, becomes
$Q=1-(1+Q)pq-2q\pmod N$, which is \eqref{F5}.
The other equations of \eqref{A6},\eqref{A7} are precisely the
definitions \eqref{PS}. We have accounted for every six-residue equation,
so reversing the calculation proves sufficiency.

All congruences involving residues from the other modulus are
well defined under \eqref{F2}. For example, changing $q$ by $N$
changes the left side of \eqref{F4} by $(1+R)pN$, a multiple of $M$.
Changing $R$ by $M$ in \eqref{F3} changes its left side by
$M(1+Q)$, divisible by both moduli by \eqref{F2}.
The other checks are identical. Thus representative choices introduce
no extra parameters. Nonmetacyclicity follows from
Proposition~\ref{parity}, and uniqueness follows from Theorem~\ref{six}.
\end{proof}

\begin{theorem}[Explicit nonredundant families]\label{largefamilies}
Suppose $M,N\ge8$, equivalently $e,f\ge4$.
All nonmetacyclic exact bicyclic triples with abelian Frattini subgroup are
given by the following three disjoint families.
In each family $\rho,\sigma,\lambda,\mu$ range over $\{0,1\}$ where
they occur, and $P,S$ are recovered by \eqref{PS}.
\begin{enumerate}[label=\textup{(\Roman*)},leftmargin=2.5em]
\item For all $M,N$, take
\[
 R=-1+\rho M/2,\quad Q=-1+\sigma N/2,\quad
 p=-1+\lambda M/2,\quad q=1+\mu N/2.
\]
All sixteen choices are admissible.
\item Take
\[
 R=1+\rho M/2,\qquad Q=-1+\sigma N/2,
\]
with $p$ any odd residue modulo $M$, and require
\[
 q=1\pmod{\max(M,N)/2}.
\]
This family occurs exactly when $M\le2N$.
If $M\le N$, then $q=1$ or $1+N/2$, and $\sigma$ is arbitrary.
If $M=2N$, then $q=1$ and $\sigma=0$.
\item Take
\[
 R=-1+\rho M/2,\qquad Q=1+\sigma N/2,
\]
with $q$ any odd residue modulo $N$, and require
\[
 p=-1\pmod{\max(M,N)/2}.
\]
This family occurs exactly when $N\le2M$.
If $N\le M$, then $p=-1$ or $-1+M/2$, and $\rho$ is arbitrary.
If $N=2M$, then $p=-1$ and $\rho=0$.
\end{enumerate}
The congruences in the two latter families refer to representatives
of the relevant residues. The explicit alternatives just stated
specify those residues unambiguously; in particular, $p=-1$ means
the residue $M-1$, not a restriction on a preferred integer lift.
\end{theorem}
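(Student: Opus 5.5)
The plan is to solve the reduced system \eqref{F1}--\eqref{F5} of Lemma~\ref{four} completely, splitting by the residue classes of $R$ and $Q$. Lemma~\ref{four} already guarantees that every admissible quadruple $(R,Q,p,q)$ with $p,q$ odd gives exactly one nonmetacyclic marked triple, and that distinct quadruples give distinct triples. So it suffices to show two things: that the solution set is exactly the union of (I)--(III), and that these three families are disjoint.

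\textbf{Step 1: reduce to sign classes.} Since $M,N\ge8$, Lemma~\ref{roots} applies to \eqref{F1}. It gives $R\equiv\varepsilon_1\pmod{M/2}$ and $Q\equiv\varepsilon_2\pmod{N/2}$ with $\varepsilon_i=\pm1$. This leaves four cases, recorded by the bits $\rho,\sigma$.

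Suppose $\varepsilon_1=\varepsilon_2=1$. Then $1+R$ and $1+Q$ are both $2\pmod 4$. Their product therefore has $2$-adic valuation exactly $2$, and \eqref{F3} fails because $\max(M,N)\ge8$. So this case contributes nothing.

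\textbf{Step 2: Family (I), $\varepsilon_1=\varepsilon_2=-1$.} Here $1+R\in\{0,M/2\}$ modulo $M$ and $1+Q\in\{0,N/2\}$ modulo $N$.
\begin{itemize}
\item \eqref{F2} holds because $N$ and $M$ are even.
\item \eqref{F3} holds because $MN/4\ge\max(M,N)$, using $\min(M,N)\ge4$.
\item In \eqref{F4} the left side is $\rho(M/2)(1+pq)$, which is $\equiv0\pmod M$ since $1+pq$ is even. So \eqref{F4} reduces to $2(1+p)\equiv0\pmod M$, that is, $p\equiv-1\pmod{M/2}$.
\item By the same argument, \eqref{F5} reduces to $q\equiv1\pmod{N/2}$.
\end{itemize}
Every combination of the four bits survives, giving the sixteen choices in (I).

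\textbf{Step 3: Family (II), $\varepsilon_1=1$, $\varepsilon_2=-1$.} Now $1+R=2+\rho M/2$ has valuation one.
\begin{itemize}
\item The first congruence of \eqref{F2} becomes $2N\equiv0\pmod M$, i.e.\ $M\le2N$. This is exactly the existence condition for (II).
\item For \eqref{F3}, take the representative $1+Q=\sigma N/2$, with $Q=-1$ read as $N-1$ so that $1+Q=N$ when $\sigma=0$. The product is then $\sigma N$ or $2N$ modulo $\max(M,N)$, after discarding the multiple of $MN/4$. This holds automatically when $M\le N$. When $M=2N$ it forces $\sigma=0$.
\item \eqref{F5} again reduces to $q\equiv1\pmod{N/2}$.
\item In \eqref{F4} the left side is $\equiv2(1+pq)\pmod M$. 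So \eqref{F4} becomes $2p(q-1)\equiv0\pmod M$, i.e.\ $q\equiv1\pmod{M/2}$.
\end{itemize}
Combining the two conditions on $q$ gives $q\equiv1\pmod{\max(M,N)/2}$, and $p$ is unrestricted. Reading this off modulo $N$ yields the stated alternatives: $q\in\{1,1+N/2\}$ when $M\le N$, and $q=1$ when $M=2N$.

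Family (III) is the mirror computation with the roles of $(R,p,M)$ and $(Q,q,N)$ exchanged. In that computation the sign in \eqref{F5} is what produces $p\equiv-1$ rather than $p\equiv1$.

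\textbf{Step 4: disjointness and the expected difficulty.} The three families are disjoint because they have different sign pairs $(\varepsilon_1,\varepsilon_2)$. The step I expect to be the main obstacle is the bookkeeping of representatives across the two moduli in \eqref{F3} and \eqref{F4}. For example, when $M=2N$ the residue $Q=-1$ modulo $N$ must be handled through a lift, since \eqref{F3} is read modulo the larger modulus $\max(M,N)$. I will invoke the well-definedness remark at the end of Lemma~\ref{four}, which is valid once \eqref{F2} holds, before each such reduction. This ensures the boundary conditions $M=2N$ and $N=2M$ are stated on residues, as the final sentence of the theorem requires.
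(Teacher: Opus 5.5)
Your proposal is correct and follows essentially the same route as the paper. The common steps are: sign classes from Lemma~\ref{roots}; elimination of the $(M/2)(1+pq)$ and $(N/2)(1+pq)$ terms in \eqref{F4}--\eqref{F5}; the $2$-adic valuation contradiction in \eqref{F3} for the positive--positive case; and the \eqref{F2}/\eqref{F3} checks that give the boundary conditions $M=2N$ and $N=2M$. The only differences are cosmetic: the paper tabulates the simplified forms of \eqref{F4} and \eqref{F5} before splitting into sign cases, and it works out family (III) directly rather than as a mirror of family (II).
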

\begin{proof}
By Lemma~\ref{roots}, write
$R=\pm1+\rho M/2$ and $Q=\pm1+\sigma N/2$.
Since $p,q$ are odd, $1+pq$ is even. Thus every term
$(M/2)(1+pq)$ vanishes modulo $M$, and every term
$(N/2)(1+pq)$ vanishes modulo $N$.
Equation \eqref{F4} accordingly simplifies as follows:
\[
\begin{array}{c|c}
R=-1+\rho M/2&2(1+p)=0\pmod M\\
R=1+\rho M/2&2p(q-1)=0\pmod M .
\end{array}
\]
Because $p$ is invertible modulo $M$, these conditions mean respectively
$p=-1\pmod{M/2}$ and $q=1\pmod{M/2}$.
Likewise \eqref{F5} becomes
\[
\begin{array}{c|c}
Q=-1+\sigma N/2&q=1\pmod{N/2}\\
Q=1+\sigma N/2&p=-1\pmod{N/2},
\end{array}
\]
where in the second row we cancel the odd factor $q$.

There are four sign combinations to consider.
If both signs are positive, each of $1+R,1+Q$ is twice an odd
integer: $M/2,N/2$ are divisible by four.
Their product is divisible by four but not by eight.
This contradicts \eqref{F3}, since $\max(M,N)\ge8$.
Thus there is no positive-positive family.

If both signs are negative, the norm equations give
$p=-1+\lambda M/2$ and $q=1+\mu N/2$.
Now $1+R=\rho M/2$ and $1+Q=\sigma N/2$.
The first divisibility in \eqref{F2} holds since $N$ is even,
and the second holds since $M$ is even.
For \eqref{F3}, if either coefficient vanishes the product is zero;
otherwise the product is $MN/4$, divisible by $\max(M,N)$
because $\min(M,N)\ge8$. These are exactly the sixteen data in (I).

For positive $R$ and negative $Q$, the first equation of
\eqref{F2} reduces to $2N=0\pmod M$: the term $\rho NM/2$
already vanishes. Since $M,N$ are powers of two, this says $M\le2N$.
The other equation of \eqref{F2} is automatic.
The norm equations require $q=1$ modulo both $M/2$ and $N/2$,
or equivalently modulo $\max(M,N)/2$.
They leave $p$ arbitrary odd.

If $M\le N$, the two possible residues for $q$ are $1,1+N/2$.
The product in \eqref{F3} is
\[
 (2+\rho M/2)\sigma N/2
   =\sigma N+\rho\sigma MN/4,
\]
which is divisible by $N=\max(M,N)$ for either value of $\sigma$.
If $M=2N$, the condition on $q$ is modulo $N$, so $q=1$.
The same product is divisible by $M=2N$ exactly when $\sigma=0$:
the first term is $\sigma N$ and the second is divisible by $2N$.
This proves all assertions in (II).

For negative $R$ and positive $Q$, the second equation of
\eqref{F2} reduces to $2M=0\pmod N$, giving $N\le2M$.
The norm equations require $p=-1$ modulo both half-moduli and
leave $q$ arbitrary odd. If $N\le M$, both residues
$-1,-1+M/2$ are permitted, and the product
$\rho M(1+\sigma N/4)$ is divisible by $M$.
If $N=2M$, the product condition forces $\rho=0$,
and the condition on $p$ gives the single residue $-1$.
This proves (III).

Every possibility has now been checked against all five conditions.
The three sign patterns are distinct by Lemma~\ref{roots}, and within
each family the listed residues are distinct. Theorem~\ref{six}
therefore proves nonredundancy as marked triples.
\end{proof}

The following theorem deals with the boundary case $\min(M,N)<8$.
\begin{theorem}[Complete boundary lists]\label{boundary}
The cases with $\min(M,N)<8$ are as follows, together with their
versions obtained by exchanging the generators. All displayed choices
are independent unless a restriction is stated.
\begin{enumerate}[label=\textup{(\alph*)},leftmargin=2.5em]
\item If $M=N=2$, there is one datum:
$R=Q=p=q=1$.
\item If $N=2$ and $M\ge4$, there are four data:
\[
 Q=q=1,\qquad R=-1+\rho M/2,\qquad p=-1+\lambda M/2.
\]
\item If $M=N=4$, independently choose
$R,Q,p,q\in\{1,3\}$. There are sixteen data.
\item If $N=4$ and $M\ge8$, there are sixteen data:
\[
 R=-1+\rho M/2,\qquad p=-1+\lambda M/2,\qquad
 Q,q\in\{1,3\}.
\]
For $M=8$ there are additionally eight data:
\[
 R=1+4\rho,\qquad Q=3,\qquad q=1,\qquad
 p\in\{1,3,5,7\}.
\]
There are no additional data when $M\ge16$.
\end{enumerate}
Here $\rho,\lambda\in\{0,1\}$ and $P,S$ are always recovered by
\eqref{PS}. The exchange of generators acts on six-residue data by
\begin{equation}\label{swap}
 (P,Q,R,S,p,q)\longmapsto(S,R,Q,P,-q,-p),
\end{equation}
with $M,N$ exchanged.
\end{theorem}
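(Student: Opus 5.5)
The boundary cases will be handled by solving the five conditions \eqref{F1}--\eqref{F5} of Lemma~\ref{four} directly, exactly as in the proof of Theorem~\ref{largefamilies}, but without using $\max(M,N)\ge8$. Lemma~\ref{four} already reduces everything to quadruples $(R,Q,p,q)$ with $p,q$ odd, and it guarantees that distinct admissible quadruples give distinct nonmetacyclic marked triples. So it suffices to enumerate the admissible quadruples in each of the four ranges (a)--(d). For $M\ge8$ we use Lemma~\ref{roots} to write $R=\pm1+\rho M/2$. For $M\le4$ we use its small-modulus statement: $R=1$ when $M=2$, and $R\in\{1,3\}$ when $M=4$. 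The same applies to $Q$ modulo $N$. By the symmetry \eqref{swap} we may assume $M\ge N$. To justify \eqref{swap}, we exchange the roles of $a,b$ in the presentation \eqref{overviewpresentation}: then $x\leftrightarrow y$, and $ba=t\,ab$ becomes $ab=t^{-1}ba$. Rewriting $t^{-1}$ in terms of the new collection coefficient and the new coordinates gives $(-q,-p)$, and the actions give $(S,R,Q,P)$.

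In case (a), $M=N=2$, all residues are $1$ modulo $2$. Every congruence holds modulo $2$ because both sides are even, so there is exactly one datum.

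In case (b), $N=2$ and $M\ge4$. Then $Q=q=1$, and \eqref{F5} and the second parts of \eqref{F2}, \eqref{F3} are automatic modulo $2$. Condition \eqref{F2} gives $2(1+R)\equiv0\pmod M$, so $R\equiv-1\pmod{M/2}$, which forces $R=-1+\rho M/2$. For $M=4$ this means $R=3$. Condition \eqref{F3} requires $(1+R)\cdot2\equiv0\pmod M$, which is the same condition. Condition \eqref{F4} then reads $(1+R)(1+p)\equiv2(1+p)\pmod M$. With $1+R=\rho M/2$ and $1+p$ even, this becomes $2(1+p)\equiv0\pmod M$, so $p=-1+\lambda M/2$. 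This gives four data. The case $M=4$ must be checked separately, since there $\pm1+M/2$ collapses.

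In case (c), $M=N=4$. Each of $1+R$, $1+Q$, $1+p$, $1-q$, $1+pq$ is even, and \eqref{F4}, \eqref{F5} are congruences modulo $4$. We check that \eqref{F1}--\eqref{F5} hold for all sixteen choices. For \eqref{F4} this comes down to showing that $(1+R)(1+pq)$ and $2(1+p)$ are both $\equiv0$ or both $\equiv2\pmod4$ as functions of the parities involved. This needs a short table, and it is where I expect an error is most likely.

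In case (d), $N=4$ and $M\ge8$. We split on the sign of $R$ as in Theorem~\ref{largefamilies}. If $R=-1+\rho M/2$, then \eqref{F4} gives $p=-1+\lambda M/2$. Condition \eqref{F2} with $N=4$ holds because $4\cdot\rho M/2\equiv0$. Condition \eqref{F3} holds modulo $M$ because $(1+Q)$ is even. Condition \eqref{F5} modulo $4$ must hold for all $Q,q\in\{1,3\}$ once $p\equiv-1\pmod 4$, which needs $M\ge8$; we verify this by direct substitution. This gives sixteen data.

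If $R=1+\rho M/2$, then \eqref{F2} requires $4(2+\rho M/2)\equiv0\pmod M$, so $M\le8$, and hence $M=8$. Condition \eqref{F3} requires $(2+4\rho)(1+Q)\equiv0\pmod8$, forcing $Q=3$. Condition \eqref{F4} forces $q\equiv1\pmod4$, so $q=1$, and then \eqref{F5} holds for every odd $p$. This gives the eight extra data for $M=8$ and rules them out for $M\ge16$.

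There is no $++$ case. There, $(2+\cdots)(2+\cdots)$ is only $\equiv4\pmod8$, which fails \eqref{F3} modulo $M\ge8$.

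The main obstacle is bookkeeping at the small moduli, where Lemma~\ref{roots} degenerates and terms like $(M/2)(1+pq)$ no longer vanish for free. I will first verify each case by exhaustive substitution, in the spirit of the appendix checks, and then present the congruence argument.
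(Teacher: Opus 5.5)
Your route is the same as the paper's: reduce to \eqref{F1}--\eqref{F5} via Lemma~\ref{four}, assume $M\ge N$ using \eqref{swap}, and solve case by case. Your derivation of \eqref{swap} is correct, and (a) and (d) are essentially right.

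There is, however, a concrete error in (b): the sentence ``For $M=4$ this means $R=3$'' is false. For $M=4$ the condition $R\equiv-1\pmod{M/2}$ only says that $R$ is odd. So $R\in\{1,3\}$, which is exactly $-1+\rho M/2$ with $\rho\in\{0,1\}$. Moreover $R=1$ is admissible. With $Q=q=1$, \eqref{F2} and \eqref{F3} both read $2(1+R)=4\equiv0\pmod 4$, and \eqref{F4} reads $2(1+p)=2(1+p)$. Taken literally, your argument gives only two data at $(M,N)=(4,2)$, contradicting the four claimed in (b); these are the four data on $K_{8,4}$.

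The fix is to delete that sentence and the promised ``separate check'' for $M=4$. Your treatment of \eqref{F4} already covers $M=4$: from $1+R\equiv\rho M/2$ and $1+p$ even you get $2(1+p)\equiv0\pmod M$ uniformly for all $M\ge4$. This is slightly cleaner than the paper's route through $(R-1)(1+p)\equiv0$, which has to treat $M\ge8$ and $M=4$ separately.

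For (c), no table is needed and nothing can go wrong. In \eqref{F3}--\eqref{F5} every side is either a product of two even numbers or twice an even number, hence $\equiv0\pmod 4$. Also \eqref{F2} carries the factor $4$, and odd squares are $1$ modulo $4$. So all sixteen choices are admissible at once.

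The same observation shows that \eqref{F5} in (d) holds for all odd $Q,q$ regardless of $p$, so your remark that it needs $p\equiv-1\pmod 4$ is unnecessary. Your closing ``no $++$ case'' paragraph is also redundant. In (d) the branch $R=1+\rho M/2$ already excludes $Q=1$, so there is no separate sign pattern left to rule out.
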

\begin{proof}
We solve \eqref{F1}--\eqref{F5} in each boundary range.

If $M=N=2$, all four odd residues $R,Q,p,q$ have to be one.
Each of the five conditions then holds: the product in \eqref{F3}
is four, and both norm congruences are zero modulo two.
This proves (a).

If $N=2$, oddness gives $Q=q=1$.
Equation \eqref{F2} requires $2(1+R)=0\pmod M$, so
$R=-1$ modulo $M/2$. These are exactly the two values
$-1,-1+M/2$ for $M\ge4$.
With $q=1$, equation \eqref{F4} becomes
\[
 (R-1)(1+p)=0\pmod M.
\]
For $M\ge8$, each of the permitted $R-1$ is twice an odd integer,
so this means $p=-1\pmod{M/2}$, giving the two stated values.
For $M=4$, the permitted values of $R$ are one and three;
the displayed equation is automatic for either odd $p$.
These two odd values of $p$ are again exactly the stated pair.
The product equation \eqref{F3} is $2(1+R)=0\pmod M$,
already checked. Equation \eqref{F5} is automatic modulo two,
as is the second equation of \eqref{F2}. This proves (b).

If $M=N=4$, all odd residues square to one.
Each of $1+R,1+Q$ is even, so their product is zero modulo four;
the two equations of \eqref{F2} are automatic.
For \eqref{F4}, if $R=3$ both sides vanish modulo four.
If $R=1$, cancellation of a factor two asks
$1+pq=1+p\pmod2$, which holds since $q$ is odd.
For \eqref{F5}, $Q=3$ again makes both sides zero.
For $Q=1$, division by two asks
$1+pq=1-q\pmod2$, also true.
Thus all sixteen choices are admissible, proving (c).

Finally, let $N=4,M\ge8$.
For negative-sign $R=-1+\rho M/2$, equation \eqref{F4} forces
$p=-1\pmod{M/2}$. The two possible values of $Q$ and of $q$
are both allowed by \eqref{F1} and \eqref{F5}, by the same
modulo-four check just made.
The equation $4(1+R)=0\pmod M$ is automatic.
The product equation is also automatic: it is zero if $\rho=0$
or $Q=3$, and is $\rho M$ if $Q=1$.
The remaining divisibility equation holds because $M$ is divisible by
four. Thus these are the sixteen data in the first part of (d).

For positive-sign $R=1+\rho M/2$, the equation
$4(1+R)=0\pmod M$ reduces to $8=0\pmod M$, forcing $M=8$.
Here $1+R$ is twice an odd integer, so \eqref{F3} requires $Q=3$;
$Q=1$ would make the product have exactly two factors of two,
too few for divisibility by eight.
Equation \eqref{F4} requires $q=1\pmod4$, and hence $q=1$.
It leaves all four odd values of $p$ available.
The other equations hold, giving precisely the additional eight data.

For the exchange formula, the new actions are $\beta,\alpha$ in
the square basis $y,x$. Thus the action coordinates become
$(S,R,Q,P)$. The collection relation becomes $ab=t^{-1}ba$,
so its coordinates in that basis are $(-q,-p)$.
This proves \eqref{swap}, and it also proves that the exchanged lists
are exhaustive without redoing the calculations.
\end{proof}

\subsection{Enumeration of the abelian Frattini case}\label{enumeration}
The following theorem summarizes the abelian Frattini case.
\begin{theorem}\label{counts}
Let $\Bab(e,f)$ be the number of isomorphism classes of regular
dessins on the complete bipartite graph $K_{2^e,2^f}$ $(1\leq f\leq e)$ with nonmetacyclic
automorphism group $G$ and abelian Frattini subgroup. Then
\begin{equation}\label{countformula}
 \Bab(e,f)=
 \begin{cases}
 0,&f\le1,\\
 1,&f=e=2,\\
 4,&f=2<e,\\
 16,&f=e=3,\\
 24,&f=3,\ e=4,\\
 16,&f=3,\ e\ge5,\\
 16+2^{f+2},&f=e\ge4,\\
 16+3\cdot2^f,&f\ge4,\ e=f+1,\\
 16+2^{f+1},&f\ge4,\ e\ge f+2.
 \end{cases}
\end{equation}
\end{theorem}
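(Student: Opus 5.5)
The formula follows by counting marked data. By Proposition~\ref{dessindictionary}, isomorphism classes of regular dessins on $K_{2^e,2^f}$ correspond bijectively to equivalence classes of exact $(2^e,2^f)$-bicyclic triples. By Theorem~\ref{six} and Lemma~\ref{four}, those with abelian Frattini subgroup correspond bijectively and nonredundantly to admissible six-residue data. Proposition~\ref{parity} singles out the nonmetacyclic ones as the data with $p,q$ odd, and these are determined by quadruples $(R,Q,p,q)$. So $\Bab(e,f)$ is the number of admissible quadruples with $M=2^{e-1}\ge N=2^{f-1}$. These are listed completely and without repetition in Theorem~\ref{largefamilies} when $f\ge4$, and in Theorem~\ref{boundary} when $f\le3$.

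Because $e\ge f$ fixes the ordered pair $(M,N)$ with $M\ge N$, the exchanged versions in Theorem~\ref{boundary} contribute only when $M=N$. I need to check that in that case they do not add new data.
\begin{itemize}
\item In cases (a) and (c) the exchange map \eqref{swap} preserves the lists: in (a) it is the single datum, and in (c) $-1\equiv3\pmod4$ shows the list is closed under \eqref{swap}.
\item Cases (b) and (d) have $M>N$, so their exchanged versions have $N>M$ and do not occur.
\end{itemize}

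The small cases are then read off directly.
\begin{itemize}
\item If $f=1$, the group is metacyclic, as observed at the start of Section~\ref{abelian}, so $\Bab=0$.
\item If $f=2$, case (a) gives $1$ when $e=2$, and case (b) gives $2\cdot2=4$ when $e>2$.
\item If $f=3$, case (c) gives $16$ when $e=3$. Case (d) gives $16$ when $e\ge5$, and $16+8=24$ when $e=4$, that is $M=8$.
\end{itemize}

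For $f\ge4$ we have $M,N\ge8$, and I count the three disjoint families of Theorem~\ref{largefamilies}.
\begin{itemize}
\item Family (I) always contributes $2^4=16$.
\item Family (III) requires $N\le2M$, which always holds. Since $N\le M$, there are $2$ choices each for $\rho$, $\sigma$ and $p\in\{-1,-1+M/2\}$, and $N/2$ choices of odd $q$. This gives $8\cdot N/2=4N=2^{f+1}$.
\item Family (II) requires $M\le2N$.
  \begin{itemize}
  \item If $e=f$, so $M=N$, the choices are $2$ each for $\rho$, $\sigma$ and $q$, times $M/2$ odd $p$. This gives $4M=2^{f+1}$.
  \item If $e=f+1$, so $M=2N$, we have $q=1$ and $\sigma=0$, leaving $2$ choices of $\rho$ and $M/2=N$ choices of $p$. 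This gives $2N=2^f$.
  \item If $e\ge f+2$, family (II) is absent.
  \end{itemize}
\end{itemize}
Summing gives $16+2^{f+2}$, $16+3\cdot2^f$ and $16+2^{f+1}$ in the three cases.

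The only delicate point is the correct counting of residues. The conditions "$p=-1\pmod{\max(M,N)/2}$" and "$q=1\pmod{\max(M,N)/2}$" must be read as the explicitly listed residues modulo $M$ or $N$, as the closing remark of Theorem~\ref{largefamilies} stipulates. One must also confirm that the three families are disjoint, which holds because they have distinct sign patterns of $(R,Q)$.
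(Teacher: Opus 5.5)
Your proposal is correct and follows the paper's proof essentially step for step: you reduce to counting admissible six-residue data with $p,q$ odd via Theorem~\ref{six}, read the small cases off Theorem~\ref{boundary}, and for $f\ge4$ add the contributions of the three disjoint families of Theorem~\ref{largefamilies} (namely $16$, then $4N$, then $4M$, $M$ or $0$). Your extra check that the exchanged boundary lists contribute nothing new when $e\ge f$ is a harmless refinement that the paper leaves implicit.
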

\begin{proof}
If one exponent is one, the other cyclic factor has index two
in $G$ by exactness. A subgroup of index two is normal.
It is cyclic and has cyclic quotient of order two, so $G$ is
metacyclic. This proves the first line.

For $f=2,3$, Theorem~\ref{boundary} gives respectively one,
four, sixteen, twenty-four and sixteen data in the ranges specified.
No abstract-group identifications should be made in this count:
Theorem~\ref{six} shows that different data are different marked
triples, and Proposition~\ref{dessindictionary} identifies precisely
these with dessin classes.

For $M,N\ge8$, family (I) always contributes sixteen.
In family (II), if $M\le N$, there are two choices of $\rho$,
two of $\sigma$, $M/2$ of $p$, and two of $q$.
The product is $4M$.
If $M=2N$, $\sigma=0$ and $q=1$, leaving
$2(M/2)=M$ data. If $M>2N$, the family is empty.
Similarly, family (III) contributes $4N$ when $N\le M$,
$N$ when $N=2M$, and zero when $N>2M$.

We may assume $M\ge N$, since the generator exchange is a bijection.
If $M=N=2^{f-1}$, the total is
$16+4M+4N=16+2^{f+2}$.
If $M=2N$, the total is
$16+M+4N=16+6N=16+3\cdot2^f$.
If $M\ge4N$, only families (I) and (III) remain, giving
$16+4N=16+2^{f+1}$.
The disjointness proved in Theorem~\ref{largefamilies} completes
the count.
\end{proof}

\begin{remark}[Reciprocality and the diagonal]
The equality $\Bab(e,f)=\Bab(f,e)$ follows from \eqref{swap}.
For $e\ne f$, choosing which colour has the larger valency gives
one representative of each reciprocal pair; the corresponding
number is therefore $\Bab(e,f)$.
For $e=f$, the colour exchange acts on a single parameter set,
and its fixed points must be counted before taking orbits.
The colour-preserving count \eqref{countformula} does not divide
by two on the diagonal.
\end{remark}

For reference, the first values are:
\[
\begin{array}{c|rrrrrr}
 e\backslash f&2&3&4&5&6&7\\ \hline
2&1&4&4&4&4&4\\
3&4&16&24&16&16&16\\
4&4&24&80&64&48&48\\
5&4&16&64&144&112&80\\
6&4&16&48&112&272&208\\
7&4&16&48&80&208&528
\end{array}
\]
For example, the abelian Frattini count with one rotation order
eight is $24$ when the other order is $16$, and $16$ when the
other order is at least $32$.

\section{The nonabelian Frattini case}\label{nonabelian}
We now treat the complementary case $K=\Phi(G)$ nonabelian.
Use $e\ge f$ and $M=2^{e-1},N=2^{f-1}$; if the original
ordering is reversed, exchange it back after applying the lists.
Write $D=G'$ and retain $(G,a,b)$ for the distinguished triple.
Unlike the collection coefficient $t$ in the abelian presentation,
the symbol $c$ below is an actual commutator in its stated ordering.
The canonical scalar root is denoted $\xi$, to distinguish it
from the abelian-case face-square vector $z$.
When reproducing Janko's structural data, write $r_0,s_0$ for
his principal generators, $d$ for his second derived generator,
and $\zeta$ for a terminal involution. These auxiliary names do
not change the marking $(G,a,b)$.
\subsection{The parameters shared by all three families}
Use the least nonnegative representatives whenever an integer
polynomial is evaluated. Every family below requires
\begin{equation}\label{na:common}
 Q\equiv3\pmod4,\qquad Q^2\not\equiv1\pmod M,\qquad
 k\in\Z_N\text{ odd}.
\end{equation}
Define
\begin{equation}\label{na:poly}
 F_{Q,k}(Z)=\frac{Q-1}{2}Z^2+Z-\frac{\Nrm_{2k}(Q)}{2}.
\end{equation}
The integer $\Nrm_{2k}(Q)$ is divisible by four.
Lemma~\ref{na:root} proves that $F_{Q,k}$ has exactly one odd root
$\xi_0$ modulo $M/2$. Choose one of its two lifts
\begin{equation}\label{na:zR}
 \xi=\xi_0\ \text{or}\ \xi_0+M/2\pmod M,\qquad
 R=(1-Q)\xi-1\pmod M.
\end{equation}
Both lifts give the same $R$, and $R\equiv1\pmod4$.

\begin{theorem}[Nonabelian Frattini classification]\label{na:main}
All nonmetacyclic exact triples \eqref{2exact} with nonabelian
Frattini subgroup are given, without repetitions, by the following
three families. The common parameters are \eqref{na:common}--\eqref{na:zR}.
\begin{enumerate}[label=\textup{(\roman*)},leftmargin=2.5em]
\item Type $\mathcal C_0$ requires $Q^N=1\pmod M$ and has presentation
\begin{equation}\label{na:C0}
 \begin{split}
 G=\langle a,b,c\mid {}&
 b^{2N}=c^M=1,\quad [b,a]=c,\\
 &c^b=c^Q,\quad c^a=c^R,\quad
 a^2=b^{2k}c^\xi\rangle.
 \end{split}
\end{equation}
Here $D=\langle c\rangle\cong C_M$.
\item Type $\mathcal L_0$ requires $f\ge3$ and
$Q^{N/2}=1\pmod M$. Put $w=b^N$ and take
\begin{equation}\label{na:L0}
 \begin{split}
 G=\langle a,b,c\mid {}&
 b^{2N}=c^M=1,\quad [b,a]=c,\quad
 [w,a]=[w,c]=1,\\
 &c^b=wc^Q,\quad c^a=wc^R,\quad
 a^2=b^{2k}c^\xi\rangle .
 \end{split}
\end{equation}
The relation $[w,b]=1$ follows from $w=b^N$.
Here $D=\langle w\rangle\times\langle c\rangle\cong C_2\times C_M$.
\item Type $\mathcal C_1$ requires $e=f+1$, so $M=2N$, and has
presentation
\begin{equation}\label{na:C1}
 \begin{split}
 G=\langle a,b,c\mid {}&
 c^M=1,\quad a^M=c^{M/2},\quad [a,b]=c,\\
 &c^a=c^Q,\quad c^b=c^R,\quad
 b^2=a^{2k}c^\xi\rangle .
 \end{split}
\end{equation}
There is no further power restriction on $Q$.
Here $D=\langle c\rangle\cong C_M$.
\end{enumerate}
For $e=f$, every presentation in \textup{(i)} or \textup{(ii)}
also gives the distinct marked triple $(G,b,a)$.
These reciprocal triples are additional entries in the
colour-preserving classification.
There are no examples unless
\[
 e\ge5,\qquad f\ge3.
\]
For every such pair of exponents at least one example exists.
Each listed group has order $4MN$, its distinguished generators
have the orders in \eqref{2exact}, and their cyclic subgroups
intersect trivially.
\end{theorem}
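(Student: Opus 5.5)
The plan has two halves: existence and correctness of the listed presentations, then exhaustion and nonredundancy.

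\textbf{Existence.} For each of the three presentations \eqref{na:C0}, \eqref{na:L0} and \eqref{na:C1}, I will build the group as an iterated cyclic extension and not rely on word problems.

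\begin{itemize}
\item Start from the normal subgroup $D$, which is $\langle c\rangle\cong C_M$ for $\mathcal C_0,\mathcal C_1$ and $C_2\times C_M$ for $\mathcal L_0$.
\item Adjoin $b$ (respectively $a$ for $\mathcal C_1$) as a cyclic extension. The well-definedness conditions are that the induced automorphism $c\mapsto c^Q$ (or $c\mapsto wc^Q$) has order dividing the appropriate power of two. This is exactly where the hypotheses $Q^N\equiv1$ or $Q^{N/2}\equiv1 \pmod M$ enter.
\item Adjoin $a$ as a degree-two extension with prescribed square $b^{2k}c^\xi$. The conditions are that conjugation by $a$ is an automorphism compatible with $[b,a]=c$, fixes the prescribed square, and squares to conjugation by that square.
\item Using Lemma~\ref{na:power} and the norm valuations of Lemma~\ref{na:normlemma}, I expect these to collapse to two facts: $R=(1-Q)\xi-1$, and $\xi$ is a root of $F_{Q,k}$ modulo $M$.
\end{itemize}

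So the first real step is to prove the promised Lemma~\ref{na:root}: $F_{Q,k}$ has a unique odd root modulo $M/2$, and both lifts satisfy the full-modulus condition. The argument is Hensel-type. The derivative $(Q-1)Z+1$ is odd for odd $Z$, so a root modulo $2$ lifts uniquely through each $2$-adic level. The constant $\Nrm_{2k}(Q)/2$ is even because $v_2(\Nrm_{2k}(Q))=\lambda\ge2$.

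Order and exactness then follow. From the extension, $|G|=4MN$. The orders of $a$ and $b$ come from powering $a^2=b^{2k}c^\xi$ with \eqref{na:powerformula}. Trivial intersection follows by comparing terminal involutions via Lemma~\ref{terminal}. Nonmetacyclicity follows from Lemma~\ref{primitive}: $D$ is not inside $\langle g^2\rangle$ for any generator $g$, because $Q^2\not\equiv1\pmod M$ rules out the cyclic-square containments. Nonabelian Frattini follows since $c$ and $b^2$ fail to commute when $Q\ne1$.

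\textbf{Exhaustion.} Start from an arbitrary nonmetacyclic exact triple with $\Phi(G)$ nonabelian. Proposition~\ref{prop:janko} and Lemma~\ref{kernelmeta} make $\Phi(G)$ metacyclic. I will then import Janko's structure theorem for nonmetacyclic bicyclic $2$-groups with nonabelian Frattini subgroup. That theorem gives $G'$ as cyclic, or as $C_2\times$cyclic with a central involution $w=b^N$, together with the action of generators on it.

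The key step is re-expressing Janko's generators $r_0,s_0,d,\zeta$ in terms of the marked $a,b$:
\begin{itemize}
\item set $c=[b,a]$ (or $[a,b]$) and read off $Q,R$ from the conjugation action;
\item since $a^2\in\Phi(G)=\langle b^2\rangle D$, write $a^2=b^{2k}c^\xi$;
\item exactness forces $k$ odd, since otherwise $a^{2^{e-1}}$ and a power of $b$ share a terminal involution.
\end{itemize}
The consistency relations derived in the existence step then force $\xi$ to be one of the two lifts and $R$ to be as stated. The split into $\mathcal C_0$, $\mathcal L_0$, $\mathcal C_1$ comes from whether $D$ is cyclic, and from which generator carries the larger order. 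In particular, $\mathcal C_1$ arises when $a^M$ is a nontrivial power of $c$, which forces $M=2N$.

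\textbf{Nonredundancy.} Within a fixed marked triple every parameter is intrinsic:
\begin{itemize}
\item $c$ is determined by the marking;
\item $Q$ and $R$ are determined by the action on $D$;
\item $k$ and $\xi$ are determined by the unique expression of $a^2$ in $\langle b^2\rangle\langle c\rangle$.
\end{itemize}
The type is determined by the isomorphism type of $D$, and in the $\mathcal C_1$ case by whether $a^M\in D$. For $e=f$ the reciprocal triple $(G,b,a)$ is distinct because the roles of $Q$ and $R$ swap, and $R\equiv1\not\equiv Q\pmod 4$.

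\textbf{Range of exponents.} Nonexistence for $e\le4$ or $f\le2$ comes from $Q\equiv3\pmod4$, $Q^2\not\equiv1\pmod M$, and the power restrictions. For example, $Q^2\not\equiv1$ needs $M\ge16$, and $\mathcal C_0$ needs $Q^N\equiv1$ with $N\ge4$. For existence, the choice $Q=-1+4$ works for $e\ge5$, $f\ge3$, checked against $Q^N\equiv1$.

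\textbf{Main obstacle.} I expect the hardest step to be the exhaustion translation from Janko's presentations to the marked generators. In particular, showing that every change of generators compatible with the marking lands in exactly one normalized form requires careful bookkeeping of right-action commutators.
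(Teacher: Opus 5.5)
\textbf{Exhaustion.} Your construction half (iterated cyclic extensions, the Hensel lift of the odd root of $F_{Q,k}$, terminal involutions for exactness) is the paper's. Your exhaustion half, however, rests on a version of Janko's theorem that does not exist.

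Janko classifies nonmetacyclic products $AB$ of two cyclic subgroups with no disjointness assumption. Among his candidates with possibly nonabelian $\Phi(G)$ there are still Theorems 4.9 and 4.13(b), whose derived subgroups are abelian of rank two with no cyclic factor of order two. There is also the second case of Theorem 4.8, where the involution $u$ is not central ($u^{r_0}=u\zeta$). So you cannot read off ``$G'$ cyclic or $C_2\times{}$cyclic with central $w=b^N$'' from Janko; that is precisely the conclusion of the paper's Proposition~\ref{na:survivors}.

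The missing idea is the order and terminal-involution computation in the two metacyclic generating cosets $\mathcal F_0,\mathcal F_1$. In 4.9 every candidate pair has order product $2|G|$, and in 4.13(b) and 4.8(ii) all candidates share the involution $\zeta$, so none is exact. In the two surviving types the same computation yields the exactness restrictions: $\delta=0,\ \nu\ge\mu-1$ or $\delta=1,\ \nu=\mu$ in the cyclic-derived type, and $\nu\ge\mu$ in the rank-two type. These restrictions do three things: they fix $(e,f)$ in terms of Janko's parameters, they identify $w=r^N=u\zeta^\eta$, and they confine $\mathcal C_1$ to $e=f+1$. Without this you have neither exhaustion nor the empty ranges.

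\textbf{Local justifications.} Several of these are also wrong.
\begin{enumerate}
\item Exactness does not force $k$ odd. For even $k$ your construction still gives exact triples: $|s|=2M$ and the terminal involution of $s$ is $c^{M/2}\ne r^N$. But these triples are metacyclic, because choosing $2i\equiv-k\pmod N$ makes $(sA^i)^2$ an odd power of $c$. Oddness of $k$ comes instead from the structure $G/D=\langle r_0D\rangle\times\langle s_0D\rangle$ with $|s_0D|=2$, which puts $r,s$ at odd relative exponents in the cyclic factor; that is, it comes from nonmetacyclicity.
\item The same example shows that $Q^2\not\equiv1\pmod M$ is not what keeps $D$ out of the cyclic square subgroups. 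What works is that odd $k$ puts $(rs)^2=A^{k+1}w^\varepsilon c^{\xi-R}$ into $\Phi(K)=\langle A^2,c^2\rangle$, so $\langle rs,K\rangle/\Phi(K)\cong C_2^3$; this is the paper's route. If you keep Lemma~\ref{primitive}, treat the three cosets separately: use $Q\equiv3\pmod4$ for the $r$-coset, and the parity of the $A$-exponent of $g^2$ for the other two.
\item ``Whether $a^M\in D$'' does not separate $\mathcal C_0$ from $\mathcal C_1$ at $e=f+1$, since $a^M=c^{M/2}$ in both. Use the multiplier of $a$ on the cyclic group $D$ modulo four: $R\equiv1$ in $\mathcal C_0$ versus $Q\equiv3$ in $\mathcal C_1$.
\item $Q=-1+4=3$ satisfies $Q^N\equiv1\pmod M$ only for $e\le f+2$. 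It fails at $(e,f)=(7,3)$, where $3^4=81\not\equiv1\pmod{64}$. Take $Q=-1+M/4$, which has order four.
\item Since $c^{b^2}=c^{Q^2}$, nonabelianness of $\Phi(G)$ is the condition $Q^2\not\equiv1$, not $Q\ne1$.
\end{enumerate}
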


The theorem is proved in Subsections~\ref{na:jankoreduction}--\ref{na:uniqueness}.
Only Janko's published structural classification is taken as an
external input; the exact-factor restrictions, canonical parameters,
extension construction and marked uniqueness are proved below.
Section~\ref{na:enumeration} then combines the two case counts.

\subsection{The reduction from Janko's presentations}\label{na:jankoreduction}
Janko's Theorem 5.1 \cite{Janko2008} says that a nonmetacyclic
bicyclic $2$-group has exactly one nonmetacyclic maximal subgroup.
The two cyclic factors lie in the other two maximal subgroups,
both metacyclic. His Section 4 exhausts these groups.

In the presentations relevant below, $D=G'$ is abelian,
\[
 G/D=\langle r_0D\rangle\times\langle s_0D\rangle,
 \qquad |r_0D|=2^\mu,\quad |s_0D|=2,\quad \mu\ge2,
\]
and $[r_0,s_0]=v$. The two metacyclic generating cosets are
\begin{equation}\label{na:genclasses}
 \mathcal F_0=\{r_0^i d:i\text{ odd},\ d\in D\},\qquad
 \mathcal F_1=\{r_0^i s_0d:i\text{ odd},\ d\in D\}.
\end{equation}
Every exact pair must contain one element from each.
Such a pair generates $G$ by the Frattini basis property.
Its cyclic subgroups are exact precisely when they are disjoint and
their orders have product $|G|$.

Theorems 4.1, 4.2, 4.6, 4.7, 4.11, 4.12 and 4.13(a) of \cite{Janko2008}
have abelian Frattini subgroup and are excluded from the present
nonabelian case. This is explicit in their statements or follows
immediately from their displayed actions: $\Phi(G)=\langle r_0^2,D\rangle$,
$D$ is abelian, and conjugation by $r_0^2$ is trivial on $D$.
The remaining candidates are Theorem 4.4, the two cases of Theorem 4.8,
Theorem 4.9 and Theorem 4.13(b).
The latter two can have either abelian or nonabelian Frattini subgroup,
but none of their cyclic factorisations is exact, as shown below.
Thus excluding them does not require an extra restriction on their
parameters.

\begin{proposition}[The surviving structural presentations]\label{na:survivors}
Every nonmetacyclic exact triple with nonabelian Frattini subgroup
comes from one of the following structural types.
\begin{enumerate}[label=\textup{(\alph*)},leftmargin=2.5em]
\item The cyclic-derived type of Janko's Theorem 4.4:
\begin{equation}\label{na:Jcyclic}
 \begin{split}
 D=\langle v\rangle,\quad &|v|=2^\nu,\quad
 \zeta=v^{2^{\nu-1}},\\
 r_0^{2^\mu}=\zeta^\delta,\quad&
 s_0^2=\zeta^\eta,\quad [r_0,s_0]=v,\quad
 v^{r_0}=v^q,\quad v^{s_0}=v^{-1},
 \end{split}
\end{equation}
where $\nu,\mu\ge2$, $\delta,\eta\in\{0,1\}$,
$q\equiv3\pmod4$ and $q^{2^{\mu-1}}=1\pmod{2^\nu}$.
Exactness requires either
\[
 \delta=0,\quad \nu\ge\mu-1,
 \qquad\text{or}\qquad
 \delta=1,\quad \nu=\mu.
\]
In the first range $(e,f)=(\nu+1,\mu)$; in the second
$(e,f)=(\mu+1,\mu)$.
\item The first case of Janko's Theorem 4.8:
\begin{equation}\label{na:Jranktwo}
 \begin{split}
 D=\langle u\rangle\times\langle v\rangle,\quad&
 |u|=2,\quad |v|=2^\nu,\quad \zeta=v^{2^{\nu-1}},\\
 r_0^{2^\mu}=u\zeta^\eta,\quad&
 s_0^2=\zeta^\kappa,\quad [r_0,s_0]=v,\\
 u^{r_0}=u^{s_0}=u,\quad&
 v^{r_0}=uv^q,\quad v^{s_0}=v^{-1},
 \end{split}
\end{equation}
where $\nu,\mu\ge2$, $\eta,\kappa\in\{0,1\}$,
$q\equiv3\pmod4$ and $q^{2^{\mu-1}}=1\pmod{2^\nu}$.
Exactness requires $\nu\ge\mu$, and then
$(e,f)=(\nu+1,\mu+1)$.
\end{enumerate}
In both types the Frattini subgroup is nonabelian exactly when
$q^2\ne1\pmod{2^\nu}$.
\end{proposition}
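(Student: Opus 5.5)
Starting from the list of Janko's presentations left by the reduction above, the plan is to examine the five candidates one at a time: Theorem 4.4, the two cases of Theorem 4.8, Theorem 4.9 and Theorem 4.13(b). For each I will impose exactness using the coset description \eqref{na:genclasses}. An exact pair consists of $a'\in\mathcal F_0$ and $b'\in\mathcal F_1$, or the same with roles exchanged. By Lemma~\ref{terminal}, exactness is equivalent to two conditions: $|a'||b'|=|G|$, and the terminal involutions of $a'$ and $b'$ differ. I will therefore compute, with the power formulas of Lemma~\ref{na:power} and the valuations of Lemma~\ref{na:normlemma}, the order and terminal involution of a general element $r_0^id$ and of a general element $r_0^is_0d$ with $i$ odd.

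For $r_0^id$, the formula \eqref{na:powerformula} gives $(r_0^id)^{2^\mu}=r_0^{i2^\mu}\prod d^{r_0^{ij}}$. The norm valuation \eqref{na:normval}, with $\lambda=v_2(q+1)$, controls how much of $D$ survives in this product. For $r_0^is_0d$, I will first compute $(r_0^is_0)^2$. Since $s_0$ inverts $v$, this square is $r_0^{2i}$ times an explicit element of $D$, and the order then follows from the same norm computation.

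\textbf{Type 4.4.} The group has order $2^{\mu+1+\nu}$, so the order product must be $2^{\mu+\nu+1}$. I expect the $\mathcal F_0$ element to have order $2^{\mu}$ times the order of its $D$-part after $2^\mu$ steps, and the $\mathcal F_1$ element to have order about $2^{\mu}$ or $2^{\nu+1}$. When $\delta=0$, the terminal involutions will be $\zeta$ versus a power of $r_0$ only when the orders align, which yields $\nu\ge\mu-1$ and $(e,f)=(\nu+1,\mu)$. When $\delta=1$, the element $r_0^{2^\mu}=\zeta$ puts $\zeta$ inside $\langle r_0\rangle$. The element of the other class must then have a different terminal involution, and I expect this to force $\nu=\mu$.

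\textbf{Type 4.8.} The same method applies. Here $u$ is central and $r_0^{2^\mu}=u\zeta^\eta$, so the terminal involution of the $\mathcal F_0$ element involves $u$ and the order product forces $\nu\ge\mu$.

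\textbf{Excluded types.} For the second case of Theorem 4.8, for 4.9 and for 4.13(b), I will show that every candidate pair either has order product smaller than $|G|$ or has both terminal involutions equal to the same central involution. The likely source of failure is an involution that is a square in both metacyclic maximal subgroups.

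\textbf{Frattini criterion.} Here $\Phi(G)=\langle r_0^2,D\rangle$ with $D$ abelian, so $\Phi(G)$ is abelian iff $r_0^2$ centralizes $D$. The action of $r_0^2$ is $v\mapsto v^{q^2}$ in the first type, and $v\mapsto u^{1+q}v^{q^2}=v^{q^2}$ in the second type, since $q$ is odd. Hence $\Phi(G)$ is nonabelian exactly when $q^2\ne1\pmod{2^\nu}$.

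\textbf{Main obstacle.} I expect the exclusion of the three discarded presentations to be the hardest step. I do not have their exact relations in front of me, so I would first try to find a uniform central involution common to $\mho$ of both metacyclic maximal subgroups, and otherwise fall back on explicit order counts.
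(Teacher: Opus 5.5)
Your method is the same as the paper's: test pairs from $\mathcal F_0\times\mathcal F_1$ by the order product and Lemma~\ref{terminal}, computing powers with Lemma~\ref{na:power} and Lemma~\ref{na:normlemma}. Your Frattini argument is complete and matches the paper's. The gap is that, for the two retained types, you only state expected outcomes. The one computation that produces every range is missing: for $y=r_0^is_0d$ with $i$ odd and $d\in D$, one has $y^{2^\mu}=r_0^{2^\mu}v^{2^{\mu-1}t}$ with $t$ \emph{odd}. The argument runs as follows.
\begin{itemize}
\item $(r_0^is_0)^2=r_0^{2i}[r_0^i,s_0]s_0^2$, and $[r_0^i,s_0]$ has odd $v$-exponent $\Nrm_i(q)$.
\item Raising $r_0^{2i}v^{\Nrm_i(q)}$ to the power $2^{\mu-1}$ multiplies that exponent by $\Nrm_{2^{\mu-1}}(q^{2i})$, whose valuation is exactly $\mu-1$ by \eqref{na:positiveval}.
\item The component $d$ is conjugated by $r_0^is_0$ through the multiplier $-q^i\equiv1\pmod4$. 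It therefore contributes $\Nrm_{2^\mu}(-q^i)$, of valuation $\mu$, and cannot change the parity of $t$.
\item The factor $s_0^2$ dies because $\mu\ge2$, and in the rank-two type the $u$-factors occur an even number of times.
\end{itemize}
You also need $(r_0^id)^{2^\mu}=r_0^{2^\mu}$, which holds because $\Nrm_{2^\mu}(q^i)$ vanishes modulo $2^\nu$ by \eqref{na:normval} and $q^{2^{\mu-1}}=1$. These two facts give every restriction:
\begin{itemize}
\item For $\delta=0$ the orders are $2^\mu$ and $2^{\max(\mu,\nu+1)}$. Their product equals $|G|=2^{\mu+\nu+1}$ if and only if $\nu\ge\mu-1$. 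At $\nu=\mu-1$ the terminal involutions are $r_0^{2^{\mu-1}}$ and $r_0^{2^{\mu-1}}\zeta$.
\item For $\delta=1$, an element of order $2^{\mu+1}$ in $\mathcal F_0$ needs a partner of order $2^\nu$. This happens only at $\nu=\mu$, through the cancellation $\zeta\cdot\zeta^t=1$.
\item In the rank-two type, $\nu<\mu$ gives two elements of order $2^{\mu+1}$, so the product is too large. At $\nu=\mu$ the terminal involutions are $u\zeta^\eta$ versus $u\zeta^{\eta+1}$; for $\nu>\mu$ they are $u\zeta^\eta$ versus $\zeta$.
\end{itemize}
None of this mechanism appears in your sketch, in particular why $\delta=1$ forces $\nu=\mu$.

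The exclusions are a second gap, and they are not optional. The proposition asserts exhaustion, so it is unproved until the second case of Janko's Theorem~4.8, Theorem~4.9 and Theorem~4.13(b) are ruled out. You explicitly leave these undone. Your intended dichotomy also points the wrong way in one case. In Theorem~4.9 every element of both cosets has order $2^{h+2}$, so the order product is $2^{2h+4}$. This is \emph{larger} than $|G|=2^{2h+3}$, not smaller, and a product exceeding $|G|$ already forces a nontrivial intersection. In the other two cases your guess of a common involution is correct, but it must be extracted from the actual relations:
\begin{itemize}
\item In the second case of Theorem~4.8, $r_0^{2^\mu}=uv^{2^{\nu-2}}\zeta^\eta$ with $\nu\ge\mu+2$. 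Hence $2^\mu$-th powers in $\mathcal F_0$ have order four and terminal involution $\zeta$. In $\mathcal F_1$ the commutator norm of valuation $\mu-1$ cannot be cancelled, giving order $2^{\nu+1}$ and terminal involution $\zeta$ again.
\item In Theorem~4.13(b), fourth powers in the two cosets are odd powers of $g=v^{-2}d^{-1}$, respectively of $d$, times involutions. Both $g$ and $d$ have terminal involution $\zeta$.
\end{itemize}
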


\begin{proof}
We use Janko's classification for the two structural presentations. We verify the additional restrictions and
the exclusion of the remaining candidates.

In either type $\Phi(G)=\langle r_0^2,D\rangle$.
For \eqref{na:Jcyclic}, conjugation by $r_0^2$ on $v$ is $v\mapsto v^{q^2}$.
For \eqref{na:Jranktwo}, applying the action twice gives
$(uv^q)^{r_0}=u^{1+q}v^{q^2}=v^{q^2}$.
It fixes $u$. Since $D$ is abelian, the final assertion follows.

\emph{Orders in the cyclic-derived type.}
The power relation and the norm lemma show that every element of
$\mathcal F_0$ has order $2^{\mu+\delta}$.
Indeed, for odd $i$ its $2^\mu$-th power is $\zeta^\delta$;
the norm of the $v$-component vanishes because
$q^{2^{\mu-1}}=1$.
For $y\in\mathcal F_1$, the corresponding calculation gives
\begin{equation}\label{na:JpowerC}
 y^{2^\mu}=\zeta^\delta v^{2^{\mu-1}t},\qquad t\text{ odd}.
\end{equation}
If $\nu<\mu-1$, the second factor is one.

Here is why the coefficient in \eqref{na:JpowerC} is odd.
For odd $i$, the commutator $[r_0^i,s_0]$ is a geometric sum in $v$
with odd coefficient. Its contribution to the $2^\mu$-th power uses
a norm of length $2^{\mu-1}$ for $q^{2i}\equiv1\pmod8$.
By \eqref{na:positiveval} that contribution has valuation exactly
$\mu-1$. The freely chosen $v$-component of $y$ uses the norm for
$-q^i\equiv1\pmod4$ of length $2^\mu$, whose valuation is $\mu$.
It cannot change the oddness of the previous coefficient.
The $s_0^2$ contribution has disappeared since $\mu\ge2$.

If $\delta=0$ and $\nu\ge\mu$, the two orders are
$2^\mu,2^{\nu+1}$ and their terminal involutions lie respectively
outside $D$ and at $\zeta$.
If $\delta=0$ and $\nu=\mu-1$, both orders are $2^\mu$,
with terminal involutions $r_0^{2^{\mu-1}}$ and
$r_0^{2^{\mu-1}}\zeta$. The norms at this half-power do not change
them, by the same valuation calculation.
In either range the two cyclic subgroups are disjoint and their
orders multiply to $|G|=2^{\nu+\mu+1}$.
If $\nu<\mu-1$, both orders are $2^\mu$, and their product is too
large to be an exact factorisation.

If $\delta=1$ and $\nu=\mu$, the factors in \eqref{na:JpowerC}
cancel, giving orders $2^{\mu+1}$ in $\mathcal F_0$ and
$2^\mu$ in $\mathcal F_1$.
The first terminal involution is $\zeta$, while the second has
nontrivial image $r_0^{2^{\mu-1}}D$; the pair is exact.
If $\nu>\mu$, the orders are $2^{\mu+1}$ and $2^{\nu+1}$,
with product twice $|G|$.
If $\nu<\mu$, both orders are $2^{\mu+1}$ and their product
again exceeds $|G|$. These are exactly the restrictions in (a).

\emph{Orders in the rank-two type.}
All elements of $\mathcal F_0$ have order $2^{\mu+1}$,
with terminal involution $u\zeta^\eta$.
The same norm calculation gives, for $y\in\mathcal F_1$,
\begin{equation}\label{na:JpowerL}
 y^{2^\mu}=u\zeta^\eta v^{2^{\mu-1}t},\qquad t\text{ odd}.
\end{equation}
The additional $u$ factors in the norms occur an even number of times,
since $\mu\ge2$.
If $\nu=\mu$, both coset orders are $2^{\mu+1}$, with different
terminal involutions $u\zeta^\eta,u\zeta^{\eta+1}$.
If $\nu>\mu$, the second order is $2^{\nu+1}$ and its terminal
involution is $\zeta$, distinct from the first.
In these ranges the order product is $2^{\nu+\mu+2}=|G|$.
If $\nu<\mu$, both coset orders are $2^{\mu+1}$ and the product
is too large. This proves the exactness restriction in (b).
The extra multiplier restriction follows also from
$[r_0^{2^\mu},s_0]=1$: its norm is zero, and
\eqref{na:normval} makes this equivalent to
$q^{2^{\mu-1}}=1$.

\emph{The second case of Theorem 4.8.}
Here $r_0^{2^\mu}=uv^{2^{\nu-2}}\zeta^\eta$,
$u^{r_0}=u\zeta$, $v^{r_0}=uv^q$, and $\nu\ge\mu+2$.
The length-$2^\mu$ norm of $v$ is $\zeta$, because
$[r_0^{2^\mu},s_0]=\zeta$.
Thus an element of $\mathcal F_0$ has $2^\mu$-th power
$uv^{2^{\nu-2}t}\zeta^j$ with $t$ odd.
This power has order four and terminal involution $\zeta$.
For $\mathcal F_1$, the norm of the commutator has
$v$-valuation $\mu-1$, whereas the norm of an arbitrary
kernel component has valuation at least $\mu$.
Since $\nu-2\ge\mu$, the term from $r_0^{2^\mu}$ cannot cancel
the former. Hence an element of $\mathcal F_1$ has order
$2^{\nu+1}$ and terminal involution $\zeta$ too.
All proposed factors have a common involution, so none is exact.

\emph{Theorem 4.9.}
Rename its derived generators $v,d$, both of order $2^h$, $h\ge2$.
Its actions include
\[
 v^{r_0}=vd,\quad d^{r_0}=d^{-1}\zeta^\eta,\quad
 v^{s_0}=v^{-1}\zeta^\epsilon,\quad d^{s_0}=d^{-1},
\]
and $r_0^4=g\omega$, where
$g=v^{-2}d^{-1}$ and $\omega\in\Omega_1(D)$.
The four-term norm for conjugation by an odd power of $r_0$
takes $v$ to $g^{-2}$ and $d$ to one.
Therefore fourth powers in $\mathcal F_0$ have the form
$g^{\,\text{odd}}\omega'$.
For an odd power of $r_0$ followed by $s_0$, the four-term norms
take $v$ to $d^{-2}$ and $d$ to $d^4$.
Combining these with the square commutator formula gives fourth
powers $d^{\,\text{odd}}\omega''$ in $\mathcal F_1$.
The factors $\omega',\omega''$ have order at most two.
Both $g$ and $d$ have order $2^h$, so both coset element orders
are $2^{h+2}$. Their product is $2^{2h+4}$, whereas
$|G|=2^{2h+3}$. Thus no exact pair exists.

\emph{Theorem 4.13(b).}
The same four-term norm calculation applies, with
\[
 d^{2^h}=v^{-2^{h+1}}\zeta,\qquad
 |v|=2^{h+t+1},\quad |d|=2^{h+t},\quad h,t\ge2.
\]
For $g=v^{-2}d^{-1}$ this relation gives $g^{2^h}=\zeta$.
Fourth powers in $\mathcal F_0$ are odd powers of $g$ times
involutions, and therefore have terminal involution $\zeta$.
Fourth powers in $\mathcal F_1$ are odd powers of $d$ times
involutions. Since
$d^{2^{h+t-1}}=\zeta$, their terminal involution is again $\zeta$.
Multiplying an odd power of an element of order at least four by
a commuting involution does not change its terminal involution:
raising to half its order kills the extra involution.
Consequently every candidate pair intersects nontrivially.

These exclusions, together with the abelian Frattini cases already
discarded, exhaust Janko's list. This proves the proposition.
\end{proof}
\subsection{Canonical parameters and the quadratic congruence}\label{na:canonical}
For the proofs it is convenient to use the \emph{preferred extension
ordering} $(r,s)$:
\begin{equation}\label{na:preferred}
 (r,s)=(b,a)\quad\text{in }\mathcal C_0,\mathcal L_0,\qquad
 (r,s)=(a,b)\quad\text{in }\mathcal C_1 .
\end{equation}
Thus the marked triple remains $(G,a,b)$ throughout.
The symbols $r,s$ are auxiliary names for its two generators,
not a different convention for triples.
For the additional diagonal reciprocal entry, exchange the marking.

Put $c=[r,s]$, so $r^s=rc$.
Write $\varepsilon=0$ in the cyclic-derived types and
$\varepsilon=1$ in the rank-two type; when $\varepsilon=0$ all factors
$w^\varepsilon$ are omitted.
The canonical relations become
\begin{equation}\label{na:unified}
 c^r=w^\varepsilon c^Q,\qquad
 c^s=w^\varepsilon c^R,\qquad s^2=r^{2k}c^\xi,
\end{equation}
where $w=r^N$ is central of order two in $\mathcal L_0$.

\begin{proposition}[Necessity of the canonical form]\label{na:necessity}
Every exact triple in Proposition~\ref{na:survivors} with nonabelian
Frattini subgroup gives precisely the parameter restrictions in
Theorem~\ref{na:main}, together with
\begin{equation}\label{na:compat}
 R=(1-Q)\xi-1,\qquad
 \Nrm_{2k}(Q)+(R-1)\xi=0\pmod M.
\end{equation}
\end{proposition}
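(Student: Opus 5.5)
Starting from an exact triple of type (a) or (b) in Proposition~\ref{na:survivors} with $q^2\ne1\pmod{2^\nu}$, we rename generators and change coordinates until we reach the unified relations \eqref{na:unified}. The parameter restrictions of Theorem~\ref{na:main} and the compatibility equations \eqref{na:compat} then come from consistency conditions.

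\textbf{Assigning the preferred generator.} Exactness places one marked generator in $\mathcal F_0$ and the other in $\mathcal F_1$. Proposition~\ref{na:survivors} computes the orders of elements in these cosets, and comparing them with $2^e,2^f$ decides which marked generator lies in which coset:
\begin{itemize}
\item type (a) with $\delta=0$ gives $\mathcal C_0$, with $r=b\in\mathcal F_0$ of order $2^\mu=2N$ and $s=a\in\mathcal F_1$;
\item type (a) with $\delta=1$, $\nu=\mu$, gives $\mathcal C_1$, with $r=a\in\mathcal F_0$ of order $2^{\mu+1}=M$ and $s=b$;
\item type (b) gives $\mathcal L_0$, with $r=b$.
\end{itemize}
This fixes the preferred ordering \eqref{na:preferred}.

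\textbf{Reading off $Q$ and the power conditions.} Since $r\in\mathcal F_0$ and $s\in\mathcal F_1$ are Frattini generators, $c=[r,s]$ generates $D$ modulo its $u$-part, and in type (b) the $u$-part is $w=r^N$. Writing $r=r_0^i d$, conjugation by $r$ on $c$ is $c\mapsto w^\varepsilon c^{Q}$ with $Q=q^i$. Then:
\begin{itemize}
\item Lemma~\ref{na:normlemma} gives $Q\equiv3\pmod4$, and nonabelian $\Phi(G)$ gives $Q^2\ne1\pmod M$.
\item The order of $D$ is $M$, from $|G|=4MN$ and $|G/D|$.
\item In type (a), $q^{2^{\mu-1}}=1$ becomes $Q^N=1\pmod M$ when $r=b$, and gives no extra condition in $\mathcal C_1$.
\item In type (b), the central $w$ shifts the exponent and yields $Q^{N/2}=1$, together with $f\ge3$.
\end{itemize}
Conjugation by $s$ inverts $c$ modulo corrections, so $c^s=w^\varepsilon c^R$ with $R$ odd.

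\textbf{The relation $s^2=r^{2k}c^\xi$.} The coset $s^2D$ lies in $\langle r^2\rangle D$, so $s^2=r^{2k}c^\xi$ (absorbing $w$ into $r^{2k}$). Since $s^2\notin\langle r^2\rangle D^2$ would contradict the coset structure, $k$ is odd: $s\in\mathcal F_1$ means $s^2$ has odd $r_0^2$-exponent by \eqref{na:genclasses}. We then impose two consistency conditions.
\begin{itemize}
\item \emph{$s$ commutes with $s^2$.} Conjugating $r^{2k}c^\xi$ by $s$ and using $r^s=rc$ with \eqref{na:powerformula} gives $(rc)^{2k}c^{R\xi}=r^{2k}c^{\Nrm_{2k}(Q)+R\xi}$. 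Equating with $r^{2k}c^\xi$ gives the second equation of \eqref{na:compat}.
\item \emph{Computing $c^{s^2}$ in two ways.} On one hand it equals $c^{R^2}$; on the other, $c^{r^{2k}c^\xi}=c^{Q^{2k}}$. Combined with the first, this should force $R=(1-Q)\xi-1$.
\end{itemize}
I expect the first-equation derivation to be the main obstacle. My first attempt would compute $[s^2,r]$ both as $[r^{2k}c^\xi,r]=c^{\xi(Q-1)}$ and via $s^2r=s(rc^{\pm})s$ using $r^s=rc$, tracking $w^\varepsilon$ carefully. Comparing exponents of $c$ should give $\xi(Q-1)=-(R+1)$ up to the sign convention, which is exactly $R=(1-Q)\xi-1$.

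\textbf{Consequences.} Substituting the first equation of \eqref{na:compat} into the second gives $F_{Q,k}(\xi)\equiv0\pmod{M/2}$. Reducing mod $4$ shows $R\equiv1\pmod4$ and that $\xi$ is odd, matching \eqref{na:zR}. Finally, the conditions $e\ge5$ and $f\ge3$ follow because $Q^2\ne1\pmod M$ with $Q\equiv3\pmod4$ requires $M\ge16$, and $f\ge3$ follows from the order bookkeeping in each type.
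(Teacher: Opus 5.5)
Your skeleton matches the paper's: $r\in\mathcal F_0$, $s\in\mathcal F_1$, the normal form $s^2=r^{2k}c^\xi$ in $H=\langle r,c\rangle$, and two consistency checks. However, the step you first rely on for the linear relation does not work. Comparing $c^{s^2}$ with $c^{r^{2k}c^\xi}$ gives only $R^2=Q^{2k}$, which the paper derives as a \emph{consequence} of \eqref{na:compat}, namely \eqref{na:Rsquare}. Combine it with the second congruence: multiply that congruence by $1-Q$ and use $(1-Q)\Nrm_{2k}(Q)=1-Q^{2k}$. You get only $(R-1)\bigl((1-Q)\xi-(R+1)\bigr)\equiv0\pmod M$. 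Since $R-1$ is even (in fact $4\mid R-1$), this fixes $(1-Q)\xi-(R+1)$ only modulo $M/2^{v_2(R-1)}$, not modulo $M$. Your fallback is the right computation and is exactly the paper's. On one side $r^{s^2}=(rc)^s=rw^\varepsilon c^{1+R}$. On the other side $r^{s^2}=r^{r^{2k}c^\xi}=r^{c^\xi}=rw^{\varepsilon\xi}c^{(1-Q)\xi}$. The $c$-coordinates give $1+R=(1-Q)\xi$, which is your $\xi(Q-1)=-(R+1)$ with no sign ambiguity. Using $[s^2,r]$ instead changes nothing, since it is the inverse of $[r,s^2]=r^{-1}r^{s^2}$.

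The genuine gap is the parity of $\xi$. From $R=(1-Q)\xi-1$ and ``$R$ odd'', reduction modulo $4$ gives nothing: an even $\xi$ just gives $R\equiv3$. Moreover, both congruences of \eqref{na:compat} are also satisfied by the even root of $F_{Q,k}$, since modulo $2$ it is $Z^2+Z$ with odd derivative. So oddness needs an independent input, which the paper takes from $R\equiv1\pmod4$. For $s=r_0^is_0d$ with $i$ odd, conjugation is $v\mapsto v^{q^i}$ followed by inversion, so its multiplier is $-q^i\equiv1\pmod4$. In type (b) this holds up to a multiple of $M/2$ after rebasing to $w=u\zeta^\eta$. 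Then $1+R\equiv1-Q\equiv2\pmod4$ forces $\xi$ odd. In $\mathcal L_0$ the $w$-coordinate of the first comparison, $w^\varepsilon=w^{\varepsilon\xi}$, forces it too.

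You also dropped the $w$'s in your second computation. It actually reads $(r^{2k}c^\xi)^s=r^{2k}w^{\varepsilon(k+\xi)}c^{\Nrm_{2k}(Q)+R\xi}$, and the $w$ factor disappears only because $k$ and $\xi$ are odd.

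Smaller slips:
\begin{itemize}
\item In type (b), $|D|=2M$; only $|c|=M$.
\item In $\mathcal C_1$, $|r|=2^{\mu+1}=2M$, not $M$.
\item On the diagonal ($\nu=\mu-1$ in (a), $\nu=\mu$ in (b)), both cosets have the same element order, so orders do not decide which marked generator lies in $\mathcal F_0$. The choice $a\in\mathcal F_0$ gives the reciprocal marking of Theorem~\ref{na:main}.
\item In $\mathcal C_0$, the bound $f\ge3$ comes from $Q^N=Q^2=1$ at $N=2$ contradicting $Q^2\ne1$, not from order bookkeeping.
\end{itemize}
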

\begin{proof}
Choose $r\in\mathcal F_0$ and $s\in\mathcal F_1$.
In the cyclic-derived type, $c=[r,s]$ is an odd power of $v$:
modulo $\langle v^2\rangle$ the derived generator is central,
and the two odd generating cosets have commutator $v$.
Thus $|c|=2^\nu=M$ and $D=\langle c\rangle$.
In the rank-two type the same argument modulo
$\langle u,v^2\rangle$ shows that the $v$-coordinate of $c$ is odd,
so again $|c|=M$.

For the cyclic-derived case with $\delta=0$, the orders from
Proposition~\ref{na:survivors} give $|r|=2N$ and
$\langle r\rangle\cap D=1$.
The subgroup $H=\langle r,c\rangle$ has unique normal forms
$r^i c^j$, $i\in\Z_{2N}$, $j\in\Z_M$.
Conjugation by $r$ on $D$ is an odd power of the original
$q$ action, so $c^r=c^Q$ with $Q\equiv3\pmod4$ and $Q^N=1$.
Conjugation by $s$ is that odd action followed by inversion,
so $c^s=c^R$ with $R\equiv1\pmod4$.

For the rank-two case, the terminal involution of $r$ is
\[
 w=r^N=u\zeta^\eta .
\]
It is central, and it lies outside $\langle c\rangle$:
the only involution in $\langle c\rangle$ is $\zeta$,
whereas $w$ has a nonzero $u$-coordinate.
Hence $D=\langle w\rangle\times\langle c\rangle$.
The action of any odd power of $r_0$ has a nonzero $u$ factor
on an element with odd $v$-coordinate. Replacing $u$ by $w$
therefore gives
$c^r=wc^Q$ and $c^s=wc^R$,
with $Q\equiv3$ and $R\equiv1$ modulo four.
Changing this basis changes the original multiplier only by
a multiple of $M/2$, so it preserves these congruences.
It also preserves the restriction $Q^{N/2}=1$, since $N/2$ is even.
The subgroup $H=\langle r,c\rangle$ again has unique normal forms
$r^i c^j$ with periods $2N,M$; the $w$ factor has been absorbed
by the relation $w=r^N$.

For the cyclic-derived case with $\delta=1$, exactness gives
$e=f+1$. Now $r$ is the longer generator, $|r|=2M$,
$r^M=c^{M/2}$, and $H$ has unique normal forms
$r^i c^j$ with $i,j\in\Z_M$.
Its two conjugation multipliers still satisfy $Q\equiv3$,
$R\equiv1$ modulo four.
For every such $Q$, Lemma~\ref{na:normlemma} gives
$Q^{M/4}=1$ when $M\ge8$, so no further multiplier restriction
is needed.

In every case $s^2$ lies in $H$ and has a unique normal form
$r^{2k}c^\xi$, with $k$ taken modulo $N$.
The images of $r,s$ in the cyclic factor of $G/D$ have odd
exponents relative to each other. Thus $k$ is odd.
In the rank-two case the abelianization first fixes $k$ modulo
$N/2$, but both lifts modulo $N$ are odd.

Conjugate $r$ by $s^2$ in two ways.
On one hand, using $r^s=rc$,
\[
 r^{s^2}=(rc)^s=r\,w^\varepsilon c^{1+R}.
\]
On the other hand, $r$ commutes with its own powers and
\[
 r^{r^{2k}c^\xi}=r^{c^\xi}
    =r\,w^{\varepsilon \xi}c^{(1-Q)\xi}.
\]
Thus $1+R=(1-Q)\xi\pmod M$.
Since both $1+R$ and $1-Q$ are two modulo four, $\xi$ must be odd.
This also makes the $w$ factors agree.

Finally $s$ fixes its square. The right-action power formula gives
\[
 (rc)^{2k}=r^{2k}w^{\varepsilon k}c^{\Nrm_{2k}(Q)}.
\]
Therefore
\[
 (r^{2k}c^\xi)^s
   =r^{2k}w^{\varepsilon(k+\xi)}
       c^{\Nrm_{2k}(Q)+Rz}.
\]
Because $k,\xi$ are odd, the $w$ factor is one.
Comparison with $s^2=r^{2k}c^\xi$ gives the second congruence of
\eqref{na:compat}.

The action of $r^2$ on $c$ is $c\mapsto c^{Q^2}$.
It is the only possible nontrivial action in
$\Phi(G)=\langle r^2,D\rangle$, so nonabelianness is precisely
$Q^2\ne1\pmod M$.
Since all units congruent to three modulo four square to one
modulo $2,4,8$, this requires $M\ge16$, or $e\ge5$.
If $f=2$, the cyclic-derived restriction $Q^N=Q^2=1$
rules out $\mathcal C_0$; the other two structural cases also
cannot occur. This proves the claimed empty ranges.
\end{proof}

\begin{lemma}[The unique odd root]\label{na:root}
For $Q\equiv3\pmod4$ and odd $k$, the polynomial \eqref{na:poly}
has exactly one odd root modulo $M/2$.
Its two lifts modulo $M$ are exactly the odd solutions of
\eqref{na:compat}, with $R$ defined by \eqref{na:zR}.
Moreover
\begin{equation}\label{na:Rsquare}
 R^2=Q^{2k}\pmod M.
\end{equation}
The root is obtained without a search: start with $\xi_1=1$ and put
\begin{equation}\label{na:lift}
 \xi_{j+1}=\xi_j+2^j
       \left(\frac{F_{Q,k}(\xi_j)}{2^j}\bmod2\right)
       \qquad(1\le j<e-2).
\end{equation}
Then $\xi_0=\xi_{e-2}$, taken modulo $M/2$.
\end{lemma}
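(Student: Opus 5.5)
The plan is to treat $F_{Q,k}$ by Hensel lifting at the prime two, using that its derivative is odd, and then to link the root to \eqref{na:compat} by direct substitution. Throughout I assume $e\ge3$, so that $M/2=2^{e-2}\ge2$; this covers the range $e\ge5$ in which the lemma is used.

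\emph{Step 1: the root modulo two.} Since $Q\equiv3\pmod4$, the coefficient $(Q-1)/2$ is odd. As noted before Theorem~\ref{na:main}, $4\mid\Nrm_{2k}(Q)$, so the constant term $\Nrm_{2k}(Q)/2$ is even. Hence
\[
 F_{Q,k}(Z)\equiv Z^2+Z=Z(Z+1)\pmod 2,
\]
and $Z\equiv1$ is a simple odd root modulo two.

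\emph{Step 2: unique lifting.} The derivative
\[
 F_{Q,k}'(Z)=(Q-1)Z+1
\]
is odd for every integer $Z$, because $Q-1$ is even. For $j\ge1$ and $\delta\in\{0,1\}$, expand
\[
 F(\xi+2^j\delta)=F(\xi)+2^j\delta F'(\xi)+\tfrac{Q-1}{2}\,2^{2j}\delta^2 .
\]
The last term vanishes modulo $2^{j+1}$ since $2j\ge j+1$. Now suppose $F(\xi_j)\equiv0\pmod{2^j}$. Then $F(\xi_j+2^j\delta)\equiv0\pmod{2^{j+1}}$ holds for exactly one $\delta$, namely the parity of $F(\xi_j)/2^j$, because $F'(\xi_j)$ is odd. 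This is precisely the recursion \eqref{na:lift}.

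By induction, modulo each $2^j$ there is exactly one root congruent to one modulo two, that is, exactly one odd root. Taking $j=e-2$ gives the unique odd root $\xi_0=\xi_{e-2}$ modulo $M/2$.

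\emph{Step 3: matching \eqref{na:compat}.} Substitute $R=(1-Q)\xi-1$ into the second congruence of \eqref{na:compat}:
\[
 \Nrm_{2k}(Q)+(R-1)\xi=\Nrm_{2k}(Q)+(1-Q)\xi^2-2\xi=-2F_{Q,k}(\xi).
\]
So the pair \eqref{na:compat} holds modulo $M$ exactly when $F_{Q,k}(\xi)\equiv0\pmod{M/2}$. The odd solutions modulo $M$ are therefore the two lifts $\xi_0$ and $\xi_0+M/2$.

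These lifts give the same $R$, because $(1-Q)M/2\equiv0\pmod M$. Moreover $(1-Q)\xi$ is twice an odd number, so $(1-Q)\xi\equiv2\pmod4$ and hence $R\equiv1\pmod4$.

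\emph{Step 4: the square of $R$.} From $R+1=(1-Q)\xi$ and the compatibility congruence $(R-1)\xi\equiv-\Nrm_{2k}(Q)\pmod M$,
\[
 R^2-1=(R-1)(R+1)=(1-Q)\,(R-1)\xi\equiv-(1-Q)\Nrm_{2k}(Q)=Q^{2k}-1\pmod M,
\]
which is \eqref{na:Rsquare}.

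The only delicate point is bookkeeping. The root is unique modulo $M/2$, not modulo $M$, because the factor $2$ in $-2F$ loses one power of two. Also, the Hensel step needs $j\ge1$, so that the quadratic correction term is absorbed; this is why the recursion \eqref{na:lift} starts at $\xi_1$ rather than at $\xi_0$.
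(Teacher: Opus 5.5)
Your proof is correct and follows the paper's argument essentially step for step: reduction to $Z^2+Z$ modulo two, Hensel lifting through the odd derivative $(Q-1)Z+1$, the identity $\Nrm_{2k}(Q)+(R-1)\xi=-2F_{Q,k}(\xi)$ for $R=(1-Q)\xi-1$, which is why the root is unique only modulo $M/2$, and multiplication of the compatibility congruence by $1-Q$ to obtain $R^2=Q^{2k}$. The one fact you cite rather than prove, $4\mid\Nrm_{2k}(Q)$, is only asserted in the text before Theorem~\ref{na:main} and is actually established inside this lemma's proof, so you should add the one line $\Nrm_{2k}(Q)=(1+Q)\Nrm_k(Q^2)$ with $4\mid 1+Q$.
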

\begin{proof}
The factorisation
$\Nrm_{2k}(Q)=(1+Q)\Nrm_k(Q^2)$ shows divisibility by four.
The second factor is odd because it is a sum of an odd number
of odd terms.
Consequently $(Q-1)/2$ is odd and the constant term of
$F_{Q,k}$ is even.
Modulo two the polynomial is $Z^2+Z$.
It has one root of odd parity, namely one.
There is also an even root; it is irrelevant because $\xi$ must
be odd.

The derivative
$F'_{Q,k}(Z)=(Q-1)Z+1$ is odd for every integer $Z$.
Suppose $\xi_j$ is an odd root modulo $2^j$.
The two candidate lifts are $\xi_j$ and $\xi_j+2^j$.
Taylor expansion modulo $2^{j+1}$ gives
\[
 F(\xi_j+2^j\epsilon)=F(\xi_j)+2^j\epsilon F'(\xi_j)
                  \pmod{2^{j+1}}.
\]
Exactly one $\epsilon\in\{0,1\}$ makes this zero,
and it is the residue of $F(\xi_j)/2^j$ modulo two.
This proves both uniqueness and the recursion.

Substituting $R=(1-Q)\xi-1$ into the second congruence of
\eqref{na:compat} yields
\[
 \Nrm_{2k}(Q)-(Q-1)\xi^2-2\xi=0\pmod M.
\]
Division by two is equivalent to $F(\xi)=0\pmod{M/2}$,
not to a congruence modulo $M$.
Thus the two lifts of the unique odd root give exactly all solutions.
Their values of $R$ agree since $(1-Q)M/2$ is divisible by $M$.

To prove \eqref{na:Rsquare}, multiply the second equation of
\eqref{na:compat} by $1-Q$.
The geometric-sum identity gives
$(1-Q)\Nrm_{2k}(Q)=1-Q^{2k}$,
and $(1-Q)\xi=R+1$.
The result is
$1-Q^{2k}+(R-1)(R+1)=0\pmod M$,
which is the desired identity.
\end{proof}
\subsection{Existence, exactness and the Frattini subgroup}\label{na:existence}
We construct each presented group explicitly. This is needed to
ensure that the presentations do not collapse the asserted kernel
or the orders of the two distinguished elements.

\begin{lemma}[A cyclic extension with right conjugation]\label{na:cyclicextension}
Let $H$ be a finite group, $\sigma\in\Aut(H)$ a right automorphism,
$t\in H$ and $n\ge1$. Suppose
\[
 \sigma^n=\iota_t,\qquad t^\sigma=t .
\]
Then the relative presentation
\[
 \langle H,s\mid h^s=h^\sigma\ (h\in H),\ s^n=t\rangle
\]
contains $H$ faithfully, has order $n|H|$, and has unique normal
forms $s^i h$, $0\le i<n$, $h\in H$.
\end{lemma}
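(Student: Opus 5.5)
The plan is to build the group concretely as a permutation group, or equivalently as a multiplication on the set $\Z_n\times H$, so that faithfulness of $H$ and the normal forms come for free. The presentation then only has to be compared with this model.

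First I construct the model. Since conjugation is a right action, let $G_0=H\rtimes\langle\tilde s\rangle$ with $\tilde s$ of infinite order acting by $h^{\tilde s}=h^\sigma$. Because $\sigma^n=\iota_t$ and $t^\sigma=t$, the element $z=\tilde s^{\,n}t^{-1}$ commutes with every element of $H$: for $h\in H$ we have $h^{\tilde s^n}=h^{\sigma^n}=h^{t}$, so $h^{\tilde s^n t^{-1}}=h$. It also commutes with $\tilde s$, because $\tilde s^{-1}t\tilde s=t^\sigma=t$. Hence $z$ is central in $G_0$ and $Z=\langle z\rangle$ is normal. Put $\bar G=G_0/Z$.

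Next I establish the normal forms in $\bar G$. Every element of $G_0$ is uniquely $\tilde s^{\,m}h$ with $m\in\Z$ and $h\in H$. Using $\tilde s^{\,n}\equiv t$ modulo $Z$, every element is congruent to some $\tilde s^{\,i}h$ with $0\le i<n$. These forms are distinct modulo $Z$. Indeed, $z^j=\tilde s^{\,nj}t^{-j}$, where I use that $t$ commutes with $\tilde s$. So if $\tilde s^{\,i}h\,z^j=\tilde s^{\,i'}h'$, comparing the $\tilde s$-exponents in the semidirect product gives $i+nj=i'$, which forces $j=0$ and hence $h=h'$. Thus $|\bar G|=n|H|$, $H$ embeds faithfully, and the defining relations hold in $\bar G$.

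Finally I compare $\bar G$ with the presented group $\Gamma$. Since $\bar G$ satisfies the relations, there is a surjection $\Gamma\to\bar G$. Conversely, in $\Gamma$ the relations $hs=sh^\sigma$ and $s^n=t$ allow any word to be rewritten in the form $s^ih$ with $0\le i<n$. Inverses cause no trouble: $s^{-1}=s^{n-1}t^{-1}$, and $h^{\sigma^{-1}}$ is available because $\sigma$ is an automorphism. So $|\Gamma|\le n|H|$. The surjection is therefore an isomorphism, and it carries these normal forms bijectively.

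The main obstacle is the centrality check for $z$. It is exactly where the two hypotheses enter, and the right-conjugation convention must be tracked carefully, since $\sigma^n=\iota_t$ means $h^{\sigma^n}=t^{-1}ht$. Everything else is routine bookkeeping.
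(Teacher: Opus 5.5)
Your proposal is correct and follows the paper's proof. Both arguments form $H\rtimes\langle\tilde s\rangle$ with $\tilde s$ of infinite order, use $\sigma^n=\iota_t$ and $t^\sigma=t$ to show that $\tilde s^{\,n}t^{-1}$ is central, quotient by the infinite cyclic subgroup it generates, and read off the normal forms through the exponent map to $\Z$. The one difference is that you make the comparison with the presented group $\Gamma$ explicit, via a surjection and the rewriting bound $|\Gamma|\le n|H|$; the paper compresses this into the remark that the quotient ``imposes exactly $s^n=t$''. Your opening mention of a permutation or $\Z_n\times H$ model is never actually used.
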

\begin{proof}
First form the extension of $H$ by an infinite cyclic generator $s$
with $h^s=h^\sigma$. Its elements have unique forms $s^i h$,
$i\in\Z$. The element $s^nt^{-1}$ centralizes $H$, because its
conjugation is $\sigma^n\iota_{t^{-1}}=\id$.
It commutes with $s$ because $t^\sigma=t$.
It is therefore central.

The infinite cyclic subgroup generated by $s^nt^{-1}$ meets $H$
trivially: its nonzero powers have nonzero image in the infinite
cyclic quotient. Quotienting by this subgroup thus retains a
faithful copy of $H$ and imposes exactly $s^n=t$.
Its image in the infinite cyclic quotient is $n\Z$, so the new
quotient over $H$ has order $n$. Reducing the exponent of $s$
gives the stated normal forms, which are unique by these facts.
\end{proof}

Use the preferred generators in \eqref{na:preferred}.
For $\mathcal C_0$, start with $\langle c\rangle\cong C_M$,
let $c^\rho=c^Q$, and adjoin $r$ with $r^{2N}=1$.
Since $Q^N=1$, the action has $2N$-th power one.
Lemma~\ref{na:cyclicextension} gives $H=\langle r,c\rangle$
of order $2NM$, with unique forms $r^i c^j$,
$i\in\Z_{2N}$, $j\in\Z_M$.

For $\mathcal L_0$, start instead with the independent product
\[
 E=\langle w\rangle\times\langle c\rangle\cong C_2\times C_M,
\]
and the right automorphism $w^\rho=w$, $c^\rho=wc^Q$.
Since $N$ is even and $Q^{N/2}=1$, we have $\rho^N=\id$.
Adjoin $r$ with $r^N=w$.
The extension lemma gives $|H|=N|E|=2NM$.
Absorbing $w$ into $r^N$ gives the same unique forms
$r^i c^j$, with periods $2N,M$.
This also proves that $w$ remains a nontrivial involution
independent of $\langle c\rangle$.

For $\mathcal C_1$, start with $\langle c\rangle\cong C_M$,
let $c^\rho=c^Q$, and adjoin $r$ with $r^M=c^{M/2}$.
For an odd unit modulo $M$, its $M$-th power is one and it fixes
$c^{M/2}$. The extension lemma gives $|H|=M^2$ and unique
forms $r^i c^j$ with periods $M,M$.
The relation $r^M=c^{M/2}$ shows that $r$ has order $2M$.

Define a right map on $H$ by
\begin{equation}\label{na:sigma}
 r^\sigma=rc,\qquad c^\sigma=w^\varepsilon c^R,
 \qquad w^\sigma=w\quad\text{when }\varepsilon=1.
\end{equation}
We check that it is an automorphism, then apply the extension lemma
with $s^2=t_0=r^{2k}c^\xi$.

In $\mathcal C_0$, the power relation is preserved because
\[
 (rc)^{2N}=r^{2N}c^{\Nrm_{2N}(Q)}=1 .
\]
Indeed, $Q^N=1$ and Lemma~\ref{na:normlemma} imply
$\Nrm_{2N}(Q)=0\pmod M$.
For $\mathcal L_0$, $Q^{N/2}=1$ similarly gives
$\Nrm_N(Q)=0\pmod M$.
The norm also includes a factor $w^{N/2}=1$, since $N/2$ is even.
Therefore $(rc)^N=w$, as required.
For $\mathcal C_1$, the norm $\Nrm_M(Q)$ vanishes modulo $M$,
so $(rc)^M=c^{M/2}=(c^R)^{M/2}$.
This preserves the defining nonsplit power relation.

The conjugation relation is also preserved.
In the cyclic-derived cases both images of $c^r=c^Q$
give $c^{QR}$.
In the rank-two case, applying $\sigma$ to $c^r=wc^Q$ gives
\[
 (wc^Q)^\sigma=w^{1+Q}c^{RQ}=c^{RQ}.
\]
Conjugating $c^\sigma=wc^R$ by $r^\sigma=rc$ gives
$w^{1+R}c^{QR}=c^{QR}$ too.
All centrality relations on $w$ are preserved.
Thus \eqref{na:sigma} defines an endomorphism.
It is invertible on the normal abelian subgroup used to construct
$H$, since $R$ is odd, and it induces the identity on the cyclic
quotient of $H$ by that subgroup. It is therefore surjective
and, since $H$ is finite, an automorphism.

\begin{proposition}[The second extension is consistent]\label{na:sigmaidentity}
The automorphism \eqref{na:sigma} satisfies
\[
 t_0^\sigma=t_0,\qquad \sigma^2=\iota_{t_0},
 \qquad t_0=r^{2k}c^\xi.
\]
\end{proposition}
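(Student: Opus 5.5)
The plan is to verify both identities directly on the generators of $H$. For this I use the unique normal forms $r^ic^j$ and the two congruences that Lemma~\ref{na:root} attaches to the chosen lift $\xi$: the relation \eqref{na:compat} (equivalently \eqref{na:zR} together with the vanishing of $\Nrm_{2k}(Q)+(R-1)\xi$ modulo $M$) and the identity \eqref{na:Rsquare}. Since $\sigma$ and $\iota_{t_0}$ are both automorphisms of $H$, it suffices to compare them on $r$, on $c$, and on $w$ when $\varepsilon=1$. I treat the three families uniformly through \eqref{na:unified}; in $\mathcal C_1$ only the periods of the normal forms change, and the computations do not see them.

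\emph{Step 1: $t_0^\sigma=t_0$.} Apply $\sigma$ to $t_0=r^{2k}c^\xi$. By the right-action power formula \eqref{na:powerformula} applied to $(rc)^{2k}$, as in the proof of Proposition~\ref{na:necessity},
\[
 t_0^\sigma=(rc)^{2k}(w^\varepsilon c^R)^\xi
 =r^{2k}w^{\varepsilon(k+\xi)}c^{\Nrm_{2k}(Q)+R\xi}.
\]
The $w$-factor is trivial because $k$ and $\xi$ are both odd. Here I must check that the $w$-contributions accumulated along the norm really total $w^{\varepsilon k}$: each conjugate $c^{r^i}$ carries $w^{\varepsilon i}$ up to even corrections, and $w$ is central. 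The remaining exponent equals $\xi$ modulo $M$ exactly by the second congruence of \eqref{na:compat}, which Lemma~\ref{na:root} guarantees for either lift.

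\emph{Step 2: $\sigma^2=\iota_{t_0}$ on $r$.} On one side, $r^{\sigma^2}=(rc)^\sigma=rc\,w^\varepsilon c^R=r\,w^\varepsilon c^{1+R}$. On the other, $r$ commutes with $r^{2k}$, so
\[
 r^{t_0}=r^{c^\xi}=r\,(c^{-\xi})^r c^\xi=r\,w^{\varepsilon\xi}c^{(1-Q)\xi}.
\]
Since $\xi$ is odd, $w^{\varepsilon\xi}=w^\varepsilon$, and $1+R=(1-Q)\xi$ is the definition \eqref{na:zR}.

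\emph{Step 3: $\sigma^2=\iota_{t_0}$ on $c$ and $w$.} First, $c^{\sigma^2}=(w^\varepsilon c^R)^\sigma=w^{\varepsilon(1+R)}c^{R^2}=c^{R^2}$, because $R$ is odd. Next, $c^{r^2}=w^{\varepsilon(1+Q)}c^{Q^2}=c^{Q^2}$, so $c^{t_0}=c^{r^{2k}}=c^{Q^{2k}}$, the factor $c^\xi$ acting trivially on the abelian $D$. These agree by \eqref{na:Rsquare}. Finally, $w$ is central and fixed by $\sigma$, so both sides fix it.

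I expect the main obstacle to be bookkeeping rather than any new idea. The right-conjugation conventions must be tracked carefully, since $(c^{-\xi})^r$ versus $r c^{-\xi}r^{-1}$ changes $Q$ to $Q^{-1}$. The parity of the accumulated $w$-exponents in the rank-two case $\mathcal L_0$ must also be checked. That is exactly where the oddness of $k$ and $\xi$ is used.
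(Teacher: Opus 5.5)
Your proposal is correct and matches the paper's proof step for step. The paper also derives $t_0^\sigma=r^{2k}w^{\varepsilon(k+\xi)}c^{\Nrm_{2k}(Q)+R\xi}=t_0$ from the power calculation of Proposition~\ref{na:necessity} together with \eqref{na:compat}, and then compares $\sigma^2$ with $\iota_{t_0}$ on $r$ (using $1+R=(1-Q)\xi$ and the oddness of $\xi$), on $c$ (using \eqref{na:Rsquare}) and on $w$. Your explicit parity check that the $w$-factors accumulated in $(rc)^{2k}$ total $w^{\varepsilon k}$ is correct; the paper leaves that detail implicit.
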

\begin{proof}
The power calculation used in Proposition~\ref{na:necessity} gives
\[
 t_0^\sigma
   =r^{2k}w^{\varepsilon(k+\xi)}
       c^{\Nrm_{2k}(Q)+Rz}=r^{2k}c^\xi,
\]
because $k,\xi$ are odd and \eqref{na:compat} holds.

On $r$, the second iterate is
$r^{\sigma^2}=r w^\varepsilon c^{1+R}$.
Conjugation by $t_0$ gives
$r^{t_0}=r w^{\varepsilon \xi}c^{(1-Q)\xi}$.
These agree by \eqref{na:compat} and oddness of $\xi$.
On $c$, the second iterate is $c^{R^2}$:
when $\varepsilon=1$ the two $w$ factors cancel because $R$ is odd.
Conjugation by $t_0$ gives $c^{Q^{2k}}$,
since $r^{2k}$ has no $w$ factor in its action.
The equality of these two powers is \eqref{na:Rsquare}.
Both maps fix $w$ where it occurs.
They agree on generators, hence are equal automorphisms.
\end{proof}

Apply Lemma~\ref{na:cyclicextension} with $n=2$.
This constructs $G$ of order $2|H|=4MN$ in all three types:
in $\mathcal C_1$, $2M^2=4MN$ because $M=2N$.
Its relative relations are exactly those in
\eqref{na:C0}, \eqref{na:L0} or \eqref{na:C1}.
Conversely those relations collect every word into an $H$ normal
form and one binary $s$-coordinate, so there are at most the number
of elements just constructed. The presentation therefore defines
exactly this group.

\begin{proposition}[Orders and exactness]\label{na:exactness}
The constructed groups have the generator orders asserted in
Theorem~\ref{na:main}, and the two cyclic factors are disjoint.
\end{proposition}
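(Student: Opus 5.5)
We work in the preferred ordering $(r,s)$ of \eqref{na:preferred} and use the normal forms $s^\epsilon r^i c^j$ ($\epsilon\in\{0,1\}$) supplied by Lemma~\ref{na:cyclicextension}, applied twice. The orders of $r$ are already fixed by the construction of $H$. In $\mathcal C_0$ and $\mathcal L_0$ we have $|r|=2N$; in $\mathcal L_0$ this is because $r^N=w$ is a nontrivial involution. In $\mathcal C_1$ we have $|r|=2M$. So the claimed order of $r$ holds in each type, and what remains is $|s|$ together with disjointness.

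For $|s|$ we compute $s^{2j}=(r^{2k}c^\xi)^j$ with the right-action power formula \eqref{na:powerformula}. Conjugation by $r^{2k}$ on $c$ is $c\mapsto c^{Q^{2k}}$; in $\mathcal L_0$ the $w$-factors cancel because $2k$ is even. This gives
\[
 s^{2j}=r^{2kj}c^{\xi\Nrm_j(Q^{2k})}.
\]
We take $j$ to be a power of two.

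Since $k$ is odd, $r^{2kj}=1$ exactly when $2N\mid 2j$ (resp.\ $2M\mid 2j$). The $c$-exponent has valuation $v_2(\Nrm_j(Q^{2k}))=v_2(j)$ by \eqref{na:positiveval}, because $Q^{2k}\equiv1\pmod 8$, and $\xi$ is odd.

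In $\mathcal C_0$ and $\mathcal L_0$ with $j=N$, the $r$-part is trivial. In $\mathcal L_0$ one first rewrites $r^{2kN}=w^{2k}=1$. The $c$-part $c^{\xi\Nrm_N(Q^{2k})}$ has order exactly $M/N$, since $v_2(\Nrm_N(Q^{2k}))=f-1$. Hence $|s|=2N\cdot M/N=2M$, as required since $a=s$.

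In $\mathcal C_1$, where $s=b$ should have order $2N=M$, take $j=N=M/2$. Then $r^{kM}=c^{kM/2}=c^{M/2}$, and the $c$-exponent is $M/2$ times an odd number. The total $c$-exponent is therefore $M/2(1+\text{odd})\equiv0\pmod M$, while the lower power $s^{N}$ still has a nontrivial $r$-part. So $|s|=2N$.

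For disjointness we use Lemma~\ref{terminal} and compare terminal involutions.

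In $\mathcal C_0$, the involution of $\langle r\rangle$ is $r^N$, which lies outside $D=\langle c\rangle$ by the normal forms. The involution of $\langle s\rangle$ is $s^{M}=c^{M/2}\in D$. These differ.

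In $\mathcal L_0$, the involution of $\langle r\rangle$ is $w$, and that of $\langle s\rangle$ is $c^{M/2}$. These differ because $D=\langle w\rangle\times\langle c\rangle$ is a direct product.

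In $\mathcal C_1$, the involution of $\langle r\rangle$ is $r^M=c^{M/2}$. The involution of $\langle s\rangle$ is $s^N=r^{kN}c^{(\ldots)}$, whose $r$-exponent is odd times $N$ and so is nonzero modulo $M$. It therefore does not lie in $D$, and the two involutions differ.

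Once disjointness holds, exactness follows from $|G|=4MN=|a||b|$.

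The main obstacle is the $\mathcal C_1$ bookkeeping, where the nonsplit relation $r^M=c^{M/2}$ feeds into the power of $s$. The delicate step is confirming that the two $c^{M/2}$ contributions cancel exactly at $j=N$, and not earlier or later. This uses the exact valuation from Lemma~\ref{na:normlemma} and the oddness of $k$ and $\xi$; we would check it carefully.
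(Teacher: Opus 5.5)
Your approach is the paper's: the power formula for $s^2=r^{2k}c^\xi$, the exact norm valuation, and terminal involutions. Your treatment of $\mathcal C_1$, and of $\mathcal C_0,\mathcal L_0$ with $e>f$, is correct. The gap is the diagonal $e=f$, i.e.\ $M=N$. This case is not vacuous: $\mathcal C_0$ and $\mathcal L_0$ both occur there for every $e\ge5$, with $\Bna(e,e)=N^2-8N>0$. At $M=N$ your computation gives $s^{2N}=c^{\xi\Nrm_N(Q^{2k})}$ of order $M/N=1$, so $s^{2N}=1$. That shows only that $|s|$ divides $2M$. The step $|s|=2N\cdot(M/N)$ is valid only when $s^{2N}\neq1$. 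Likewise, your identification of the terminal involution of $s$ as $s^M=c^{M/2}$ is false when $M=N$.

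The fix is your own formula at $j=N/2$. Since $|r|=2N$ and $k$ is odd, $r^{kN}=r^N\neq1$ (this is $w$ in $\mathcal L_0$). Also $v_2(\Nrm_{N/2}(Q^{2k}))=f-2=e-2$, which makes the $c$-exponent $\equiv M/2\pmod M$. Hence
\[
 s^M=s^N=r^Nc^{M/2}\qquad(\text{resp. } wc^{M/2}\text{ in }\mathcal L_0).
\]
This element is nontrivial, so $|s|=2M$, and it is the terminal involution of $s$. It differs from $r^N$ (resp.\ $w$) because $c^{M/2}\neq1$ in the normal forms, so disjointness still holds. The paper avoids the problem by evaluating $s^M=r^{kM}c^{M/2}$ directly (your $j=M/2$) and then splitting into the cases $M>N$ and $M=N$. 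Your choice $j=N$ hides the diagonal case. With this case added, your proof is complete.
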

\begin{proof}
In $\mathcal C_0,\mathcal L_0$, $r$ has order $2N$.
The action of $r^{2k}$ on $c$ is multiplication by $Q^{2k}$,
which is one modulo eight.
Using $s^2=r^{2k}c^\xi$, the norm lemma gives
\[
 s^M=r^{kM}c^{M/2}.
\]
This is an equality of elements, not just of orders:
the relevant norm has valuation $e-2$, so modulo $M$
its product with odd $\xi$ is exactly $M/2$.
If $M>N$, then $2N$ divides $M$ and the $r$ factor is one.
Thus $s^M=c^{M/2}\ne1$ and $|s|=2M$.
The terminal involution $c^{M/2}$ differs from $r^N$,
because $\langle r\rangle\cap\langle c\rangle=1$.
If $M=N$, instead $s^M=r^N c^{M/2}$;
this is nontrivial and differs from $r^N$ by independence.
Again $|s|=2M$ and the two terminal involutions are different.

In $\mathcal C_1$, $r$ has order $2M$ and terminal involution
$c^{M/2}$.
The same norm calculation now gives
\[
 s^M=r^{kM}c^{M/2}
     =c^{(M/2)(k+1)}=1,
\]
since $k$ is odd.
On the other hand, the image of
$s^{M/2}=(r^{2k}c^\xi)^{M/4}$
in $H/\langle c\rangle$ is
$r^{kM/2}\langle c\rangle$, which is nontrivial.
Hence $|s|=M=2N$, and its terminal involution lies outside
$\langle c\rangle$. It differs from that of $r$.

The terminal-involution criterion proves disjointness in every case.
The product of the generator orders is $4MN=|G|$.
Their cyclic subgroups therefore have setwise product all of $G$,
which proves exactness in the specified marking.
\end{proof}

\begin{proposition}[Derived and Frattini subgroups]\label{na:frattini}
For the three constructed types the derived subgroup is the one
stated in Theorem~\ref{na:main}. In preferred ordering put $A=r^2$.
Then
\[
 K=\Phi(G)=\langle A,c\rangle,
\]
and its presentations are
\begin{align}
 \mathcal C_0,\mathcal L_0:\quad
 K&=\langle A,c\mid A^N=c^M=1,\ c^A=c^{Q^2}\rangle,\label{na:Ksplit}\\
 \mathcal C_1:\quad
 K&=\langle A,c\mid c^M=1,\ A^N=c^{M/2},\
                         c^A=c^{Q^2}\rangle.\label{na:Knonsplit}
\end{align}
They are nonabelian metacyclic groups of order $MN$.
The full group $G$ is nonmetacyclic.
\end{proposition}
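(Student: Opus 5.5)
We work throughout in the preferred ordering $(r,s)$ of \eqref{na:preferred}, with $G=H\langle s\rangle$ and $H=\langle r,c\rangle$ as constructed above.

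\emph{Derived subgroup.} By the argument of Lemma~\ref{dsquares}, $G'$ is the normal closure of $c=[r,s]$. By \eqref{na:unified}, conjugation by $r$ and $s$ maps $D_0=\langle w^\varepsilon,c\rangle$ to itself, so $D_0$ is normal. In $G/D_0$ the images of $r$ and $s$ commute, so $G'\le D_0$, and $c\in G'$. In the rank-two case we also need $w\in G'$. Since $c^r=wc^Q$ gives $w=c^{-Q}c^r\in G'$, we get $G'=D_0$, which is the derived subgroup stated in Theorem~\ref{na:main}.

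\emph{Frattini subgroup.} By \eqref{frattinifact} and Lemma~\ref{squarebasis}, $K=\Phi(G)=\mho_1(G)G'$ has index four. Put $K_0=\langle A,c\rangle$ with $A=r^2$.
\begin{itemize}
\item $K_0\le K$: $A$ is a square and $c\in G'$.
\item $w\in K_0$ in $\mathcal L_0$: $w=r^N=A^{N/2}$ because $N$ is even.
\item $|K_0|=MN$: the unique normal forms $r^ic^j$ in $H$ show that $K_0$ is exactly the set of forms with even $i$. Since $r$ has period $2N$ in $\mathcal C_0,\mathcal L_0$ and period $M=2N$ in $\mathcal C_1$, this set has $N\cdot M$ elements.
\end{itemize}
Hence $|K_0|=|G|/4=|K|$ and $K=K_0$.

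\emph{Presentations of $K$.} From $c^r=w^\varepsilon c^Q$ we compute $c^{A}=c^{Q^2}$. In $\mathcal L_0$ the factor $w^{1+Q}$ vanishes because $Q$ is odd. The power relations are:
\begin{itemize}
\item in $\mathcal C_0,\mathcal L_0$: $A^N=r^{2N}=1$;
\item in $\mathcal C_1$: $A^N=r^M=c^{M/2}$.
\end{itemize}
This gives surjections from the groups presented in \eqref{na:Ksplit} and \eqref{na:Knonsplit} onto $K$. In each presented group $\langle c\rangle$ is normal with cyclic quotient generated by the image of $A$, so its order is at most $MN$. Comparing orders makes both surjections isomorphisms, and also shows that $K$ is metacyclic. $K$ is nonabelian because $Q^2\not\equiv1\pmod M$ by \eqref{na:common}.

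\emph{Nonmetacyclicity of $G$.} We expect this to be the only real obstacle. The plan is to use Lemma~\ref{primitive}: show that for every $g\in G\setminus K$, $G'\not\le\langle g^2\rangle$.
\begin{itemize}
\item \emph{Rank-two case.} $G'\cong C_2\times C_M$ is noncyclic, so it cannot lie in any cyclic subgroup $\langle g^2\rangle$.
\item \emph{Cyclic cases.} We must rule out $c\in\langle g^2\rangle$ for $g$ in each of the three nontrivial cosets. We would treat them by coset:
 \begin{itemize}
 \item For $g\in r\,K$, we have $g^2\in r^2\langle c\rangle$. Then $\langle g^2\rangle\cap\langle c\rangle$ is controlled by $\langle r^2\rangle\cap\langle c\rangle$, which is $1$ in $\mathcal C_0$ and $\langle c^{M/2}\rangle$ in $\mathcal C_1$. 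Using the normal forms, the $r$-exponent of $g^{2j}$ vanishes only when $j$ is a multiple of $N$, so $g^{2j}$ lies in $\langle c\rangle$ only as an element of order at most $2$. This cannot generate $\langle c\rangle$, since $M\ge16$.
 \item For $g\in sK$ and $g\in rsK$, we would compute $g^2$ with \eqref{na:powerformula} and argue in the same way. The $r$-exponent of $g^2$ is $2k$ times an odd number (respectively an odd multiple of $2$), which forces $\langle g^2\rangle\cap\langle c\rangle$ to have order at most $2$.
 \end{itemize}
\end{itemize}
A fallback avoids the coset computations. The parameters came from Janko's nonmetacyclic types via Proposition~\ref{na:survivors}, and the proof of Proposition~\ref{na:necessity} identifies the constructed group with that structural group. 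Alternatively, one can show that the third maximal subgroup $\langle rs,K\rangle$ needs three generators by exhibiting an elementary abelian quotient of order $8$ (working modulo $\langle c^2, A^2\rangle$ and checking that $rs$, $A$, $c$ survive independently). Then, as in the proof of Proposition~\ref{parity}, $G$ is not metacyclic.
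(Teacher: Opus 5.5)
The gap is in the nonmetacyclicity step for the cyclic-derived types $\mathcal C_0,\mathcal C_1$. The rest is sound: your arguments for $G'$, for $K=\Phi(G)$ (via $|K_0|=MN=|G|/4$, a harmless variant of the paper's index-four argument) and for the presentations of $K$ work. For $\mathcal L_0$, your observation that $G'\cong C_2\times C_M$ is noncyclic does prove nonmetacyclicity, and more directly than the paper.

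In $\mathcal C_0,\mathcal C_1$ your coset analysis is wrong as stated. For $g\in sK$ the $A$-exponent of $g^2$ is odd, but the claimed bound on $\langle g^2\rangle\cap\langle c\rangle$ is false. For $(e,f)=(5,3)$, $Q=3$, $k=\xi=1$ one has $s^8=(Ac)^4=c^{\Nrm_4(9)}=c^{820}=c^4$. In general, in $\mathcal C_0$ this intersection is $\langle c^N\rangle$, of order $M/N$. That coset is repairable, since $\langle c^N\rangle$ is still proper.

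For $g\in rsK$ the mechanism itself breaks down. From $rs=s\,rc$ you get $rsrs=r\,s^2\,rc=r^{2k+2}(c^\xi)^r c$, so $(rs)^2=A^{k+1}w^\varepsilon c^{1+Q\xi}$ with $k+1$ even. The $r$-exponent of $g^2$ is therefore divisible by four, not an odd multiple of two, and it carries no information. For $k=N-1$ in $\mathcal C_0$, the square $(rs)^2=c^{\theta}$ lies in $\langle c\rangle$ outright. What actually excludes $c\in\langle g^2\rangle$ is that $\theta=\xi-R=1+Q\xi$ is even, i.e.\ $g^2\in\langle A^2,c^2\rangle$. This is the face-square formula \eqref{na:facesquare}, which is the heart of the paper's proof and which your proposal never computes.

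Your fallbacks do not close the gap. The appeal to Janko is circular. Proposition~\ref{na:necessity} turns a Janko group into admissible parameters, but nothing shows that the group built from an arbitrary admissible datum is one of Janko's nonmetacyclic groups. That is exactly what this step must establish.

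The $C_2^3$ quotient of $\langle rs,K\rangle$ is the paper's route, but you only name it. The check you describe, that $rs,A,c$ survive independently, is not the issue: independence is automatic from $rs\notin K$ and $|K/K_1|=4$. What must be verified is the following:
\begin{enumerate}
\item $K_1=\langle A^2,c^2\rangle$ is normal in $G$. The paper notes $K_1=\Phi(K)$, which is characteristic in $K$.
\item $r$ and $s$ act trivially on $K/K_1$. This follows from $c^r=w^\varepsilon c^Q$, $A^s=(rc)^2=Aw^\varepsilon c^{Q+1}$, $c^s=w^\varepsilon c^R$ and $w=A^{N/2}\in K_1$.
\item $(rs)^2\in K_1$. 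This is the computation above, using that $k$ and $\xi$ are odd.
\end{enumerate}
With these three facts, $\langle rs,K\rangle/K_1\cong C_2^3$ and the argument of Proposition~\ref{parity} finishes. They also repair your $rsK$ coset, since then $g^2\in K_1\not\ni c$ for every $g\in rsK$.
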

\begin{proof}
Since $c=[r,s]$, its normal closure is $G'$:
quotienting by that closure makes the two generators commute.
In the cyclic-derived cases it is already the normal subgroup
$\langle c\rangle$.
In $\mathcal L_0$, the relation $c^r=wc^Q$ shows that this closure
also contains $w$. The subgroup $\langle w,c\rangle$ is normal
and abelian and has the independent factors constructed above.
This proves the derived subgroup assertions.

The subgroup $\langle A,c\rangle$ contains $D$,
because $w=A^{N/2}$ in the rank-two case.
It contains both generating squares and has index four,
with elementary abelian quotient on $rK,sK$.
Consequently $G^2G'\le K$, while $A=r^2$ and $c\in G'$
give the reverse inclusion into $\Phi(G)=G^2G'$.
This proves equality.
The $H$ normal forms restrict to unique forms $A^i c^j$,
$i\in\Z_N,j\in\Z_M$, giving \eqref{na:Ksplit},\eqref{na:Knonsplit}.
The action is $c^A=c^{Q^2}$, since the two $w$ factors cancel.
Thus $K$ is nonabelian exactly as required by \eqref{na:common};
it is metacyclic with cyclic normal subgroup $\langle c\rangle$.

To show that $G$ is nonmetacyclic, put
$K_1=\langle A^2,c^2\rangle$.
The relation $c^A=c^{Q^2}$ gives
$K'=\langle c^{Q^2-1}\rangle\le\langle c^2\rangle\le K_1$.
Thus $K_1$ is normal in $K$.
The normal forms show that
$K/K_1\cong C_2^2$ on $A,c$.
Since $K_1$ is generated by squares and this quotient is elementary
abelian, $K_1=K^2K'=\Phi(K)$.
It is therefore characteristic in $K$ and normal in $G$.
Both $r,s$ act trivially
on $K/K_1$.
For $r$ this follows from $c^r=w^\varepsilon c^Q$,
with $Q$ odd and $w=A^{N/2}\in K_1$.
For $s$ use
\[
 A^s=(rc)^2=A w^\varepsilon c^{Q+1},\qquad
 c^s=w^\varepsilon c^R;
\]
the added factors lie in $K_1$.
Here $N/2$ is even whenever $w$ occurs.

A direct collection gives
\begin{equation}\label{na:facesquare}
 (rs)^2=A^{k+1}w^\varepsilon c^{\xi-R}.
\end{equation}
The exponents $k+1$ and $\xi-R$ are even, so this square lies in $K_1$.
Therefore
$\langle rs,K\rangle/K_1\cong C_2^3$:
two independent generators come from $K/K_1$ and the third is
$rsK$. This subgroup needs at least three generators.

Every subgroup of a metacyclic group is metacyclic:
intersect it with a cyclic normal subgroup and take its image
in the cyclic quotient. Such a subgroup is generated by at most
two elements, a generator of the intersection and a lift of a
generator of the image.
The three-generator subgroup just found is therefore impossible
inside a metacyclic $G$. Hence $G$ is nonmetacyclic.
\end{proof}

\begin{remark}[The shorter terminal involution need not be central]
In $\mathcal C_0$ no centrality relation on $b^N$ is imposed.
Indeed,
\[
 (b^N)^a=(bc)^N=b^N c^{\Nrm_N(Q)}.
\]
The restriction $Q^N=1$ ensures that the longer norm
$\Nrm_{2N}(Q)$ vanishes modulo $M$; it does not always make
$\Nrm_N(Q)$ vanish.
For $M=16,N=4,Q=3$, the latter norm is $40=8\pmod{16}$,
so $(b^4)^a=b^4c^8\ne b^4$.
Inserting centrality of $b^N$ would therefore delete valid entries.
\end{remark}
\subsection{Marked uniqueness and reciprocality}\label{na:uniqueness}
\begin{proposition}[Intrinsic recovery of every parameter]\label{na:unique}
The lists in Theorem~\ref{na:main} are nonredundant as exact triples.
No nonabelian Frattini triple on the diagonal is symmetric.
\end{proposition}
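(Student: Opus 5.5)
The plan is to show that every parameter in Theorem~\ref{na:main} is an invariant of the marked triple $(G,a,b)$. Then two listed entries that give equivalent triples must coincide. First I recover the family, then the numerical parameters, and finally I treat the diagonal reciprocal entries.

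\emph{Recovering the family.} Given $(G,a,b)$, form $c_0=[b,a]$ and $D=G'$.
\begin{itemize}
\item Types $\mathcal C_0,\mathcal C_1$ have cyclic $D$, whereas $\mathcal L_0$ has $D\cong C_2\times C_M$ by Proposition~\ref{na:frattini}. So whether $D$ is cyclic is an isomorphism invariant.
\item Type $\mathcal C_1$ is separated from $\mathcal C_0$ by $e=f+1$ together with the orders: in $\mathcal C_1$ the longer generator $a$ has its terminal involution inside $D$ (namely $a^M=c^{M/2}$). In $\mathcal C_0$ with $M=2N$ the longer generator is $a=s$, with $s^M=c^{M/2}$ as well.
\item So I expect the real distinguishing invariant to be which marked generator plays the role of $r$. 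In $\mathcal C_0$ it is $b$, and $D\cap\langle b\rangle=1$. In $\mathcal C_1$ it is $a$, and the image of $b$ is the one whose square lies in $\langle a^2\rangle D$ with odd exponent.
\item I would make this precise through conjugation. In $\mathcal C_0$ and $\mathcal L_0$, $b$ acts on $c$ by $Q\equiv3\pmod 4$ and $a$ by $R\equiv1\pmod4$; in $\mathcal C_1$ the roles are swapped. Hence ``which generator inverts $D$ modulo $\mho_2(D)$'' is intrinsic. This works because $Q$ and $R$ act on the cyclic group $D/\langle w\rangle$ independently of the choice of generator.
\end{itemize}

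\emph{Recovering the numbers.} Once the preferred ordering $(r,s)$ is fixed by the marking, $c=[r,s]$ is an element determined by the marking. The multiplier $Q$ is then the exponent in $c^r=w^\varepsilon c^Q$, read in the cyclic group $\langle c\rangle$ (modulo $w$ in the rank-two case). Likewise $R$ comes from $c^s$. The normal form of $s^2$ in $H=\langle r,c\rangle$ is unique by Lemma~\ref{na:cyclicextension}, and it yields $k$ and $\xi$. In $\mathcal L_0$ the element $w=r^N$ is intrinsic, so the unique form $r^{2k}c^\xi$ fixes $k\bmod N$ and $\xi\bmod M$. Any marked isomorphism carries $r,s,c,w$ to their counterparts, so it preserves $(Q,R,k,\xi)$. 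Conversely, equal data give identical presentations, so the lists are nonredundant.

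\emph{Diagonal symmetry.} For $e=f$ only $\mathcal C_0$ and $\mathcal L_0$ occur. A symmetric triple would need an automorphism exchanging $a$ and $b$. Such an automorphism sends $c=[b,a]$ to $[a,b]=c^{-1}$, and therefore conjugates the action of $b$ on $D$ to the action of $a$. The action of $b$ is $Q\equiv3\pmod4$ and that of $a$ is $R\equiv1\pmod4$, and these residues modulo $4$ are preserved under an isomorphism of cyclic groups (again modulo $w$). This is a contradiction, and it also shows that $(G,b,a)$ is a distinct marked triple, as the theorem asserts.

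The main obstacle is the family and ordering recovery, particularly separating $\mathcal C_1$ from $\mathcal C_0$ when $M=2N$. I would first try the mod-$4$ action invariant described above, since $Q\equiv3$ and $R\equiv1$ are both forced by Proposition~\ref{na:necessity}.
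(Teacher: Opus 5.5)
Your route is the paper's: a mod-$4$ multiplier invariant separates the types and the two diagonal markings, and then $c$, $w$, $Q$, $k$, $\xi$ are read off from $c^r$ and from the unique $H$-normal form of $s^2$. For the cyclic-derived types the argument is sound, since the multiplier of an automorphism of a cyclic $D$ does not depend on the chosen generator and $\Aut(D)$ is abelian.

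The gap is in the rank-two type $\mathcal L_0$, which does occur on the diagonal for $e=f\ge5$. First, your stated invariant, which generator inverts $D$ modulo $\mho_2(D)$, is unusable there. Here $D/\mho_2(D)\cong C_2\times C_4$, and the induced maps $\bar c\mapsto\bar w\bar c^{-1}$ and $\bar c\mapsto\bar w\bar c$ are both different from inversion. Second, your replacement quotient $D/\langle w\rangle$ is not canonical. The subgroup $\langle w\rangle$ is one of two non-square involution subgroups of $D$, and these are interchanged by $\Aut(D)$. Moreover $w$ is defined as $r^N$, that is, through the very ordering you are trying to show is intrinsic. On the diagonal both markings are listed, so knowing the type does not yet tell you which marked generator is $r$.

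Concretely, a hypothetical exchanging automorphism $\tau$ sends $w=b^N$ to $a^N=wc^{M/2}$; this is Proposition~\ref{na:exactness} with $M=N$. So $\tau$ induces an isomorphism $D/\langle w\rangle\to D/\langle wc^{M/2}\rangle$, not an automorphism of $D/\langle w\rangle$. Your parenthetical ``again modulo $w$'' hides exactly this step. The same issue arises when you separate a diagonal $\mathcal L_0$ entry from a reciprocal $\mathcal L_0$ entry. That separation is needed before you may say that a marked isomorphism carries $w$ to its counterpart.

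The repair is the one the paper uses: pass to the characteristic subgroup $\Omega_1(D)=\langle w,c^{M/2}\rangle$. Then $D/\Omega_1(D)$ is cyclic of order $M/2\ge8$, the $w$ factors disappear, and the two marked generators act by $Q$ and $R$. Every automorphism of $G$ preserves this quotient. Since automorphisms of a cyclic group commute, an exchange would force $Q\equiv R\pmod{M/2}$, which is impossible modulo $4$. The same quotient separates reciprocal from non-reciprocal entries. Alternatively, check directly that $a$ acts on $D/\langle wc^{M/2}\rangle$ by $R+M/2\equiv1\pmod4$. With this change the rest of your recovery argument goes through as written.
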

\begin{proof}
First, $|D|=2M$ distinguishes $\mathcal L_0$ from the two
cyclic-derived types, where $|D|=M$.

For a cyclic derived group, the multiplier of a distinguished
generator on $D$ modulo four is intrinsic: replacing a generator
of $D$ by an odd power does not change that multiplier.
In $\mathcal C_0$, the longer marked generator $a$ has multiplier
$R\equiv1\pmod4$, whereas in $\mathcal C_1$ it has multiplier
$Q\equiv3\pmod4$.
This separates the types when both occur at $e=f+1$.

For $\mathcal L_0$, use the characteristic subgroup
$\Omega_1(D)$, the subgroup of elements of order at most two.
The quotient $D/\Omega_1(D)$ is cyclic of order $M/2$.
The $w$ factors disappear in this quotient, so the two preferred
generators have multipliers $Q$ and $R$, respectively.
Their residues modulo four are intrinsic as well, since $M\ge16$.
On the diagonal, in either type the two distinguished generators
therefore induce different multipliers modulo four, one three and
one one. Exchanging the marking changes this invariant.
The two diagonal orientations are consequently distinct.
This also proves that no automorphism interchanges the two
distinguished generators, so none of these dessins is symmetric.

Once the type and marking are fixed, the preferred ordering $(r,s)$
is determined. Then $c=[r,s]$ is determined by the triple.
In the rank-two case $w=r^N$ is determined too.
The relation $c^r=w^\varepsilon c^Q$ gives the unique residue $Q$.
The unique normal form of $s^2$ in $H$ gives $2k,\xi$,
hence $k\in\Z_N$ and $\xi\in\Z_M$.
The period of the $r$ coordinate is $2N$ in the first two types;
it is $M=2N$ in the third, so the same conclusion holds there.
Thus an isomorphism of triples preserves every canonical parameter.
In particular the two lifts of $\xi_0$ are different marked triples,
although they yield the same $R$.

Conversely, equal type, marking and parameters give the identical
presentation with the identical distinguished pair.
This proves both uniqueness and the stated symmetry assertion.
\end{proof}

\begin{proof}[Completion of Theorem~\ref{na:main}]
Janko's theorem and Proposition~\ref{na:survivors} exhaust the relevant
structural groups and their possible generating cosets.
Proposition~\ref{na:necessity} converts every exact pair to a displayed
canonical type.
Lemma~\ref{na:root} solves its remaining congruences.
The construction and Propositions~\ref{na:exactness},\ref{na:frattini}
prove that every admissible parameter really gives an exact,
nonmetacyclic group with nonabelian Frattini subgroup.
Proposition~\ref{na:unique} removes all marked repetitions.

Finally, examples exist in every range $e\ge5,f\ge3$:
in $\mathcal C_0$ take $Q=-1+M/4$.
This residue is three modulo four, has $Q^2\ne1$ modulo $M$,
and has order four, hence $Q^N=1$ since $N\ge4$.
Choose any odd $k$ and either root lift.
The empty ranges were established in Proposition~\ref{na:necessity}.
\end{proof}

\subsection{The nilpotency class of the Frattini subgroup}\label{na:invariants}
\begin{proposition}
For an admissible datum put
\[
 v=v_2(Q^2-1),\qquad h=e-1.
\]
Then
\[
 \cl(K)=\left\lceil\frac{h}{v}\right\rceil .
\]
In particular, nonabelian Frattini subgroups of class greater than
two occur in the classification.
\end{proposition}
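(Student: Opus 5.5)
We work with the presentations \eqref{na:Ksplit} and \eqref{na:Knonsplit} of Proposition~\ref{na:frattini}. In both, $K=\langle A,c\rangle$ has the cyclic normal subgroup $C=\langle c\rangle\cong C_M$ of order $2^h$. Conjugation by $A$ acts on $C$ as multiplication by $m:=Q^2$, and $m\equiv1\pmod8$ with $v_2(m-1)=v\ge3$. Since $K/C$ is cyclic, generated by the image of $A$, the derived subgroup and the lower central series live inside $C$. The plan is to show by induction that
\[
 \gamma_i(K)=\langle c^{(m-1)^{i-1}}\rangle=\langle c^{2^{(i-1)v}}\rangle\qquad(i\ge2),
\]
with the convention that this is trivial once $(i-1)v\ge h$.

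The base case $i=2$ comes from the standard fact for a group generated by $A$ and $c$ with $\langle c\rangle$ normal. The group $K'$ is the normal closure of $[c,A]=c^{m-1}$. The subgroup $\langle c^{m-1}\rangle$ is characteristic in $C$, hence normal in $K$, so $K'=\langle c^{m-1}\rangle=\langle c^{2^v}\rangle$.

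For the inductive step, suppose $\gamma_i=\langle c^{2^{j}}\rangle$ with $j=(i-1)v$.
\begin{itemize}
\item Commutators with $c$ are trivial, since $C$ is abelian.
\item Commutators with $A$ give $[c^{2^j},A]=c^{2^j(m-1)}$.
\item The group $\gamma_{i+1}=[\gamma_i,K]$ is generated by conjugates of these commutators. Every conjugate is again a power of $c^{2^{j+v}}$, because $\langle c^{2^{j+v}}\rangle$ is characteristic in $C$.
\end{itemize}
Hence $\gamma_{i+1}=\langle c^{2^{j+v}}\rangle$. The multiplier $2^j(m-1)$ has $2$-adic valuation exactly $j+v$, so this is an equality rather than just an inclusion.

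Next we must check that the nonsplit relation $A^N=c^{M/2}$ in $\mathcal C_1$ does not interfere. It does not, because the description uses only the normal cyclic subgroup $C$ and the action of $A$ on it, and $c^{M/2}$ is central. The group $\gamma_i$ is trivial exactly when $(i-1)v\ge h$. The class is therefore the least $n$ with $nv\ge h$, which is $\lceil h/v\rceil$. This also covers the abelian boundary: $K$ is nonabelian precisely when $v<h$, so the class is at least two.

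For the final assertion we need an admissible datum with $v<h/2$, which makes the class at least three. The existence construction at the end of Theorem~\ref{na:main} uses $\mathcal C_0$ with $Q=-1+M/4$, which gives $v=h-1$ and class two. Instead, take $Q=3$. Then $Q\equiv3\pmod4$ and $v_2(9-1)=3$, so $v=3$. We also need $Q^N\equiv1\pmod M$, which by Lemma~\ref{na:normlemma} means $v_2(3^N-1)=2+(f-1)\ge e-1$. For instance $e=f=8$ gives $h=7$, $v=3$ and class $\lceil 7/3\rceil=3$. Any odd $k$ and either root lift from Lemma~\ref{na:root} then complete the datum.

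The main obstacle is the careful justification that the normal closure adds nothing beyond powers of $c$. This rests on the characteristic-subgroup argument together with the exact valuation computation $v_2(m-1)=v$ from Lemma~\ref{na:normlemma}.
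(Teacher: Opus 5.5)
Your proof is correct and follows the same route as the paper: you compute $\gamma_i(K)=\langle c^{(Q^2-1)^{i-1}}\rangle$ inductively from $c^A=c^{Q^2}$ inside the cyclic normal subgroup $\langle c\rangle$ of order $2^h$ and read off $\lceil h/v\rceil$, and your normal-closure justification is more careful than the paper's. Your class-three example with $Q=3$ in $\mathcal C_0$ at $e=f=8$ is also valid, since $v_2(3^{128}-1)=9\ge7$; the paper uses the same multiplier at $(e,f)=(8,6)$ instead.
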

\begin{proof}
The presentations \eqref{na:Ksplit},\eqref{na:Knonsplit} have
$K=\langle A,c\rangle$ and $c^A=c^{Q^2}$.
Thus $K'=\langle c^{Q^2-1}\rangle$.
Further commutation with $c$ is trivial, and commutation with
$A$ multiplies the exponent by $Q^2-1$.
Inductively,
\[
 \gamma_j(K)=\langle c^{(Q^2-1)^{j-1}}\rangle\qquad(j\ge2).
\]
Since $|c|=2^h$, $\gamma_{d+1}(K)=1$ exactly when $dv\ge h$.
The least such $d$ is the displayed ceiling.
The assumption $Q^2\ne1\pmod M$ ensures $v<h$, so this is
a nonabelian class of at least two.

For example, take $e=8,f=6$ and $Q=3$ in $\mathcal C_0$.
Here $M=128,N=32$, and $3^{32}=1\pmod{128}$ by
\eqref{na:qval}, while $v_2(3^2-1)=3$.
The class is $\lceil7/3\rceil=3$, proving the last assertion.
\end{proof}
\subsection{Counting the canonical multipliers}
\begin{lemma}[Number of admissible multipliers]\label{na:qcount}
For $M=2^{e-1}\ge8$ and $j\ge1$,
\[
 \#\{Q\in\Z_M:Q\equiv3\pmod4,\ Q^{2^j}=1\}
      =2^{\min(e-3,j)}.
\]
Exactly two such multipliers have $Q^2=1$, namely
$-1$ and $-1+M/2$.
For each remaining multiplier in a canonical type, there are
exactly $N$ preferred marked triples: $N/2$ choices of odd $k$
and two root lifts $\xi$.
\end{lemma}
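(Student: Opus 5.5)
The count of multipliers comes from the structure of the unit group. For $M=2^{e-1}\ge8$ we have $(\Z/M)^\times=\langle -1\rangle\times\langle 5\rangle$, with $\langle 5\rangle\cong C_{M/4}=C_{2^{e-3}}$ equal to the units $\equiv1\pmod4$. The units $\equiv3\pmod4$ are therefore exactly $Q=-5^i$ with $i\in\Z_{2^{e-3}}$.

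Since $2^j$ is even, $Q^{2^j}=5^{i2^j}$. This equals one precisely when $2^{e-3}$ divides $i2^j$, that is, when $v_2(i)\ge e-3-j$. The number of such $i$ in the cyclic group of order $2^{e-3}$ is $2^{\min(e-3,j)}$. If one wants to avoid quoting the structure of $(\Z/M)^\times$, Lemma~\ref{na:normlemma} gives the same answer: $v_2(Q^{2^j}-1)=\lambda+j$ with $\lambda=v_2(Q+1)\ge2$. So the condition is $\lambda\ge e-1-j$, and one counts the residues $\equiv3\pmod4$ having each prescribed $2$-valuation of $Q+1$. The case $Q=-1$, with $\lambda$ infinite, is counted separately. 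The generator-based count is cleaner, so I would use it.

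For $j=1$ the formula gives $2^{\min(e-3,1)}=2$, since $e\ge4$. Lemma~\ref{roots} identifies these two roots of $Q^2=1$ with $Q\equiv3\pmod4$: they are $-1$ and $-1+M/2$, which is $\equiv3\pmod4$ because $M/2\ge4$. The other two roots of $Q^2=1$ are $\equiv1\pmod4$.

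For the final assertion, take a remaining multiplier, so that $Q^2\ne1\pmod M$ and \eqref{na:common} holds. In a canonical type the free data are the odd $k\in\Z_N$, of which there are $N/2$, and $\xi$. By Lemma~\ref{na:root}, $F_{Q,k}$ has a unique odd root $\xi_0$ modulo $M/2$, and both of its lifts modulo $M$ are admissible; once $k$ is fixed, $R$ is determined by \eqref{na:zR}. Existence of each of these triples follows from the construction together with Propositions~\ref{na:sigmaidentity} and~\ref{na:exactness}, and Proposition~\ref{na:unique} shows that distinct pairs $(k,\xi)$ give distinct marked triples. This yields $2\cdot N/2=N$ triples.

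The only point requiring care is in $\mathcal L_0$, where $k$ is a priori only fixed modulo $N/2$ by the abelianization. The normal-form recovery in Proposition~\ref{na:unique} still reads off $2k$ modulo $2N$, so $k$ is determined modulo $N$ and the count is unchanged. I expect no real obstacle here; this bookkeeping check in $\mathcal L_0$ is the main thing to verify.
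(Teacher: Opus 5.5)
Your proof is correct. For the main count you take a different route from the paper, and the rest coincides.

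\textbf{The paper's count.} The paper does not use the structure of the unit group. It applies the valuation formula \eqref{na:qval}, $v_2(Q^{2^j}-1)=v_2(Q+1)+j$, to turn $Q^{2^j}=1$ into the single congruence $Q\equiv-1\pmod{2^{\max(2,e-1-j)}}$. It then counts the residues in that class modulo $2^{e-1}$, and checks $Q=-1$ by direct evaluation. It also re-derives the two involutory roots inline from the factorisation $(Q-1)(Q+1)$, rather than citing Lemma~\ref{roots} as you do.

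\textbf{Your count.} Writing $Q=-5^i$ reduces everything to counting elements of $C_{2^{e-3}}$ killed by $2^j$. This is cleaner, and it treats $Q=-1$ (namely $i=0$) uniformly with the other multipliers. The cost is that it imports the external fact that $5$ has order $2^{e-3}$ modulo $M$ and generates the units congruent to $1$ modulo $4$.

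\textbf{What each buys.} The paper's route stays within its own lemmas. As a by-product it describes the solution set explicitly as a congruence class on $Q+1$. That is exactly the form in which $\lambda=v_2(Q+1)$ and $d=\log_2\ord_M(Q)=h-\lambda$ are used later: in Lemma~\ref{tvaluation}, and in the exact-order multiplier counts in Theorem~\ref{nacircularcounts}.

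\textbf{The final assertion.} The count of $N$ triples per remaining multiplier is handled the same way in both, through Lemma~\ref{na:root} and Proposition~\ref{na:unique}. Your remark that $k$ is recovered modulo $N$ in $\mathcal L_0$ matches the observations the paper makes in Propositions~\ref{na:necessity} and~\ref{na:unique}.
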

\begin{proof}
By \eqref{na:qval}, the power condition is equivalent to
$Q=-1\pmod{2^{e-1-j}}$ when that modulus exceeds four.
Together with $Q\equiv-1\pmod4$, it is
\[
 Q=-1\pmod{2^{\max(2,e-1-j)}}.
\]
The number of these residues modulo $2^{e-1}$ is
$2^{e-1-\max(2,e-1-j)}=2^{\min(e-3,j)}$.
If the modulus has already dropped to four, this simply counts
all residues congruent to three modulo four.
The assertion includes $Q=-1$ by direct evaluation.

The roots of $Q^2=1$ modulo a power of two at least eight are
$\pm1,\ \pm1+M/2$.
This follows from $(Q-1)(Q+1)$: the two factors have greatest
common divisor two, so one must be zero modulo $M/2$.
The two roots congruent to three modulo four are precisely
the two listed.
Finally Lemma~\ref{na:root} and Proposition~\ref{na:unique} show that
each allowed multiplier gives exactly the claimed number
of distinct triples.
\end{proof}

\section{The abelian Frattini case: external symmetries and circularity}\label{abelianexternal}
\subsection{Face orders and genera}
The \emph{face order} of a regular dessin is the length of a cycle of
the face permutation on its edge labels. Its geometric face boundary
has twice that many edge sides. Thus a face order of two means a
quadrilateral boundary, not a digon.

\begin{proposition}\label{faces}
For a six-residue datum, put
\begin{equation}\label{zcoords}
 z=(1+p+Pq,\ 1+Qq)\in\Z_M\times\Z_N.
\end{equation}
Then the face order and genus are
\begin{align}
 \ell&=|ab|=2\,\ord(z),\label{faceorder}\\
 \ord(z)&=\operatorname{lcm}\left(
 \frac{M}{\gcd(M,1+p+Pq)},\
 \frac{N}{\gcd(N,1+Qq)}\right),\label{zorder}\\
 g&=1+\frac{4MN-2M-2N-4MN/\ell}{2}.\label{genus}
\end{align}
Here a zero coordinate contributes order one; equivalently
$\gcd(L,0)=L$.
\end{proposition}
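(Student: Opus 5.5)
The plan is to identify the face order with the order of a single element of $G$, and then to read that order off from its square, which lies in the abelian group $K$.

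First, by the proof of Proposition~\ref{dessindictionary}(c), a face boundary is traced by successive powers of $c=ba$, so the face permutation on edge labels has cycles of length $|ba|$. Since $ba=a^{-1}(ab)a$, we have $|ba|=|ab|$, and therefore $\ell=|ab|$.

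Second, I compute $(ab)^2$ in $K$. Lemma~\ref{dsquares}, formula \eqref{squareab} with $w=0$, gives $(ab)^2=x+y+t^\alpha$. From \eqref{coaction} we have $t^\alpha=(p+Pq,\,Qq)$. Hence $(ab)^2=(1+p+Pq,\,1+Qq)=z$ in the additive coordinates of $K=\Z_M\times\Z_N$.

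Third, $ab\notin K$, because its image in $G/K\cong C_2^2$ is the nontrivial element $\bar a+\bar b$. So every odd power of $ab$ lies outside $K$ and is not the identity, which means $|ab|$ is even. For the even powers, $(ab)^{2j}=jz$, so the least $j>0$ with $(ab)^{2j}=1$ is $\ord(z)$. This gives \eqref{faceorder}.

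Fourth, the order of an element of the direct product $\Z_M\times\Z_N$ is the lcm of the orders of its coordinates. A coordinate $u\in\Z_L$ has order $L/\gcd(L,u)$, with the convention $\gcd(L,0)=L$ covering a zero coordinate. This gives \eqref{zorder}.

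Finally, the genus comes from Euler's formula. The dessin has $2M+2N$ vertices (the rotation orders are $2M$ and $2N$, and $K_{2M,2N}$ has that many vertices), $|G|=4MN$ edges, and $4MN/\ell$ faces, since each face uses $\ell$ edge labels and $G$ acts regularly. Then $2-2g=(2M+2N)-4MN+4MN/\ell$, which rearranges to \eqref{genus}.

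The only point requiring care is the coordinate bookkeeping in the second step: the formula needs $t^\alpha$, computed with the right-action convention of \eqref{coaction}, and not $t$ or $[b,a]$. Everything else is routine.
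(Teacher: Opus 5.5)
Your proof is correct and follows the paper's argument essentially step for step. You identify the face order with $|ab|$ via the conjugacy of $ab$ and $ba$, read $(ab)^2=x+y+t^\alpha=z$ from \eqref{squareab}, use $ab\notin K$ to get $\ell=2\ord(z)$, and finish with coordinate orders and Euler's formula. The only cosmetic difference is that you take the face cycles from the boundary description in Proposition~\ref{dessindictionary}(c), whereas the paper uses the product of the two left-multiplication vertex permutations.
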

\begin{proof}
In the edge-label model of Proposition~\ref{dessindictionary},
the vertex permutations are left multiplication by $a$ and by $b$.
Their right-composed product is left multiplication by $ba$.
The inverse of this product is a possible face-permutation convention;
inversion does not change cycle lengths, and $ba$ is conjugate to $ab$.
Thus the face order is $|ab|$.

Formula \eqref{squareab} at $w=0$ gives
$(ab)^2=x+y+t^\alpha$, whose coordinates are exactly \eqref{zcoords}.
Since $ba=(ab)^a$, its square is $z^\alpha$ and has the same
order. The displayed square formula is for $ab$ itself.
The coset $abK$ has order two and is nontrivial, so $ab$ has even
order. For an element of even order, the order of its square is
half its order. This proves \eqref{faceorder}.
In an additive cyclic group $\Z_L$, the order of a residue $r$ is
$L/\gcd(L,r)$: this is the least positive $k$ such that $L\mid kr$.
The order in a direct product is the least common multiple of the
two coordinate orders, giving \eqref{zorder}.

There are $2M+2N$ vertices and $4MN$ edges.
Regularity makes every face cycle have length $\ell$ on the
$4MN$ edge labels, so there are $4MN/\ell$ faces.
Substituting these values into
$2-2g=V-E+F$ gives \eqref{genus}.
\end{proof}

\begin{corollary}[Faces of family \textup{(I)}]\label{Ifaces}
For the sixteen data of family \textup{(I)},
\[
 z=((\lambda+\rho)M/2,\ (\mu+\sigma)N/2).
\]
Four data have face order two and twelve have face order four.
\end{corollary}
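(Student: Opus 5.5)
The plan is to substitute the family (I) parameters directly into \eqref{zcoords} and then read off the face order from \eqref{faceorder} and \eqref{zorder}. Throughout, $M,N\ge8$ as in Theorem~\ref{largefamilies}. Recall the data:
\[
R=-1+\rho M/2,\quad Q=-1+\sigma N/2,\quad p=-1+\lambda M/2,\quad q=1+\mu N/2,
\]
with $P=-(1+R)p=-\rho(M/2)p$ by \eqref{PS}.

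\emph{First coordinate.} We have
\[
1+p+Pq=\lambda M/2-\rho(M/2)pq.
\]
Since $pq$ is odd, $(M/2)pq\equiv M/2\pmod M$. Hence the first coordinate is $(\lambda-\rho)M/2$, and because $-M/2\equiv M/2$ this equals $(\lambda+\rho)M/2$ modulo $M$.

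\emph{Second coordinate.} Expanding,
\[
1+Qq=1+(-1+\sigma N/2)(1+\mu N/2)=(\sigma-\mu)N/2+\sigma\mu N^2/4.
\]
The last term is a multiple of $N$ because $N\ge4$. So modulo $N$ the second coordinate is $(\sigma-\mu)N/2\equiv(\mu+\sigma)N/2$. This gives the displayed formula for $z$.

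\emph{Counting face orders.} Each coordinate of $z$ is either $0$ or the unique element of order two in its cyclic factor. So $\ord(z)=1$ exactly when $\lambda\equiv\rho$ and $\mu\equiv\sigma\pmod 2$, and $\ord(z)=2$ otherwise. Since $\rho,\sigma,\lambda,\mu\in\{0,1\}$ are independent, exactly $2\cdot2=4$ of the sixteen choices satisfy both equalities. By \eqref{faceorder} these four data have face order $2$, and the remaining twelve have face order $4$.

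The only step needing care is the reduction of the products. First, the $\rho$-term in the first coordinate uses oddness of $pq$, which holds since $p,q$ are odd throughout Section~\ref{arithmetic}. Second, the cross term $\sigma\mu N^2/4$ must vanish modulo $N$, which uses $N\ge4$. Both are immediate in the range $e,f\ge4$ of Theorem~\ref{largefamilies}.
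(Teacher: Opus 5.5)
Your proof is correct and follows essentially the same route as the paper: you substitute the family (I) parameters into \eqref{zcoords}, use oddness of $p,q$ to reduce the first coordinate and $N\ge4$ to kill the cross term $\sigma\mu N^2/4$, and then count the four choices with $\lambda=\rho$, $\mu=\sigma$. The only cosmetic difference is that the paper first reduces $P$ to $\rho M/2$ and then multiplies by $q$, whereas you reduce the product $(M/2)pq$ in one step.
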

\begin{proof}
Using \eqref{PS},
$P=-\rho(M/2)p$. Since $p$ is odd, this is $\rho M/2$ modulo $M$.
Also $q$ is odd, so $Pq=\rho M/2$.
Thus the first coordinate of \eqref{zcoords} is
$(\lambda+\rho)M/2$.
For the second, expand
\[
 1+(-1+\sigma N/2)(1+\mu N/2)
   =(\sigma-\mu)N/2+\sigma\mu N^2/4.
\]
The last term vanishes modulo $N$, and a half-modulus has order two,
so replacing $\sigma-\mu$ by $\sigma+\mu$ does not change the residue.
The coordinate is therefore $(\sigma+\mu)N/2$.
Both coordinates vanish exactly when $\lambda=\rho$ and $\mu=\sigma$.
There are four such choices. Each of the other twelve choices gives
a nonzero element of order two, proving the stated face orders.
\end{proof}

\subsection{Mirror images and reflexibility}
A dessin is \emph{reflexible} if it has a colour-preserving symmetry
reversing the orientation of the supporting surface, or equivalently
if it is isomorphic to its mirror image. This allows a symmetry
outside the orientation-preserving group $G$ used in the classification.
The mirror triple is $(G,a^{-1},b^{-1})$.

\begin{proposition}[The reflexibility criterion]\label{reflexcriterion}
For any admissible six-residue datum with $M,N\ge2$, the dessin is
reflexible if and only if
\begin{equation}\label{reflexnorm}
 t^{\alpha\beta}=t^{-1}.
\end{equation}
Equivalently its coordinates satisfy
\begin{equation}\label{reflexcoordinates}
 P(Rq-1)=0\pmod M,\qquad S(p+1)=0\pmod N.
\end{equation}
In the square basis belonging to the inverse generators, the mirror
operation on six-residue data is
\begin{equation}\label{mirrordata}
 (P,Q,R,S,p,q)\longmapsto
 \bigl(P,Q,R,S,-R(p+Pq),-Sp-Qq\bigr),
\end{equation}
where the last two entries are reduced modulo $M,N$, respectively.
\end{proposition}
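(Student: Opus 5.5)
The plan is to compute the mirror triple's six-residue data directly and then compare. Theorem~\ref{six} shows that marked triples correspond bijectively to admissible data. By Proposition~\ref{dessindictionary}(a), reflexibility means that $(G,a,b)$ and $(G,a^{-1},b^{-1})$ are equivalent marked triples. So the dessin is reflexible exactly when the six-residue datum of the mirror triple, read in its own square basis, coincides with the original datum. This is the route I would take; \eqref{mirrordata} comes out along the way, and both \eqref{reflexnorm} and \eqref{reflexcoordinates} follow by comparing entries.

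\emph{Step 1: the action residues.} The new squares are $x'=a^{-2}=x^{-1}$ and $y'=b^{-2}=y^{-1}$, which in additive notation are $-x$ and $-y$. The action of $a^{-1}$ on $K$ is $\alpha^{-1}=\alpha$, because $\alpha$ is an involution; likewise $b^{-1}$ acts by $\beta$. Since $\alpha$ and $\beta$ are linear, their matrices in the basis $(-x,-y)$ agree with those in $(x,y)$. Hence $P,Q,R,S$ are unchanged.

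\emph{Step 2: the collection coefficient.} Let $t'$ be defined by $b^{-1}a^{-1}=t'\,a^{-1}b^{-1}$. We have $b^{-1}a^{-1}=(ab)^{-1}$, and $a^{-1}b^{-1}=(ba)^{-1}=(ab)^{-1}t^{-1}$. Therefore
\[
 t'=(ab)^{-1}\,t\,(ab)=t^{ab}=t^{\alpha\beta},
\]
which is \eqref{actualcomm}. Using \eqref{coaction}, first $t^\alpha=(p+Pq,Qq)$, and then $t^{\alpha\beta}=(R(p+Pq),\,S(p+Pq)+Qq)$. The term $SPq$ vanishes modulo $N$ by \eqref{A4}. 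Since the new basis is the negative of the old one, the coordinates of $t'$ in the new basis are $(-R(p+Pq),\,-Sp-Qq)$. This proves \eqref{mirrordata}.

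\emph{Step 3: comparison.} Equality of the data therefore means that $-t^{\alpha\beta}=t$ in old coordinates. Written multiplicatively this is $t^{\alpha\beta}=t^{-1}$, which is \eqref{reflexnorm}.

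\emph{Step 4: the coordinate form.} For the first coordinate, the condition is $-R(p+Pq)=p$, that is, $(1+R)p+RPq=0\pmod M$. Substituting $(1+R)p=-P$ from \eqref{A7} turns this into $P(Rq-1)=0\pmod M$. For the second coordinate, the condition is $-Sp-Qq=q$, that is, $Sp+(1+Q)q=0\pmod N$. Substituting $(1+Q)q=S$ from \eqref{A6} gives $S(p+1)=0\pmod N$. These two substitutions are reversible, so \eqref{reflexcoordinates} is equivalent to \eqref{reflexnorm}.

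I expect the main obstacle to be bookkeeping rather than ideas. The identification $t'=t^{\alpha\beta}$ has to be made with the paper's right-action conventions, with $t$ kept distinct from $[b,a]$ as the remark after Proposition~\ref{parity} warns. One must also remember that the basis change negates coordinates, and check that the $SPq$ term genuinely vanishes modulo $N$.
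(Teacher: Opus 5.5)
Your proof is correct. It uses the same key identity $b^{-1}a^{-1}=t^{\alpha\beta}a^{-1}b^{-1}$, the same negation of coordinates in the inverse square basis, and the same substitutions from \eqref{A6} and \eqref{A7} as the paper. The only difference is that you derive the criterion by comparing the mirror datum \eqref{mirrordata} with the original through the nonredundancy in Theorem~\ref{six}, whereas the paper checks directly that $a\mapsto a^{-1}$, $b\mapsto b^{-1}$ preserves the relations of \eqref{overviewpresentation} and hence defines an involutory automorphism.
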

\begin{proof}
If an automorphism $\theta$ sends $a$ to $a^{-1}$ and $b$ to
$b^{-1}$, then it sends $x,y$ to $x^{-1},y^{-1}$.
Its restriction to $K$ is therefore inversion, written additively as
$(u,v)^\delta=(-u,-v)$. This commutes with every automorphism of
the abelian group $K$. Moreover conjugation by $a^{-1}$ on $K$
is $\alpha^{-1}=\alpha$, and similarly for $b^{-1}$.
Consequently the square and action relations in the presentation
are automatically respected by the proposed substitution.

Its effect on the remaining collection relation is decisive.
We have
\[
 b^{-1}a^{-1}ba=[b,a]=t^{\alpha\beta},
 \qquad
 b^{-1}a^{-1}=t^{\alpha\beta}a^{-1}b^{-1}.
\]
Applying the substitution to $ba=t\,ab$ instead gives
$b^{-1}a^{-1}=t^{-1}a^{-1}b^{-1}$.
Thus the relation is preserved exactly when \eqref{reflexnorm}
holds. When it holds, all presentation relations are preserved,
so the substitution defines an endomorphism. Applying it twice
fixes $a,b$; it is therefore an involutory automorphism. Conversely,
an automorphism inverting $a,b$ must satisfy this relation.
Proposition~\ref{dessindictionary} now proves the criterion.

For the coordinates, \eqref{A4} gives $PS=0$ in both moduli,
and \eqref{coaction} yields
\[
 t^{\alpha\beta}=(Rp+RPq,\ Sp+Qq).
\]
The first coordinate of $t+t^{\alpha\beta}$ is
$(1+R)p+RPq=P(Rq-1)$ by \eqref{A7}.
The second is $Sp+(1+Q)q=S(p+1)$ by \eqref{A6}.
This proves \eqref{reflexcoordinates}.
For the mirror, the new squares are $x^{-1},y^{-1}$, and the new
collection coefficient is $t^{\alpha\beta}$, as computed above.
Its coordinates in that inverse square basis are the negatives
of its old coordinates. The action coordinates $P,Q,R,S$ are
unchanged, giving \eqref{mirrordata}.
\end{proof}

\begin{theorem}[All classes in this case are reflexible]
\label{allreflexible}
Every nonmetacyclic complete regular dessin with abelian Frattini subgroup is reflexible.
Consequently the number of reflexible classes of bidegree $(e,f)$
is $\Bab(e,f)$, and there are no chiral classes in the nonmetacyclic
abelian-Frattini case.
\end{theorem}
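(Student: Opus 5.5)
The plan is to reduce Theorem~\ref{allreflexible} to a finite congruence check using Proposition~\ref{reflexcriterion}. By that proposition, a datum is reflexible exactly when \eqref{reflexcoordinates} holds. I will substitute the normalisations \eqref{PS}, namely $P=-(1+R)p$ and $S=(1+Q)q$. Since $p,q$ are odd for every nonmetacyclic datum (Proposition~\ref{parity}), they are units and may be cancelled. The two conditions then become
\[
 (1+R)(Rq-1)=0\pmod M,\qquad (1+Q)(p+1)=0\pmod N.
\]
Both left sides are products of two even integers, because $R,Q,p,q$ are all odd. So the strategy is simple: in each case, show that one factor is divisible by the corresponding half-modulus, and the other factor supplies the missing power of two.

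Next I will run through the exhaustive lists, using Theorems~\ref{largefamilies} and \ref{boundary}.

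\emph{Family (I).} Here $1+R=\rho M/2$ and $1+Q=\sigma N/2$, so both conditions hold because $Rq-1$ and $p+1$ are even.

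\emph{Family (II).} The $Q$-condition holds exactly as in family (I). For the $R$-condition, the requirement $q\equiv1\pmod{\max(M,N)/2}$ together with $R\equiv1\pmod{M/2}$ gives $Rq-1\equiv0\pmod{M/2}$, and then the even factor $1+R$ finishes it.

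\emph{Family (III).} This is symmetric to family (II). The $R$-condition holds as in family (I), and $p\equiv-1\pmod{\max(M,N)/2}$ makes $p+1$ divisible by $N/2$, with $1+Q$ even.

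\emph{Boundary cases.}
\begin{enumerate}[label=\textup{(\alph*)}]
\item For $M=N=2$, both conditions hold modulo two trivially.
\item For $N=2$, the $Q$-condition is trivially satisfied. The $R$-condition holds because $R\equiv-1\pmod{M/2}$.
\item For $M=N=4$, a product of two even numbers vanishes modulo four.
\item For $N=4$ with negative-sign $R$, the argument of family (I) covers the $R$-condition, and the modulo-four argument of (c) covers the $Q$-condition. In the extra $M=8$ data, $Q=3$ gives $1+Q=4\equiv0\pmod 4$. Also $q=1$ gives $Rq-1=4\rho$, so $(1+R)(Rq-1)$ is divisible by $8$.
\end{enumerate}

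\emph{Exchanged versions.} The exchanged versions of the lists need no separate computation. Under \eqref{swap}, the pair of reduced conditions is carried to itself, with the roles of $(R,q,M)$ and $(Q,-p,N)$ interchanged. Indeed, $(1+R)(Rq-1)$ maps to $(1+Q)(-Q p-1)$, which has the same divisibility as $(1+Q)(p+1)$ up to an even correction. Alternatively, I can simply note that each exchanged datum is again covered by the case analysis above with the moduli swapped. I expect the cleanest route is the second one.

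The counting statement then follows immediately: every listed class is reflexible, so the number of reflexible classes equals $\Bab(e,f)$ from Theorem~\ref{counts}, and there are no chiral classes.

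The only delicate point is the family (II)/(III) congruence at the edge $M=2N$ (respectively $N=2M$), where one must use the stronger modulus $\max(M,N)/2$ rather than $N/2$. I will also recheck the $M=8$ exceptional boundary data explicitly, since there $1+R$ is only twice an odd number.
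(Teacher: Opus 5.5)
Your proposal is correct and follows the paper's route: reduce to \eqref{reflexcoordinates}, substitute \eqref{PS}, and check the exhaustive lists of Theorems~\ref{largefamilies} and \ref{boundary} family by family. Your factor $(1+R)(Rq-1)$ is congruent to the paper's $(1+R)(q-1)$ in \eqref{reflexsimplification}, because $R^2=1$. One small repair is needed in your paragraph on exchanged lists. There the discrepancy between $(1+Q)(Qp+1)$ and $(1+Q)(p+1)$ is $(Q^2-1)p$, which vanishes modulo $N$ because $Q^2=1\pmod N$; an ``even correction'' alone would not suffice. The paper avoids this computation altogether by noting that an automorphism inverting $a,b$ also inverts the generators of $(G,b,a)$, so reciprocity preserves reflexibility.
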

\begin{proof}
It suffices to verify \eqref{reflexcoordinates} on the exhaustive
lists of Theorem~\ref{largefamilies} and Theorem~\ref{boundary}.
An identity useful for this purpose is
\begin{equation}\label{reflexsimplification}
 P(Rq-1)=-(1+R)p(q-1)\pmod M.
\end{equation}
Indeed $P=-(1+R)p$, and $R^2=1$ implies
$(1+R)R=1+R$ modulo $M$.

In family (I), $P=\rho M/2$ and $S=\sigma N/2$.
Both $Rq-1$ and $p+1$ are even. Multiplying them by the respective
half-moduli gives zero, so both reflexibility equations hold.

In family (II), $q-1$ is divisible by $\max(M,N)/2$ and
$1+R$ is even, proving the first equation by
\eqref{reflexsimplification}. Also $S=\sigma N/2$ if $M\le N$,
whereas $S=0$ if $M=2N$. As $p+1$ is even, the second equation
holds in both cases. Family (III) consists of reciprocals of
family (II). A generator-inverting automorphism for $(G,a,b)$
also inverts the generators of $(G,b,a)$, so reflexibility is
preserved by reciprocity; this proves the assertion for (III).

For the boundary lists, if $M=N=2$ then $P=S=0$.
If $N=2,M\ge4$, then $S=0$ and $P$ is either $0$ or $M/2$;
again $Rq-1$ is even. If $M=N=4$, then $P,S$ are even and
each multiplier in \eqref{reflexcoordinates} is even, giving
zero modulo four. For the first sixteen data with $N=4,M\ge8$,
we have $P\in\{0,M/2\}$ and $p+1$ divisible by $M/2$, hence by
four. These imply the two congruences. In the eight extra data
at $M=8,N=4$, $q=1$ and $S=0$, so both congruences follow
immediately from \eqref{reflexsimplification}.
Their reciprocal lists are reflexible as well. Every possible
nonmetacyclic datum has now been covered.
\end{proof}

The generator-inverting automorphism is unique, because $a,b$
generate $G$. It has order two, since $|a|,|b|\ge4$ in this
case. Thus the full colour-preserving symmetry group, allowing
orientation reversal, is a split extension $G\rtimes C_{\mathrm{ab},2}$ of
order $2|G|=8MN$. To see the splitting geometrically, choose the
orientation-reversing symmetry that fixes the distinguished edge.
It reverses both vertex rotations; its square fixes the edge and
preserves the orientation, and is therefore the identity.
\subsection{A coordinate criterion for circularity}
Circularity is stricter than a bound on the face order. For example,
a face can have eight sides and revisit a vertex. The following
criterion detects the repeated vertices exactly.

\begin{proposition}[The circularity criterion]\label{circularcriterion}
Write $z=(z_1,z_2)$ as in \eqref{zcoords}, and put
\[
 d_1=\frac{M}{\gcd(M,z_1)},\qquad
 d_2=\frac{N}{\gcd(N,z_2)}.
\]
For any admissible six-residue datum, the dessin is circular if and
only if $d_1=d_2$. Equivalently,
\begin{equation}\label{circularcoordinates}
 \frac{M}{\gcd(M,1+p+Pq)}
 =\frac{N}{\gcd(N,1+Qq)}.
\end{equation}
If these equal orders are $d$, every face boundary is a simple
cycle of length $4d$.
\end{proposition}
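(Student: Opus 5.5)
By Proposition~\ref{dessindictionary}(c), circularity is equivalent to $\langle a\rangle\cap\langle ab\rangle=\langle b\rangle\cap\langle ab\rangle=1$. The plan is to reduce both intersections to intersections inside the abelian kernel $K=\Z_M\times\Z_N$, and then to compare orders of cyclic subgroups generated by $z=(ab)^2$.

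\emph{Reduction to $K$.} The odd powers of $ab$ lie in the coset $abK$. Every element of $\langle a\rangle$ lies in $K$ or in $aK$, and these cosets differ from $abK$ because $G/K\cong C_2^2$ has basis the images of $a,b$. Hence $\langle a\rangle\cap\langle ab\rangle$ consists of even powers of $ab$ only, and lies in $\langle a\rangle\cap K=\langle x\rangle$. In the same way $\langle b\rangle\cap\langle ab\rangle=\langle y\rangle\cap\langle z\rangle$. By \eqref{squareab} with $w=0$, the element $z=(ab)^2$ has coordinates \eqref{zcoords}. Additively, $\langle x\rangle=\Z_M\times0$ and $\langle y\rangle=0\times\Z_N$.

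\emph{Comparing $d_1$ and $d_2$.} A multiple $jz$ lies in $\Z_M\times0$ exactly when $d_2\mid j$. It is then nonzero exactly when $d_1\nmid j$. So $\langle x\rangle\cap\langle z\rangle\ne0$ if and only if some multiple of $d_2$ is not a multiple of $d_1$. Since both are powers of two, this means $d_1>d_2$. Symmetrically, $\langle y\rangle\cap\langle z\rangle\ne0$ if and only if $d_2>d_1$. Both intersections are therefore trivial precisely when $d_1=d_2$. Rewriting $d_1,d_2$ through \eqref{zcoords} gives \eqref{circularcoordinates}.

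\emph{Face length.} If $d_1=d_2=d$, then $\ord(z)=\operatorname{lcm}(d_1,d_2)=d$. Proposition~\ref{faces} then gives face order $\ell=2d$, so each boundary walk has $2\ell=4d$ edge sides. The proof of Proposition~\ref{dessindictionary}(c) shows that under the trivial-intersection condition all vertices along the walk are distinct. Hence every face boundary is a simple cycle of length $4d$.

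The only delicate step is the coset argument that excludes odd powers of $ab$. Once that is done, the rest is the elementary order comparison in a product of two cyclic $2$-groups.
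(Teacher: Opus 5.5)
Your proof is correct and follows the paper's argument: the same coset reduction in $G/K$ to $\langle x\rangle\cap\langle z\rangle$ and $\langle y\rangle\cap\langle z\rangle$, then an order comparison in $\Z_M\times\Z_N$, then Proposition~\ref{faces} for the length $4d$. The only difference is in the comparison step: the paper uses the projection of $\langle z\rangle$ onto $\langle y\rangle$, whose kernel has order $d/d_2$, while you use an equivalent divisibility test on multiples of $z$.
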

\begin{proof}
By Proposition~\ref{dessindictionary}, circularity is equivalent
to trivial intersections of $\langle ab\rangle$ with both vertex
rotation subgroups.

The image of $ab$ in $G/K$ is the sum of the two nonzero basis
vectors. Its even powers lie in $K$ and its odd powers do not.
If a power of $ab$ lies in $\langle a\rangle$ or $\langle b\rangle$,
comparison in $G/K$ forces that power to be even and forces the
power of $a$ or $b$ to be even as well. Since $(ab)^2=z$, it follows
that
\[
 \langle ab\rangle\cap\langle a\rangle
      =\langle z\rangle\cap\langle x\rangle,
 \qquad
 \langle ab\rangle\cap\langle b\rangle
      =\langle z\rangle\cap\langle y\rangle.
\]
The cyclic subgroup $\langle z\rangle$ has order
$d=\operatorname{lcm}(d_1,d_2)$.
Its projection to $\langle y\rangle$ has image of order $d_2$
and kernel $\langle z\rangle\cap\langle x\rangle$, so that
intersection has order $d/d_2$. The other intersection similarly
has order $d/d_1$. Both are trivial exactly when $d_1=d_2=d$.
Finally the face order is $2d$ by Proposition~\ref{faces}, and
the boundary has twice that many sides, giving $4d$.
\end{proof}
\subsection{The circular subfamilies}
The coordinate-order test now isolates the circular dessins without
any further group isomorphism tests.

\begin{theorem}[Circular data in the three large families]
\label{circularfamilies}
Assume $M,N\ge8$. The circular data in
Theorem~\ref{largefamilies} are exactly the following.
\begin{enumerate}[label=\textup{(\Roman*)},leftmargin=2.5em]
\item In family (I), require
\begin{equation}\label{circularI}
 \lambda+\rho=\mu+\sigma\pmod2.
\end{equation}
There are eight circular data, four with face order two and
four with face order four.
\item In family (II), if $M\le N$, write $q=1+\mu N/2$.
Require
\begin{equation}\label{circularII}
 p=\begin{cases}
 R\pmod M,&\mu=\sigma,\\
 R(1-M/2)\pmod M,&\mu\ne\sigma.
 \end{cases}
\end{equation}
There are eight circular data, four of each face order two and
four. If $M=2N$, require $p=R$; there are two circular data,
both with face order two. For $M>2N$ this family is absent.
\item In family (III), if $N\le M$, write $p=-1+\lambda M/2$.
Require
\begin{equation}\label{circularIII}
 q=\begin{cases}
 -Q\pmod N,&\lambda=\rho,\\
 Q(N/2-1)\pmod N,&\lambda\ne\rho.
 \end{cases}
\end{equation}
There are eight circular data, four of each face order two and
four. If $N=2M$, require $q=-Q$; there are two circular data,
both with face order two. For $N>2M$ this family is absent.
\end{enumerate}
In particular every circular dessin in these families has
quadrilateral or octagonal face boundaries.
\end{theorem}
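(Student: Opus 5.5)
The plan is to apply the circularity criterion of Proposition~\ref{circularcriterion} to each family of Theorem~\ref{largefamilies}. We compute $z=(1+p+Pq,\,1+Qq)$ explicitly with $P,S$ from \eqref{PS}, and then impose $d_1=d_2$. Every datum in these families will turn out to have both coordinates of $z$ lying in the subgroups of order at most two, or being controlled by a single free residue. The test therefore reduces to a parity or congruence condition on the free parameters. The face order $2d$ then comes from Proposition~\ref{faces}.

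For family (I), Corollary~\ref{Ifaces} already gives $z=((\lambda+\rho)M/2,(\mu+\sigma)N/2)$. Each coordinate has order one or two, according to the parity of the coefficient. Hence $d_1=d_2$ holds exactly when $\lambda+\rho\equiv\mu+\sigma\pmod2$. This leaves eight data: four with both coordinates zero, so $d=1$ and face order two, and four with both of order two, so face order four.

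For family (II), $R=1+\rho M/2$ gives $1+R=2+\rho M/2$, so $P=-(2+\rho M/2)p$. With $q=1+\mu N/2$ (or $q=1$ when $M=2N$) the second coordinate is $1+Qq=1+(-1+\sigma N/2)(1+\mu N/2)\equiv(\sigma+\mu)N/2\pmod N$, exactly as in Corollary~\ref{Ifaces}, so $d_2\in\{1,2\}$. The first coordinate is $1+p+Pq=1+p-(2+\rho M/2)pq$. Since $M\le N$, the term $\mu N/2$ in $q$ times the even number $1+R$ vanishes modulo $M$, so $1+p+Pq\equiv1+p-(1+R)p=1-Rp\pmod M$. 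Using $R^2=1$, the condition $d_1=1$ becomes $p=R$, and $d_1=2$ becomes $Rp=1+M/2$, that is $p=R(1+M/2)=R(1-M/2)$. Matching $d_1$ with $d_2$ then yields \eqref{circularII}. For each of the four choices of $(\rho,\sigma)$ and two choices of $\mu$ exactly one $p$ survives, giving eight data, split four and four by face order. When $M=2N$ we have $\sigma=0$ and $q=1$, so $d_2=1$ and we need $p=R$: this gives two data (one for each $\rho$), both of face order two. Family (III) follows by applying the exchange \eqref{swap}. It maps circular data to circular data, because Proposition~\ref{dessindictionary}(c) is symmetric in $a,b$, and it converts \eqref{circularII} into \eqref{circularIII} via $(p,q)\mapsto(-q,-p)$ with $R\leftrightarrow Q$. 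Alternatively, one can repeat the same computation directly.

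The main obstacle is the bookkeeping in family (II): one must reduce $1+p+Pq$ correctly modulo $M$ while tracking the cross terms $\rho M/2\cdot pq$ and $(1+R)\mu N/2$, and confirm that they vanish or contribute only $M/2$ as claimed. The cleanest route is to use the identity $(1+R)R=1+R$, as in \eqref{reflexsimplification}, to get $1+p+Pq\equiv1-Rp+(1+R)p(1-q)$, and then to note that the last term vanishes because $1+R$ is even and $q-1$ is divisible by $M/2$. Finally, since $d\le2$ throughout, the face boundaries have $4d\in\{4,8\}$ sides, as stated.
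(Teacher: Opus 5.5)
Your proposal is correct and matches the paper's proof for families (I) and (II): the same $z$ from Corollary~\ref{Ifaces}, the same reduction $z_1\equiv 1-Rp\pmod M$ with the cross term $(1+R)p\mu N/2$ killed by $M\mid N$, and the same matching of coordinate orders. The only deviation is that you obtain (III) from (II) through the exchange \eqref{swap} rather than by the paper's direct substitution $z=((\lambda+\rho)M/2,\,1+Qq)$; this works, but the justification should not be that Proposition~\ref{dessindictionary}(c) is literally symmetric in $a,b$ — it is that $\langle ba\rangle=a^{-1}\langle ab\rangle a=b\langle ab\rangle b^{-1}$, so both intersections stay trivial and the face order $|ba|=|ab|$ is unchanged.
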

\begin{proof}
In family (I), Corollary~\ref{Ifaces} gives
$z=((\lambda+\rho)M/2,(\mu+\sigma)N/2)$.
Each coordinate has order one when its binary sum is even and
order two when its binary sum is odd. Thus their orders agree
exactly in \eqref{circularI}. Both vanish in four choices; both
are nonzero in four choices. Proposition~\ref{circularcriterion}
and the face-order formula give the assertion.

In family (II) with $M\le N$, direct substitution gives
\[
 z_1=1-Rp\pmod M,
 \qquad z_2=(\sigma+\mu)N/2\pmod N.
\]
For the first equality, the extra term
$(1+R)p\mu N/2$ is divisible by $M$, since $1+R$ is even and
$M\mid N$. For the second, expand $1+Qq$ and use $N\mid N^2/4$.
If $\sigma=\mu$, the second coordinate is zero, so circularity
requires $z_1=0$. As $R^{-1}=R$, this means $p=R$.
If $\sigma\ne\mu$, the second coordinate has order two, so
$z_1$ must be the unique residue of order two modulo $M$, namely
$M/2$. This gives $p=R(1-M/2)$. All these residues are odd.
The two choices of $\rho$ and the four choices of $(\sigma,\mu)$
give eight data, half of each face order.
If $M=2N$, then $\sigma=0,q=1$ and $z=(1-Rp,0)$.
Only $p=R$ is circular, one choice for each of the two values of
$\rho$, and both have face order two.

In family (III) with $N\le M$, substitution gives
\[
 z_1=(\lambda+\rho)M/2\pmod M,
 \qquad z_2=1+Qq\pmod N.
\]
When $\lambda=\rho$, the first coordinate is zero, so the second
must vanish: $q=-Q$, using $Q^{-1}=Q$.
When $\lambda\ne\rho$, the first coordinate has order two, so
$z_2=N/2$, giving $q=Q(N/2-1)$. Again all chosen residues are
odd. The binary choices give four data with zero $z$ and four
with both coordinates nonzero of order two.
If $N=2M$, then $\rho=0,p=-1$ and $z_1=0$; only $q=-Q$
is circular, for the two values of $\sigma$. These have face
order two. The nonexistence statements are those of
Theorem~\ref{largefamilies}. This covers every datum in the
three families.
\end{proof}

For completeness, the boundary lists admit the following equally
explicit circular restrictions. Reciprocal lists are obtained by
exchanging $a,b$ and the moduli as in \eqref{swap}.
\begin{enumerate}[label=\textup{(\alph*)},leftmargin=2.5em]
\item For $M=N=2$, the unique datum is circular, with face order two.
\item For $N=2,M\ge4$, require $p=R$. Exactly two data are
circular, both with face order two.
\item For $M=N=4$, require
$p=R$ if and only if $q=-Q$. Exactly eight data are circular,
four with face order two and four with face order four.
\item For the first sixteen data with $N=4,M\ge8$, require
$\lambda=\rho$ if and only if $q=-Q\pmod4$.
There are eight circular data, four of each face order.
For the additional eight data at $M=8,N=4$, require $p=R$.
There are two further circular data, both with face order two.
\end{enumerate}
Here is a verification of all four restrictions. In (a), $z=0$.
In (b), $z=(1-Rp,0)$, so it must vanish, giving $p=R$.
In (c), each coordinate is zero or two modulo four.
The first is zero exactly when $p=R$, and the second exactly
when $q=-Q$. Equal coordinate orders therefore give the stated
biconditional. For each of the four pairs $(R,Q)$, one choice
has both coordinates zero and one has both nonzero.
In the first part of (d), $z_1=(\lambda+\rho)M/2$ and
$z_2=1+Qq\pmod4$, so the same comparison proves the rule.
In the extra data $z=(1-Rp,0)$; each of the two roots $R$
permits exactly the one circular value $p=R$.

\subsection{Enumeration of circular dessins}
Let $C_{\mathrm{ab},2}(e,f)$ and $C_{\mathrm{ab},4}(e,f)$ denote the numbers of circular
classes in this case with face orders two and four, respectively.
Thus their geometric face boundaries have four and eight sides.
Put $C_{\rm ab}(e,f)=C_{\mathrm{ab},2}(e,f)+C_{\mathrm{ab},4}(e,f)$.

\begin{theorem}[Circular counts and genera]\label{circularcounts}
There are no circular classes in this case with any other face
order. For $e\ge f\ge1$, the complete counts are
\begin{center}
\begin{tabular}{lrrr}
\toprule
range&$C_{\mathrm{ab},2}(e,f)$&$C_{\mathrm{ab},4}(e,f)$&$C_{\rm ab}(e,f)$\\
\midrule
$f\le1$&0&0&0\\
$e=f=2$&1&0&1\\
$f=2<e$&2&0&2\\
$e=f=3$&4&4&8\\
$f=3,\ e=4$&6&4&10\\
$f=3,\ e\ge5$&4&4&8\\
$e=f\ge4$&12&12&24\\
$f\ge4,\ e=f+1$&10&8&18\\
$f\ge4,\ e\ge f+2$&8&8&16\\
\bottomrule
\end{tabular}
\end{center}
For $e<f$ exchange the two exponents. Every class counted here
is reflexible. Its genus is
\begin{equation}\label{circulargenera}
 g=\begin{cases}
 1+MN-M-N,&\ell=2,\\
 1+3MN/2-M-N,&\ell=4.
 \end{cases}
\end{equation}
\end{theorem}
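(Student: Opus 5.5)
The counts are a bookkeeping exercise on results already proved: Theorem~\ref{circularfamilies} for $e,f\ge4$, the four boundary restrictions (a)--(d) listed after it, and the face-order and genus formulas of Proposition~\ref{faces}. Proposition~\ref{circularcriterion} shows that a circular datum has face order $2d$, where $d=d_1=d_2$. In every family, $z$ has coordinates that are either zero or of order two, apart from the first coordinate $1-Rp$ in family (II) and the extra boundary data. For those cases the circularity condition forces $z_1\in\{0,M/2\}$. Hence $d\in\{1,2\}$, and the only possible face orders are two and four. This proves the first sentence. Reflexibility of every counted class follows directly from Theorem~\ref{allreflexible}, since all these dessins have abelian Frattini subgroup.

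For the table I assume $e\ge f$, i.e.\ $M\ge N$, and read the ranges off row by row. If $f\le1$ there are no data at all (Theorem~\ref{counts}). The row $e=f=2$ is boundary (a), giving one datum of face order two. The row $f=2<e$ is boundary (b), giving two data of face order two. The row $e=f=3$ is (c), giving $4+4$. For $f=3$, $e\ge5$, only the first part of (d) applies, giving $4+4$. For $e=4$, $f=3$ (so $M=8$, $N=4$), the extra eight data contribute two more of face order two, giving $6+4$.

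For $e,f\ge4$ I add the three families of Theorem~\ref{circularfamilies}, using the orientation $M\ge N$.
\begin{itemize}
\item \emph{Diagonal $M=N$.} Families (I), (II) and (III) each contribute $4+4$, giving $12+12$.
\item \emph{$M=2N$.} Family (I) gives $4+4$ and family (III) (with $N\le M$) gives $4+4$. Family (II) is in its case $M=2N$ and adds $2+0$, giving $10+8$.
\item \emph{$M\ge4N$.} Family (II) is absent, giving $8+8$.
\end{itemize}
The case $e<f$ follows from the exchange \eqref{swap}. This is a bijection on marked data that preserves $\langle ab\rangle$ up to conjugacy by $(ab)^{-1}$ and $ba$, so it preserves both circularity and face order. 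The families are disjoint by Theorem~\ref{largefamilies}, so there is no double counting.

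For the genus, I substitute $\ell=2$ and $\ell=4$ into \eqref{genus}. With $\ell=2$ one gets $g=1+(4MN-2M-2N-2MN)/2=1+MN-M-N$. With $\ell=4$ one gets $g=1+(3MN-2M-2N)/2=1+3MN/2-M-N$. The only point needing care is the boundary row $e=4$, $f=3$, where the extra eight data of (d) must be included and the reciprocal boundary lists are not counted a second time. That, together with the cases $M=2N$, is where I expect the bookkeeping to be most error-prone.
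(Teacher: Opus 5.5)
Your proposal is correct and takes the same route as the paper. You assemble the table row by row from the boundary restrictions (a)--(d) and the three families of Theorem~\ref{circularfamilies}, obtain reflexibility from Theorem~\ref{allreflexible}, and get the genus by substituting $\ell=2,4$ into \eqref{genus}.

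One small slip in your face-order paragraph: family (III) also has an unrestricted coordinate, $z_2=1+Qq$ with $q$ any odd residue. The same argument handles it, since $z_1$ is $0$ or of order two there, so circularity forces $z_2\in\{0,N/2\}$. Alternatively, the face orders already recorded in Theorem~\ref{circularfamilies} cover it.
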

\begin{proof}
The boundary restrictions just proved give the first six rows.
In the large ranges family (I) always contributes four classes
of each face order. When $M=N$, families (II) and (III) each
contribute four of each, giving $(12,12)$.
When $M=2N$, family (II) contributes $(2,0)$ and family (III)
contributes $(4,4)$; adding family (I) gives $(10,8)$.
When $M\ge4N$, family (II) is absent and family (III) contributes
$(4,4)$, giving $(8,8)$. These ranges translate exactly into the
last three rows. The classification lists are disjoint and
nonredundant, so no further identifications are made.
All circular data in the proof have $\ord(z)=1$ or $2$, proving
the exclusion of other face orders. Reflexibility is
Theorem~\ref{allreflexible}. Finally substituting $\ell=2,4$
into \eqref{genus} gives \eqref{circulargenera}.
\end{proof}

\subsection{The symmetry criterion}
A dessin is \emph{symmetric} here if it is isomorphic to its reciprocal
by an orientation-preserving map. This requires $e=f$.

\begin{proposition}\label{symmetry}
Suppose $M=N$. A datum admits a colour-exchanging symmetry if and only if
\begin{equation}\label{symmetrycondition}
 P=S,\qquad Q=R,\qquad p+q=0\pmod M.
\end{equation}
For nonmetacyclic data, the number $S(e)$ of symmetric classes is
\[
 S(2)=1,\qquad S(e)=4\quad(e\ge3).
\]
Consequently the number of classes up to reciprocality on the diagonal
is $(\Bab(e,e)+S(e))/2$ for $e\ge2$.
\end{proposition}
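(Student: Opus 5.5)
The plan is to treat colour exchange exactly as mirror images were treated in Proposition~\ref{reflexcriterion}. By Proposition~\ref{dessindictionary}(b), the dessin is symmetric if and only if some automorphism $\theta$ of $G$ sends $a\mapsto b$ and $b\mapsto a$. Such a $\theta$ exists precisely when the reciprocal triple $(G,b,a)$ is marked-equivalent to $(G,a,b)$. By the swap formula \eqref{swap}, that reciprocal triple has six-residue data $(S,R,Q,P,-q,-p)$; when $M=N$ these lie in the same parameter set. Theorem~\ref{six} says distinct admissible data give distinct marked triples, so symmetry is equivalent to the datum being fixed by \eqref{swap}. That is, $P=S$, $Q=R$ and $p=-q$, which is \eqref{symmetrycondition}. (The condition $q=-p$ is the same congruence read in the other coordinate, since $M=N$.)

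Next I will count the fixed points on the exhaustive lists, using $M=N$ throughout.

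\textbf{Large range $e\ge4$.} Theorem~\ref{largefamilies} applies.
\begin{itemize}
\item In family (II), $R\equiv1$ and $Q\equiv-1 \pmod 4$, so $Q\ne R$. Family (III) fails the same way, so neither contributes.
\item In family (I), $Q=R$ forces $\sigma=\rho$. The condition $p+q=0$ reads $(\lambda+\mu)M/2=0$, which forces $\mu=\lambda$. With $p$ odd, \eqref{PS} gives $P=\rho M/2$, and likewise $S=\sigma N/2$, so $P=S$ is then automatic.
\end{itemize}
The free binary choices $\rho,\lambda$ give exactly four data.

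\textbf{Boundary $M=N=2$.} Theorem~\ref{boundary}(a) gives $R=Q=p=q=1$ and $P=S=0$, and $p+q=2\equiv0$. So $S(2)=1$.

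\textbf{Boundary $M=N=4$, i.e.\ $e=3$.} In list (c), impose $Q=R$ and $q=-p \pmod 4$. This leaves two choices of $R$ and two of $p$. It remains to check $P=S$, where $P=-(1+R)p$ and $S=(1+R)q=-(1+R)p$; these agree. That gives four data, so $S(3)=4$.

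The final assertion is Burnside's lemma: colour exchange is an involution on the $\Bab(e,e)$ diagonal classes, and it has $S(e)$ fixed points.

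The main obstacle is the first step. I must make sure that a colour-exchanging symmetry corresponds to equality of data under \eqref{swap}, not merely to some relabelling. In particular, the squares $x=a^2$ and $y=b^2$ must be exchanged, so the square basis of the reciprocal is $(y,x)$. Also, the collection coefficient of the reciprocal is $t^{-1}$, not $t^{\alpha\beta}$. This is already handled in the proof of \eqref{swap}, so I will cite it. The residual care needed is in identifying $\Z_M$ with $\Z_N$ when $M=N$, which is immediate.
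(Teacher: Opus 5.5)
Your proof is correct. The fixed-point enumeration on the exhaustive lists and the final orbit count are the same as in the paper; the difference lies in how you derive \eqref{symmetrycondition}.

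The paper works directly with a hypothetical automorphism $\tau$ interchanging $a$ and $b$. It restricts $\tau$ to $\gamma\colon(u,v)\mapsto(v,u)$ on $K$. From the intertwining relations $\alpha\gamma=\gamma\beta$ and $\beta\gamma=\gamma\alpha$ it extracts $Q=R$ and $P=S$, and from $t^\gamma=t^{-1}$ it extracts $p+q=0$. For the converse it checks by hand that interchanging $a,b$ and $x,y$ preserves every relation of \eqref{overviewpresentation}, then uses $\tau^2=\id$ to conclude that the map is an automorphism.

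You instead note that a symmetry is exactly a marked equivalence between $(G,a,b)$ and $(G,b,a)$. You then read the condition off the already-proved exchange formula \eqref{swap} together with Theorem~\ref{six}. So \eqref{symmetrycondition} becomes literally the statement that the datum is a fixed point of \eqref{swap}.

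What your route buys:
\begin{itemize}
\item It is shorter.
\item It makes the closing orbit count conceptually immediate, since the symmetric classes are by definition the fixed points of the involution whose orbits you count.
\end{itemize}

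It does rely on both halves of the bijection in Theorem~\ref{six}: equivalent triples have equal data, and equal data give equivalent triples through the common presentation. You quoted only the nonredundancy half, so state the other half explicitly.

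The paper's route is self-contained and exhibits the colour-exchanging automorphism concretely. In substance, though, it re-derives the computation underlying \eqref{swap}.
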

\begin{proof}
Choose a colour-exchanging isomorphism. By regularity, we can compose
it with an automorphism preserving both colors so that the chosen
edge is fixed. The resulting map must interchange the two
distinguished rotations.
Thus symmetry is equivalent to an automorphism $\tau$ of $G$ with
$a^\tau=b$ and $b^\tau=a$.

Its restriction $\gamma$ to $K$ must satisfy
$x^\gamma=y$, $y^\gamma=x$, or
$(u,v)^\gamma=(v,u)$. Compatibility with right conjugation requires
\begin{equation}\label{rightsymmetry}
 \alpha\gamma=\gamma\beta,\qquad
 \beta\gamma=\gamma\alpha.
\end{equation}
For example,
$(u^\alpha)^\gamma=(u^a)^\tau=(u^\tau)^b=(u^\gamma)^\beta$,
which explains the order in the first identity.
Coordinate calculation gives
\[
 (u,v)^{\alpha\gamma}=(Qv,u+Pv),\qquad
 (u,v)^{\gamma\beta}=(Rv,Sv+u).
\]
Hence \eqref{rightsymmetry} is equivalent to $Q=R,P=S$.
Applying the exchange to $ba=t\,ab$ also requires
$ab=t^\gamma ba$, or $t^\gamma=t^{-1}$.
In coordinates this says $(q,p)=(-p,-q)$, giving $p+q=0$.
These prove necessity.

Conversely, these conditions show directly that interchanging $a,b$
and $x,y$ preserves every relation in \eqref{overviewpresentation}.
It therefore defines a homomorphism $\tau:G\to G$.
Its square fixes every generator, so $\tau^2=\id$.
Thus it is an automorphism and gives the required symmetry.

For the count, the $M=2$ list has just one datum and it is fixed
by \eqref{swap}. For $M=4$, choose $Q=R$ in two ways and choose
odd $p$ in two ways, putting $q=-p$. Then
$S=(1+R)(-p)=P$, so there are four fixed data.
For $M\ge8$, a symmetric datum has $Q=R$.
Families (II) and (III) have different signs for these two roots,
so cannot be symmetric. In family (I), equality of the roots gives
$\rho=\sigma$, and $q=-p$ gives $\mu=\lambda$.
There are two choices for each of $\rho,\lambda$, hence four.
Finally an involution on a finite set has one-element orbits at its
fixed points and two-element orbits elsewhere. Its number of orbits
is $S(e)+(\Bab(e,e)-S(e))/2$, proving the last formula.
\end{proof}

\begin{corollary}[Symmetric circular classes and reciprocality]
\label{symmetriccircular}
Every symmetric dessin in this case is circular and reflexible.
For $e=f$, the number of circular classes up to reciprocality is
\[
 \frac{C_{\rm ab}(e,e)+S(e)}2=
 \begin{cases}
 1,&e=2,\\
 6,&e=3,\\
 14,&e\ge4.
 \end{cases}
\]
For $e>f$, fixing the larger exponent as the first coordinate gives
$C_{\rm ab}(e,f)$ circular reciprocal pairs. Identifying orientation reversal
does not reduce any of these counts, since all classes are reflexible.
\end{corollary}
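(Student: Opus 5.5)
The corollary has three parts: symmetric implies circular and reflexible; a reciprocal orbit count on the diagonal; and the off-diagonal and mirror remarks. I will treat them in that order.

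\emph{Symmetric implies circular and reflexible.} Reflexibility is immediate from Theorem~\ref{allreflexible}, since every class in this case is reflexible.

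For circularity I would read off the fixed data from the proof of Proposition~\ref{symmetry} and apply the circularity criterion of Proposition~\ref{circularcriterion} to each.

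\begin{itemize}
\item For $M=2$, the unique datum is circular by boundary item (a).
\item For $M=4$, the symmetric data are $Q=R$ and $q=-p$. Boundary item (c) requires $p=R$ if and only if $q=-Q$. With $Q=R$ and $q=-p$, the condition $q=-Q$ reads $-p=-R$, i.e.\ $p=R$. So the biconditional holds automatically, and all four symmetric data are circular.
\item For $M\ge8$, the symmetric data lie in family (I) with $\rho=\sigma$ and $\lambda=\mu$. Then $\lambda+\rho=\mu+\sigma$, which is exactly condition \eqref{circularI}. So these data are circular too, two with face order two ($\lambda=\rho$) and two with face order four.
\end{itemize}

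\emph{The orbit count on the diagonal.} The generator exchange \eqref{swap} is an involution on the full set of data. It preserves circularity, because Proposition~\ref{dessindictionary}(c) is symmetric in $a$ and $b$. It therefore restricts to an involution on the circular data.

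By the first part, the fixed points of this restricted involution are exactly the $S(e)$ symmetric classes. The orbit-counting argument from the end of Proposition~\ref{symmetry} then gives
\[
S(e)+\frac{C_{\rm ab}(e,e)-S(e)}{2}=\frac{C_{\rm ab}(e,e)+S(e)}{2}.
\]
Substituting $C_{\rm ab}(e,e)=1,8,24$ from Theorem~\ref{circularcounts} and $S(e)=1,4,4$ gives $1$, $6$ and $14$ for $e=2$, $e=3$ and $e\ge4$ respectively.

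\emph{Off the diagonal and the mirror remark.} For $e>f$, placing the larger exponent first picks exactly one member of each reciprocal pair, as in the remark after Theorem~\ref{counts}. The exchanged list is again circular, so the count of circular reciprocal pairs is $C_{\rm ab}(e,f)$.

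Finally, every class is reflexible, so the mirror operation fixes every class. Identifying mirror images therefore merges nothing, and none of these counts is reduced.

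The only delicate step is confirming that the symmetric data lie inside the circular subfamilies. This is done above for each of the three ranges of $M$.
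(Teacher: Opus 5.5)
Your proposal is correct and follows the paper's proof essentially step for step: the same three-range check that the symmetric data are circular ($M=2$ trivially, $M=4$ via the automatic biconditional, $M\ge8$ via \eqref{circularI}), the same fixed-point orbit count $S(e)+(C_{\rm ab}(e,e)-S(e))/2$, and the same remarks off the diagonal and for mirrors. One small precision: condition (c) of Proposition~\ref{dessindictionary} is not literally symmetric in $a,b$. Exchanging the generators replaces $\langle ab\rangle$ by $\langle ba\rangle$, which is the conjugate of $\langle ab\rangle$ by $a$ (or by $b^{-1}$), and these elements normalise $\langle a\rangle$ and $\langle b\rangle$ respectively; so circularity is indeed preserved, as is also clear geometrically since recolouring does not change the faces.
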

\begin{proof}
For $M=N=2$ there is only the circular datum. For $M=N=4$,
symmetry gives $R=Q$ and $q=-p$. The circular boundary condition
is $p=R$ if and only if $q=-Q$, which is then automatic.
For $M=N\ge8$, the symmetric data lie in family (I) with
$\rho=\sigma$ and $\lambda=\mu$. These equalities imply
\eqref{circularI}, so all four symmetric classes are circular.
Reflexibility is Theorem~\ref{allreflexible}.

Reciprocity preserves circularity, and its fixed points on the
diagonal are exactly the $S(e)$ symmetric classes. Its remaining
orbits have size two, so the circular orbit count is
$S(e)+(C_{\rm ab}(e,e)-S(e))/2$. Substituting the counts already proved
gives the three displayed values. When $e>f$, reciprocity exchanges
the two ordered bidegrees, so choosing the larger exponent first
selects one element of each orbit. Finally a reflexible class is
already isomorphic to its mirror as an oriented dessin; allowing
orientation reversal introduces no new identification.
\end{proof}

\subsection{Three further face distributions}
The following table records the number of data of each face order.
It counts only the nonmetacyclic abelian Frattini case.
\[
\begin{array}{c|r|rrr}
\text{underlying graph}&B&\ell=2&\ell=4&\ell=8\\ \hline
K_{8,8}&16&4&12&0\\
K_{16,8}&24&6&14&4\\
K_{16,16}&80&12&36&32
\end{array}
\]
All the displayed classes are reflexible. The numbers of circular
classes in these three rows are respectively $8,10,24$; their
quadrilateral and octagonal subcounts are given by
Theorem~\ref{circularcounts}. A face order of four does not by
itself imply circularity: family (I), for example, has twelve
such classes but only four of them are circular.
Here are explicit checks of the face-order entries.

For $K_{8,8}$, $M=N=4$ and the four odd choices are independent.
Using \eqref{PS}, the first coordinate of $z$ becomes
$1+p-(1+R)pq$. If $R=3$, it is zero exactly when $p=3$.
If $R=1$, it is zero exactly when $p=1$, independently of $q$.
The second coordinate $1+Qq$ is zero exactly when $q=-Q$.
There are therefore two choices of $(R,p)$ and two of $(Q,q)$
making $z=0$, giving four face orders two.
All other coordinates are zero or two, so the remaining twelve
have face order four.

For $K_{16,8}$, $M=8,N=4$. In the first sixteen boundary data,
$R=-1+4\rho$ and $p=-1+4\lambda$.
The first coordinate of $z$ is $4(\lambda+\rho)$,
and the second vanishes exactly when $q=-Q$.
Thus four of these data have $z=0$, and twelve have order-two $z$.
In the eight additional data, $R=1+4\rho,Q=3,q=1$.
Now $P=-(2+4\rho)p$, so
\[
 z=(1-(1+4\rho)p,\ 0).
\]
As $p$ ranges over the four odd residues modulo eight, multiplication
by $1+4\rho$ permutes those residues. For each $\rho$, this coordinate
takes the values $0,2,4,6$ exactly once.
Hence these eight data contribute two face orders two, two face
orders four and four face orders eight, giving the stated totals.

For $K_{16,16}$, $M=N=8$. Family (I) gives four face orders two
and twelve face orders four by Corollary~\ref{Ifaces}.
For family (II), $R=1+4\rho,Q=-1+4\sigma$, $q=1+4\mu$,
and $p$ is any odd residue. Direct substitution gives
\[
 z=(1-(1+4\rho)p,\ 4(\sigma+\mu)).
\]
For each $\rho$, the first coordinate runs through $0,2,4,6$.
Among the four pairs $(\sigma,\mu)$, two make the second coordinate
zero and two make it four.
Consequently family (II) contributes four face orders two,
twelve face orders four, and sixteen face orders eight.
Family (III) has the same distribution: generator exchange
\eqref{swap} interchanges families (II) and (III), and it preserves
$|ab|$ because $|ba|=|ab|$. Adding the three distributions proves
the last row.

\subsection{Examples}
\begin{example}[The unique datum on $K_{4,4}$]
Here $M=N=2$, and the single nonmetacyclic datum has
\[
 (P,Q,R,S,p,q)=(0,1,1,0,1,1).
\]
The square generators $x=a^2,y=b^2$ are central involutions, and
$ba=xyab$. Since $t=xy$ is central,
$[b,a]=xy$ as well in this particular example.
The sixteen normal forms of Theorem~\ref{construction} make the
order and exactness explicit.

The nonmetacyclicity obstruction is especially transparent.
The squares of primitive elements are $x,y,1$, according as their
Frattini cosets are $Ka,Kb,Kab$. The derived subgroup is
$\langle xy\rangle$, contained in none of the corresponding cyclic
square subgroups. Thus Lemma~\ref{primitive} applies directly.
The coordinate $z$ is zero, so $\ell=2$.
There are eight vertices, sixteen edges and eight faces,
giving genus one. It is both reflexible and circular.
\end{example}
\begin{example}[All four data on $K_{8,4}$]
Let $M=4,N=2$. The four tuples and their topological invariants are
shown below.
\[
\begin{array}{c|rrrrrr|rr}
 &P&Q&R&S&p&q&\ell&g\\ \hline
1&0&1&3&0&1&1&4&7\\
2&0&1&3&0&3&1&2&3\\
3&2&1&1&0&1&1&2&3\\
4&2&1&1&0&3&1&4&7
\end{array}
\]
To see the calculations, the boundary list permits $R=3,1$ and
$p=3,1$, with $Q=q=1$. If $R=3$, then
$P=-(1+R)p=0\pmod4$; if $R=1$, then $P=-2p=2\pmod4$.
Also $S=(1+Q)q=0\pmod2$.
Formula \eqref{zcoords} gives $z=(1+p+P,0)$, which is either zero
or $(2,0)$. The corresponding face orders are two and four.
Finally \eqref{genus}, with $4MN=32$ and $2M+2N=12$, gives
genus three or seven respectively.

All four dessins are reflexible. The second and third are
circular, and the first and fourth are not, since the second
coordinate of $z$ is zero while its first is nonzero in the
first and fourth rows.

The third datum also illustrates why the collection coefficient is
not generally the commutator. Here $t=(1,1)$ and
$t^{\alpha\beta}=(3,1)$, so
\[
 ba=xy\,ab,\qquad [b,a]=x^3y .
\]
The parity criterion agrees for both coordinate pairs, but replacing
one by the other in the collection relation would be incorrect.
\end{example}
\section{The nonabelian Frattini case: external symmetries and circularity}\label{nonabelianexternal}
The notation in this section is that of Theorem~\ref{na:main}.
In particular the root parameter is $\xi$, and the preferred pair
$(r,s)$ is specified in \eqref{na:preferred}. Put
\[
 h=e-1,\quad n=f-1,\quad M=2^h,\quad N=2^n,
 \quad \theta=\xi-R=1+Q\xi\pmod M.
\]
All statements are unchanged by the additional reciprocal marking
on the diagonal.
\subsection{Face orders and genera}
\begin{proposition}[Computing the face order]\label{na:faces}
In preferred ordering put
\[
 j=k+1+\varepsilon N/2,\qquad t=\xi-R,\qquad A=r^2 .
\]
Then $(rs)^2=A^j c^t$.
Let $d$ be the least positive power of two for which
\[
 N\mid jd
\]
and
\[
 \begin{cases}
 t\,\Nrm_d(Q^{2j})=0\pmod M,
       &\mathcal C_0,\mathcal L_0,\\
 (jd/N)(M/2)+t\,\Nrm_d(Q^{2j})=0\pmod M,
       &\mathcal C_1.
 \end{cases}
\]
The face order is $\ell=2d$, and the genus is
\[
 g=1+\frac{4MN-2M-2N-4MN/\ell}{2}.
\]
A face order $\ell$ corresponds to a boundary of $2\ell$
edge sides.
\end{proposition}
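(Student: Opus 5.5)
The plan is to reduce the face order to the order of the single element $(rs)^2\in K$, and then to read that order off the unique normal forms of $K=\Phi(G)$ from Proposition~\ref{na:frattini}.

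\emph{Face order is $|rs|$.} By the argument of Proposition~\ref{faces}, the face order is $|ab|$, and $|ba|=|ab|$ because $ba=(ab)^a$. In preferred ordering $rs$ is either $ba$ or $ab$, so $\ell=|rs|$. The coset $rsK$ is the nontrivial third coset of $G/K\cong C_2^2$, so $rs\notin K$ while $(rs)^2\in K$. Hence $|rs|$ is even and $\ell=2\,|(rs)^2|$, and it remains to compute $(rs)^2$ and its order.

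\emph{The square.} I will collect $(rs)^2=r\,(srs^{-1})\,s^2$, using $r^s=rc$, the relations \eqref{na:unified} and $s^2=r^{2k}c^\xi$. This is exactly the collection recorded in \eqref{na:facesquare}:
\[
(rs)^2=A^{k+1}w^\varepsilon c^{\xi-R}.
\]
In $\mathcal L_0$ we have $w=r^N=A^{N/2}$, so $w$ can be absorbed into the $A$-exponent. This gives $(rs)^2=A^jc^t$ with $j=k+1+\varepsilon N/2$ and $t=\xi-R$. I expect this step to be the main obstacle. The right-conjugation conventions, the sign of the commutator and the placement of the $w$ factor all have to be checked carefully against the earlier derivation. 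I would verify the result independently by conjugating both sides by $r$ and by $s$ and comparing with the action formulas $A^s=Aw^\varepsilon c^{Q+1}$ and $c^s=w^\varepsilon c^R$.

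\emph{The order of $A^jc^t$.} Conjugation by $A$ on $\langle c\rangle$ is $c\mapsto c^{Q^2}$, so $A^j$ acts by $Q^{2j}$. Lemma~\ref{na:power}, applied with $E=\langle c\rangle$, then gives
\[
(A^jc^t)^d=A^{jd}\,c^{t\,\Nrm_d(Q^{2j})}.
\]
The criterion for this to be trivial depends on the type.
\begin{itemize}
\item In $\mathcal C_0$ and $\mathcal L_0$ the presentation \eqref{na:Ksplit} gives unique forms $A^ic^m$ with $i\in\Z_N$ and $m\in\Z_M$. So the power is trivial exactly when $N\mid jd$ and $t\,\Nrm_d(Q^{2j})\equiv0\pmod M$.
\item In $\mathcal C_1$ the presentation \eqref{na:Knonsplit} still gives unique forms with $i\in\Z_N$, but now $A^N=c^{M/2}$. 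So $N\mid jd$ is again forced, after which $A^{jd}=c^{(jd/N)(M/2)}$. This yields the second displayed congruence.
\end{itemize}
Since $K$ is a $2$-group, the order of $A^jc^t$ is the least power of two $d$ satisfying these conditions. Hence $\ell=2d$.

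\emph{Genus and boundary length.} The genus follows from Euler's formula exactly as in \eqref{genus}. There are $2M+2N$ vertices, $4MN$ edges and $4MN/\ell$ faces. Each face cycle of length $\ell$ alternates black and white corners, so its boundary has $2\ell$ edge sides.
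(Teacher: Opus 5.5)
Your proposal is correct and follows the paper's proof essentially step by step. The steps are: $\ell=|rs|=2|(rs)^2|$; the square from \eqref{na:facesquare}, with $w=A^{N/2}$ absorbed into the $A$-exponent; the power formula $(A^jc^t)^d=A^{jd}c^{t\,\Nrm_d(Q^{2j})}$ tested against the split and nonsplit normal forms of $K$, using that orders in a $2$-group are powers of two; and Euler's formula.

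Two small remarks. The collection is quickest as $(rs)^2=r\,s^2\,r\,c$, from $rs=src$, rather than via $srs^{-1}$. Your proposed check of conjugating both sides by $r$ and $s$ is only a consistency test, since equal conjugates do not force equal elements.
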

\begin{proof}
Formula \eqref{na:facesquare} gives the square, and $w=A^{N/2}$
absorbs its extra factor in $\mathcal L_0$.
The power formula in $K$ gives
\[
 (A^j c^t)^d=A^{jd}c^{t\,\Nrm_d(Q^{2j})}.
\]
In the split types the normal form is one exactly when the
$A$ exponent vanishes modulo $N$ and the $c$ exponent vanishes
modulo $M$. In the nonsplit type, an $A$ exponent of $jd$
with $N\mid jd$ contributes $(jd/N)(M/2)$ to the $c$ exponent.
This proves the tests. The order of an element of a finite
$2$-group is a power of two, so restricting $d$ to such powers
does not omit the true order.

The coset of $rs$ in $G/K$ is nontrivial of order two.
Hence $|rs|$ is twice the order of its square.
The face permutation in the edge-label construction has cycle
length $|ab|$; this equals $|rs|$ because $ab$ is $rs$ or $sr$
and the latter two are conjugate.
There are $2M+2N$ vertices, $4MN$ edges and $4MN/\ell$ faces.
Euler's formula gives the genus.
\end{proof}

\begin{example}
Take $(e,f)=(5,3)$, so $M=16,N=4$. Only type $\mathcal C_0$ occurs.
The noninvolutory multipliers congruent to three modulo four
are $Q=3,11$, and the odd $k$ are one and three.
The four choices of $(Q,k)$ give these root lifts:
\[
\begin{array}{c|c|c|c|c}
 Q&k&\xi&R&\text{face orders for the two lifts}\\ \hline
 3&1&1,\ 9&13&8,\ 8\\
 3&3&5,\ 13&5&2,\ 4\\
 11&1&1,\ 9&5&8,\ 8\\
 11&3&5,\ 13&13&4,\ 2
\end{array}
\]
For instance, $Q=3,k=1$ gives
$F(Z)=Z^2+Z-2$ modulo eight, whose unique odd root is one.
Its two lifts are $1,9$ and $R=-2\xi-1=13$ modulo sixteen.
The corresponding marked presentations have $b^8=c^{16}=1$,
$[b,a]=c$, $c^b=c^3$, $c^a=c^{13}$ and
$a^2=b^2c^\xi$, with $\xi=1$ or nine.

For this row $A=b^2$ has order four, and
$(ba)^2=A^2c^{\xi-13}$. Its square is $c^8$, and its fourth
power is one, so the face order is eight.
For the rows with $k=3$, $A^{k+1}=A^4=1$; the two square
coordinates are one and $c^8$, giving face orders two and four.
This checks the entire distribution:
two classes have face order two, two have face order four
and four have face order eight.
Their genera, from Proposition~\ref{na:faces}, are respectively
$45,77,93$. By Theorem~\ref{nacircularfamilies}, exactly the two
face-order-two classes are circular: $(Q,k,\xi)=(3,3,5)$ and
$(11,3,13)$. Formula~\eqref{namirror} interchanges them.
The other six face boundaries repeat vertices, including both
classes of face order four.

In all eight cases $Q^2=9\pmod{16}$, so
\[
 K=\langle A,c\mid A^4=c^{16}=1,\ c^A=c^9\rangle.
\]
Its derived subgroup is $\langle c^8\rangle$ of order two,
central in $K$; thus it has nilpotency class two.
The class formula gives the same answer, $\lceil4/3\rceil=2$.

The separate and combined counts in a few small cases are
\[
\begin{array}{c|r|r|r}
 (e,f)&\Bab&\Bna&T\\ \hline
 (4,3)&24&0&24\\
 (5,3)&16&8&24\\
 (5,4)&64&48&112\\
 (6,4)&48&64&112\\
 (5,5)&144&128&272
\end{array}
\]
The two contributions are kept separate because their reflection
behavior differs, as proved below.
\end{example}

\subsection{Mirror images and reflexibility}
\begin{theorem}[Mirror parameters]\label{nonabelmirror}
The mirror of the canonical datum $(Q,k,\xi,R)$ is the datum
\begin{equation}\label{namirror}
 Q^*=Q^{-1},\qquad k^*=k,\qquad
 \xi^*=-\xi Q R^{-1}\pmod M,\qquad R^*=R^{-1}\pmod M,
\end{equation}
in the same family and with the same choice of preferred ordering.
None of the nonabelian Frattini dessins is reflexible. They occur
in $\Bna(e,f)/2$ chiral pairs.
\end{theorem}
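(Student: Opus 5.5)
The plan is to compute the mirror triple $(G,a^{-1},b^{-1})$ directly in the preferred ordering, put it into the canonical form of Proposition~\ref{na:necessity}, and read off its parameters. Then Proposition~\ref{na:unique} will compare it with the original.

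Put $r'=r^{-1}$ and $s'=s^{-1}$. These lie in the same generating cosets $\mathcal F_0,\mathcal F_1$, so the mirror has the same type, the same marking and the same preferred ordering; the derived-group invariants of Proposition~\ref{na:unique} are visibly unchanged.

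First I compute $c'=[r',s']=r s^{-1}r^{-1}s$. Using $r^s=rc$, we get $s^{-1}r^{-1}s=c^{-1}r^{-1}$, hence $c'=r c^{-1} r^{-1}=(c^{-1})^{r^{-1}}$. In the cyclic-derived types this is $c^{-Q^{-1}}$, a generator of $D$. In $\mathcal L_0$ an extra factor $w$ appears, and I expect it to be absorbed by $w$ being central of order two.

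Next, conjugation by $r'$ on $c'$ is the inverse of the $r$-action, giving $Q^*=Q^{-1}$; likewise the $s'$-action gives $R^*=R^{-1}$. In $\mathcal L_0$ one checks that the $w$ factor stays of the form $w c'^{Q^*}$, since $w^{Q}=w$ for odd exponents. For the square relation, $s'^2=(s^2)^{-1}=c^{-\xi}r^{-2k}=r'^{2k}\bigl(c^{-\xi}\bigr)^{r^{-2k}}$. I will rewrite $c^{-\xi}$ as a power of $c'$ and conjugate by $r^{-2k}$. That conjugation multiplies the exponent by $Q^{-2k}=R^{-2}$, using \eqref{na:Rsquare}. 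Since $c=c'^{-Q}$, the exponent becomes $\xi^*=\xi Q R^{-2}$ up to a sign. I expect the main obstacle to be exactly this sign and power bookkeeping, together with the $w$ factors in $\mathcal L_0$ and the nonsplit relation $r^M=c^{M/2}$ in $\mathcal C_1$.

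As a fallback, I will not rely on the exact computation. I will derive $\xi^*$ from the canonical relation $R^*=(1-Q^*)\xi^*-1$ of Proposition~\ref{na:necessity}, which determines $\xi^*$ modulo $M/2$. The lift is then fixed by the square relation: the direct computation only has to decide the class modulo $M$, and it does so by comparing with $M/2$. This should confirm $\xi^*=-\xi QR^{-1}$, consistent with $(1-Q^{-1})(-\xi Q R^{-1})=(1-Q)\xi R^{-1}=(R+1)R^{-1}=1+R^{-1}$. The parameter $k^*=k$ follows from $r'^{2k}$.

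Non-reflexibility then follows from Proposition~\ref{na:unique}, because marked triples are determined by their parameters. A reflexible dessin would need $Q^{-1}=Q$, that is $Q^2=1\pmod M$, which contradicts \eqref{na:common}. Hence the mirror is always a different listed class. The mirror map is an involution on the $\Bna(e,f)$ classes with no fixed points, so the classes split into $\Bna(e,f)/2$ chiral pairs. On the diagonal the same argument applies to the reciprocal markings, since mirroring commutes with exchanging the marking.
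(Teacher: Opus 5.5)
Your overall route is the paper's: compute the canonical parameters of $(G,r^{-1},s^{-1})$ directly, then use marked uniqueness. Your non-reflexibility argument ($Q^{-1}=Q$ would contradict $Q^2\not\equiv1\pmod M$) and the fixed-point-free pairing into $\Bna(e,f)/2$ chiral pairs are fine.

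The gap is in the formula for $\xi^*$. With the convention $[u,v]=u^{-1}v^{-1}uv$ one has $[r^{-1},s^{-1}]=r\,s\,r^{-1}s^{-1}$. The word you wrote, $r s^{-1}r^{-1}s$, is $[r^{-1},s]$. The correct identity is $[r^{-1},s^{-1}]=[r,s]^{r^{-1}s^{-1}}$, so $c^*=c^{(QR)^{-1}}$. In $\mathcal L_0$ each inverse action contributes a factor $w$, and these cancel. Your element is instead $c'=c^{-Q^{-1}}$, or $wc^{-Q^{-1}}$ in $\mathcal L_0$.

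This matters because $\xi$ is defined as the exponent of the actual commutator. In the cyclic case $c'=(c^*)^{-R}$, which is exactly why your computation produces $\xi QR^{-2}$ rather than $-\xi QR^{-1}$. The discrepancy is the unit $-R^{-1}$, and this is not $\pm1$: $R\equiv1\pmod4$ excludes $R=-1$, and $R^2=Q^{2k}\neq1$ (as $k$ is odd and $Q^2\ne1$) excludes $R=1$. So it cannot be fixed as ``a sign''. Indeed, your value fails $R^*=(1-Q^*)\xi^*-1$ with either sign. With $+$ it would require $(1+R^{-1})^2\equiv0\pmod M$, whose left side has $2$-adic valuation two. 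With $-$ it would require $R^{-2}\equiv1$. That failure should have flagged the error.

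The fallback does not close the gap. The relation $R^*=(1-Q^*)\xi^*-1$ determines $\xi^*$ only modulo $M/2$. The two lifts modulo $M$ are distinct marked triples by Proposition~\ref{na:unique}, and choosing between them is precisely the content of the stated formula modulo $M$. Your sentence that the square relation ``decides the class modulo $M$ by comparing with $M/2$'' presupposes a correct direct computation in the canonical basis, which is the step that went wrong.

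The repair is short. Since $c=(c^*)^{QR}$,
\[
 (s^{-1})^2=r^{-2k}c^{-\xi Q^{-2k}}=(r^{-1})^{2k}(c^*)^{-\xi Q^{1-2k}R},
\]
and $Q^{2k}=R^2$ gives $\xi^*=-\xi QR^{-1}$, with $k^*=k$ because $r^{-2k}=(r^{-1})^{2k}$ as elements.
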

\begin{proof}
Use the reflected pair $r^*=r^{-1},s^*=s^{-1}$ and put
$c^*=[r^{-1},s^{-1}]$. Directly from the definition of commutators,
\[
 [r^{-1},s^{-1}]=[r,s]^{r^{-1}s^{-1}}.
\]
The actions of $r,s$ commute on the abelian derived subgroup.
In the cyclic-derived cases this gives
$c^*=c^{(QR)^{-1}}$. In $\mathcal L_0$, each inverse action
contributes a factor $w$, and these cancel since their total
exponent is even. The same formula therefore holds there too;
also $(r^*)^N=w$ because $w$ has order two.

Consequently $c^*$ has the same order as $c$, and its actions by
$r^*,s^*$ have multipliers $Q^{-1},R^{-1}$, with the same $w$
factors when required. Moreover
\[
 (s^*)^2=(r^{2k}c^\xi)^{-1}
       =r^{-2k}c^{-\xi Q^{-2k}}.
\]
Expressing the last factor in the basis $c^*=c^{(QR)^{-1}}$
gives $\xi^*=-\xi Q^{1-2k}R=-\xi Q R^{-1}$, using
$R^2=Q^{2k}$. This proves \eqref{namirror}. It also verifies
$R^*=(1-Q^*)\xi^*-1$ directly. The reflected triple is exact
and has the same Frattini subgroup, so it is an admissible datum;
alternatively the relations just computed verify its defining
presentation. The range condition on $Q$ is invariant under
inversion. No change of the residue $k\in\Z_N$ is needed, even
in the nonsplit family, since $r^{-2k}=(r^*)^{2k}$ as elements.

If a mirror were isomorphic to its original marked triple,
marked uniqueness would give $Q=Q^{-1}\pmod M$. This contradicts
$Q^2\ne1\pmod M$. For a diagonal reciprocal entry, reflection
preserves which generator has action $3\pmod4$ and which has
 action $1\pmod4$, so it stays in that entry's preferred ordering.
Thus the uniqueness argument applies to every marking.
An involution with no fixed points partitions the classes into
pairs, proving the count.
\end{proof}

\subsection{Terminal involutions and the valuation of the face square}
In preferred ordering the face square is
\begin{equation}\label{nafacesqnew}
 F=(rs)^2=A^j c^\theta,\qquad A=r^2,\qquad
 j=k+1+\varepsilon N/2,
\end{equation}
where $\varepsilon=1$ only in $\mathcal L_0$.
The letter $F$ in this subsection denotes this group element,
not the root polynomial of \eqref{na:poly}.
Circularity holds exactly when $\langle F\rangle$ meets neither
$\langle A\rangle$ nor $\langle s^2\rangle$ nontrivially.
Indeed an intersection of the face subgroup with a vertex subgroup
must have even exponents in $G/\Phi(G)$, as in the proof of
Proposition~\ref{circularcriterion}.

\begin{lemma}\label{tvaluation}
Let $\lambda=v_2(Q+1)$, let $d=\log_2\ord_M(Q)=h-\lambda$,
and put $i=v_2(k+1)$, where $1\le i\le n$.
Here $2\le d\le h-2$. The two lifts of the odd root give
$\theta$ values differing by $M/2$. If $i<d$, both have valuation
$h-d+i-1$. If $i\ge d$, they are exactly $0,M/2$ modulo $M$.
\end{lemma}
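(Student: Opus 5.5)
We will work with the identity $\theta=\xi-R=1+Q\xi$, which follows from $R=(1-Q)\xi-1$ in \eqref{na:zR}. The statement about the two lifts is then immediate: changing $\xi$ by $M/2$ changes $\theta$ by $QM/2\equiv M/2\pmod M$, because $Q$ is odd. The range $2\le d\le h-2$ comes from Lemma~\ref{na:normlemma}. Write $\lambda=v_2(Q+1)\ge2$, since $Q\equiv3\pmod4$. By \eqref{na:qval} we have $v_2(Q^2-1)=\lambda+1$, and $Q^2\not\equiv1\pmod M$ forces $\lambda+1<h$. The order of $Q$ modulo $M$ is $2^{h-\lambda}$ by \eqref{na:qval} once more, which gives $d=h-\lambda$ and $2\le d\le h-2$.

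To obtain the valuation, we eliminate $\xi$ through a linear relation rather than work with the quadratic $F_{Q,k}$ directly. Multiplying $\theta=\xi-R$ by $1-Q$ and using $(1-Q)\xi=R+1$ gives
\[
 (1-Q)\theta\equiv 1+QR\pmod M .
\]
Put $u=QR$. Since $R\equiv1$ and $Q\equiv3\pmod4$, we have $u\equiv3\pmod4$. By \eqref{na:Rsquare}, $u^2=Q^2R^2\equiv Q^{2(k+1)}\pmod M$. Write $k+1=2^i m$ with $m$ odd. Lemma~\ref{na:normlemma}, applied to $Q^m$ (which has the same $\lambda$), gives $v_2(Q^{2(k+1)}-1)=\lambda+i+1$ whenever this is less than $h$, that is, when $i<d$.

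Since $v_2(u-1)=1$, we get $v_2(u+1)=\lambda+i$ modulo $M$. Dividing by $1-Q$, whose valuation is one, yields $v_2(\theta)=\lambda+i-1=h-d+i-1$. This valuation is less than $h-1$, so it is not affected by the half-modulus ambiguity, and both lifts have it. This settles the case $i<d$.

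For $i\ge d$ we get $u^2\equiv1\pmod M$, so Lemma~\ref{roots} gives $u\equiv-1$ or $-1+M/2\pmod M$. From the displayed relation, $\theta$ is then determined modulo $M/2$: it is $\equiv0$ if $u\equiv-1\pmod M$, and $\equiv M/4$ if $u\equiv-1+M/2$. The claim therefore amounts to excluding the second alternative.

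This exclusion is the main obstacle, because the linear relation loses one bit. First, I would square $\theta=1+Q\xi$ and substitute the doubled congruence $(Q-1)\xi^2+2\xi\equiv\Nrm_{2k}(Q)\pmod M$. This gives the exact relation
\[
 \theta^2\equiv 1+Q\xi^2+Q\,\Nrm_{2k}(Q)\pmod M .
\]
Second, I would compute $\Nrm_{2k}(Q)=(1+Q)\Nrm_k(Q^2)$ modulo $M$ when $i\ge d$, using \eqref{na:normval}. Third, I would compare with $\theta=M/4+\epsilon M/2$, for which $\theta^2\equiv0\pmod M$ when $h\ge4$. The aim is a contradiction from the odd part $1+Q\xi^2$, which should be forced to be $\equiv0$ only when $u\equiv-1$. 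If this bookkeeping becomes messy, the fallback is to determine $u\bmod M$ directly: write $R+1=(1-Q)\xi$ and evaluate $QR+1=Q(1-Q)\xi-Q+1=(1-Q)\theta$ modulo $M$ via the Hensel recursion \eqref{na:lift}. Tracking the top bit of $\xi_0$ there should show that $\theta_0\equiv0\pmod{M/2}$.
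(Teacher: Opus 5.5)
Your preliminary steps are correct: the two lifts shift $\theta$ by $M/2$, the range $2\le d\le h-2$ holds, and so does the relation $(1-Q)\theta\equiv1+QR$ with $(QR)^2\equiv Q^{2(k+1)}$. The case split, however, has an off-by-one error that leaves a case unproved. The inequality $\lambda+i+1<h$ means $i<h-\lambda-1=d-1$, i.e.\ $i\le d-2$, not $i<d$.

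At $i=d-1$ you have $v_2(Q^{2(k+1)}-1)=h$, so $u^2\equiv1\pmod M$. Your relation then only gives $\theta\equiv0$ or $M/4\pmod{M/2}$. The lemma asserts $v_2(\theta)=h-2$ there, which is the alternative $u\equiv-1+M/2$. That is the opposite of what you need for $i\ge d$. For instance, $M=16$, $Q=3$, $k=1$ gives $\lambda=d=2$, $i=1$, $R=13$, $u\equiv7$, $u^2\equiv1\pmod{16}$ and $\theta\in\{4,12\}$.

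The defect is structural. The conditions $(1-Q)\theta\equiv1+QR$, $R^2\equiv Q^{2k}$ and $QR\equiv3\pmod4$ determine $QR$ only modulo $M/2$, hence $\theta$ only modulo $M/4$. So this route cannot decide between $v_2(\theta)=h-2$ and $v_2(\theta)\ge h-1$, and both the case $i=d-1$ and the case $i\ge d$ depend on exactly that lost bit.

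The case $i\ge d$ is left open, and the squaring plan cannot close it as described. Since $h=d+\lambda\ge4$, both $\theta\equiv0$ and $\theta\equiv M/4\pmod{M/2}$ give $\theta^2\equiv0\pmod M$. Comparing $\theta^2$ with $1+Q\xi^2+Q\Nrm_{2k}(Q)$ therefore separates nothing, unless $\xi$ is re-expressed through $\theta$; at that point the decisive information sits in a term linear in $\theta$.

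The missing idea is to rewrite the root equation itself in terms of $\theta$. Multiply $(Q-1)\xi^2+2\xi\equiv\Nrm_{2k}(Q)$ by $Q^2$ and substitute $Q\xi=\theta-1$ to get
\[
 (Q-1)\theta^2+2\theta\equiv\Nrm_{2(k+1)}(Q)\pmod M,
 \qquad\text{i.e.}\qquad
 \theta\bigl(1+\tfrac{Q-1}{2}\theta\bigr)\equiv\tfrac12\Nrm_{2(k+1)}(Q)\pmod{M/2},
\]
where the second factor on the left is odd because $\theta$ is even. Write $k+1=2^im$ with $m$ odd. Then $\Nrm_{2(k+1)}(Q)=\Nrm_{2^{i+1}}(Q)\,\Nrm_m(Q^{2^{i+1}})$ with odd second factor, so \eqref{na:normval} gives the right side valuation $\lambda+i-1$. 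Hence $v_2(\theta)$, truncated at $h-1$, is $\min(h-1,\lambda+i-1)$. This yields $h-d+i-1$ for every $i<d$ (including $i=d-1$) and $\theta\equiv0\pmod{M/2}$ for $i\ge d$ at once. This is the paper's argument, and it makes the detour through $u=QR$ unnecessary.
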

\begin{proof}
The valuation formula \eqref{na:qval} shows that the order of
$Q\equiv3\pmod4$ with $Q^2\ne1$ is $2^{h-\lambda}\ge4$.
The two root lifts differ by $M/2$, and multiplying by odd $Q$
shows that the two $\theta$ values differ by $M/2$ as well.
The canonical root equation gives
\[
 (Q-1)\xi^2+2\xi=\Nrm_{2k}(Q)\pmod M.
\]
Substitution of $\theta=Q\xi+1$, followed by multiplication by
$Q^2$, gives the useful shifted equation
\[
 (Q-1)\theta^2+2\theta=\Nrm_{2(k+1)}(Q)\pmod M.
\]
Thus
\[
 \theta\left(1+\frac{Q-1}{2}\theta\right)
       =\frac{\Nrm_{2(k+1)}(Q)}2\pmod{M/2}.
\]
Since $\theta$ is even, the parenthesized factor is odd.
The norm valuation on the right is $\lambda+i-1$:
write $k+1=2^i u$ with $u$ odd, and the additional norm of odd
length has odd value. Hence the valuation of $\theta$, truncated
at $h-1$, is $\min(h-1,\lambda+i-1)$.
Using $d=h-\lambda$ proves the two alternatives. In the second
alternative the two residues divisible by $M/2$ are $0,M/2$,
so their difference identifies them exactly.
\end{proof}

\begin{lemma}[The three involutions]\label{threeinvolutions}
In the split Frattini presentations \eqref{na:Ksplit}, put
$U=A^{N/2}$ and $V=c^{M/2}$. The nonidentity involutions of $K$
are $U,V,UV$. The terminal involutions of $r,s$ are
\[
 (U,V)\quad(M>N),\qquad (U,UV)\quad(M=N).
\]
If the coordinate orders of $A^j$ and $c^\theta$ are $2^a,2^b$,
then $F$ has order $2^{\max(a,b)}$. Its terminal involution is
$U$ when $a>b$, $V$ when $b>a$, and $UV$ when $a=b>0$.
For $a=b=0$ it is the identity.
\end{lemma}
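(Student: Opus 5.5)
The plan is to work entirely inside the split presentation \eqref{na:Ksplit}, where $K=\langle A,c\rangle$ has unique normal forms $A^ic^j$ with $i\in\Z_N$, $j\in\Z_M$ and $c^A=c^{Q^2}$. Throughout we use $e\ge f$, so $M\ge N$; only $\mathcal C_0$ and $\mathcal L_0$ are split. The first step is the power formula \eqref{na:powerformula} with $E=\langle c\rangle$:
\[
 (A^ic^j)^m=A^{im}c^{\,j\Nrm_m(Q^{2i})}.
\]
The key arithmetic input is that $Q^{2i}\equiv1\pmod 8$. Hence \eqref{na:positiveval} gives $v_2(\Nrm_{2^u}(Q^{2i}))=u$ exactly, and the same value when $Q^{2i}=1$.

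\textbf{Step 1: the involutions of $K$.} Squaring gives $A^{2i}c^{j(1+Q^{2i})}$. Triviality forces $i\in\{0,N/2\}$. For $i=0$ we need $2j\equiv0$, so $j\in\{0,M/2\}$. For $i=N/2$ the multiplier is $1+Q^N$. In $\mathcal C_0$ we have $Q^N=1$ by hypothesis, and in $\mathcal L_0$ we have $Q^{N/2}=1$, so again $Q^N=1$. Thus $1+Q^N=2$ and again $j\in\{0,M/2\}$. This leaves exactly $U,V,UV$. We also note that $U$ and $V$ commute, since $V^A=V$ as $Q$ is odd; so these form a Klein four-group.

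\textbf{Step 2: terminal involutions of $r,s$.} In preferred ordering $r=b$ has order $2N$. Its terminal involution is $r^N=A^{N/2}=U$; in $\mathcal L_0$ this is $w$. For $s$ we quote Proposition~\ref{na:exactness}, which computed $s^M$ as an element and showed $|s|=2M$. There $s^M=c^{M/2}=V$ when $M>N$, and $s^M=r^Nc^{M/2}=UV$ when $M=N$.

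\textbf{Step 3: the element $F=A^jc^\theta$.} Let $2^a$ be the order of $A^j$ in $\Z_N$ and $2^b$ the order of $c^\theta$, so $2^b=M/\gcd(M,\theta)$. By the power formula,
\[
 F^{2^u}=A^{j2^u}c^{\,\theta\Nrm_{2^u}(Q^{2j})},
\]
and by the valuation fact the $c$-exponent has valuation exactly $v_2(\theta)+u$. So the $c$-coordinate vanishes modulo $M$ precisely when $u\ge b$, and the $A$-coordinate vanishes precisely when $u\ge a$. Uniqueness of normal forms then gives $|F|=2^{\max(a,b)}$.

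At $u=\max(a,b)-1$, when the maximum is positive, the $A$-exponent equals $N/2$ iff $a$ is the maximum and is $0$ otherwise. Similarly, the $c$-exponent has valuation exactly $h-1$, hence equals $M/2$, iff $b$ is the maximum, and is $0$ otherwise. This yields the three cases $U$, $V$, $UV$, and the identity when $a=b=0$.

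The main obstacle, though mild, is Step 3. We must be sure the $c$-exponent is controlled exactly rather than merely up to units. This needs the congruence $Q^{2j}\equiv1\pmod8$ together with the exact valuation \eqref{na:positiveval}, and the observation that a residue of valuation exactly $h-1$ modulo $2^h$ is $M/2$. We must also make sure the $w$ factor in $\mathcal L_0$ has been absorbed, via $w=A^{N/2}$, so that the split normal form applies there too.
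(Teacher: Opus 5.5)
Your proposal is correct and follows the paper's proof step for step. You read off the involutions from the normal forms $A^ic^j$ using $Q^N=1$ in both split families, quote the terminal involutions of $r,s$ from Proposition~\ref{na:exactness}, and obtain the order and terminal involution of $F$ from the power formula with the exact valuation $v_2(\Nrm_{2^u}(Q^{2j}))=u$, which $Q^{2j}\equiv1\pmod 8$ forces. The only difference is cosmetic: you get $i\in\{0,N/2\}$ by squaring rather than by passing to the quotient modulo $\langle c\rangle$, which is equivalent.
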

\begin{proof}
An involution in the normal forms $A^u c^v$ must have
$u=0$ or $N/2$ by its image modulo $\langle c\rangle$.
Since $Q^N=1$ in both split families, $A^{N/2}$ centralizes $c$.
The square condition then forces $v=0$ or $M/2$, giving the list.
The terminal involutions of $r,s$ were computed in
Proposition~\ref{na:exactness}.

For the order calculation,
$(A^j c^\theta)^{2^t}=A^{j2^t}
 c^{\theta\Nrm_{2^t}(Q^{2j})}$.
Since $Q^{2j}=1\pmod8$, its norm has valuation exactly $t$.
Thus its order is the maximum of the two coordinate orders.
At half that order only the coordinate with the larger order
survives, or both do if the orders agree. A surviving coordinate
is its cyclic factor's unique involution. This proves the stated
terminal-involution rule.
\end{proof}

In particular the split off-diagonal circularity test is $a=b$,
including the identity square. On the diagonal it is $b>a$ or
$a=b=0$. This change is essential: on the diagonal the terminal
involution of $s$ is $UV$, not $V$.
\subsection{The circular subfamilies}
\begin{theorem}[All nonabelian Frattini circular data]\label{nacircularfamilies}
The following lists are exhaustive. A condition on $\theta$ selects
one root lift when its two values are $0,M/2$.
\begin{enumerate}[label=\textup{(\alph*)},leftmargin=2.5em]
\item On the diagonal $e=f$, the circular data are
\[
 \mathcal C_0:\ k=N-1,\qquad
 \mathcal L_0:\ k=N/2-1.
\]
For each admissible $Q$ both root lifts are allowed. The lift with
$\theta=0$ has face order $2$, and the lift with $\theta=M/2$
has face order $4$. Include both reciprocal markings.
\item If $e>f$, the circular data of $\mathcal C_0$ are
\begin{align*}
 &k=N-1,\quad\theta=0 &&(\text{face order }2);\\
 &k=N/2-1,\quad Q^{N/2}=1,\quad\theta=M/2
       &&(\text{face order }4);\\
 &\ord_M(Q)=N/2,\quad 1\le i=v_2(k+1)\le n-2
       &&(\text{both lifts, face order }2^{n-i+1}).
\end{align*}
\item If $e>f$, the circular data of $\mathcal L_0$ are
\begin{align*}
 &k=N/2-1,\quad\theta=0 &&(\text{face order }2);\\
 &k=N-1,\quad\theta=M/2 &&(\text{face order }4);\\
 &\ord_M(Q)=N/2,\quad 1\le i=v_2(k+1)\le n-2
       &&(\text{both lifts, face order }2^{n-i+1}).
\end{align*}
The last line in each of \textup{(b),(c)} is empty for $n=2$.
\item In $\mathcal C_1$, the circular data are exactly
$k=N-1$, $\theta=M/2$, with face order $2$.
\end{enumerate}
Here face order $\ell$ means a geometric boundary of $2\ell$ sides.
\end{theorem}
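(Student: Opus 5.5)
The plan is to reduce circularity to terminal-involution comparisons and then run a case analysis on the two valuations of the face square. By the remark after Lemma~\ref{threeinvolutions}, a datum is circular exactly when $\langle F\rangle$, with $F=(rs)^2=A^jc^\theta$, meets neither $\langle r^2\rangle$ nor $\langle s^2\rangle$. By Lemma~\ref{terminal}, for $F\ne1$ this means the terminal involution of $F$ differs from those of $r$ and $s$; for $F=1$ circularity is automatic. Lemma~\ref{threeinvolutions} then turns the problem, in the split types $\mathcal C_0,\mathcal L_0$, into a comparison of two exponents. Let $2^a$ be the order of $A^j$, so $a=\max(0,n-v_2(j))$, and let $2^b$ be the order of $c^\theta$, so $b=\max(0,h-v_2(\theta))$. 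Off the diagonal the test is $a=b$; on the diagonal it is $b>a$ or $a=b=0$. The face order is $2^{\max(a,b)+1}$.

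The first step is to compute $a$ and $b$ from the parameters. In $\mathcal C_0$ we have $j=k+1$, so $v_2(j)=i$. In $\mathcal L_0$ we have $j=k+1+N/2$. This gives $v_2(j)=i$ for $i<n-1$. At $k=N/2-1$ it gives $j\equiv0\pmod N$, so $a=0$, and at $k=N-1$ it gives $j\equiv N/2$, so $a=1$. This exchange of the roles of $k=N-1$ and $k=N/2-1$ between (b) and (c) is exactly what the statement predicts. For $b$ I use Lemma~\ref{tvaluation}, which splits into two cases.
\begin{itemize}
\item If $i<d$, then $v_2(\theta)=h-d+i-1$ for both lifts, so $b=d-i+1$.
\item If $i\ge d$, then $\theta\in\{0,M/2\}$, so $b\in\{0,1\}$ according to the lift.
\end{itemize}

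The second step is to solve $a=b$ (or the diagonal inequality) in each range.
\begin{itemize}
\item For $i\le n-1$ in $\mathcal C_0$ we have $a=n-i$. With $i<d$, the equation $n-i=d-i+1$ gives $d=n-1$, that is $\ord_M(Q)=N/2$. The extra requirement $i<d=n-1$ gives $1\le i\le n-2$, and the face order is $2^{n-i+1}$, as in the third line. With $i\ge d$ we would need $n-i\le1$. The case $i=n-1$ is $k=N/2-1$; there $a=1$ forces $b=1$, so $\theta=M/2$, and $i\ge d$ reads $d\le n-1$, i.e.\ $Q^{N/2}=1$. The case $i=n$ is $k=N-1$; there $a=0$ forces $\theta=0$ (here $i\ge d$ is automatic since $Q^N=1$ gives $d\le n$).
\item $\mathcal L_0$ is handled the same way, with the two special $k$ values interchanged. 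Note that $Q^{N/2}=1$ there, so $d\le n-1$ automatically.
\item On the diagonal $M=N$, we have $d\le h-2<n$. If $i<d$, then $b=d-i+1$ must exceed $a=n-i$, which is impossible because $d<n$. If instead $d\le i<n$, then $a=n-i\ge1\ge b$, so neither $b>a$ nor $a=b=0$ holds. Only $a=0$ survives, i.e.\ $k=N-1$ in $\mathcal C_0$ or $k=N/2-1$ in $\mathcal L_0$. There $b\in\{0,1\}$ and both lifts pass. The reciprocal markings are circular as well, since Proposition~\ref{dessindictionary}(c) is symmetric in $a,b$.
\end{itemize}

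The last step, and the one I expect to be the main obstacle, is the nonsplit type $\mathcal C_1$, where Lemma~\ref{threeinvolutions} does not apply. There $A^{N}=c^{M/2}$, and the terminal involution of $r=a$ is $c^{M/2}$, that of $s=b$ lies outside $\langle c\rangle$ (Proposition~\ref{na:exactness}). I would redo the involution analysis of $K$ from \eqref{na:Knonsplit}: its involutions are $c^{M/2}$ together with elements $A^{N/2}c^v$ of the appropriate form. I would then compute the terminal involution of $F=A^jc^\theta$ using the power test of Proposition~\ref{na:faces} and compare it with the two vertex involutions. My expectation is that whenever $A^j$ has nontrivial image modulo $\langle c\rangle$ this involution collides with that of $s$ (or of $r$, after the $(jd/N)(M/2)$ correction), which forces $k=N-1$. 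After that, the extra $c^{M/2}$ contribution forces $\theta=M/2$, giving $F=A^Nc^{M/2}=1$ and face order $2$. The delicate point is tracking the extra $M/2$ term from $A^N=c^{M/2}$ carefully. Face orders then follow from Proposition~\ref{na:faces}.
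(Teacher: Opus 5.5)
Your treatment of the split types $\mathcal C_0,\mathcal L_0$ is correct and is essentially the paper's argument. It uses the same exponents $a,b$ from Lemma~\ref{threeinvolutions}, the same two cases of Lemma~\ref{tvaluation}, and the same interchange of $k=N-1$ and $k=N/2-1$ caused by the shift $j=k+1+N/2$. There is one slip in the diagonal bullet. For $d\le i<n$ you write $a=n-i\ge1$, which fails in $\mathcal L_0$ at $i=n-1$, where $a=0$. You also never explicitly dismiss $\mathcal L_0$ with $k=N-1$, where $a=1\ge b$. Your final diagonal list is nevertheless right.

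The genuine gap is part (d). You give only an expectation. Your hedge (a collision with the involution of $s$, or perhaps of $r$) shows that the deciding computation has not been made. What is missing is the valuation bound that makes the $A$-coordinate dominate.

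For $k\ne N-1$ put $i=v_2(k+1)<n$ and $a=n-i\ge1$. Lemma~\ref{tvaluation} gives $v_2(\theta)\ge i+1$ for both lifts:
\begin{itemize}
\item if $i<d$, then $v_2(\theta)=\lambda+i-1\ge i+1$ because $\lambda\ge2$;
\item if $i\ge d$, then $\theta\in\{0,M/2\}$ and $h-1=n\ge i+1$.
\end{itemize}
Hence
\[
 F^{2^a}=A^{Nu}\,c^{\theta\Nrm_{2^a}(Q^{2j})}=c^{M/2},\qquad u\ \text{odd}.
\]
Here the $c$-exponent has valuation at least $i+1+a=h$, and $A^N=c^{M/2}$.

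So the terminal involution of $F$ is always $c^{M/2}=r^M$, which is the terminal involution of $r=a$. It is never the involution of $s$, which lies outside $\langle c\rangle$. No finer analysis of the involutions of $K$ is needed.

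For $k=N-1$ one has $j=N$ and $F=c^{M/2+\theta}$ with $\theta\in\{0,M/2\}$. The lift $\theta=0$ gives $F=c^{M/2}$ and is excluded. The lift $\theta=M/2$ gives $F=1$, with face order $2$.

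Without this bound your sketch neither excludes $k\ne N-1$ nor determines which lift survives at $k=N-1$.
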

\begin{proof}
For the split families, apply Lemma~\ref{threeinvolutions}.
First suppose $i=v_2(k+1)\le n-2$.
Adding $N/2$ does not change this valuation, so in both split
families $a=n-i\ge2$. If $i\ge d$, Lemma~\ref{tvaluation}
gives $b\le1$, which is never circular. If $i<d$, it gives
$b=d+1-i$. Off the diagonal, $a=b$ is therefore exactly $d=n-1$,
and both lifts are circular. Their face order is $2^{a+1}$,
as in the last lines of (b),(c). On the diagonal, every admissible
multiplier has $d\le h-2=n-2$, so $b\le a-1$ and none of
these data is circular.

There are two remaining odd values of $k$. If $k=N/2-1$, then
$i=n-1$. In $\mathcal C_0$ the $A$ coordinate has order two.
If $d=n$, the other coordinate has order four, which is not
circular. Otherwise $d\le n-1$, or $Q^{N/2}=1$, and its two
coordinate values are $0,M/2$. Off the diagonal choose $M/2$
to make the orders equal; on the diagonal neither lift is
circular. In $\mathcal L_0$, this $k$ gives $j=N$, so the
$A$ coordinate is the identity. All its admissible multipliers
have $d\le n-1$, so the two values are again $0,M/2$.
Off the diagonal only zero is allowed; on the diagonal both are
allowed, by the terminal-involution rule.

If $k=N-1$, then $i=n$ and $\theta=0,M/2$ in both split types.
In $\mathcal C_0$ the $A$ coordinate is the identity; this gives
the first line of (b), and both circular lifts on the diagonal.
In $\mathcal L_0$, $j=3N/2$, so the $A$ coordinate has order two.
Off the diagonal choose $\theta=M/2$; on the diagonal neither
lift is allowed. This proves (a)--(c), including the face orders.

For $\mathcal C_1$, $h=n+1$ and $A^N=V=c^{M/2}$.
If $k\ne N-1$, put $i=v_2(k+1)<n$ and $a=n-i\ge1$.
The valuation lemma gives $v_2(\theta)\ge i+1$ (with the usual
infinite valuation for zero), since $\lambda\ge2$ and $i+1\le h-1$.
In the power $F^{2^a}$ the $A$ part is $A^{N u}=V$ with $u$ odd,
while its $c$ part is the identity, since its valuation is at least
$i+1+a=h$. Hence $F^{2^a}=V\ne1$ and $F^{2^{a+1}}=1$.
Its terminal involution is $V$, which already belongs to
$\langle r\rangle$, so it is not circular.
For $k=N-1$ we have $F=A^N c^\theta=V c^\theta$.
The two lifts give $\theta=0,M/2$: the first has terminal
involution $V$ and is forbidden, while the second gives $F=1$.
The latter has face order two and is circular. This proves (d).
\end{proof}
\subsection{Enumeration by face order}
\begin{theorem}[Nonabelian Frattini circular counts]\label{nacircularcounts}
There are no such circular dessins when $\Bna(e,f)=0$.
For $e=f\ge4$, their counts of face orders $2,4$ are both $N-8$,
and there are no others. For $e>f\ge4$ they are
\[
\begin{array}{c|ccc}
 &\ell=2&\ell=4&\ell=2^j,\ 3\le j\le f-1\\\hline
 e=f+1&3N/2-6&N-4&N2^{j-2}\\
 e\ge f+2&3N/2-4&N-4&N2^{j-2}.
\end{array}
\]
For $f=3,e\ge5$, precisely two are circular, both of face order two.
All these classes are chiral. Their total counts are
\begin{equation}\label{nacirctotal}
 C_{\rm nab}(e,f)=
 \begin{cases}
 0,&\Bna(e,f)=0,\\
 2,&f=3,\ e\ge5,\\
 2N-16,&e=f\ge4,\\
 N(N+1)/2-10,&e=f+1,\ f\ge4,\\
 N(N+1)/2-8,&e\ge f+2,\ f\ge4.
 \end{cases}
\end{equation}
\end{theorem}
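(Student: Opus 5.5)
The plan is to read the counts directly off the exhaustive lists of Theorem~\ref{nacircularfamilies}. For each line of those lists I will count the admissible multipliers $Q$ with Lemma~\ref{na:qcount}, count the admissible $k$ by their $2$-adic valuation $i=v_2(k+1)$, and then multiply by the number of root lifts selected by the stated condition on $\theta$. Distinct data are distinct classes by Proposition~\ref{na:unique}, so no identifications arise. Chirality is immediate from Theorem~\ref{nonabelmirror}, and the first assertion is vacuous. Write $n=f-1$, so $N=2^n$, and recall that admissibility also demands $Q^2\ne1\pmod M$. This removes the two multipliers $-1,-1+M/2$ from every count in Lemma~\ref{na:qcount}.

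\emph{Multiplier counts.} Lemma~\ref{na:qcount} gives the following numbers of admissible multipliers:
\begin{itemize}
\item $2^{\min(e-3,n)}-2$ for $\mathcal C_0$, where $Q^N=1$;
\item $2^{\min(e-3,n-1)}-2$ for $\mathcal L_0$, and also for the condition $Q^{N/2}=1$;
\item $M/4-2=N/2-2$ for $\mathcal C_1$, which carries no power restriction.
\end{itemize}
For the last lines of (b),(c), the number of $Q\equiv3\pmod4$ with $\ord_M(Q)=N/2$ is the difference $2^{\min(e-3,n-1)}-2^{\min(e-3,n-2)}$. When $e\ge f+1$ this equals $N/4$. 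For $f\ge4$ these $Q$ have order at least $4$, so they automatically satisfy $Q^2\ne1$. Finally, the number of odd $k\in\Z_N$ with $v_2(k+1)=i<n$ is $N/2^{i+1}$.

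\emph{Diagonal.} When $e=f$ we have $e-3=n-1$. Both $\mathcal C_0$ (with $k=N-1$) and $\mathcal L_0$ (with $k=N/2-1$) then have $N/4-2$ multipliers. Each multiplier contributes one datum of face order $2$ (the lift $\theta=0$) and one of face order $4$. Doubling for the reciprocal marking gives $2\cdot2(N/4-2)=N-8$ in each face order, and the total is $2N-16$.

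\emph{Off the diagonal, face orders $2$ and $4$.} For face order $2$, add three contributions:
\begin{itemize}
\item $\mathcal C_0$ with $k=N-1$: this gives $N/2-2$ when $e=f+1$ and $N-2$ when $e\ge f+2$;
\item $\mathcal L_0$: this gives $N/2-2$;
\item $\mathcal C_1$: present only when $e=f+1$, it gives $N/2-2$.
\end{itemize}
The sums are $3N/2-6$ and $3N/2-4$. For face order $4$, the second lines of (b) and (c) each give $N/2-2$, for a total of $N-4$.

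\emph{Off the diagonal, higher face orders.} The value $i=n+1-j$ with $1\le i\le n-2$ gives face order $2^j$ with $3\le j\le n$. The count is the product of four factors: $N/4$ multipliers, $N/2^{i+1}$ values of $k$, $2$ lifts, and $2$ types. This product is $N\,2^{j-2}$. Summing over $j$ gives $N(N/2-2)$, and adding the face-order $2$ and $4$ counts gives $N(N+1)/2-10$ and $N(N+1)/2-8$, as in \eqref{nacirctotal}.

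\emph{The case $f=3$.} For $f=3$ and $e\ge5$ we have $N=4$. Here $\mathcal L_0$ would need $Q^2=1$, so it does not occur, and $\mathcal C_1$ would need $e=4$. The face-order-$4$ line would also need $Q^{2}=1$, and the last line is empty because $n=2$. This leaves $2^{2}-2=2$ data of face order $2$.

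The main obstacle is bookkeeping rather than any new idea. I must check that the minima in Lemma~\ref{na:qcount} resolve correctly in each of the ranges $e=f$, $e=f+1$ and $e\ge f+2$. I must also make sure the removal of the two involutory multipliers is applied exactly once in each count, and not to the order-$N/2$ difference. As a sanity check I will compare the results against the $(5,3)$ example and the small table.
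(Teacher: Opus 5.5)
Your proposal is correct and follows the paper's own proof essentially step for step: you read the counts off the exhaustive lists of Theorem~\ref{nacircularfamilies}, count multipliers with Lemma~\ref{na:qcount}, count $k$ by $v_2(k+1)$, and multiply by lifts, types and (on the diagonal) markings. One slip needs fixing: on the diagonal $e-3=f-3=n-2$, not $n-1$. This value makes both $\min(e-3,n)$ and $\min(e-3,n-1)$ equal to $n-2$ and gives the $N/4-2$ multipliers you state, whereas $e-3=n-1$ would give $N/2-2$ for both types.
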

\begin{proof}
On the diagonal the number of admissible $Q$ in each split type
is $N/4-2$ by Lemma~\ref{na:qcount}. For each one, (a) gives
one class of each face order in each marking. There are two types
and two markings, giving $4(N/4-2)=N-8$ of each order.

Off the diagonal, $\mathcal C_0$ has $N/2-2$ multipliers at gap one
and $N-2$ at larger gaps. Each contributes one face-order-two
class. Its face-order-four line restricts to $Q^{N/2}=1$, leaving
$N/2-2$ multipliers in both ranges. In $\mathcal L_0$ there are
$N/2-2$ multipliers, each giving one of each small face order.
The family $\mathcal C_1$ contributes $N/2-2$ face-order-two
classes only at gap one. This proves the first two columns.

For a higher face order $2^j$, put $i=n-j+1$. The exact multiplier
order is $N/2=2^{n-1}$; the number of such $Q\equiv3\pmod4$
is $N/4$. Indeed subtract the two counts of roots of powers
$N/2$ and $N/4$ in Lemma~\ref{na:qcount}.
There are $2^{n-i-1}=2^{j-2}$ odd $k\in\Z_N$ with
$v_2(k+1)=i$. Both lifts are allowed, and there are two split
types. Their product is
$(N/4)2^{j-2}\cdot2\cdot2=N2^{j-2}$.
This also proves that no other face orders occur.

For $f=3$, $N=4$. The only available type is $\mathcal C_0$,
with two multipliers, each giving one face-order-two class;
the order-four and higher lines are empty.
For the totals when $f\ge4$, the higher columns sum to
\[
 \sum_{j=3}^{f-1}N2^{j-2}=N(N/2-2).
\]
Adding the small columns proves \eqref{nacirctotal}.
Chirality is Theorem~\ref{nonabelmirror}.
\end{proof}

\section{Uniform enumeration}\label{uniformcount}
The counts in this section concern nonmetacyclic automorphism groups
throughout. They do not include the metacyclic dessins classified
in \cite{CH2024}.
\subsection{Enumeration of both Frattini cases}\label{na:enumeration}
Let $\Bab(e,f)$ and $\Bna(e,f)$ count the colour- and
orientation-preserving classes with abelian and nonabelian Frattini
subgroup, respectively, and put
\[
 T(e,f)=\Bab(e,f)+\Bna(e,f).
\]
The notation $\Bab$ agrees with the abelian case above.
Continue to assume $e\ge f$; for other ordered exponents exchange
them in the formulas.

\begin{theorem}[Separate and combined counts]\label{na:counttheorem}
For $e\ge f$ the counts are given by the following table.
For the rows with $f\ge4$, put $N=2^{f-1}$.
\[
\begin{array}{c|c|r|r|r}
 \text{smaller exponent}&\text{range}
 &\Bab(e,f)&\Bna(e,f)&T(e,f)\\ \hline
 f\le1&\text{all}&0&0&0\\
 f=2&e=2&1&0&1\\
 f=2&e>2&4&0&4\\
 f=3&e=3&16&0&16\\
 f=3&e=4&24&0&24\\
 f=3&e\ge5&16&8&24\\
 f\ge4&e=f&8N+16&N^2-8N&N^2+16\\
 f\ge4&e=f+1&6N+16&3N^2/2-6N&3N^2/2+16\\
 f\ge4&e\ge f+2&4N+16&3N^2/2-4N&3N^2/2+16
\end{array}
\]
Equivalently, the total in the last three rows is
\[
 T(e,e)=2^{2e-2}+16,\qquad
 T(e,f)=3\cdot2^{2f-3}+16\quad(e>f\ge4).
\]
\end{theorem}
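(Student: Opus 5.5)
The $\Bab$ column is exactly Theorem~\ref{counts}, rewritten with $N=2^{f-1}$: $2^{f+2}=8N$, $3\cdot2^f=6N$, $2^{f+1}=4N$. So the work is to compute $\Bna(e,f)$ and add. I will use Theorem~\ref{na:main} as the exhaustive, nonredundant list, and count each family as (number of admissible multipliers $Q$) times (number of triples per multiplier). By Lemma~\ref{na:qcount}, a multiplier with $Q^2\ne1$ gives exactly $N$ marked triples: $N/2$ odd values of $k$ and two root lifts. On the diagonal, types $\mathcal C_0,\mathcal L_0$ also contribute their reciprocal markings. These are distinct by Proposition~\ref{na:unique}, so the diagonal count for these types doubles.

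Next I count multipliers using Lemma~\ref{na:qcount} with $M=2^{e-1}$. The count of $Q\equiv3\pmod4$ with $Q^{2^j}=1$ is $2^{\min(e-3,j)}$, and I subtract the two involutory ones, $-1$ and $-1+M/2$. For $\mathcal C_0$ (take $j=n=f-1$) this gives $2^{\min(e-3,f-1)}-2$. That equals $N/4-2$ when $e=f$, $N/2-2$ when $e=f+1$, and $N-2$ when $e\ge f+2$. For $\mathcal L_0$ (take $j=f-2$, requiring $f\ge3$) it gives $2^{\min(e-3,f-2)}-2$. That equals $N/4-2$ when $e=f$, and $N/2-2$ when $e\ge f+1$. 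For $\mathcal C_1$ (only when $e=f+1$) every $Q\equiv3\pmod4$ with $Q^2\ne1$ is allowed. There are $M/4-2=N/2-2$ of these, and each again gives $N$ triples, since $k\in\Z_N$ is odd and there are two lifts.

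Multiplying out:
\begin{itemize}
\item For $e=f$: $2\cdot2\cdot N(N/4-2)=N^2-8N$.
\item For $e=f+1$: $N\bigl[(N/2-2)+(N/2-2)+(N/2-2)\bigr]=3N^2/2-6N$.
\item For $e\ge f+2$: $N\bigl[(N-2)+(N/2-2)\bigr]=3N^2/2-4N$.
\end{itemize}
These match the table. Adding the $\Bab$ column gives $N^2+16=2^{2e-2}+16$ and $3N^2/2+16=3\cdot2^{2f-3}+16$.

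The small rows need separate treatment.
\begin{itemize}
\item For $f\le2$, or for $f=3$ with $e\le4$, Theorem~\ref{na:main} gives no nonabelian examples, since $e\ge5$ and $f\ge3$ are required.
\item For $f=3$ and $e\ge5$, we have $N=4$. Type $\mathcal L_0$ gives $2^{\min(e-3,1)}-2=0$ multipliers. Type $\mathcal C_1$ would need $e=4$, so it does not occur. Type $\mathcal C_0$ gives $2^{\min(e-3,2)}-2=2$ multipliers, so $\Bna=2\cdot4=8$. This agrees with the $(5,3)$ example.
\end{itemize}

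The main point needing care is the boundary of Lemma~\ref{na:qcount}. The lemma assumes $M\ge8$. When $\min(e-3,j)$ is attained at $e-3$, the count is all residues $\equiv3\pmod4$, and we must confirm that the two involutory ones are still subtracted. The diagonal case $e=f$ is the delicate one: there $\min(e-3,f-1)=f-2$ for $\mathcal C_0$, while for $\mathcal L_0$ both $e-3$ and $f-2$ equal $f-2$ when $e=f$, so each gives $N/4$ multipliers before subtraction. I would double-check this against the $(5,5)$ value $\Bna=128$: with $N=16$, $N^2-8N=128$, which agrees. The $(5,4)$ and $(6,4)$ examples give $48$ and $64$, which also agree.
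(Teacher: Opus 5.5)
Your proposal is correct and follows the paper's proof essentially step for step. The $\Bab$ column comes from Theorem~\ref{counts}, and $\Bna$ is $N$ times the number of noninvolutory multipliers from Lemma~\ref{na:qcount} for each of $\mathcal C_0,\mathcal L_0,\mathcal C_1$, doubled on the diagonal for the two split types. One slip in your closing check: on the diagonal both $\min(e-3,f-1)$ and $\min(e-3,f-2)$ equal $f-3$, not $f-2$, and $2^{f-3}=N/4$. The values you actually used in the main computation are already the correct ones.
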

\begin{proof}
We count the two cases separately.

\emph{The abelian case.}
The first column is Theorem~\ref{counts}. Its large-parameter
values are $8N+16$, $6N+16$ and $4N+16$ at gaps zero, one and
at least two, respectively. The boundary values are given in
Theorem~\ref{boundary}.

\emph{The nonabelian case.}
Set
\[
 q_0=2^{\min(e-3,f-1)},\qquad
 q_1=2^{\min(e-3,f-2)} .
\]
For $e\ge4,f\ge3$, Lemma~\ref{na:qcount} gives
\[
 \mathcal C_0:\ N(q_0-2),\qquad
 \mathcal L_0:\ N(q_1-2)
\]
preferred triples.
On the diagonal each is doubled because both markings occur
and are distinct.
Type $\mathcal C_1$ occurs only at $e=f+1$.
Its possible $Q$ are all $M/4=2^{f-2}=N/2$ residues congruent
to three modulo four; subtracting the two involutory ones gives
\[
 \mathcal C_1:\ N(N/2-2).
\]
There is no diagonal doubling for this type.

If $e=f\ge4$, both $q_0,q_1$ equal $2^{f-3}=N/4$.
Thus
\[
 \Bna(e,e)=2N\{(N/4-2)+(N/4-2)\}=N^2-8N.
\]
If $e=f+1$ and $f\ge4$, all three multiplier counts equal $N/2$,
so
$\Bna(e,f)=3N(N/2-2)=3N^2/2-6N$.
If $e\ge f+2$ and $f\ge4$, then $q_0=N$ and $q_1=N/2$,
while $\mathcal C_1$ is absent. Consequently
\[
 \Bna(e,f)=N\{(N-2)+(N/2-2)\}=3N^2/2-4N.
\]
These expressions include the zero diagonal contribution at
$(e,f)=(4,4)$.

For $f=3$, $N=4$ and $\mathcal L_0$ has no noninvolutory
multiplier. Type $\mathcal C_0$ contributes zero at $e=4$
and $4(4-2)=8$ for $e\ge5$.
At $(3,3)$ all unit squares are one, so the list is empty directly.
For $f=2$ the required power $Q^N$ rules out every nonabelian
case, as already proved.
For $f\le1$ a factor has order at most two and the other cyclic
factor is normal of index at most two, so $G$ is metacyclic.

The abelian and nonabelian cases are disjoint because
abelianness of $\Phi(G)$ is an isomorphism invariant.
Their sums give every entry of the table.
\end{proof}

\begin{remark}[Why the total stabilizes off the diagonal]
At gap one, the nonabelian contribution is $3N^2/2-6N$ and
the abelian contribution is $6N+16$.
At gap at least two, these become $3N^2/2-4N$ and $4N+16$.
The abelian case loses $2N$ classes while the nonabelian
case gains $2N$. Thus each case changes, but their sum does not.
The equality of the total does not mean that the same three
nonabelian types are present: $\mathcal C_1$ occurs only at gap one.
\end{remark}

\begin{corollary}[Counts up to reciprocality]\label{na:reciprocalcount}
For $e>f$, fixing the colour with larger valency gives one
representative of every reciprocal pair, so the counts of
reciprocal pairs are the corresponding table entries.
On the diagonal $e=f\ge2$,
\[
 \#\{\text{nonabelian Frattini reciprocal pairs}\}
     =\Bna(e,e)/2.
\]
For both Frattini cases together, the diagonal count is
\[
 \frac{T(e,e)+S_{\mathrm{ab}}(e)}{2},\qquad
 S_{\mathrm{ab}}(2)=1,\quad S_{\mathrm{ab}}(e)=4\ (e\ge3).
\]
\end{corollary}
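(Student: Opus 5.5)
The corollary has three assertions, and I will treat each as a statement about orbits of the colour exchange (reciprocity) acting on marked triples.

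\emph{Off the diagonal.} For $e>f$, reciprocity sends a dessin of ordered bidegree $(e,f)$ to one of ordered bidegree $(f,e)$. Proposition~\ref{dessindictionary}(b) realises this exchange as $(G,a,b)\mapsto(G,b,a)$. No class is fixed, and each orbit meets the set of classes with the larger valency coloured first in exactly one element. The number of reciprocal pairs is therefore $\Bab(e,f)$, $\Bna(e,f)$ or $T(e,f)$, as the case may be. This already appears in the remark after Theorem~\ref{counts} for the abelian case, and the same reasoning applies without change to the nonabelian case.

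\emph{On the diagonal.} Here $e=f$, and reciprocity is an involution on the finite set of classes. It preserves the isomorphism type of $G$, so it preserves whether $\Phi(G)$ is abelian, and it acts separately on the abelian and nonabelian parts. An involution with $s$ fixed points on a set of size $B$ has $s+(B-s)/2=(B+s)/2$ orbits. The fixed points are exactly the symmetric classes, by Proposition~\ref{dessindictionary}(b). I will then count these fixed points in each part.

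\emph{Counting fixed points.} In the nonabelian part, Proposition~\ref{na:unique} shows that no diagonal triple with nonabelian Frattini subgroup is symmetric. Intrinsically, one marked generator induces a multiplier $\equiv1\pmod4$ on $D$ or on $D/\Omega_1(D)$, and the other induces one $\equiv3\pmod4$. So there are no fixed points, and the count is $\Bna(e,e)/2$. In the abelian part, Proposition~\ref{symmetry} gives $S_{\mathrm{ab}}(e)$ fixed points, with $S_{\mathrm{ab}}(2)=1$ and $S_{\mathrm{ab}}(e)=4$ for $e\ge3$, and orbit count $(\Bab(e,e)+S_{\mathrm{ab}}(e))/2$. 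Since the two parts are disjoint and invariant, their orbit counts add:
\[
 \frac{\Bab(e,e)+S_{\mathrm{ab}}(e)}{2}+\frac{\Bna(e,e)}{2}=\frac{T(e,e)+S_{\mathrm{ab}}(e)}{2}.
\]
For $e\le4$ we have $\Bna(e,e)=0$ by Theorem~\ref{na:counttheorem}, so the formula reduces to the abelian one. It remains valid at $e=2$, where $T=1$.

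The only point needing care is that the diagonal nonabelian list contains both markings of every presentation in types $\mathcal C_0$ and $\mathcal L_0$. Reciprocity therefore acts on this list by swapping the two markings of each presentation, and it never sends an entry outside the list. This follows from the completeness statement in Theorem~\ref{na:main}: the exchanged triple is again an exact nonmetacyclic triple, so it appears exactly once in the list. The rest of the argument is bookkeeping.
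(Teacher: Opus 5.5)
Your proposal is correct and follows essentially the same route as the paper. Off the diagonal you place the larger valency first; on the diagonal you count orbits of the colour-exchange involution by $(t+s)/2$, using Proposition~\ref{na:unique} (no fixed points in the nonabelian case) and Proposition~\ref{symmetry} ($S_{\mathrm{ab}}(e)$ fixed points in the abelian case). Your extra remarks, that the involution preserves each Frattini case and swaps the two diagonal markings of each $\mathcal C_0$ or $\mathcal L_0$ presentation, are valid bookkeeping that the paper leaves implicit.
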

\begin{proof}
With unequal valencies, a colour exchange moves between the
two different ordered problems, so choosing the larger valency
first selects one of each pair.
On the diagonal, colour exchange is an involution on the set
of classes. Proposition~\ref{na:unique} proves it has no fixed points
in the nonabelian case, so every orbit there has size two.
Its fixed points in the abelian case are exactly the symmetric
classes counted in Proposition~\ref{symmetry}: one at exponent
two and four at every larger exponent.
For an involution on a set of size $t$ with $s$ fixed points,
the orbit count is $s+(t-s)/2=(t+s)/2$, which proves the formula.
\end{proof}
\subsection{Counting the symmetric and the reflexible subfamilies}
Write $T=\Bab+\Bna$, as in Theorem~\ref{na:counttheorem}.

\begin{theorem}[Symmetric and reflexible classes]\label{uniformsymref}
A nonmetacyclic complete regular dessin in the classification is
reflexible if and only if its Frattini subgroup is abelian.
Thus the number of reflexible classes is exactly $\Bab(e,f)$;
the remaining $\Bna(e,f)$ classes form $\Bna(e,f)/2$ chiral pairs.
Symmetric classes occur only on the diagonal, with
\[
 S(e)=\begin{cases}0,&e=1,\\1,&e=2,\\4,&e\ge3.\end{cases}
\]
They all have abelian Frattini subgroup and are circular and reflexible.
For $e\ge3$, their four six-residue parameters have the uniform form
\[
 P=S=\rho M/2,\qquad Q=R=-1+\rho M/2,\qquad
 p=-1+\lambda M/2,\quad q=1+\lambda M/2,
 \quad \rho,\lambda\in\{0,1\}.
\]
Their face order is two for $\lambda=\rho$ and four otherwise.
For $e=2$ the unique symmetric datum is the boundary datum
$R=Q=p=q=1$, with face order two.
\end{theorem}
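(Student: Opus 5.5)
The proof will assemble results already established. For reflexibility I will combine Theorem~\ref{allreflexible}, which shows that every abelian Frattini datum is reflexible, with Theorem~\ref{nonabelmirror}, which shows that the mirror map \eqref{namirror} has no fixed points on the nonabelian Frattini lists. The latter is because a fixed point would force $Q=Q^{-1}$, which is impossible since $Q^2\ne1\pmod M$. Abelianness of $\Phi(G)$ is an isomorphism invariant, so the two cases do not interact. Hence the reflexible classes are counted exactly by $\Bab(e,f)$, and the nonabelian classes are permuted by a fixed-point-free involution, giving $\Bna(e,f)/2$ chiral pairs.

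For symmetry, I will first note that a colour-exchanging isomorphism must preserve the rotation orders, so $e=f$. For $e=1$ the group is metacyclic and the list is empty, giving $S(1)=0$. Proposition~\ref{na:unique} already shows that no diagonal nonabelian Frattini triple is symmetric. The reason is that the two distinguished generators induce multipliers $3$ and $1$ modulo four on $D$ or on $D/\Omega_1(D)$. Thus all symmetric classes have abelian Frattini subgroup, and Proposition~\ref{symmetry} gives $S(2)=1$ and $S(e)=4$ for $e\ge3$.

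It remains to write the symmetric data explicitly. For $M\ge8$, the proof of Proposition~\ref{symmetry} places them in family (I) with $\rho=\sigma$ and $\lambda=\mu$. Substituting into Theorem~\ref{largefamilies} gives $R=Q=-1+\rho M/2$, $p=-1+\lambda M/2$ and $q=1+\lambda M/2$. Then \eqref{PS} gives $P=-(\rho M/2)p=\rho M/2$, since $p$ is odd, and similarly $S=\rho N/2=\rho M/2$.

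The case $M=4$ ($e=3$) needs a separate check, and I expect it to be the only real obstacle. There the symmetric data are $Q=R\in\{1,3\}$ and $q=-p$. I must verify that these coincide with the uniform formula read modulo four. With $M/2=2$, the formula gives $R=-1+2\rho\in\{3,1\}$, $p=-1+2\lambda$ and $q=1+2\lambda=-p\pmod 4$, and $P=-(1+R)p=2\rho$. This matches the formula, so the four data agree.

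For the face orders I will use Corollary~\ref{Ifaces}, which gives $z=((\lambda+\rho)M/2,(\lambda+\rho)N/2)$ when $M\ge8$. This is zero exactly when $\lambda=\rho$, so the face order is two in that case and four otherwise. For $M=4$ I will compute $z$ directly from \eqref{zcoords}, or read it off the $K_{8,8}$ analysis, with the same outcome. Circularity then follows from Corollary~\ref{symmetriccircular}: both coordinates of $z$ have equal order, so Proposition~\ref{circularcriterion} applies. For $e=2$, the boundary datum (a) of Theorem~\ref{boundary} has $z=0$, which gives face order two.
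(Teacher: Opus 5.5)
Your proposal is correct and follows the paper's proof. Reflexibility comes from Theorems~\ref{allreflexible} and~\ref{nonabelmirror}, symmetry from Propositions~\ref{symmetry} and~\ref{na:unique}, circularity from Corollary~\ref{symmetriccircular}, and the explicit parameters and face-square vector $((\lambda+\rho)M/2,(\lambda+\rho)M/2)$ are computed separately for $M\ge8$ (family (I)) and for $M=4$ (the boundary list); your modulo-four check simply writes out what the paper states in one line.
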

\begin{proof}
Theorem~\ref{allreflexible} and Theorem~\ref{nonabelmirror} give
the two directions of the reflexibility assertion, since the
two Frattini cases exhaust the classification. Unequal vertex
valencies cannot be interchanged by a graph automorphism.
On the diagonal, Proposition~\ref{symmetry} gives the counts and
parameters in the abelian case, while
Proposition~\ref{na:unique} excludes symmetry in the nonabelian
case. Corollary~\ref{symmetriccircular} proves circularity of
all the surviving symmetric classes. For $M\ge8$, the symmetry
conditions select family (I) with $\sigma=\rho$ and $\mu=\lambda$;
substituting in \eqref{PS} gives the displayed $P=S$.
For $M=4$, the boundary list with $Q=R$, $q=-p$ gives the same
four formulas. In both ranges the face-square vector is
$((\lambda+\rho)M/2,(\lambda+\rho)M/2)$, giving two classes
of each stated face order. The $M=2$ datum has zero face square.
There are no classes when an exponent is one.
\end{proof}
\subsection{Counting the circular subfamily}
\begin{theorem}[Uniform circular classification and enumeration]\label{uniformcircular}
The circular classes are exactly the data listed in
Theorem~\ref{circularfamilies} and its boundary lists for the
abelian case, and Theorem~\ref{nacircularfamilies} for the
nonabelian case. For $e\ge f$ their total number is
\begin{equation}\label{uniformcirccount}
 C(e,f)=
 \begin{cases}
 0,&f=1,\\
 1,&e=f=2,\\
 2,&f=2<e,\\
 8,&e=f=3,\\
 10,&f=3<e,\\
 2N+8,&e=f\ge4,\\
 N(N+1)/2+8,&e>f\ge4,
 \end{cases}
 \qquad N=2^{f-1}.
\end{equation}
Their complete distribution by face order is
\[
\begin{array}{c|ccc}
 \text{range}&\ell=2&\ell=4&\ell=2^j,\ 3\le j\le f-1\\\hline
 e=f=2&1&0&0\\
 f=2<e&2&0&0\\
 e=f=3&4&4&0\\
 f=3<e&6&4&0\\
 e=f\ge4&N+4&N+4&0\\
 e>f\ge4&3N/2+4&N+4&N2^{j-2}.
\end{array}
\]
No other circular face orders occur. For every listed face order,
\[
 g=1+2MN-M-N-\frac{2MN}{\ell},
 \qquad M=2^{e-1},\quad N=2^{f-1},
\]
and the geometric face boundary is a simple cycle with $2\ell$ sides.
Exactly $C_{\rm ab}(e,f)$ of these circular classes are reflexible,
where $C_{\rm ab}$ is the count in Theorem~\ref{circularcounts};
the remaining $C_{\rm nab}(e,f)$ are chiral, with the counts in
\eqref{nacirctotal}.
\end{theorem}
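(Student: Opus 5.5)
The proof is an assembly argument: circularity is a property of the marked triple, so we can read it off the exhaustive, nonredundant classification of Theorem~\ref{uniformmain}. By Proposition~\ref{dessindictionary}(c), each class is circular or not independently of how the classification is presented. The abelian and nonabelian Frattini lists are disjoint, since abelianness of $\Phi(G)$ is an isomorphism invariant. Hence the circular classes are exactly the union of the circular data from Theorem~\ref{circularfamilies} together with its boundary lists, and from Theorem~\ref{nacircularfamilies}. We also get
\[
 C(e,f)=C_{\rm ab}(e,f)+C_{\rm nab}(e,f),
\]
and the same additivity holds separately for each face order.

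Next I add the two distributions row by row, using Theorem~\ref{circularcounts} for the abelian part and Theorem~\ref{nacircularcounts} for the nonabelian part.
\begin{itemize}
\item For $f\le2$ and for $e=f=3$, the group $\Bna$ vanishes, so only the abelian table contributes. This gives $0$; $1$; $2$; and $(4,4)$ with total $8$.
\item For $f=3$ and $e=4$, the abelian part is $(6,4)$ and the nonabelian part is empty.
\item For $f=3$ and $e\ge5$, the abelian part is $(4,4)$ and the nonabelian part adds two classes of face order two. Both cases therefore give $(6,4)$ with total $10$.
\item For $e=f\ge4$, adding $(12,12)$ to $(N-8,N-8)$ gives $(N+4,N+4)$ and total $2N+8$.
\item For $e=f+1$ and $f\ge4$, adding $(10,8)$ to $(3N/2-6,N-4)$ gives $(3N/2+4,N+4)$.
\item For $e\ge f+2$ and $f\ge4$, adding $(8,8)$ to $(3N/2-4,N-4)$ gives the same pair $(3N/2+4,N+4)$.
\end{itemize}
The higher face orders $2^j$ come only from the nonabelian table. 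For the totals, $18+N(N+1)/2-10$ and $16+N(N+1)/2-8$ both equal $N(N+1)/2+8$, which explains why the off-diagonal total does not depend on the gap. Theorems~\ref{circularcounts} and \ref{nacircularcounts} each exclude all other face orders, so none occur in the union.

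For the genus, I substitute the face count $4MN/\ell$ into the formula of Proposition~\ref{faces} (equivalently Proposition~\ref{na:faces}). This gives
\[
 g=1+2MN-M-N-\frac{2MN}{\ell}.
\]
The boundary statement follows from the same proof of Proposition~\ref{dessindictionary}(c). A face has $\ell$ black and $\ell$ white vertex visits, and under circularity these are pairwise distinct, so the boundary is a simple cycle with $2\ell$ sides. The reflexible/chiral split comes from Theorem~\ref{allreflexible} for the abelian case and Theorem~\ref{nonabelmirror} for the nonabelian case, applied to the circular subsets: exactly $C_{\rm ab}$ of the circular classes are reflexible and $C_{\rm nab}$ are chiral.

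The only delicate point is the bookkeeping at the boundary rows, especially $f=3$. There one must check that at $e=4$ the nonabelian case is empty ($e\ge5$ is required), while at $e\ge5$ the nonabelian contribution of $2$ exactly offsets the abelian drop from $10$ to $8$. I would also check that the diagonal nonabelian counts already include both reciprocal markings, as in the proof of Theorem~\ref{nacircularcounts}, so that nothing is doubled twice.
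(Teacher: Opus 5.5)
Your proposal is correct and follows the paper's proof essentially step for step. Both arguments use disjointness of the abelian and nonabelian Frattini cases, add the face-order distributions of Theorems~\ref{circularcounts} and \ref{nacircularcounts} row by row (including the $f=3$ transition and both diagonal markings), and obtain the genus from the Euler formula. The only cosmetic difference is that the paper cites the reflexible/chiral split via Theorem~\ref{uniformsymref}, which is itself assembled from Theorems~\ref{allreflexible} and \ref{nonabelmirror} exactly as you do.
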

\begin{proof}
The two sets of data are disjoint by the abelianness of their
Frattini subgroups, and each case's circularity criterion was
proved to be necessary and sufficient.
For $f=2$ the nonabelian case is empty. For $f=3$, the abelian
case gives $(4,4)$ on the diagonal, $(6,4)$ at gap one, and
$(4,4)$ at larger gaps; the last range gains $(2,0)$ from the
nonabelian case. This proves all the small rows.

For $f\ge4$, on the diagonal add $(12,12)$ to
$(N-8,N-8)$. Off the diagonal at gap one add $(10,8)$ to
$(3N/2-6,N-4)$, and at larger gaps add $(8,8)$ to
$(3N/2-4,N-4)$. Both sums are $(3N/2+4,N+4)$.
Only the nonabelian case contributes higher face orders,
with the displayed counts. Summing these columns gives
\eqref{uniformcirccount}. The genus is the common Euler formula:
there are $4MN$ edges, $2M+2N$ vertices, and $4MN/\ell$ faces.
Reflexibility and chirality follow from Theorem~\ref{uniformsymref}.
\end{proof}

\begin{corollary}[Reciprocality and mirror identifications]\label{uniformorbits}
For $e>f$, the ordered counts $T(e,f)$ and $C(e,f)$ are already
the counts up to reciprocality, when the larger valency is placed
first. On the diagonal the numbers up to reciprocality are
\[
 \frac{T(e,e)+S(e)}2=
 \begin{cases}1,&e=2,\\10,&e=3,\\N^2/2+10,&e\ge4,\end{cases}
\]
and, for circular classes,
\[
 \frac{C(e,e)+S(e)}2=
 \begin{cases}1,&e=2,\\6,&e=3,\\N+6,&e\ge4.\end{cases}
\]
Identifying mirrors while retaining colors gives
$\Bab+\Bna/2$ classes, or $C_{\rm ab}+C_{\rm nab}/2$
when circularity is imposed.
\end{corollary}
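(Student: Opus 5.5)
The plan is to treat every count in the statement as an orbit count of an involution acting on a finite set of classes. For such an involution I will use the elementary identity
\[
\#\text{orbits}=s+\frac{t-s}{2}=\frac{t+s}{2},
\]
where $t$ is the size of the set and $s$ is the number of fixed points. This is the identity already used in Proposition~\ref{symmetry} and Corollary~\ref{na:reciprocalcount}. The only work is to identify, in each case, the relevant set, the involution and its fixed points, and then to substitute the counts from Theorems~\ref{na:counttheorem}, \ref{uniformsymref} and \ref{uniformcircular}.

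\emph{Off the diagonal.} For $e>f$, colour exchange sends a triple of orders $(2^e,2^f)$ to one of orders $(2^f,2^e)$. Placing the larger valency first therefore picks exactly one representative of each reciprocal pair, as in Corollary~\ref{na:reciprocalcount}. The same applies to the circular classes, since reciprocity preserves circularity. This holds because criterion (c) of Proposition~\ref{dessindictionary} is symmetric under exchanging $a$ and $b$: $\langle ba\rangle$ is conjugate to $\langle ab\rangle$ by $a$, and $a$ normalizes $\langle a\rangle$.

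\emph{On the diagonal.} Colour exchange $(G,a,b)\mapsto(G,b,a)$ is an involution on the $T(e,e)$ classes. Its fixed points are the symmetric classes, of which there are $S(e)$ by Theorem~\ref{uniformsymref}; these are all abelian Frattini classes, by Proposition~\ref{na:unique}. Substituting $T(2,2)=1$, $T(3,3)=16$ and $T(e,e)=N^2+16$ gives $1$, $10$ and $N^2/2+10$.

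For the circular count, I restrict the same involution to the circular classes. This is legitimate because the involution preserves circularity, as above. Every symmetric class is circular by Theorem~\ref{uniformsymref}, so the fixed-point count is still $S(e)$. Using $C(2,2)=1$, $C(3,3)=8$ and $C(e,e)=2N+8$ from \eqref{uniformcirccount} gives $1$, $6$ and $N+6$.

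\emph{Mirror identification.} The mirror map $(G,a,b)\mapsto(G,a^{-1},b^{-1})$ is an involution on the classes of fixed bidegree, and it preserves colours. Its fixed points are exactly the reflexible classes, which number $\Bab$ by Theorem~\ref{uniformsymref}. This gives $\Bab+(\Bna)/2$ orbits; the division is legitimate because $\Bna$ is even, being the number of classes in $\Bna/2$ chiral pairs.

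For circular classes, the mirror also preserves circularity. Indeed $a^{-1}b^{-1}=(ba)^{-1}$ generates a conjugate of $\langle ab\rangle$ by an element normalizing $\langle a\rangle$, and the two vertex subgroups are unchanged; so criterion (c) is invariant. The fixed points are the reflexible circular classes, numbering $C_{\rm ab}$, which yields $C_{\rm ab}+C_{\rm nab}/2$.

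The only step requiring any care is this invariance of circularity under both involutions. Once that is checked, the rest is arithmetic.
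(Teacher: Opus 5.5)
Your proposal is correct and follows the paper's proof. Both count orbits of the colour-exchange and mirror involutions by $(t+s)/2$, with the symmetric and reflexible classes as fixed points, and you also make explicit why both involutions preserve circularity, which the paper leaves implicit; for the $\langle b\rangle$-intersection that check needs the conjugation $ba=b(ab)b^{-1}$ by $b$ rather than by $a$, but this is an immediate adjustment.
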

\begin{proof}
Colour exchange is an involution, with fixed classes exactly the
symmetric ones. The number of its orbits is half the sum of its
set size and fixed-point count. Substitute the already proved
values. For unequal exponents it exchanges the two distinct
ordered problems, so fixing the larger valency first chooses
one of each reciprocal pair. The mirror statements follow in
the same way from the reflexible fixed classes and chiral pairs.
\end{proof}
\subsection{Proof of Theorem~\ref{uniformmain}}
The square-basis lemma splits every triple into the two possibilities
for its Frattini subgroup. For the abelian possibility, the extension
and six-residue theorems give necessary and sufficient presentations,
and Theorem~\ref{largefamilies} and Theorem~\ref{boundary} solve
all the nonmetacyclic data. For the nonabelian possibility,
Proposition~\ref{na:survivors} gives exhaustion from Janko's list,
and Propositions~\ref{na:necessity}, \ref{na:exactness},
\ref{na:frattini}, and \ref{na:unique} establish the canonical
presentations, their construction, their nonmetacyclicity, and
marked nonredundancy. The two cases cannot overlap.
Theorem~\ref{na:counttheorem} adds their cardinalities and gives
exactly the displayed total. The special-class assertions follow
from Theorems~\ref{uniformsymref} and \ref{uniformcircular}.
This completes the proof of Theorem~\ref{uniformmain}.

\begin{remark}[Reading the parameter lists]\label{app}
The abelian lists use the prescribed generators directly.
For the nonabelian lists, first order the exponents as $e\ge f$,
choose the preferred ordering in \eqref{na:preferred}, and obtain
$\xi$ by the lifting procedure of Lemma~\ref{na:root}.
On the diagonal the reciprocal entries in Theorem~\ref{na:main}
are distinct and must both be included. The presentations then
construct every marked class exactly once. The explicit circular
restrictions select subfamilies without further isomorphism tests.
\end{remark}

\section{Stabilisation with respect to the larger exponent}
\label{stabilisation}
The enumeration suggests that increasing the larger rotation order
can leave the number of dessins unchanged. This phenomenon is real,
but its formulation depends on the classes being counted. In
particular, the diagonal must be separated from the strict inequality,
and for $p=2$ one must distinguish the nonmetacyclic count of this
paper from the count of all complete regular dessins.
We explain the stabilisation through the weights of the classification
families, and then show why equality of counts need not come from
shortening the larger cyclic factor by a quotient.

\subsection{The precise statements}
Let $d_p(e,f)$ denote the number of orientation- and colour-preserving
isomorphism classes of all regular dessins on $K_{p^e,p^f}$, with
the specified rotation orders placed in that order. Thus
$d_p(e,f)$ includes metacyclic groups, whereas $T(e,f)$ in this
paper counts only nonmetacyclic groups and is relevant only to $p=2$.

For an odd prime, \cite[Theorem 1 and Lemma 21]{HNW2019} gives
\begin{equation}\label{oddstablecount}
 d_p(e,f)=
 \begin{cases}
 p^{2f-2},&e=f,\\
 p^{2f-1},&e>f.
 \end{cases}
\end{equation}
The first formula retains colours. Up to reciprocality its value
is instead $(p^{2f-2}+p^{f-1})/2$, since there are $p^{f-1}$
symmetric classes. In the strict inequality the ordered count is
already the reciprocal count after the larger rotation order has
been placed first. Thus, even for odd primes, the stabilisation
statement concerns $e>f$, not the whole range $e\ge f$.

For the nonmetacyclic $2$-groups classified here,
Theorem~\ref{na:counttheorem} gives
\begin{equation}\label{nonmetastablecount}
 T(e,f)=
 \begin{cases}
 0,&f=1,\\
 4,&f=2,\\
 24,&f=3,\\
 3\cdot2^{2f-3}+16,&f\ge4,
 \end{cases}
 \qquad(e>f).
\end{equation}
This is independent of $e$ throughout the strict inequality.
Its diagonal values are different, as recorded in
Theorem~\ref{uniformmain}.

For comparison, let $m_2(e,f)$ count the metacyclic classes.
For $f\ge4$, the metacyclic enumeration
\cite[Theorem 1.1 and Lemma 6.1]{CH2024}, converted to
colour-preserving counts on the diagonal, is
\[
 m_2(e,f)=
 \begin{cases}
 3\cdot2^{2f-4},&e=f,\\
 3\cdot2^{2f-3},&e=f+1,\\
 3\cdot2^{2f-2},&e\ge f+2.
 \end{cases}
\]
Combining this with our nonmetacyclic count yields the following
useful distinction.

\begin{proposition}[Total counts at the prime two]\label{total2stability}
For $f\ge4$, put $N=2^{f-1}$. Then
\begin{equation}\label{total2table}
\begin{array}{c|ccc}
 \text{range}&m_2(e,f)&T(e,f)&d_2(e,f)\\\hline
 e=f&3N^2/4&N^2+16&7N^2/4+16\\
 e=f+1&3N^2/2&3N^2/2+16&3N^2+16\\
 e\ge f+2&3N^2&3N^2/2+16&9N^2/2+16.
\end{array}
\end{equation}
For smaller $f$ the complete values are
\[
\begin{array}{c|ccc}
 f&d_2(f,f)&d_2(f+1,f)&d_2(e,f),\ e\ge f+2\\\hline
 1&1&2&4\\
 2&4&10&16\\
 3&28&48&72.
\end{array}
\]
Thus the total count is constant for $e\ge f+2$, but need not
be constant throughout $e>f$.
\end{proposition}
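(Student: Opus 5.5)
The plan is to use the decomposition $d_2(e,f)=m_2(e,f)+T(e,f)$, which holds because every exact triple generates either a metacyclic or a nonmetacyclic group, and because the dessin dictionary of Proposition~\ref{dessindictionary} identifies classes with marked triples in both cases. Nothing new has to be classified. The $T$ column of \eqref{total2table} is copied from Theorem~\ref{na:counttheorem}: $N^2+16$ on the diagonal and $3N^2/2+16$ for $e>f\ge4$, since $3\cdot 2^{2f-3}=3N^2/2$.

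For the $m_2$ column I would first rewrite the metacyclic formulas quoted from \cite{CH2024} in terms of $N$. This gives $3\cdot2^{2f-4}=3N^2/4$, $3\cdot2^{2f-3}=3N^2/2$ and $3\cdot2^{2f-2}=3N^2$. The point to be careful about is that the diagonal value is a colour-preserving count, not a count up to reciprocality. The quoted statement asserts this conversion, so I would only check that the diagonal figure from \cite{CH2024} was doubled with its symmetric fixed points accounted for, using the same involution-orbit relation $t\mapsto (t+s)/2$ as in Corollary~\ref{na:reciprocalcount}. Adding the two columns then gives $7N^2/4+16$, $3N^2+16$ and $9N^2/2+16$.

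For the small table with $f\le3$, I would take $m_2(e,f)$ directly from the metacyclic classification of \cite{CH2024} in those ranges and add $T$ from Theorem~\ref{na:counttheorem}. The $T$ contributions are as follows.
\begin{itemize}
\item For $f=1$, $T=0$, so $d_2$ equals the metacyclic count $1,2,4$.
\item For $f=2$, $T=1,4,4$, so the metacyclic values must be $3,6,12$.
\item For $f=3$, $T=16,24,24$, so the metacyclic values must be $12,24,48$.
\end{itemize}
As a sanity check, I would confirm these small metacyclic values against the general pattern: the diagonal, gap one and gap at least two ratios should be $1:2:4$, matching $3N^2/4, 3N^2/2, 3N^2$. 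For $f=1$, where every group is metacyclic, I would verify the three values independently. The constant-gap-$\ge2$ claim follows because no entry in that row depends on $e$. Nonconstancy on $e>f$ follows from comparing the gap-one and gap-two rows, which differ by $3N^2/2$ for $f\ge4$ and by $24$ for $f=3$.

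The main obstacle is the faithful transcription of the metacyclic counts from \cite{CH2024}. I need the correct normalisation of the diagonal (colour-preserving versus reciprocal) and the correct boundary values for $f\le3$, where the general formula may fail. I would handle this by recomputing $f=1,2$ directly from metacyclic exact factorisations if the citation is ambiguous.
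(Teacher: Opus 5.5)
Your proposal is correct and is essentially the paper's argument: the $f\ge4$ table comes from adding the diagonal-converted metacyclic counts of \cite{CH2024} to $T(e,f)$ from Theorem~\ref{na:counttheorem}, and the $f=3$ row likewise uses the cited values $12,24,48$. The one difference is that the paper does not take the smallest metacyclic values from the citation (your ``must be $3,6,12$'' is read off the target, not derived). Instead it computes $f=1$ from the semidirect products with $r^2\equiv1\pmod{2^e}$, and obtains $d_2(2,2)=4$ and $d_2(3,2)=10$ directly from the abelian-Frattini extension data: a Klein four parent in the first case, and parents $C_4\times C_2$ and $D_8$ in the second, the latter contributing nothing. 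This is exactly the recomputation you name as your fallback.
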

\begin{proof}
For $e=f$, the metacyclic paper gives an ordered count
$3\cdot2^{2f-4}=3N^2/4$; its reciprocal count is half the sum
of this number and its $2^{f-2}$ symmetric classes.
For unequal exponents its reciprocal counts are the ordered
counts, giving the other two metacyclic entries. Add the
nonmetacyclic entries of Theorem~\ref{na:counttheorem} to obtain
\eqref{total2table}.

For $f=1$, the semidirect-product calculation below gives the
three small values. For $(e,f)=(2,2)$ the Frattini parent is the
Klein four group. Both conjugation actions are the identity and
any of its four elements is a collection coefficient, giving four
marked classes. For $(3,2)$ the possible parents are
$C_4\times C_2$ and $D_8$, with marked generators $x,y$ of
orders four and two. In $D_8$, an automorphism fixing $y$ must
send $x$ to $x$ or $x^{-1}$. Its square fixes $x$, so it cannot
be conjugation by $y$, as conjugation by $b$ would require.
That parent therefore gives no extension.

For $C_4\times C_2$, write $y^\alpha=x^P y$ and
$x^\beta=x^R y^S$. Here $P=0,2$, $R=1,3$; the first norm
forces $S=0$. The remaining norm equations are
$R=1+2p+Pq\pmod4$ and $(R+1)p=-P\pmod4$, with
$p\in\Z_4$, $q\in\Z_2$. At $P=0$ there are eight solutions:
$R=1,p$ even or $R=3,p$ odd, with either $q$. At $P=2$
there are exactly two: $R=1,p$ odd, $q=1$. Hence there are ten
marked classes. The stable metacyclic count for $f=2,e\ge4$
is twelve, to which our four nonmetacyclic classes are added.
For $f=3$, the metacyclic counts are $12,24,48$ in the three
displayed ranges, by \cite[Lemma 6.1 and Theorem 1.1]{CH2024};
our counts are $16,24,24$. This proves the remaining entries.
\end{proof}

For example,
\[
 d_3(2,2)=9,\qquad d_3(3,2)=d_3(4,2)=27,
\]
whereas
\[
 d_2(4,4)=128,\qquad d_2(5,4)=208,\qquad
 d_2(6,4)=d_2(7,4)=304.
\]
These examples also show that identifying reciprocal dessins
cannot be left implicit in a stabilisation statement.
Nor is the topology unchanged when a count stabilises. In the
odd-prime case the face order and genus are
\[
 \ell=p^e,\qquad g=\frac{(p^f-1)(p^e-2)}2
\]
by \cite[Theorem 1]{HNW2019}; the genus increases with $e$.

\subsection{How the odd-prime families compensate}
The formula $p^{2f-1}$ can be understood without repeating the full
classification. We give an explicit conservation identity between
its families. For a fixed group $H$, let $\mathcal X(H)$ be its
ordered exact generating pairs of the prescribed orders.
The action of $\Aut(H)$ on $\mathcal X(H)$ is free: an automorphism
fixing both generators fixes the group. Hence the contribution of
$H$ to the dessin count is
\begin{equation}\label{orbitweightstable}
 \frac{|\mathcal X(H)|}{|\Aut(H)|}.
\end{equation}
Growth in the number of choices of generators can therefore be
cancelled by growth in the number of automorphisms. One must also
account for changes in which abstract groups occur.

For precision, use the three families $M_1,M_2,M_3$ of
\cite[Theorem 12]{HNW2019}, with their smaller exponent denoted
here by $f$, their larger exponent by $e$, and their action
parameter by $\ell$. Write $W_i(e,f)$ for the total orbit weight
of family $M_i$, with the abelian class included in $W_1$.
The weights in \cite[Lemmas 22--24]{HNW2019} give
\begin{align}
 W_1(e,f)&=p^{\min(e-f,f)}+
 (p-1)\sum_{\ell=e-f}^{f-1}p^{f+e-2\ell-1},\label{Wone}\\
 W_2(e,f)&=(p-1)\sum_{\ell=1}^{f-1}p^{2f-2\ell-1},\label{Wtwo}\\
 W_3(e,f)&=(p-1)^2
 \sum_{u=f+1}^{\min(e-1,2f-1)}
 \sum_{\ell=u-f}^{f-1}p^{f+u-2\ell-2}.\label{Wthree}
\end{align}
An empty sum is zero. In \eqref{Wone}, the first term combines
the abelian group with the $M_1$ action parameters $\ell\ge f$;
the remaining sum is absent when $e-f\ge f$.
All summands are orbit counts, not numbers of presentations before
isomorphisms have been removed.

\begin{proposition}[A transfer identity for the orbit weights]
\label{oddweighttransfer}
For $f<e<2f$,
\begin{equation}\label{weighttransfer}
 W_1(e+1,f)-W_1(e,f)
 =-\bigl(W_3(e+1,f)-W_3(e,f)\bigr).
\end{equation}
The term $W_2$ is independent of $e$. All three weights are
constant once $e\ge2f$. Consequently their sum is $p^{2f-1}$
for every $e>f$.
\end{proposition}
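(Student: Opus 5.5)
The plan is a direct computation from the closed formulas \eqref{Wone}--\eqref{Wthree}, treated as finite geometric sums in $p^{-2}$. Throughout, fix $f$ and put $g=e-f$.

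\emph{Invariance of $W_2$ and the stable range.} The formula \eqref{Wtwo} does not involve $e$, so $W_2$ is independent of $e$. For $e\ge 2f$ the other two weights also freeze:
\begin{itemize}
\item $\min(e-f,f)=f$, and the sum in \eqref{Wone} runs over $\ell\ge e-f\ge f>f-1$, so it is empty and $W_1=p^f$;
\item the upper limit in \eqref{Wthree} is $\min(e-1,2f-1)=2f-1$, so $W_3$ no longer depends on $e$.
\end{itemize}
Hence all three weights are constant once $e\ge 2f$.

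\emph{The transfer identity.} Let $f<e<2f$, so $1\le g\le f-1$ and $g+1\le f$; the first term of \eqref{Wone} is then $p^g$ at $e$ and $p^{g+1}$ at $e+1$. This also covers the edge step $e=2f-1$, where $\min$ becomes $f$ and the sum at $e+1$ is empty.

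On the $W_3$ side, the upper limit is $e-1$ at $e$ and $\min(e,2f-1)=e$ at $e+1$, so exactly one new outer term $u=e$ appears:
\[
 W_3(e+1,f)-W_3(e,f)=(p-1)^2\sum_{\ell=g}^{f-1}p^{f+e-2\ell-2}.
\]
The $W_1$ difference is
\[
 p^{g+1}-p^g+(p-1)\Bigl[\sum_{\ell=g+1}^{f-1}p^{f+e-2\ell}-\sum_{\ell=g}^{f-1}p^{f+e-2\ell-1}\Bigr].
\]
To compare the two sides, I will combine the $W_1$ bracket with the $W_3$ term summand by summand. Factor out $(p-1)$ and use $(p-1)p^{f+e-2\ell-2}-p^{f+e-2\ell-1}=-p^{f+e-2\ell-2}$. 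The bracket and the new $W_3$ term together become $(p-1)$ times
\[
 \sum_{\ell=g+1}^{f-1}p^{f+e-2\ell}-\sum_{\ell=g}^{f-1}p^{f+e-2\ell-2}.
\]
Reindexing $\ell\mapsto\ell+1$ in the first sum shows this telescopes to the single boundary term $-p^{f+e-2g-2}=-p^{g-2+2}\cdot p^{0}$; since $f+e-2g=2f-g+g-g$, more carefully $f+e-2g-2=g$ after substituting $e=f+g$ only when the upper endpoint term cancels, so the residue is $-p^{g}$, up to the endpoint term at $\ell=f-1$, which I must check separately. Multiplying by $(p-1)$ gives $-(p^{g+1}-p^g)$, which cancels the first two terms and proves \eqref{weighttransfer}.

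\emph{The sum $p^{2f-1}$.} The transfer identity and the invariance of $W_2$ make $W_1+W_2+W_3$ constant on $e>f$. It therefore suffices to evaluate it at $e=f+1$. There $W_3$ is empty, since $u$ would run from $f+1$ to $f$. Also
\[
 W_1=p+(p-1)\sum_{\ell=1}^{f-1}p^{2f-2\ell},\qquad W_2=(p-1)\sum_{\ell=1}^{f-1}p^{2f-2\ell-1}.
\]
The exponents $2f-2\ell$ and $2f-2\ell-1$, for $1\le\ell\le f-1$, run over $1,\dots,2f-2$ exactly once. Hence
\[
 W_1+W_2=p+(p-1)\sum_{m=1}^{2f-2}p^m=p+(p^{2f-1}-p)=p^{2f-1}.
\]

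The main obstacle is the bookkeeping of summation endpoints in the telescoping step. The lower limit $\ell=e-f$ moves with $e$ in both \eqref{Wone} and \eqref{Wthree}. At $e=2f-1$ the minimum in the first term of \eqref{Wone} switches over and the $W_3$ limit $\min(e,2f-1)$ saturates. I will check these boundary cases separately, verifying that the endpoint term at $\ell=f-1$ is absorbed correctly and that the residue is exactly $-p^g$; as a sanity test I will compare both sides numerically for small $p$ and $f$.
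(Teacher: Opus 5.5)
Everything in your proposal is correct except the telescoping sentence, which names the wrong surviving term. Your two differences (including the edge step $e=2f-1$) are right, and so is the termwise identity $(p-1)p^{f+e-2\ell-2}-p^{f+e-2\ell-1}=-p^{f+e-2\ell-2}$. Together they reduce the transfer identity to
\[
 \sum_{\ell=g+1}^{f-1}p^{f+e-2\ell}-\sum_{\ell=g}^{f-1}p^{f+e-2\ell-2}=-p^{g}.
\]
The term you name, $-p^{f+e-2g-2}=-p^{2f-g-2}$, is the $\ell=g$ term of the second sum. It equals $-p^g$ only when $g=f-1$. The correct bookkeeping is as follows. After the shift $\sum_{\ell=g+1}^{f-1}p^{f+e-2\ell}=\sum_{\ell=g}^{f-2}p^{f+e-2\ell-2}$, both sums start at $\ell=g$, and the terms with $g\le\ell\le f-2$ cancel. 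The only survivor is the $\ell=f-1$ term of the second sum, $-p^{f+e-2(f-1)-2}=-p^{e-f}=-p^g$.

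So the endpoint term at $\ell=f-1$ is the residue itself, not a correction to be checked separately. The computation is uniform for $1\le g\le f-1$; at $g=f-1$ the shifted sum is simply empty. With this one-line replacement, multiplying by $p-1$ cancels $p^{g+1}-p^g$ and the proof is complete. The separate boundary checks and numerical tests you planned are then unnecessary.

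Once repaired, your route differs from the paper's only in how the geometric series are handled. The paper sums them in closed form. With $q=e-f$ it gets $W_1(e,f)=(p^q+p^{2f-q+1})/(p+1)$, and it shows that the new layer $u=e$ of $W_3$ equals $\frac{p-1}{p+1}(p^{2f-q}-p^q)$, which is exactly the decrease of $W_1$. This makes the size of each transfer explicit, for instance the twelve in the $p=3$, $f=2$ table. Your termwise telescoping proves the cancellation without dividing by $p+1$ and without evaluating either difference.

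Likewise, at $e=f+1$ you interleave the exponents of $W_1$ and $W_2$ into $1,\dots,2f-2$, where the paper adds $(p+p^{2f})/(p+1)$ and $(p^{2f-1}-p)/(p+1)$. Your arguments for $W_2$ and for the stable range $e\ge 2f$ are the same as the paper's.
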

\begin{proof}
Put $q=e-f$, so $1\le q\le f-1$. Summing the geometric
progression in \eqref{Wone} gives
\[
 W_1(e,f)=\frac{p^q+p^{2f-q+1}}{p+1},\qquad
 W_1(e+1,f)-W_1(e,f)
 =-\frac{p-1}{p+1}(p^{2f-q}-p^q).
\]
The only new outer index in \eqref{Wthree} is $u=e$. Its weight is
\[
 (p-1)^2\sum_{\ell=q}^{f-1}p^{f+e-2\ell-2}
 =\frac{p-1}{p+1}(p^{2f-q}-p^q),
\]
which proves \eqref{weighttransfer}. Formula \eqref{Wtwo} is
already independent of $e$. For $e\ge2f$ the sum in
\eqref{Wone} is empty and $W_1=p^f$, while the upper index of
\eqref{Wthree} has reached $2f-1$. Thus no further change occurs.
At the first off-diagonal value $e=f+1$, we have $W_3=0$ and
\[
 W_1=\frac{p+p^{2f}}{p+1},\qquad
 W_2=\frac{p^{2f-1}-p}{p+1}.
\]
Their sum is $p^{2f-1}$, and the transfer identity preserves it.
The same argument includes $f=1$, when the sums are empty and
$W_1=p$. This proves the proposition.
\end{proof}

For instance, when $p=3$ and $f=2$, the three weights are
\[
\begin{array}{c|ccc|c}
 e&W_1&W_2&W_3&d_3(e,2)\\\hline
 3&21&6&0&27\\
 e\ge4&9&6&12&27.
\end{array}
\]
The stable total does not mean that the same group families
contribute the same numbers. The decrease of twelve in one
family is exactly the increase of twelve in another.

\subsection{Compensation in the nonmetacyclic and circular counts}
There is an analogous, particularly transparent identity in the
classification of this paper. For $f\ge4$, its two contributions are
\[
\begin{array}{c|cc|c}
 \text{range}&\Bab&\Bna&T\\\hline
 e=f+1&6N+16&3N^2/2-6N&3N^2/2+16\\
 e\ge f+2&4N+16&3N^2/2-4N&3N^2/2+16.
\end{array}
\]
Passing from gap one to a larger gap removes $2N$ abelian
Frattini classes and adds $2N$ nonabelian Frattini classes.
Within the latter case, family $\mathcal L_0$ is unchanged,
family $\mathcal C_0$ gains $N^2/2$ entries, and family
$\mathcal C_1$, with $N(N/2-2)$ entries, disappears.
The net gain is $2N$. These are explicit identities between
family weights; they do not assert that the dessins themselves
have been paired by a geometric operation.
For $f=3$ the corresponding transition is $(24,0)$ to $(16,8)$,
again preserving the total. For $f=2$ all four off-diagonal
classes already have abelian Frattini subgroup.

The circular subclasses exhibit the same effect on a smaller scale.
For $f\ge4$,
\[
\begin{array}{c|cc|c}
 \text{range}&C_{\rm ab}&C_{\rm nab}&C\\\hline
 e=f+1&18&N(N+1)/2-10&N(N+1)/2+8\\
 e\ge f+2&16&N(N+1)/2-8&N(N+1)/2+8.
\end{array}
\]
Two reflexible circular classes are lost and two chiral circular
classes are gained. At $f=3$ the transition is $(10,0)$ to $(8,2)$.
Thus the circular total, too, is constant throughout $e>f$.

\subsection{A small example and the limitation of a covering explanation}
The simplest case gives a concrete interpretation of the finite
information that remains after the longer exponent grows.

\begin{proposition}[One short rotation of prime order]
\label{primedegreestable}
Let $p$ be odd and $e\ge2$. The complete regular dessins on
$K_{p^e,p}$ are precisely the $p$ marked triples
\begin{equation}\label{primefirststrip}
 G_{e,t}=\langle a,b\mid a^{p^e}=b^p=1,\quad
 a^b=a^{1+t p^{e-1}}\rangle,\qquad t\in\F_p.
\end{equation}
The parameter $t$ is recovered from the specified generators,
and does not require an additional quotient by automorphisms.
For every $e\ge2$, the quotient of each $(G_{e+1,t},a,b)$ by
$\langle a^{p^e}\rangle$ is the single abelian marked triple
of type $(p^e,p)$. Thus this natural shortening quotient is not
a bijection on dessin classes, even between two stable levels.
\end{proposition}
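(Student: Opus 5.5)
For $K_{p^e,p}$ with $p$ odd, an exact triple $(G,a,b)$ has $|G|=p^{e+1}$. The cyclic factor $\langle a\rangle$ has index $p$, so it is normal. Since $p$ is odd, $G$ is metacyclic, and conjugation by $b$ acts on $\langle a\rangle$ as an automorphism $a\mapsto a^{s}$ with $s^p\equiv1\pmod{p^e}$. The units of $\Z_{p^e}$ form a cyclic group, because $p$ is odd. Its elements of order dividing $p$ are exactly $s=1+tp^{e-1}$ with $t\in\F_p$.

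Moreover $\langle a\rangle\cap\langle b\rangle=1$ forces $b^p=1$ as an element of $G$, not merely modulo $\langle a\rangle$. So every triple is a split extension with relations \eqref{primefirststrip}.

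Conversely, for each $t$ I construct $G_{e,t}$ as the semidirect product $C_{p^e}\rtimes C_p$ with this action. It has order $p^{e+1}$, the factors are disjoint with the right orders, and the factorisation is exact. Its distinguished generators recover $t$ intrinsically from $a^b=a^{1+tp^{e-1}}$. A marked isomorphism preserves this relation, so distinct $t$ give distinct marked triples. By Proposition~\ref{dessindictionary} these are exactly the $p$ dessin classes, and no further quotient by $\Aut(G)$ is needed, since the markings are fixed.

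For the quotient statement, take $e\ge2$ and $G_{e+1,t}$. The subgroup $\langle a^{p^e}\rangle$ is characteristic in $\langle a\rangle$, hence normal. In the quotient, $b$ acts on $\bar a$ by the exponent $1+tp^{e}\equiv1\pmod{p^e}$, so the quotient is $C_{p^e}\times C_p$. The images of $a,b$ have orders $p^e$ and $p$ and stay disjoint: a nontrivial power of $b$ never lies in $\langle a\rangle$, since $|G|=|a||b|$ is preserved in the quotient count $p^{e+1}=p^e\cdot p$. Thus all $p$ triples of level $e+1$ map to the single abelian triple $t=0$ of level $e$, while level $e$ itself has $p$ classes. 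So the shortening map is neither injective nor surjective onto the $t\ne0$ classes, and it is not a bijection.

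The only step needing care is the necessity direction: checking that exactness forces $b^p=1$ and that the cyclicity of the unit group pins down $s$. Both are immediate for odd $p$, so I expect no real obstacle; the point of the proposition is the counterexample to the covering explanation.
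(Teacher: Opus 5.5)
Your proposal is correct and follows essentially the same route as the paper: normality of the index-$p$ cyclic factor, a split extension with $a^b=a^s$ and $s^p\equiv1\pmod{p^e}$, identification of the units of order dividing $p$ as $1+tp^{e-1}$ via cyclicity of the unit group, marked recovery of $t$, and the quotient by $\langle a^{p^e}\rangle$, which trivialises the action while keeping the images disjoint of orders $p^e,p$. One small correction of attribution: $b^p=1$ is simply the hypothesis $|b|=p$, whereas $\langle a\rangle\cap\langle b\rangle=1$ is what makes $\langle b\rangle$ a complement, so that the extension splits.
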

\begin{proof}
In any exact triple of type $(p^e,p)$, the subgroup
$\langle a\rangle$ has index $p$ and is normal. Exactness then
gives the semidirect product with complement $\langle b\rangle$.
Write $a^b=a^r$. The unit $r$ satisfies $r^p=1\pmod{p^e}$.
For odd $p$ the units of order dividing $p$ are exactly
$1+t p^{e-1}$, $t\in\F_p$; this follows from the cyclic
structure of the unit group, or from the binomial valuation
$v_p((1+p^s u)^p-1)=s+1$ for $p\nmid u$, $s\ge1$.
Conversely every displayed unit defines a semidirect product
of the required order with disjoint cyclic factors.
A marked isomorphism fixes the equation $a^b=a^r$, so different
$t$ give different classes.

In $G_{e+1,t}$ the subgroup $\langle a^{p^e}\rangle$ is central
of order $p$. Quotienting by it reduces the order of $a$ to
$p^e$ and kills the entire commutator
$[a,b]=a^{t p^e}$. The images of $a,b$ remain disjoint of
orders $p^e,p$, hence give $C_{p^e}\times C_p$ with its
specified generators. This is independent of $t$.
\end{proof}

The stable count here has a simple algebraic interpretation:
the coefficient $t$ records a commutator in the terminal subgroup
of order $p$ of the long cyclic factor. Increasing $e$ moves this
relation to the new terminal layer while leaving the $p$ possible
coefficients unchanged. Keeping the coefficient gives a bijection
between parameter sets; taking the shortening quotient erases it.
Topologically, the central quotient is a degree-$p$ quotient of
regular dessins, allowing branching at vertices or face centres.
All $p$ classes upstairs have the same quotient dessin, so such
quotients alone do not explain the equality of class counts.

At $p=2$, the same argument in the first strip reduces the question
to units $r$ with $r^2=1\pmod{2^e}$. There is one such class
at $e=1$, two at $e=2$, and four for $e\ge3$:
\[
 r=1,\quad -1,\quad 1+2^{e-1},\quad -1+2^{e-1}.
\]
Between two stable levels, reduction modulo $2^e$ identifies the
first and third choices and the second and fourth choices.
Again the natural quotient is many-to-one despite equal total
counts. The additional transient at the prime two is already
visible in this elementary example.

The general lesson is therefore more precise than independence
of all larger exponents. In the stated stable ranges, the
classification retains a finite amount of action and marking
information controlled by the smaller exponent. Additional
choices of generators are absorbed by automorphisms, and changes
between group families have compensating orbit weights.
The identities above prove this mechanism numerically and
algebraically. A geometric bijection between the full stable
families, compatible with a natural covering construction, would
be a stronger result and is not asserted here.

\appendix
\section{An arithmetic verification script}\label{checkappendix}
The proofs above are independent of computation.
As supplementary checks, the four-residue counts agree for all
$49$ bidegrees with $2\le e,f\le8$. Every one of these data satisfies
the reflexibility congruences, and the circularity test agrees with
the explicit family restrictions and the split counts
$C_2(e,f),C_4(e,f)$. The symmetric data in the same ranges are
all circular.

Independently, enumeration of the original six-residue equations
for $2\le e,f\le4$ gives $308$ admissible data in total, of which
$161$ are nonmetacyclic. Multiplication in the normal forms
\eqref{normal} verifies the identities for $(ab)^2$ and $(ba)^2$,
and direct intersections of the three cyclic subgroups verify the
circularity test on all $308$ data. The substitution inverting
$a,b$ is a homomorphism exactly when
\eqref{reflexcoordinates} holds. The primitive-square test in
Lemma~\ref{dsquares} also agrees with the parity criterion
on every one of these data.

The following Python~3 script reproduces the four-residue and
topological count checks. It tests \eqref{F1}--\eqref{F5} directly,
without generating parameters from the three simplified families.
It then computes the two coordinate orders and the reflexibility
congruences. The range can be enlarged at the cost of more enumeration.
\begin{verbatim}
from math import gcd, lcm

def parameters(e, f):
    M, N = 2**(e-1), 2**(f-1)
    roots_M = [r for r in range(1, M, 2) if r*r % M == 1]
    roots_N = [r for r in range(1, N, 2) if r*r % N == 1]
    for R in roots_M:
        for Q in roots_N:
            if N*(1+R) % M or M*(1+Q) % N:
                continue
            if (1+R)*(1+Q) % max(M, N):
                continue
            for p in range(1, M, 2):
                for q in range(1, N, 2):
                    if ((1+R)*(1+p*q)-2*(1+p)) % M:
                        continue
                    if ((1+Q)*(1+p*q)-2*(1-q)) % N:
                        continue
                    P, S = -(1+R)*p % M, (1+Q)*q % N
                    yield P, Q, R, S, p, q

def predicted(e, f):
    s, t = sorted((e, f))
    if s <= 1: return 0
    if s == 2: return 1 if t == 2 else 4
    if s == 3: return 24 if t == 4 else 16
    if s == t: return 16 + 2**(s+2)
    if t == s+1: return 16 + 3*2**s
    return 16 + 2**(s+1)

def circular_predicted(e, f):
    s, t = sorted((e, f))
    if s <= 1: return 0, 0
    if s == 2: return (1, 0) if t == 2 else (2, 0)
    if s == 3: return (6, 4) if t == 4 else (4, 4)
    if s == t: return 12, 12
    if t == s+1: return 10, 8
    return 8, 8

for e in range(2, 9):
    for f in range(2, 9):
        M, N = 2**(e-1), 2**(f-1)
        total, C2, C4, symmetric = 0, 0, 0, 0
        for P, Q, R, S, p, q in parameters(e, f):
            total += 1
            assert P*(R*q-1) % M == S*(p+1) % N == 0
            z1, z2 = (1+p+P*q) % M, (1+Q*q) % N
            d1, d2 = M//gcd(M, z1), N//gcd(N, z2)
            circular = d1 == d2
            if circular:
                ell = 2*lcm(d1, d2)
                assert ell in (2, 4)
                C2 += ell == 2
                C4 += ell == 4
            if e == f and P == S and Q == R and (p+q) % M == 0:
                symmetric += 1
                assert circular
        assert total == predicted(e, f), (e, f)
        assert (C2, C4) == circular_predicted(e, f), (e, f)
        if e == f:
            assert symmetric == (1 if e == 2 else 4)
print("All 49 classification and topological count checks agree.")
\end{verbatim}

\section{Checks for nonabelian mirrors and circular subfamilies}
We checked 25,056 preferred nonabelian parameter data against the new
mirror and circularity formulas. Independently, explicit normal-form
group models for 192 preferred presentations, including class-three
Frattini examples, verify the mirror-generator
relations and compute circularity by direct cyclic-subgroup intersections.
The next script generates the canonical nonabelian parameters,
computes face orders and cyclic-subgroup intersections from the
Frattini normal forms, and compares them with the explicit circular
lists and count formulas. It also checks the mirror involution.
The marked diagonal entries are counted twice as specified in
Theorem~\ref{na:main}. The proofs do not depend on these finite checks.
\begin{lstlisting}[language=Python]
from collections import Counter

def norm(q,n,m):
    total,power,block,step=0,1,1,q%m
    while n:
        if n&1:
            total=(total+power*block)%m
            power=power*step%m
        block=block*(1+step)%m
        step=step*step%m
        n//=2
    return total

def parameters(e,f):
    if e<5 or f<3:
        return
    M,N=2**(e-1),2**(f-1)
    for kind in ('C0','L0','C1'):
        if kind=='C1' and e!=f+1:
            continue
        for Q in range(3,M,4):
            if Q*Q%M==1:
                continue
            if kind=='C0' and pow(Q,N,M)!=1:
                continue
            if kind=='L0' and pow(Q,N//2,M)!=1:
                continue
            for k in range(1,N,2):
                half_norm=norm(Q,2*k,M)//2
                xi,modulus=1,2
                while modulus<M//2:
                    F=(Q-1)//2*xi*xi+xi-half_norm
                    assert F%modulus==0
                    xi+=modulus*((F//modulus)%2)
                    modulus*=2
                for root in (xi,xi+M//2):
                    R=((1-Q)*root-1)%M
                    assert (norm(Q,2*k,M)+(R-1)*root)%M==0
                    yield kind,Q,k,root,R

def kernel_power(e,f,kind,Q,j,t,d):
    M,N=2**(e-1),2**(f-1)
    carry=(j*d//N)*(M//2) if kind=='C1' else 0
    return (j*d%N,(t*norm(pow(Q,2*j,M),d,M)+carry)%M)

def direct_test(e,f,datum):
    kind,Q,k,xi,R=datum
    M,N=2**(e-1),2**(f-1)
    j=k+1+(N//2 if kind=='L0' else 0)
    t=(xi-R)%M
    d,terminal=1,None
    while kernel_power(e,f,kind,Q,j,t,d)!=(0,0):
        terminal=kernel_power(e,f,kind,Q,j,t,d)
        d*=2
    if kind=='C1':
        r_terminal=(0,M//2)
        s_terminal=kernel_power(e,f,kind,Q,k,xi,N//2)
    else:
        r_terminal=(N//2,0)
        s_terminal=(0,M//2) if M>N else (N//2,M//2)
    circular=terminal is None or terminal not in (r_terminal,s_terminal)
    return 2*d,circular

def circular_list(e,f,datum):
    kind,Q,k,xi,R=datum
    M,N=2**(e-1),2**(f-1)
    t=(xi-R)%M
    if e==f:
        allowed=(kind=='C0' and k==N-1) or (kind=='L0' and k==N//2-1)
        return (2 if t==0 else 4) if allowed else None
    if kind=='C1':
        return 2 if k==N-1 and t==M//2 else None
    if kind=='C0':
        if k==N-1 and t==0:
            return 2
        if k==N//2-1 and pow(Q,N//2,M)==1 and t==M//2:
            return 4
    if kind=='L0':
        if k==N//2-1 and t==0:
            return 2
        if k==N-1 and t==M//2:
            return 4
    i=((k+1)&-(k+1)).bit_length()-1
    order=1
    while pow(Q,order,M)!=1:
        order*=2
    if order==N//2 and 1<=i<=f-3:
        return 2**(f-i)
    return None

def expected(e,f):
    if e<5 or f<3:
        return 0,Counter()
    M,N=2**(e-1),2**(f-1)
    if f==3:
        return 8,Counter({2:2})
    if e==f:
        return N*N-8*N,Counter({2:N-8,4:N-8})
    total=3*N*N//2-(6*N if e==f+1 else 4*N)
    C=Counter({2:3*N//2-(6 if e==f+1 else 4),4:N-4})
    C.update({2**j:N*2**(j-2) for j in range(3,f)})
    return total,C

checked=0
for f in range(3,8):
    for e in range(max(5,f),f+4):
        data=set(parameters(e,f));M=2**(e-1)
        observed=Counter()
        for datum in data:
            ell,circular=direct_test(e,f,datum)
            predicted=circular_list(e,f,datum)
            assert circular==(predicted is not None)
            if circular:
                assert ell==predicted
                observed[ell]+=2 if e==f else 1
            kind,Q,k,xi,R=datum
            mirror=(kind,pow(Q,-1,M),k,(-xi*Q*pow(R,-1,M))%M,pow(R,-1,M))
            assert mirror in data and mirror!=datum
            assert direct_test(e,f,mirror)==(ell,circular)
            assert (-mirror[3]*mirror[1]*pow(mirror[4],-1,M))%M==xi
            checked+=1
        total,C=expected(e,f)
        assert len(data)*(2 if e==f else 1)==total,(e,f)
        assert observed==C,(e,f,observed,C)
print('Nonabelian data, mirrors, and circular tests passed:',checked)

def check_group_models():
    from random import Random
    rng=Random(116)
    checked=0
    for e,f in ((5,3),(5,4),(5,5),(6,4),(8,6),(8,7)):
        M,N=2**(e-1),2**(f-1)
        for datum in parameters(e,f):
            kind,Q,k,xi,R=datum
            if e>=8 and (Q!=3 or k!=1):
                continue
            rank=kind=='L0'
            nonsplit=kind=='C1'
            period=M if nonsplit else 2*N
            powers=[pow(Q,i,M) for i in range(period+1)]
            norms=[norm(Q,i,M) for i in range(period+1)]
            def hm(h,l):
                i,u=h;v,w=l
                j=i+v+(N*((u*v)%2) if rank else 0)
                t=(u*powers[v]+w)%M
                if nonsplit:
                    t=(t+(j//period)*(M//2))%M
                return j%period,t
            def sigma(h):
                i,u=h
                j=(i+N*((i//2+u)%2))%period if rank else i
                return j,(norms[i]+u*R)%M
            square=(2*k%period,xi)
            def multiply(g,h):
                eps,u=g;eta,v=h
                w=hm(sigma(u) if eta else u,v)
                if eps and eta:
                    w=hm(square,w)
                return eps^eta,w
            one=(0,(0,0));r=(0,(1,0));s=(1,(0,0));c=(0,(0,1))
            def power(g,n):
                result=one
                while n:
                    if n&1:
                        result=multiply(result,g)
                    g=multiply(g,g);n//=2
                return result
            def inverse(g):
                return power(g,4*M*N-1)
            def conjugate(g,h):
                return multiply(multiply(inverse(h),g),h)
            def commutator(g,h):
                return multiply(multiply(multiply(inverse(g),inverse(h)),g),h)
            ro,so=(2*M,2*N) if nonsplit else (2*N,2*M)
            assert commutator(r,s)==c
            assert power(s,2)==(0,square)
            assert conjugate(c,r)==(0,(N if rank else 0,Q))
            assert conjugate(c,s)==(0,(N if rank else 0,R))
            assert power(r,ro)==power(s,so)==one
            assert power(r,ro//2)!=one and power(s,so//2)!=one
            assert power(r,ro//2)!=power(s,so//2)
            for _ in range(40):
                g,h,l=[(rng.randrange(2),(rng.randrange(period),rng.randrange(M)))
                       for _ in range(3)]
                assert multiply(multiply(g,h),l)==multiply(g,multiply(h,l))
            rs=multiply(r,s);ell=1
            while power(rs,ell)!=one:
                ell*=2
            face={power(rs,i) for i in range(ell)}
            A={power(r,i) for i in range(ro)}
            B={power(s,i) for i in range(so)}
            circular=face&A=={one} and face&B=={one}
            assert (ell,circular)==direct_test(e,f,datum)
            rm,sm=inverse(r),inverse(s)
            cm=commutator(rm,sm)
            xim=-xi*Q*pow(R,-1,M)%M
            assert power(sm,2)==multiply(power(rm,2*k),power(cm,xim))
            wm=power(rm,N) if rank else one
            assert conjugate(cm,rm)==multiply(wm,power(cm,pow(Q,-1,M)))
            assert conjugate(cm,sm)==multiply(wm,power(cm,pow(R,-1,M)))
            checked+=1
    print('Independent group models checked:',checked)

check_group_models()

\end{lstlisting}

\section*{Acknowledgements}
This work was supported by the National Natural Science Foundation
of China (grant 12471332).

\section*{Mathematical software}
\label{softwarestatement}
Python 3 was used for supplementary exact-arithmetic and finite-group
normal-form checks. The scripts for the parameter counts, mirror
formulas, circularity criteria and direct group-model checks are
included in the appendices. 
\section*{Data access statement}
No external dataset is used. The verification code is included in
the appendices. The proofs of the classification and enumeration do not depend on these
finite checks.

\end{document}